\newtheorem{theorem}{Theorem}[section]
\newtheorem{claim}[theorem]{Claim}
\newtheorem{example}[theorem]{Example}
\newtheorem{proposition}[theorem]{Proposition}
\newtheorem{corollary}[theorem]{Corollary}
\newtheorem{lemma}[theorem]{Lemma}
\theoremstyle{definition}
\newtheorem{definition}[theorem]{Definition}
\newtheorem{problem}[theorem]{Problem}
\theoremstyle{definition}
\newtheorem{remark}[theorem]{Remark}
\newcommand{\IN}{\mathbb N}
\newcommand{\IR}{\mathbb R}
\newcommand{\IC}{\mathbb C}
\newcommand{\IT}{\mathbb T}
\newcommand{\IZ}{\mathbb Z}
\newcommand{\F}{\mathcal F}
\newcommand{\U}{\mathcal U}
\newcommand{\C}{\mathcal C}
\newcommand{\V}{\mathcal V}
\newcommand{\Iso}{\mathrm{Iso}}
\newcommand{\e}{\varepsilon}
\newcommand{\w}{\omega}
\newcommand{\pr}{\mathrm{pr}}
\newcommand{\Ra}{\Rightarrow}
\newcommand{\Tau}{\mathcal T}
\newcommand{\id}{\mathrm{id}}
\newcommand{\TG}{\mathsf{T\!G}}
\newcommand{\PG}{\mathsf{P\!G}}
\newcommand{\pTS}{\mathsf p\!\mathsf T\!\mathsf S}
\newcommand{\Sym}{\mathrm{Sym}}
\newcommand{\TS}{\mathsf{T\!\,S}}
\newcommand{\pTG}{\mathsf{p\!T\!G}}
\newcommand{\qTG}{\mathsf{q\!T\!G}}
\newcommand{\sTG}{\mathsf{s\!T\!G}}
\newcommand{\rTG}{\mathsf{r\!T\!G}}
\newcommand{\rTS}{\mathsf{r\!T\!S}}
\newcommand{\sTS}{\mathsf{s\!T\!S}}
\newcommand{\TSone}{\mathsf{T_{\!\!1\!}S}}
\newcommand{\K}{\mathcal K}
\newcommand{\dom}{\mathrm{dom}}
\newcommand{\eC}{\mathsf{e}{:}\C}
\newcommand{\iC}{\mathsf{i}{:}\C}
\newcommand{\hC}{\mathsf{h}{:}\C}
\newcommand{\pC}{\mathsf{p}{:}\C}
\newcommand{\cC}{\mathsf{c}{:}\C}
\newcommand{\iPG}{\mathsf{i}{:}\mathsf{P\!G}}
\newcommand{\hPG}{\mathsf{h}{:}\mathsf{P\!G}}
\newcommand{\iTG}{\mathsf{i}{:}\!\mathsf{T\!G}}
\newcommand{\eTG}{\mathsf{e}{:}\!\mathsf{T\!G}}
\newcommand{\hTG}{\mathsf{h}{:}\!\mathsf{T\!G}}
\newcommand{\phTG}{\mathsf{p}{:}\!\mathsf{T\!G}}
\newcommand{\cTG}{\mathsf{c}{:}\!\mathsf{T\!G}}
\newcommand{\ipTG}{\mathsf{i}{:}\mathsf{p\!T\!G}}
\newcommand{\epTG}{\mathsf{e}{:}\mathsf{p\!T\!G}}
\newcommand{\hpTG}{\mathsf{h}{:}\mathsf{p\!T\!G}}
\newcommand{\ppTG}{\mathsf{p}{:}\mathsf{p\!T\!G}}
\newcommand{\cpTG}{\mathsf{c}{:}\mathsf{p\!T\!G}}
\newcommand{\iqTG}{\mathsf{i}{:}\mathsf{q\!T\!G}}
\newcommand{\eqTG}{\mathsf{e}{:}\mathsf{q\!T\!G}}
\newcommand{\hqTG}{\mathsf{h}{:}\mathsf{q\!T\!G}}
\newcommand{\pqTG}{\mathsf{p}{:}\mathsf{q\!T\!G}}
\newcommand{\cqTG}{\mathsf{c}{:}\mathsf{q\!T\!G}}
\newcommand{\erTG}{\mathsf{e}{:}\mathsf{r\!T\!G}}
\newcommand{\esTG}{\mathsf{e}{:}\mathsf{s\!T\!G}}
\newcommand{\isTG}{\mathsf{i}{:}\mathsf{s\!T\!G}}
\newcommand{\hsTG}{\mathsf{h}{:}\mathsf{s\!T\!G}}
\newcommand{\psTG}{\mathsf{p}{:}\mathsf{s\!T\!G}}
\newcommand{\csTG}{\mathsf{c}{:}\mathsf{s\!T\!G}}
\newcommand{\iTS}{\mathsf{i}{:}\!\mathsf{T\!S}}
\newcommand{\eTS}{\mathsf{e}{:}\!\mathsf{T\!S}}
\newcommand{\hTS}{\mathsf{h}{:}\!\mathsf{T\!S}}
\newcommand{\phTS}{\mathsf{p}{:}\!\mathsf{T\!S}}
\newcommand{\cTS}{\mathsf{c}{:}\!\mathsf{T\!S}}
\newcommand{\ipTS}{\mathsf{i}{:}\mathsf{p\!T\!S}}
\newcommand{\epTS}{\mathsf{e}{:}\mathsf{p\!T\!S}}
\newcommand{\hpTS}{\mathsf{h}{:}\mathsf{p\!T\!S}}
\newcommand{\ppTS}{\mathsf{p}{:}\mathsf{p\!T\!S}}
\newcommand{\cpTS}{\mathsf{c}{:}\mathsf{p\!T\!S}}
\newcommand{\isTS}{\mathsf{i}{:}\mathsf{s\!T\!S}}
\newcommand{\esTS}{\mathsf{e}{:}\mathsf{s\!T\!S}}
\newcommand{\hsTS}{\mathsf{h}{:}\mathsf{s\!T\!S}}
\newcommand{\psTS}{\mathsf{p}{:}\mathsf{s\!T\!S}}
\newcommand{\csTS}{\mathsf{c}{:}\mathsf{s\!T\!S}}
\title{Categorically closed topological groups}
\author{Taras Banakh}
\address{T.~Banakh: Ivan Franko National University of Lviv
(Ukraine) and\newline\indent Jan Kochanowski University in Kielce
(Poland)}
\email{t.o.banakh@gmail.com}
\subjclass{22A05, 22A15, 54E15}
\keywords{topological group, paratopological group, topological semigroup, absolutely closed topological group, topological group of compact exponent}
\begin{document}
\begin{abstract} Let $\vec\C$ be a category whose objects are semigroups with topology and morphisms are closed semigroup relations, in particular, continuous homomorphisms.
An object $X$ of the category $\vec\C$ is called {\em $\vec\C$-closed} if for each morphism $\Phi\subset X\times Y$ in the category $\vec\C$ the image $\Phi(X)=\{y\in Y:\exists x\in X\;(x,y)\in\Phi\}$ is closed in $Y$. In the paper we survey existing and new results on topological groups, which are $\vec\C$-closed for various categories $\vec\C$ of topologized semigroups.
\end{abstract}
\maketitle

\section{Introduction and Survey of Main Results}

In this paper we recognize topological groups which are $\vec\C$-closed for some categories $\vec\C$ of Hausdorff topologized semigroups.

A {\em topologized semigroup} is a topological space $S$ endowed
with an associative binary operation $S\times S\to
S$, $(x,y)\mapsto xy$. If the binary operation is (separately) continuous, then $S$ is called a ({\em semi}){\em topological semigroup}. A topologized semigroup $S$ is called {\em powertopological} if it is semitopological and for every $n\in\IN$ the map $S\to S$, $x\mapsto x^n$, is continuous. A topologized semigroup $S$ is called {\em right-topological} if for every $a\in S$ the right shift $S\to S$, $x\mapsto xa$, is continuous.

{\em All topologized semigroups considered in this paper} (except for those in Proposition~\ref{p:sd} and Example~\ref{e:Iso}) {\em are assumed to be Hausdorff}.  

Topologized semigroups are objects of many categories which differ by morphisms. The most obvious category for morphisms has continuous homomorphisms between topologized semigroups. A bit wider category for morphisms has partial homomorphisms, i.e., homomorphisms defined on subsemigroups. The widest category for morphisms has semigroup relations. By a {\em semigroup relation} between semigroups $X,Y$ we understand a subsemigroup $R\subset X\times Y$ of the product semigroup $X\times Y$.

Now we recall some standard operations on (semigroup) relations. For two (semigroup) relations $\Phi\subset X\times Y$ and $\Psi\subset Y\times Z$ their composition is the (semigroup) relation $\Psi\circ\Phi\subset X\times Z$ defined by $\Psi\circ\Phi=\{(x,z)\in X\times Z:\exists y\in Y\;\;(x,y)\in\Phi,\;(y,z)\in\Psi\}$. For a (semigroup) relation $R\subset X\times Y$ its inverse $R^{-1}$ is the (semigroup) relation $R^{-1}=\{(y,x):(x,y)\in R\}\subset Y\times X$.

For a relation $R\subset X\times Y$ and subsets $A\subset X$ the set  $R(A)=\{y\in X:\exists a\in A\;(a,y)\in R\}$ is the {\em image} of $A$ under the relation $R$. If $R\subset X\times Y$ is a semigroup relation between semigroups $X,Y$, then for any subsemigroup $A\subset X$ its image $R(A)$ is a subsemigroup of $Y$.
For a relation $R\subset X\times Y$ the sets $R(X)$ and $R^{-1}(X)$ are called the {\em range} and {\em domain} of $R$, respectively.

Semigroup relations between semigroups can be equivalently viewed as multimorphisms. By a {\em multimorphism} between semigroups $X,Y$ we understand a multi-valued function $\Phi:X\multimap Y$ such that $\Phi(x)\cdot\Phi(y)\subset \Phi(xy)$ for any $x,y\in X$. Observe that a multi-valued function  $\Phi:X\multimap Y$ between semigroups is a multimorphism if and only if its graph $\Gamma=\{(x,y)\in X\times Y:y\in\Phi(x)\}$ is a subsemigroup in $X\times Y$. Conversely, each subsemigroup $\Gamma\subset X\times Y$ determines a multimorphism $\Phi:X\multimap Y$, $\Phi:x\mapsto \Phi(x):=\{(y\in Y:(x,y)\in \Gamma\}$.  In the sequel we shall identify multimorphisms with their graphs. 

A multimorphism $\Phi:X\multimap Y$ between semigroups $X,Y$ is called a {\em partial homomorphism} if for each $x\in X$ the set $\Phi(x)$ contains at most one point. Each partial homomorphism $\Phi:X\multimap Y$ can be identified with the unique function $\varphi:\dom(\varphi)\to Y$ such that $\Phi(x)=\{\varphi(x)\}$ for each $x\in\dom(\varphi):=\Phi^{-1}(Y)$. This function $\varphi$ is a homomorphism from the subsemigroup $\dom(\varphi)$ to the semigroup $Y$.

For a class $\C$ of Hausdorff topologized semigroups by ${\mathsf c}{:}\C$ we denote the category whose objects are topologized semigroups in the class $\C$ and morphisms are closed semigroup relations between the topologized semigroups in the class $\C$. The category
${\mathsf c}{:}\C$ contains the subcategories ${\mathsf e}{:}\C$, ${\mathsf i}{:}\C$, ${\mathsf h}{:}\C$, and $\pC$  whose objects are topologized semigroups in the class $\C$ and morphisms are isomorphic topological embeddings, injective continuous homomorphisms, continuous homomorphisms, and partial continuous homomorphisms with closed domain, respectively.

In this paper we consider some concrete instances of the following general notion.

\begin{definition}
Let $\vec \C$ be a category of topologized semigroups and their semigroup relations. A topologized
semigroup $X$ is called {\em $\vec\C$-closed} if for any morphism $\Phi\subset X\times Y$ of the category $\vec \C$ the range $\Phi(X)$ is closed in $Y$.
\end{definition}

In particular, for a class $\C$ of topologized semigroups, a topologized semigroup $X$ is called
\begin{itemize}
\item {\em ${\mathsf e}{:}\C$-closed} if for each isomorphic topological embedding $f:X\to Y\in\C$ the image $f(X)$ is closed in $Y$;
\item {\em $\mathsf{i}{:}\C$-closed} if for any injective
continuous homomorphism $f:X\to Y\in\C$ the image $f(X)$ is closed in $Y$;
\item {\em $\mathsf{h}{:}\C$-closed} if for any continuous
homomorphism $f:X\to Y\in\C$ the
image $f(X)$ is closed in $Y$;
\item {\em $\pC$-closed} if for any continuous
homomorphism $f:Z\to Y\in\C$ defined on a closed subsemigroup $Z\subset X$ the
image $f(Z)$ is closed in $Y$;
\item {\em $\cC$-closed} if for any topologized semigroup $Y\in\C$ and any closed subsemigroup $\Phi\subset X\times Y$ the range $\Phi(X):=\{y\in Y:\exists x\in X\;\;(x,y)\in\Phi\}$ of $\Phi$ is closed in $Y$.
\end{itemize}

It is clear that for any class $\C$ of Hausdorff topologized semigroups and a topologized semigroup $X$ we have the implications:
$$\mbox{$\cC$-closed}\;\Ra\;\mbox{$\pC$-closed}\;\Ra\;\mbox{${\mathsf h}{:}\C$-closed}\;\Ra\;\mbox{${\mathsf i}{:}\C$-closed}\;\Ra\;\mbox{${\mathsf e}{:}\C$-closed}.$$


In this paper we are interested in characterizing topological groups which are ${\mathsf e}{:}\C$-, ${\mathsf i}{:}\C$-, $\hC$-, $\pC$- or $\cC$-closed for the following classes of Hausdorff topologized semigroups:
\begin{itemize}
\item $\TS$ of all topological semigroups,
\item $\pTS$ of all powertopological semigroups,
\item $\sTS$ of all semitopological semigroups,
\item $\rTS$ of all right-topological semigroups,
\smallskip
\item $\TG$ of all topological groups,
\item $\pTG$ of all paratopological groups,
\item $\qTG$ of all quasitopological groups,
\item $\sTG$ of all semitopological groups,
\item $\rTG$ of all right-topological groups.
\end{itemize}

We recall that a {\em paratopological group} is a group $G$ endowed with a topology making it a topological semigroup. So, the inversion operation is not necessarily continuous. A {\em quasitopological group} if a topologized group $G$ such that for any $a,b\in G$ and $n\in\{1,-1\}$ the map $G\to G$, $x\mapsto ax^nb$, is continuous.

The inclusion relations between the classes of topologized semigroups are described in the following diagram (in which an arrow $\mathsf A\to\mathsf B$ between classes $\mathsf A,\mathsf B$ indicates that $\mathsf A\subset\mathsf B$).
$$
\xymatrix{
\pTG\ar[d]&\TG\ar[r]\ar[l]&\qTG\ar[r]&\sTG\ar[d]\ar[r]&\rTG\ar[d]\\
\TS\ar[rr]&&\pTS\ar[r]&\sTS\ar[r]&\rTS
}
$$
In this paper we shall survey exising and new results on the following general problem (consisting of $9\times 5=45$ subproblems).

\begin{problem}\label{prob:gen} Given a class $\C\in\{\TS,\pTS,\sTS,\rTS,\,\TG,\qTG,\pTG,\sTG,\rTG\}$ and a class of morphisms $\mathsf{f}\in\{\mathsf e,\mathsf i,\mathsf h, \mathsf p, \mathsf c\}$ detect topological groups which are $\mathsf{f}\,\!{:}\C$-closed.
\end{problem}

For the categories $\eTG$ and $\eqTG$ the answer to this problem is known and is a combined result of Raikov \cite{Raikov1946} who proved the equivalence $(1)\Leftrightarrow(2)$ and Bardyla, Gutik, Ravsky \cite{BGR} who proved the equivalence $(1)\Leftrightarrow(3)$.

\begin{theorem}[Raikov, Bardyla--Gutik--Ravsky]\label{t:Raikov} For a topological group $X$ the following conditions are equivalent:
\begin{enumerate}
\item $X$ is $\eTG$-closed;
\item $X$ is $\eqTG$-closed;
\item $X$ is complete.
\end{enumerate}
\end{theorem}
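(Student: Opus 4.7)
The plan is to establish the cyclic chain $(2)\Rightarrow(1)\Rightarrow(3)\Rightarrow(2)$. The implication $(2)\Rightarrow(1)$ is immediate from the inclusion $\TG\subset\qTG$: every isomorphic topological embedding of $X$ into an object of $\TG$ is also an isomorphic topological embedding into an object of $\qTG$, so $\eqTG$-closedness formally implies $\eTG$-closedness. For $(1)\Rightarrow(3)$, I would invoke Raikov's classical two-sided completion $\widehat X$, which is a complete Hausdorff topological group containing $X$ as a dense topological subgroup, and which satisfies $X=\widehat X$ if and only if $X$ is complete. If $X$ is $\eTG$-closed, then the inclusion $X\hookrightarrow\widehat X$ is an isomorphic topological embedding, so $X$ is closed in $\widehat X$; combined with density this forces $X=\widehat X$.

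The bulk of the work lies in $(3)\Rightarrow(2)$. Assume $X$ is complete and $X\subset Y$ is a topological subgroup of a Hausdorff quasitopological group $Y$. Take $y\in\overline{X}^{Y}$; the goal is to show $y\in X$. The strategy is to produce a two-sided Cauchy net (or filter) in $X$ converging to $y$ in $Y$: completeness supplies a limit in $X$, and Hausdorffness of $Y$ then forces this limit to coincide with $y$. Starting from any net $(x_\alpha)$ in $X$ with $x_\alpha\to y$, the continuity of the shifts $L_{y^{-1}}$, $R_{y^{-1}}$ and of the inversion in $Y$ gives
\[
y^{-1}x_\alpha\to e,\quad x_\alpha y^{-1}\to e,\quad x_\alpha^{-1}y\to e,\quad y x_\alpha^{-1}\to e \quad\text{in }Y,
\]
and then for each fixed $\alpha$ the continuity of $L_{x_\alpha^{\pm 1}}$ yields $x_\alpha^{-1}x_\beta\to x_\alpha^{-1}y$ and $x_\alpha x_\beta^{-1}\to x_\alpha y^{-1}$ as $\beta\to\infty$, whose targets in turn tend to $e$ as $\alpha\to\infty$. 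A diagonal extraction (replaced in the non-first-countable setting by a filter/ultrafilter refinement using regularity of Hausdorff quasitopological groups to supply nested neighbourhood bases) then produces a cofinal subnet $(x_{\alpha_n})$ with $x_{\alpha_n}^{-1}x_{\alpha_m}\to e$ and $x_{\alpha_n}x_{\alpha_m}^{-1}\to e$ jointly as $n,m\to\infty$. Since these products lie in $X$, which carries the subspace topology from $Y$, this is precisely the two-sided Cauchy condition in $X$, and the argument closes as above.

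The main obstacle is that multiplication in $Y$ is only separately continuous, so a direct appeal to joint continuity of $(a,b)\mapsto ab^{-1}$ at $(y,y)$ is unavailable; only iterated limits survive the passage through $Y$. The crucial point that makes the diagonal/filter extraction succeed is that the ``difference'' elements $x_\alpha^{-1}x_\beta$ and $x_\alpha x_\beta^{-1}$ automatically fall inside the topological subgroup $X$, where joint continuity is at our disposal, so separate continuity of $Y$ suffices to control them once the cofinal sequence of indices is chosen carefully. Delivering this extraction in full generality, without countability assumptions on $Y$, is the delicate ingredient supplied by Bardyla--Gutik--Ravsky.
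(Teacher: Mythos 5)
Your implications $(2)\Rightarrow(1)$ and $(1)\Rightarrow(3)$ are correct and follow the standard route; note that the paper itself does not prove Theorem~\ref{t:Raikov} but cites Raikov and Bardyla--Gutik--Ravsky, and your Raikov-completion argument for $(1)\Rightarrow(3)$ is exactly what is recorded in Section~2 before Theorem~\ref{t:Weil}. The gap is in $(3)\Rightarrow(2)$, and it is genuine. The iterated limits $x_\alpha^{-1}x_\beta\to x_\alpha^{-1}y$ (as $\beta\to\infty$, $\alpha$ fixed) and $x_\alpha^{-1}y\to e$ (as $\alpha\to\infty$) do not by themselves yield a cofinal index set along which $x_\alpha^{-1}x_\beta\to e$ \emph{jointly}: for each fixed $\alpha$ the admissible $\beta$ are those with $x_\beta\in x_\alpha U$, a neighbourhood of $y$ that depends on $\alpha$, and no single neighbourhood of $y$ controls all pairs. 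Your observation that the differences $x_\alpha^{-1}x_\beta$ lie in $X$, where multiplication is jointly continuous, does not rescue this, because the convergence you invoke passes through the intermediate points $x_\alpha^{-1}y\in Y\setminus X$, where only separate continuity is available; even in the first-countable case, choosing $x_n$ with $x_n^{-1}y$ and $yx_n^{-1}$ in a shrinking base $U_n$ gives $x_n^{-1}x_m=(x_n^{-1}y)(y^{-1}x_m)$, a product of two elements of $Y$ near $e$ which separate continuity cannot keep near $e$. So the ``diagonal/filter extraction'' is precisely the missing step, and you explicitly defer it to the cited paper rather than supplying it. (Also, Hausdorff quasitopological groups need not be regular, so the parenthetical appeal to regularity is unwarranted.)

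The step can be closed by a different device: recentering at an auxiliary point of $X$. Let $y\in\overline{X}$ and let $U$ be a neighbourhood of $e$ in $X$; choose an open symmetric $W\subset X$ with $WW\subset U$ and an open $V\subset Y$ with $V\cap X=W$. Since in a quasitopological group every map $z\mapsto az^{\pm1}b$ is a homeomorphism, the set $yV^{-1}\cap V^{-1}y$ is an open neighbourhood of $y$ and hence contains a point $c\in X$. Then $O:=cV\cap Vc$ is again an open neighbourhood of $y$, and for any $a,b\in X\cap O$ the elements $c^{-1}a$, $c^{-1}b$, $ac^{-1}$, $bc^{-1}$ all lie in $X\cap V=W$, whence $a^{-1}b=(c^{-1}a)^{-1}(c^{-1}b)\in WW\subset U$ and $ab^{-1}=(ac^{-1})(bc^{-1})^{-1}\in WW\subset U$. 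Thus the trace filter generated by $\{X\cap O_y\}$ is Cauchy in the two-sided uniformity of $X$; completeness gives it a limit $x_0\in X$, and Hausdorffness of $Y$ forces $x_0=y$, so $y\in X$ and $X$ is closed. This recentering at $c$ --- which brings both factors of $a^{-1}b$ back into $X$ where joint continuity applies --- is the ingredient your outline is missing.
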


A topological group $X$ is {\em complete} if it is complete in its two-sided uniformity, i.e., the uniformity, generated by the entourages $\{(x,y)\in X\times X:y\in xU\cap Ux\}$ where $U$ runs over neighborhood of the unit in $X$.

On the other hand, Gutik \cite[2.5]{Gutik14} answered Problem~\ref{prob:gen} for the category $\esTS$:

\begin{theorem}[Gutik]\label{t:Gutik} A topological group is compact if and only if it is $\esTS$-closed.
\end{theorem}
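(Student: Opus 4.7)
The forward implication is purely topological: if $X$ is compact and $e\colon X\to Y$ is an isomorphic topological embedding into a Hausdorff semitopological semigroup $Y$, then $e(X)$ is a compact subset of the Hausdorff space $Y$ and therefore closed, with no use whatsoever of the semigroup operation on $Y$.

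For the converse, suppose $X$ is $\esTS$-closed. Since every Hausdorff topological group is itself a Hausdorff semitopological semigroup, $X$ is in particular $\esTG$-closed, and by Theorem~\ref{t:Raikov} it is Raikov-complete. Assume toward a contradiction that $X$ is non-compact; using the standard fact that a Raikov-complete topological group is compact if and only if it is totally bounded in its two-sided uniformity, $X$ is not totally bounded. The plan is to realise $X$ as a non-closed, topologically embedded subgroup of some Hausdorff semitopological semigroup $Y$, which contradicts the hypothesis. The candidate is the one-point zero extension $Y:=X\sqcup\{0\}$, with $0\notin X$ declared a two-sided absorbing element ($0\cdot y=y\cdot 0=0$ for every $y\in Y$) while the multiplication on $X$ is unchanged; associativity is immediate. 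Keep $X$ open in $Y$ with its given topology, and declare the neighborhoods of $0$ to be the sets $\{0\}\cup A$, where $A$ ranges over a filter $\mathcal{F}$ of open subsets of $X$. A direct verification translates the structural requirements on $Y$ into requirements on $\mathcal{F}$: separate continuity of $y\mapsto ay$ and $y\mapsto ya$ at $0$ for each $a\in X$ amounts to $\mathcal{F}$ being invariant under both left and right translations of $X$; the Hausdorff axiom amounts to $\bigcap_{A\in\mathcal{F}}\overline{A}=\emptyset$ (closures in $X$); and failure of $X$ to be closed in $Y$ amounts to $\emptyset\notin\mathcal{F}$. Given any such $\mathcal{F}$, the inclusion $X\hookrightarrow Y$ is the required embedding.

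\textbf{Main obstacle.} Exhibiting $\mathcal{F}$ is the heart of the matter. When $X$ is locally compact the filter of co-compact open subsets works: non-compactness gives non-triviality, local compactness provides the no-clustering condition, and translations preserve compactness. For general $X$ one must exploit the failure of total boundedness: the idea is to pick a symmetric open neighborhood $V$ of the identity such that $X$ is not covered by any finite two-sided union $\bigcup x_iVy_j$, and then let $\mathcal{F}$ consist of the open sets whose complement is contained in such a finite two-sided union of translates of $\overline V$. Two-sided translation invariance and non-triviality of $\mathcal{F}$ are immediate from the choice of $V$, while the no-clustering condition at an arbitrary $x\in X$ is witnessed by the neighborhood $xV\subset x\overline V$ sitting inside the ideal. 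The genuinely subtle point — where the Raikov-completeness and non-compactness of $X$ are essentially used — is securing the existence of such a $V$ in the absence of local compactness; this is the only step of the argument that is not a formal unpacking of the construction.
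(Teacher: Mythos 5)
Your proof takes essentially the same route as the paper's own argument (the implication $(2)\Rightarrow(1)$ of Theorem~\ref{t:gutik}): adjoin a two-sided absorbing zero to $X$ and declare the neighborhoods of $0$ to be the complements of finitely many translates of a fixed witness to the failure of total boundedness. The single step you leave open---the existence of a symmetric open $V$ with $X\not\subset\bigcup_{i,j}x_iVy_j$ for all finite families---is exactly the paper's Lemma~\ref{l:Prot} (a topological group is precompact if and only if $X=FUF$ for some finite $F$), applied to the complete non-compact, hence non-precompact, group $X$; with that supplied your construction is correct, and your use of two-sided translates is in fact what makes the neighborhood filter of $0$ invariant under both left and right shifts, as required for separate continuity.
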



Theorems~\ref{t:Raikov}, \ref{t:Gutik} and the trivial inclusions $\qTG\supset\TG\subset\pTG\subset\TS\subset\pTS\subset\sTS$ imply the following diagram of implications between various $\vec\C$-closedness properties of a topological group:
$$
\xymatrix{
\mbox{complete}\ar@{<=>}[d]&&&&&\mbox{compact}\ar@{<=>}[d]\ar@{=>}[lllll]\\
\mbox{$\eqTG$-closed}\ar@{<=>}[r]&\mbox{$\eTG$-closed}&\mbox{$\epTG$-closed}\ar@{=>}[l]&\mbox{$\eTS$-closed}\ar@{=>}[l]
&\mbox{$\epTS$-closed}\ar@{=>}[l]&\mbox{$\esTS$-closed}\ar@{=>}[l]\ar@{<=>}[u]\\
\mbox{$\iqTG$-closed}\ar@{=>}[r]\ar@{=>}[u]&\mbox{$\iTG$-closed}\ar@{=>}[u]&\mbox{$\ipTG$-closed}\ar@{=>}[u]\ar@{=>}[l]
&\mbox{$\iTS$-closed}\ar@{=>}[u]\ar@{=>}[l]&\mbox{$\ipTS$-closed}\ar@{=>}[u]\ar@{=>}[l]&\mbox{$\isTS$-closed}\ar@{<=>}[u]
\ar@{=>}[l]\ar@{<=>}[u]\\
\mbox{$\hqTG$-closed}\ar@{=>}[u]\ar@{=>}[r]&\mbox{$\hTG$-closed}\ar@{=>}[u]&\mbox{$\hpTG$-closed}\ar@{=>}[u]\ar@{=>}[l]
&\mbox{$\hTS$-closed}\ar@{=>}[u]\ar@{=>}[l]&\mbox{$\hpTS$-closed}\ar@{=>}[u]\ar@{=>}[l]&\mbox{$\hsTS$-closed}\ar@{<=>}[u]
\ar@{=>}[l]\ar@{<=>}[u]\\
\mbox{$\pqTG$-closed}\ar@{=>}[u]\ar@{=>}[r]&\mbox{$\phTG$-closed}\ar@{=>}[u]&\mbox{$\ppTG$-closed}\ar@{=>}[u]\ar@{=>}[l]
&\mbox{$\phTS$-closed}\ar@{=>}[u]\ar@{=>}[l]&\mbox{$\ppTS$-closed}\ar@{=>}[u]\ar@{=>}[l]&\mbox{$\psTS$-closed}\ar@{<=>}[u]
\ar@{=>}[l]\ar@{<=>}[u]\\
\mbox{$\cqTG$-closed}\ar@{=>}[u]\ar@{=>}[r]&\mbox{$\cTG$-closed}\ar@{=>}[u]&\mbox{$\cpTG$-closed}\ar@{=>}[u]\ar@{=>}[l]
&\mbox{$\cTS$-closed}\ar@{=>}[u]\ar@{=>}[l]&\mbox{$\cpTS$-closed}\ar@{=>}[u]\ar@{=>}[l]&\mbox{$\csTS$-closed}\ar@{<=>}[u]
\ar@{=>}[l]\ar@{<=>}[u]
}
$$
This diagram shows that various $\vec \C$-closedness properties of topological groups fill and organize the ``space'' between compactness and completeness. 

In fact, under different names, $\vec\C$-closed topological (semi)groups have been already considered in mathematical literature.
As we have already mentioned, $\eTG$-closed topological groups appeared in Raikov's characterization \cite{Raikov1946} of complete topological groups. $\eTS$-Closed and $\hTS$-closed topological semigroups were introduced in 1969 by Stepp \cite{Stepp1969,Stepp75} who called them maximal and absolutely maximal semigroups, respectively. The study $\hTG$-closed, $\phTG$-closed and $\cTG$-closed topological groups (called $h$-complete, hereditarily $h$-complete and $c$-compact topological groups, respectiely) was initiated by Dikranjan and Tonolo \cite{DTon} and continued by Dikranjan, Uspenskij \cite{DU}, see the monograph of Luka\'sc  \cite{Luk} and survey \cite[\S4]{DSh} of Dikranjan and Shakhmatov.
The study of $\epTG$-closed paratopological groups was initiated by Banakh and Ravsky \cite{BanRav2001}, \cite{Ravsky2003},  who called them $H$-closed paratopological groups.  In \cite{Bardyla-Gutik-2012,
BGR,ChuchmanGutik2007,
GutikRepovs2008}
Hausdorff $\eTS$-closed (resp. $\hTS$-closed) topological semigroups are called (absolutely) $H$-closed. In \cite{Gutik14} Gutik studied and characterized $\esTS$-closed topological groups (calling them $H$-closed topological groups in the class of semitopological semigroups). The papers \cite{BB-sl1}, \cite{BB-sl2} are devoted to recognizing $\vec \C$-closed topological semilattices for various categories $\vec\C$ of topologized semigroups. In the paper \cite{Ban17} the author studied $\vec\C$-closedness properties in Abelian topological groups and proved the following characterization.

\begin{theorem}[Banakh]\label{t:banakh} An Abelian topological group $X$ is compact if and only if $X$ is $\iTG$-closed.
\end{theorem}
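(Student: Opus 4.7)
The plan is to prove both directions separately; only the reverse implication is substantial. For the easy direction, if $X$ is compact then for any continuous homomorphism $f\colon X\to Y$ with $Y\in\TG$, the image $f(X)$ is compact, hence closed in the Hausdorff group $Y$. Thus compactness implies $\hTG$-closedness, and a fortiori $\iTG$-closedness.

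For the hard direction, I argue by contrapositive. Suppose $X$ is a Hausdorff Abelian topological group that is not compact. By Theorem~\ref{t:Raikov}, if $X$ is not Raikov-complete, then $X$ fails to be $\eTG$-closed, hence fails to be $\iTG$-closed, and we are finished. So I may additionally assume $X$ is Raikov-complete but not compact. The task reduces to producing a Hausdorff topological group $Y$ together with an injective continuous homomorphism $f\colon X\to Y$ whose image $f(X)$ is not closed in $Y$.

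The strategy is to weaken the topology of $X$: find a Hausdorff group topology $\tau'$ on the underlying Abelian group of $X$ that is strictly coarser than the original topology $\tau$ and such that $(X,\tau')$ is not Raikov-complete. Granted such $\tau'$, set $Y:=\widetilde{(X,\tau')}$, the Raikov completion of $(X,\tau')$, and let $f$ be the identity map $(X,\tau)\to(X,\tau')$ composed with the canonical dense embedding $(X,\tau')\hookrightarrow Y$. Because $\tau'\subset\tau$ the map $f$ is continuous, and it is obviously injective; its image is the subgroup $(X,\tau')$, which is dense but proper in $Y$ (since $\tau'$-completeness fails), hence not closed. This witnesses the failure of $\iTG$-closedness.

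The main obstacle is the construction of the coarsening $\tau'$. A natural and symmetric strategy is to look for a strictly coarser \emph{totally bounded} Hausdorff group topology: since every Raikov-complete totally bounded group is compact, the assumed non-compactness of $X$ automatically ensures that any such $\tau'$ is incomplete. In the Abelian setting this is accessible through continuous homomorphisms from $X$ into compact (or totally bounded) Abelian groups, and the crux of the proof is to produce a point-separating family of such homomorphisms — leveraging divisibility, duality, and character-theoretic features unique to the Abelian world — whose initial topology is strictly weaker than $\tau$. This is where commutativity is indispensable, and it is exactly the step where the bulk of the technical effort must be concentrated; the remainder is a routine passage to the completion.
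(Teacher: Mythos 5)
Your easy direction and your reduction of the hard direction are both fine: compactness gives $\hTG$-closedness hence $\iTG$-closedness, and for a complete non-compact Abelian $X$ it does suffice to produce a weaker non-complete Hausdorff group topology $\tau'$ and pass to the Raikov completion of $(X,\tau')$. This is in fact exactly the form in which the paper uses Theorem~\ref{t:banakh} (see the proof of Theorem~\ref{t:iTG=>center}). Note, however, that the paper does not prove this theorem at all: it is imported from the separate preprint \cite{Ban17}, whose title --- ``a quantitative generalization of the Prodanov--Stoyanov Theorem'' --- already signals that the existence of such a $\tau'$ is a deep result, comparable in difficulty to the Prodanov--Stoyanov precompactness theorem for minimal Abelian groups. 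Everything you wrote before the last paragraph is routine bookkeeping; the entire content of the theorem is concentrated in the step you defer to ``divisibility, duality, and character-theoretic features.''

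That deferred step is not merely unfinished --- the specific route you sketch cannot work. A point-separating family of continuous homomorphisms into compact groups exists only when $X$ is MAP, and Hausdorff Abelian topological groups need not be MAP: the complete metrizable connected group $L^p([0,1])$ with $0\le p<1$ has trivial character group, hence admits \emph{no} injective continuous homomorphism into a compact group and \emph{no} totally bounded Hausdorff coarsening whatsoever (such a coarsening would embed the group densely into its compact completion and force enough characters). Yet this group is non-compact and complete, so the theorem asserts it is not $\iTG$-closed; the witnessing weaker topology (e.g.\ the one inherited from $L^0$) is necessarily \emph{not} totally bounded. So any correct proof must handle non-MAP Abelian groups by a mechanism other than characters. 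A secondary slip: even when a totally bounded Hausdorff coarsening $\tau'$ exists, its incompleteness is not ``automatic'' from the non-compactness of $(X,\tau)$; the group $\IT$ with the discrete topology is complete and non-compact but admits the usual compact (hence complete) topology as a totally bounded coarsening. One must additionally arrange that the chosen $\tau'$ is non-compact.
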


In Corollary~\ref{c:AesTG} we shall complement this theorem proving that an Abelian topological group is compact if and only if it is $\esTG$-closed.

The results of Banakh \cite{Ban17} and Ravsky~\cite{Ravsky2003} combined with Theorem~\ref{t:AbelC} (proved in this paper) imply the following characterization of Abelian topological groups which are $\epTG$-, $\eTS$- or $\epTS$-closed.

\begin{theorem}[Banakh, Ravsky]\label{t:BRintro} For an Abelian topological group $X$ the following conditions are equivalent:
\begin{enumerate}
\item $X$ is $\epTG$-closed;
\item $X$ is $\eTS$-closed;
\item $X$ is $\epTS$-closed;
\item $X$ is complete and has compact exponent;
\item $X$ is complete and for every  injective continuous homomorphism $f:X\to Y$ to a topological group $Y$ the group $\overline{f(X)}/f(X)$ is periodic.
\end{enumerate}
\end{theorem}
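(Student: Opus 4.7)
The plan is to close the cycle $(3)\Ra(2)\Ra(1)\Ra(4)\Leftrightarrow(5)\Ra(3)$, using the already-cited works of Banakh \cite{Ban17} and Ravsky \cite{Ravsky2003} together with Theorem \ref{t:AbelC} of the present paper. The implications $(3)\Ra(2)\Ra(1)$ are free from the inclusions $\pTG\subset\TS\subset\pTS$ recorded in the diagram of classes: any isomorphic topological embedding of $X$ into a paratopological group $Y\in\pTG$ is also an isomorphic embedding into the topological semigroup $Y$, and analogously for $Y\in\TS\subset\pTS$, so the corresponding closedness properties become progressively stronger.

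For $(1)\Leftrightarrow(4)$, I would invoke Ravsky's characterization \cite{Ravsky2003} of $\epTG$-closed (equivalently, $H$-closed) Abelian paratopological groups as precisely the complete ones of compact exponent. Since every topological group is a paratopological group, the characterization applies to $X$ verbatim.

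The equivalence $(4)\Leftrightarrow(5)$ is of purely algebraic-topological nature and I would draw it from \cite{Ban17}. The easy direction $(4)\Ra(5)$ proceeds as follows. Let $K\subset X$ be a compact subgroup and $n\in\IN$ with $nX\subset K$. For any injective continuous homomorphism $f:X\to Y$ into a topological group $Y$, the image $f(K)$ is compact and therefore closed in $Y$, so by continuity of $y\mapsto ny$ we obtain
\[
n\,\overline{f(X)}\;\subset\;\overline{n f(X)}\;=\;\overline{f(nX)}\;\subset\;\overline{f(K)}\;=\;f(K)\;\subset\;f(X),
\]
whence $\overline{f(X)}/f(X)$ has exponent dividing $n$ and is in particular periodic. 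The converse $(5)\Ra(4)$ requires embedding $X$ into sufficiently rich target groups in order to upgrade the pointwise periodicity of the quotients $\overline{f(X)}/f(X)$ into a uniform bound on exponents witnessing compact exponent; this is the substantive part of the equivalence and relies on the machinery of \cite{Ban17}.

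Finally, the implication $(4)\Ra(3)$ (and hence $(5)\Ra(3)$) is precisely the new Theorem \ref{t:AbelC} of this paper, which closes the cycle. The main obstacle in proving Theorem \ref{t:BRintro} is therefore Theorem \ref{t:AbelC} itself: one must manufacture a closed embedding of $X$ into an arbitrary powertopological-semigroup target from the sole hypotheses of completeness and compact exponent, which I expect to require a careful analysis of the interaction of the witnessing compact subgroup of $X$ with the semigroup-theoretic closure in the ambient target.
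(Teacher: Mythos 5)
Your proposal matches the paper's own derivation: the paper gives no separate proof of Theorem~\ref{t:BRintro} but obtains it exactly as you do, by combining Ravsky's characterization of $\epTG$-closed Abelian groups, the periodicity criterion from \cite{Ban17}, the trivial implications coming from the inclusions $\pTG\subset\TS\subset\pTS$, and Theorem~\ref{t:AbelC} (via Theorem~\ref{t:exp}) for the implication from completeness with compact exponent to $\epTS$-closedness. Your explicit argument for $(4)\Rightarrow(5)$ is correct, and you rightly identify Theorem~\ref{t:AbelC} as the new ingredient supplied by the paper.
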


A group $X$ is called {\em periodic} if each element of $X$ has finite order. Theorem~\ref{t:BRintro}(4) involves the (important) notion of a topological groups of compact exponent, which is defined as follows.

\begin{definition} A topological group $X$ has ({\em pre}){\em compact exponent} if for some $n\in\IN$ the set $nX:=\{x^n:x\in X\}$ has compact closure in $X$ (resp. is totally bounded in $X$).
\end{definition}

Theorems~\ref{t:banakh}, \ref{t:BRintro} and Corollary~\ref{c:esTG} imply that for Abelian groups, the diagram describing the interplay between various $\vec \C$-closedness properties collapses to the following form (containing only three different types of closedness: compactness, completeness, and completeness combined with compact exponent):
{\small
$$
\xymatrixcolsep{15pt}
\xymatrix{
\mbox{\color{blue}\it complete}\ar@{<=>}[d]&&&\mbox{\color{red}\small\it complete of}\atop\mbox{\color{red}\small\it \!compact exponent\!\!}\ar@{<=>}[d]\ar@{=>}[lll]&&&\mbox{\it compact}\ar@{<=>}[d]\ar@{=>}[lll]\\
\mbox{\color{blue}$\eTG$-closed}
&\mbox{\color{blue}$\eqTG$-closed}\ar@{<=>}[l]
&\mbox{\color{red}$\epTG$-closed}\ar@{=>}[l]
&\mbox{\color{red}$\eTS$-closed}\ar@{<=>}[l]
&\mbox{\color{red}$\epTS$-closed}\ar@{<=>}[l]
&\mbox{$\esTG$-closed}\ar@{=>}[l]
&\mbox{$\esTS$-closed}\ar@{<=>}[l]\\
\mbox{$\iTG$-closed}\ar@{=>}[u]
&\mbox{$\iqTG$-closed}\ar@{=>}[u]\ar@{<=>}[l]
&\mbox{$\ipTG$-closed}\ar@{=>}[u]\ar@{<=>}[l]
&\mbox{$\iTS$-closed}\ar@{=>}[u]\ar@{<=>}[l]
&\mbox{$\ipTS$-closed}\ar@{=>}[u]\ar@{<=>}[l]
&\mbox{$\isTG$-closed}\ar@{<=>}[l]\ar@{<=>}[u]
&\mbox{$\isTS$-closed}\ar@{<=>}[l]\ar@{<=>}[u]\\
\mbox{$\hTG$-closed}\ar@{<=>}[u]
&\mbox{$\hqTG$-closed}\ar@{<=>}[u]\ar@{<=>}[l]
&\mbox{$\hpTG$-closed}\ar@{<=>}[u]\ar@{<=>}[l]
&\mbox{$\hTS$-closed}\ar@{<=>}[u]\ar@{<=>}[l]
&\mbox{$\hpTS$-closed}\ar@{<=>}[u]\ar@{<=>}[l]
&\mbox{$\hsTG$-closed}\ar@{<=>}[u]\ar@{<=>}[l]
&\mbox{$\hsTS$-closed}\ar@{<=>}[u]\ar@{<=>}[l]\\
\mbox{$\phTG$-closed}\ar@{<=>}[u]
&\mbox{$\pqTG$-closed}\ar@{<=>}[u]\ar@{<=>}[l]
&\mbox{$\ppTS$-closed}\ar@{<=>}[u]\ar@{<=>}[l]
&\mbox{$\phTS$-closed}\ar@{<=>}[u]\ar@{<=>}[l]
&\mbox{$\ppTS$-closed}\ar@{<=>}[u]\ar@{<=>}[l]
&\mbox{$\psTG$-closed}\ar@{<=>}[u]\ar@{<=>}[l]
&\mbox{$\psTS$-closed}\ar@{<=>}[u]\ar@{<=>}[l]\\
\mbox{$\cTG$-closed}\ar@{<=>}[u]
&\mbox{$\cqTG$-closed}\ar@{<=>}[u]\ar@{<=>}[l]
&\mbox{$\cpTG$-closed}\ar@{<=>}[u]\ar@{<=>}[l]
&\mbox{$\cTS$-closed}\ar@{<=>}[u]\ar@{<=>}[l]
&\mbox{$\cpTS$-closed}\ar@{<=>}[u]\ar@{<=>}[l]
&\mbox{$\csTG$-closed}\ar@{<=>}[u]\ar@{<=>}[l]
&\mbox{$\csTS$-closed}\ar@{<=>}[u]\ar@{<=>}[l]
}
$$
}

So, the problem remains to investigate the $\vec\C$-closedness properties for non-commutative topological groups. Now we survey the principal results (known and new) addressing this complex and difficult problem. We start with the following characterization of $\eTS$-closed topological groups, proved in Section~\ref{s:eC}.

\begin{theorem}\label{t1.1} A topological group $X$ is $\eTS$-closed if and only if $X$ is Weil-complete and for every continuous homomorphism $f:X\to Y$ into a Hausdorff topological semigroup $Y$ the complement $\overline{f(X)}\setminus f(X)$ is not an ideal in the semigroup $\overline{f(X)}$.
\end{theorem}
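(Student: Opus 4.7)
The plan is to prove the equivalence by treating each direction separately. Throughout, I use Hausdorffness, joint continuity of multiplication, and the isomorphic-embedding property (that $f(z_\gamma)\to f(z)$ in $Y$ with $f(z)\in f(X)$ lifts to $z_\gamma\to z$ in $X$) repeatedly.

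\textbf{Forward direction.} Assume $X$ is $\eTS$-closed. Each necessary condition is derived by constructing, in its failure case, a Hausdorff topological semigroup in which $X$ embeds as a topological subgroup but is not closed. For Weil-completeness: if some right-Cauchy net in $X$ fails to converge, the standard one-sided semigroup completion with respect to the right uniformity is a Hausdorff topological semigroup containing $X$ as a dense non-closed topological subgroup. For the ideal condition: suppose $f:X\to Y$ is a continuous homomorphism such that $I:=\overline{f(X)}\setminus f(X)$ is a nonempty ideal of $\overline{f(X)}$, and take $Z:=X\sqcup I$ with multiplication extending that of $X$ by $x\cdot i:=f(x)\cdot i$, $i\cdot x:=i\cdot f(x)$, and $i\cdot j:=i\cdot j$ computed in $\overline{f(X)}$ (all such products lie in $I$ by the ideal property). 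Topologize $Z$ so that $X$ is open with its original topology while each $i\in I$ has the neighborhood base $\{V\cup f^{-1}(V):V\text{ open neighborhood of }i\text{ in }\overline{f(X)}\}$. Routine checks (joint continuity on each of the four pieces $X\times X$, $X\times I$, $I\times X$, $I\times I$; Hausdorffness between $X$- and $I$-points via Hausdorffness of $\overline{f(X)}$ applied to $f(x)\neq i$) confirm that $Z$ is a Hausdorff topological semigroup in which $X$ is a non-closed topological subgroup, contradicting $\eTS$-closedness.

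\textbf{Reverse direction.} Let $f:X\to Y$ be an isomorphic topological embedding into a Hausdorff topological semigroup; replacing $Y$ by $\overline{f(X)}$, assume $f(X)$ is dense in $Y$, and write $I:=Y\setminus f(X)$. I first prove a subclaim: $f(X)\cdot I\cup I\cdot f(X)\subset I$. Indeed, if $f(x)\cdot i=f(x_0)$ with $i\in I$, a net $f(a_\alpha)\to i$ gives $f(xa_\alpha)=f(x)f(a_\alpha)\to f(x_0)$, so the embedding property yields $xa_\alpha\to x_0$, whence $a_\alpha\to x^{-1}x_0$ and $f(a_\alpha)\to f(x^{-1}x_0)\in f(X)$, contradicting Hausdorffness since $i\notin f(X)$. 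Suppose now that $I\neq\emptyset$. The ideal hypothesis yields $y\in Y$ and $i\in I$ with $yi\in f(X)$ or $iy\in f(X)$; the subclaim forces $y\in I$, and after relabelling there exist $i,j\in I$ with $ji=f(x_0)\in f(X)$. Picking nets $f(a_\alpha)\to i$ and $f(b_\beta)\to j$, joint continuity of multiplication gives $f(b_\beta a_\alpha)=f(b_\beta)f(a_\alpha)\to f(x_0)$ as a double net, and the embedding property lifts this to $b_\beta a_\alpha\to x_0$ in $X$.

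The decisive step is the identity
\[
b_\beta b_{\beta'}^{-1}=(b_\beta a_\alpha)(b_{\beta'}a_\alpha)^{-1},
\]
which turns the double-net convergence into a right-Cauchy condition on $(b_\beta)$: given a neighborhood $V$ of $e\in X$, choose a symmetric $U$ with $x_0UU^{-1}x_0^{-1}\subset V$ and an index pair $(\alpha_0,\beta_0)$ from the double-net convergence such that $b_\beta a_\alpha\in x_0U$ for $\alpha\geq\alpha_0$ and $\beta\geq\beta_0$; fixing one such $\alpha$, the identity gives $b_\beta b_{\beta'}^{-1}\in (x_0U)(x_0U)^{-1}=x_0UU^{-1}x_0^{-1}\subset V$ for all $\beta,\beta'\geq\beta_0$. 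Weil-completeness produces a limit $b_\infty\in X$ for $(b_\beta)$, whence $f(b_\beta)\to f(b_\infty)\in f(X)$ contradicts $f(b_\beta)\to j\notin f(X)$. The main obstacle I expect is spotting this right-Cauchy identity --- it is precisely what makes Weil-completeness (rather than the weaker Raikov-completeness of Theorem~\ref{t:Raikov}) the correct hypothesis, since without it the double-net convergence $b_\beta a_\alpha\to x_0$ does not obviously yield information about $(a_\alpha)$ or $(b_\beta)$ separately; the forward construction of $Z=X\sqcup I$ is also delicate but largely mechanical.
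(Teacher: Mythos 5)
Your proof is correct, and your forward direction is essentially the paper's: the semigroup $Z=X\sqcup I$ you build is exactly the paper's ``ideal union'' $U_f(X,\overline{f(X)})$ of Theorem~\ref{t:2.2}, and the Weil-completion argument for necessity of Weil-completeness is Theorem~\ref{t:Weil}. The reverse direction is where you genuinely diverge. The paper routes it through Theorem~\ref{t:2.5n}(1), which works with the trace filter of a boundary point $y$ and shows that $xy\in f(X)$ for $x\in f(X)$ would force that filter to be Cauchy in the left uniformity; that argument uses only separate continuity and is phrased for semitopological semigroups. You instead split the ideal property into two parts: $f(X)\cdot I\cup I\cdot f(X)\subset I$, obtained by translating back into the group (note this needs no completeness at all), and $I\cdot I\subset I$, obtained from the double-net convergence $b_\beta a_\alpha\to x_0$ together with the identity $b_\beta b_{\beta'}^{-1}=(b_\beta a_\alpha)(b_{\beta'}a_\alpha)^{-1}$ and Weil-completeness. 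Your treatment of $I\cdot I\subset I$ is a genuine addition: the paper's written proof of Theorem~\ref{t:2.5n}(1) only verifies $xy,yx\in I$ for $x\in f(X)$ and $y\in I$ and does not explicitly address products of two boundary points landing in $f(X)$, which is precisely the case your right-Cauchy identity covers; observe that this step of yours needs joint continuity (to get convergence of the double net over the product directed set rather than merely an iterated limit, whose intermediate terms $f(b_\beta)\cdot i$ lie in $I$ and cannot be lifted to $X$), so it is tailored to topological semigroups --- which is all Theorem~\ref{t1.1} requires. One cosmetic slip: the basic neighborhoods of $i\in I$ in $Z$ should be $(V\cap I)\cup f^{-1}(V)$ rather than $V\cup f^{-1}(V)$, since $V$ may contain points of $f(X)$ that are not points of $Z$.
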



Using Theorems~\ref{t:banakh}, \ref{t:BRintro} and \ref{t1.1} we shall prove that various $\vec\C$-closedness properties have strong implications on the structure of subgroups related to commutativity, such as the subgroups of the topological derived series or the central series of a given topological group.

 We recall that for a group $G$ its {\em commutator} $[G,G]$ is the subgroup generated by the set $\{xyx^{-1}y^{-1}:x,y\in G\}$.
The {\em topological derived series} $$G=G^{[0]}\supset G^{[1]}\supset G^{[2]}\supset\cdots$$of a topological group $G$ consists of the subgroups defined by the recursive formula $G^{[n+1]}:=\overline{[G^{[n]},G^{[n]}]}$ for $n\in\w$.
A topological group $G$ is called {\em solvable} if $G^{[n]}=\{e\}$ for some $n\in\IN$. The quotient group $X/X^{[1]}$ is called the {\em Abelization} of a topological group $X$.

The {\em central series}
$$\{e\}=Z_0(G)\subset Z_1(G)\subset\dots$$
of a (topological) group $G$ consists of (closed) normal subgroups defined by the recursive formula $$Z_{n+1}(G):=\{z\in X:\forall x\in G \;\;zxz^{-1}x^{-1}\in Z_n(G)\}\mbox{ \ for \ }n\in\w.$$A group $G$ is called {\em nilpotent} if $G=Z_n(G)$ for some $n\in\w$.
The subgroup $Z_1(G)$ is called the {\em center} of the group $G$ and is denoted by $Z(G)$.

The following theorem unifies Propositions~\ref{p:eTS-center}, and Corollaries~\ref{c:esTG}, \ref{c:hTS-center}.

\begin{theorem} Let $X$ be a topological group.
\begin{enumerate}
\item If $X$ is $\epTG$-closed, then the center $Z(X)$ has compact exponent.
\item If $X$ is $\eTS$-closed, then for any closed normal subgroup $N\subset X$ the center $Z(X/N)$ of the quotient topological group $X/N$ has precompact exponent.
\item If $X$ is $\iTG$-closed or $\esTS$-closed, then the center $Z(X)$ of $X$ is compact.
\item If $X$ is $\hTG$-closed, then  for any closed normal subgroup $N\subset X$ the center $Z(X/N)$ is compact; in particular, the Abelization $X/{X^{[1]}}$ of $X$ is compact.
\end{enumerate}
\end{theorem}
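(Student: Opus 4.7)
The plan is to reduce each of the four parts to the corresponding characterization of closedness for Abelian topological groups already recorded in the paper: Theorem~\ref{t:banakh} (Abelian $\iTG$-closed iff compact), Corollary~\ref{c:AesTG} (Abelian $\esTG$-closed iff compact), and Theorem~\ref{t:BRintro} (Abelian $\epTG$-, $\eTS$-, or $\epTS$-closed iff complete with compact exponent). Since $Z(X)$ and $Z(X/N)$ are always closed Abelian subgroups of $X$ and $X/N$ respectively, the task in each case is to transfer the hypothesized closedness of $X$ to a corresponding closedness property of the center and then invoke the Abelian dichotomy.

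Part (1) is almost formal: a continuous homomorphism $f\colon Z(X)\to Y$ into a paratopological group $Y$ is a continuous partial homomorphism of $X$ defined on the closed subsemigroup $Z(X)$, so $\epTG$-closedness of $X$ forces $f(Z(X))$ to be closed in $Y$. Hence $Z(X)$ is $\epTG$-closed, and Theorem~\ref{t:BRintro}(1)$\Leftrightarrow$(4) delivers compact exponent. Part (2) is more delicate because $\eTS$-closedness is a condition on isomorphic topological embeddings, which do not compose with the quotient map $\pi\colon X\to X/N$ in the convenient direction. The strategy here is to combine the characterization of Theorem~\ref{t1.1} (Weil-completeness plus the no-ideal-in-closure condition) with a contrapositive argument: if $nZ(X/N)$ were not totally bounded for any $n\in\IN$, one would encode this failure as an ideal in the closure of a suitable topological-semigroup image built out of $X$, contradicting Theorem~\ref{t1.1}. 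The weakening from compact to \emph{precompact} exponent reflects that Weil-completeness of $X$ need not descend to $X/N$, so Theorem~\ref{t:BRintro}(4) applies to the completion of $Z(X/N)$ and yields total boundedness of $nZ(X/N)$ rather than compact closure.

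Parts (3) and (4) rely on an amalgamated product construction. Given an injective continuous homomorphism $g\colon Z\to H$ from $Z\in\{Z(X), Z(X/N)\}$ into a topological group $H$, first replace $H$ by the closed centralizer $C_H(g(Z))$ so that $g(Z)$ becomes central in $H$, and then form the amalgamated topological group $(G\times H)/\{(z,g(z)^{-1}):z\in Z\}$ with $G=X$ in (3) and $G=X/N$ in (4). Centrality of $Z$ in $G$ and of $g(Z)$ in $H$ ensures that the amalgamating subgroup is normal, so the quotient is a Hausdorff topological group into which $G$ embeds continuously. In (4) one first checks that $\hTG$-closedness transfers from $X$ to $X/N$ by precomposing any continuous homomorphism $X/N\to Y$ with $\pi$; in (3) the closedness hypothesis on $X$ is applied directly. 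Closedness of the image of $G$ in the amalgam forces $g(Z)$ to be closed in $H$, so $Z$ is $\iTG$-closed and Theorem~\ref{t:banakh} yields compactness; the $\esTS$-variant of (3) is handled by the analogous amalgamation in the category of semitopological groups and Corollary~\ref{c:AesTG}. The main obstacle is the amalgamation itself: verifying that the pushout is Hausdorff and lies in the correct category depends crucially on the centrality of $Z$ (making the amalgamating subgroup normal) and on the centralizer trick, with additional care required in the semitopological setting where inversion is not continuous.
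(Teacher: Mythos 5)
Your reduction targets (Theorem~\ref{t:banakh}, Corollary~\ref{c:AesTG}, Theorem~\ref{t:BRintro}) are the right ones, and your amalgam $(G\times H')/D$ with $D=\{(z,g(z)^{-1}):z\in Z\}$ is a correct and clean substitute for the paper's argument in the $\iTG$-case of (3) and in (4): centrality makes $D$ closed and normal, the quotient is a Hausdorff topological group, and $q^{-1}(q(G\times\{e\}))=G\times g(Z)$ transfers closedness exactly as you say (the paper instead re-topologizes $X$ directly, taking $\{UV:U\in\Tau_e,\ e\in V\in\tau_Z\}$ as a neighbourhood base via the Pontryagin axioms, which is the same construction in different clothing). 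But two of the four parts have genuine gaps. In part (1) you have misread the notation: $\epTG$-closed means $\mathsf{e}{:}\pTG$-closed, i.e.\ closedness under isomorphic topological \emph{embeddings} into paratopological groups; it says nothing about partial homomorphisms defined on closed subsemigroups (that would be $\ppTG$-closedness, a far stronger hypothesis). So the ``almost formal'' step concluding that $Z(X)$ is $\epTG$-closed fails; one must actually extend a non-closed embedding of $Z(X)$ into a paratopological group to a non-closed embedding of all of $X$, using centrality. In part (2) the sentence ``one would encode this failure as an ideal'' is where the entire proof lives: the paper's Proposition~\ref{p:eTS-center} re-topologizes $G=X/N$ so that the completion of its center contains a point $z$ with $z^n\notin G$ for all $n$ (this uses the periodicity clause (5) of Theorem~\ref{t:BRintro}, which you never invoke), and then verifies that $S=\{gz^n:g\in G,\;n\in\w\}$ is a subsemigroup with $S\setminus G$ an ideal. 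None of that construction appears in your proposal.

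The $\esTS$-half of (3) is the other place where the plan would fail rather than merely being incomplete. There the ambient object $H$ is only a semitopological group, and the quotient of a Hausdorff semitopological group by a closed normal subgroup need not be Hausdorff: the usual Hausdorffness proof for $G/N$ uses joint continuity of $(x,y)\mapsto xy^{-1}$, which is unavailable. So the amalgam $(X\times H')/D$ need not live in the class $\sTS$ of Hausdorff semitopological semigroups, and the reduction to Corollary~\ref{c:AesTG} collapses. The paper avoids quotients entirely at this point: Lemma~\ref{l:erTG} constructs by hand a Hausdorff shift-invariant topology on $\IZ\times X$ in which $X$ sits as a non-closed topological subgroup, using a sequence $(x_n)$ chosen in a countable non-totally-bounded \emph{central} subgroup, and the four-case verification of Hausdorffness is the real content of the argument. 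Note also that Corollary~\ref{c:AesTG} is itself deduced from that same lemma, so the Abelian case is not an independent input you can reduce to here.
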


Applying the statements (2) and (4) inductively, we obtain the following corollary describing the compactness properties of some characteristic subgroups of a $\vec\C$-closed topological group (see
 Corollary~\ref{c:eTS=>Zn}, Proposition~\ref{p:hTG=>Z_n}, and Theorem~\ref{t:DU-hypocommutator}).

\begin{corollary}\label{c:Cclosed-Zn} Let $X$ be a topological group.
\begin{enumerate}
\item If $X$ is $\eTS$-closed, then for every $n\in\w$ the subgroup $Z_n(X)$ has compact exponent.
\item If $X$ is $\hTS$-closed, then for every $n\in\w$ the subgroup $Z_n(X)$ is compact.
\item If $X$ is $\phTG$-closed, then for every $n\in\w$ the quotient topological group $X/X^{[n]}$ is compact.
\end{enumerate}
\end{corollary}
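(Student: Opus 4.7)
My plan is to prove each clause independently by induction on $n$, applying the corresponding clause of the preceding theorem with $N=Z_n(X)$ or $N=X^{[n]}$. The main obstacle will be clause (1), where extending a compact exponent condition across a non-central closed normal subgroup requires extra care.

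\textbf{For clause (2)}: Since $\hTS$-closedness implies $\hTG$-closedness, the base case $Z_0(X)=\{e\}$ is trivial. For the inductive step, $Z_n(X)$ is closed and normal in $X$ (centers and iterated centers being characteristic and given by preimages of closed sets under continuous commutator maps), so clause (4) of the preceding theorem applied with $N=Z_n(X)$ gives compactness of $Z(X/Z_n(X))=Z_{n+1}(X)/Z_n(X)$. Since a topological group with a compact normal subgroup and compact quotient is itself compact, $Z_{n+1}(X)$ is compact.

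\textbf{For clause (3)}: I would first observe that $\phTG$-closedness passes to closed subgroups, since a continuous homomorphism defined on a closed subsemigroup $Z$ of a closed subgroup $H\subset X$ is also defined on a closed subsemigroup of $X$ (as $Z$ is closed in $H$ and $H$ in $X$), so its image is closed by hypothesis on $X$. Thus each $X^{[n]}$, being closed, is $\phTG$-closed and hence $\hTG$-closed, so clause (4) of the preceding theorem yields compactness of the Abelization $X^{[n]}/X^{[n+1]}$. Induction on $n$, via the short exact sequence
\[
1\to X^{[n]}/X^{[n+1]}\to X/X^{[n+1]}\to X/X^{[n]}\to 1
\]
and the compact-by-compact extension principle, then gives compactness of $X/X^{[n]}$.

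\textbf{For clause (1)}: By Theorem~\ref{t1.1}, $X$ is Weil-complete, so every closed subgroup of $X$ is Weil-complete, and for such subgroups precompact exponent coincides with compact exponent. Inducting on $n$ with trivial base, clause (2) of the preceding theorem applied with $N=Z_n(X)$ gives precompact exponent of $Z_{n+1}(X)/Z_n(X)$. Combined with the inductive hypothesis that $Z_n(X)$ has compact exponent, an extension argument should yield precompact (hence compact) exponent of $Z_{n+1}(X)$. The technical heart is that $Z_n(X)$ is not fully central in $Z_{n+1}(X)$, so the naive identity $(an)^k=a^kn^k$ fails when $g^m=an$; however, the commutator corrections $[a,n]$ lie in $Z_{n-1}(X)$ by definition of the central series, and a nested inductive hypothesis (already guaranteeing compact exponent of $Z_{n-1}(X)$) absorbs these corrections into a totally bounded error term after raising to a sufficiently high power.
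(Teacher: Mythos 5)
Your treatments of clauses (2) and (3) are correct and coincide with the paper's: clause (2) is exactly Proposition~\ref{p:hTG=>Z_n} (upward induction, compactness of $Z(X/Z_n(X))=Z_{n+1}(X)/Z_n(X)$ from statement (4) of the preceding theorem, then the three-space property of compactness), and clause (3) is the paper's intended argument via the $\hTG$-closedness of the closed subgroups $X^{[n]}$, compactness of their Abelizations $X^{[n]}/X^{[n+1]}$, and the same extension principle.

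The problem is clause (1). You have correctly put your finger on the real difficulty --- $Z_n(X)$ need not be central in $Z_{n+1}(X)$ for $n\ge 2$ (in the group of $4\times 4$ unitriangular matrices, $Z_2$ does not centralize $Z_3$) --- but your proposed repair does not close the gap. The correction term you want to absorb is a product of many individual commutators lying in $Z_{n-1}(X)$; the nested inductive hypothesis only says that $Z_{n-1}(X)$ has precompact \emph{exponent}, i.e.\ that the set of $N$-th powers of its elements is precompact for some $N$. Your error term is not an $N$-th power of a single element, and raising the whole product to a further power reintroduces exactly the cross terms you are trying to eliminate, so ``absorbs these corrections into a totally bounded error term'' remains an assertion rather than an argument. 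The clean repair is to run the induction downward on quotients instead of upward on subgroups: for $G=Z_n(X)$ and $0\le k\le n$ put $G_k=G/Z_k(X)$ and $C_k=Z_{k+1}(X)/Z_k(X)$. Then $C_k=Z(X/Z_k(X))$ is a closed subgroup of $G_k$ that is genuinely \emph{central} in $G_k$ (being central in all of $X/Z_k(X)$), it has precompact exponent by statement (2) of the preceding theorem, and $G_k/C_k\cong G_{k+1}$. Since $G_n=\{e\}$, downward induction on $k$ via Proposition~\ref{p:3exp}, now applied within its stated hypotheses, gives that $G_0=Z_n(X)$ has precompact, hence (being a closed subgroup of the complete group $X$) compact, exponent. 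For what it is worth, the paper's own proof of Corollary~\ref{c:eTS=>Zn} applies Proposition~\ref{p:3exp} directly to the pair $Z_n(X)\subset Z_{n+1}(X)$ without commenting on centrality, so your diagnosis of the obstacle is sharper than the source; but the repair has to be the one above, not the commutator-collection sketch.
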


The three items of Corollary~\ref{c:Cclosed-Zn} imply the following three characterizations. The first of them characterizes nilpotent complete group of compact exponent and is proved in Theorem~\ref{t:AbelC}.

\begin{theorem}\label{t:eTS-nilpotent} For a nilpotent topological group $X$ the following conditions are equivalent:
\begin{enumerate}
\item $X$ is complete and has compact exponent;
\item $X$ is $\eTS$-closed;
\item $X$ is $\epTS$-closed.
\end{enumerate}
\end{theorem}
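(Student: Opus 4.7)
The implication $(3)\Ra(2)$ is immediate from the inclusion $\TS\subset\pTS$: every isomorphic topological embedding of $X$ into an object of $\TS$ is in particular an embedding into an object of $\pTS$, so $\epTS$-closedness is formally stronger than $\eTS$-closedness. For $(2)\Ra(1)$, observe that $\eTS$-closedness implies $\eTG$-closedness since $\TG\subset\TS$, whence $X$ is complete by Theorem~\ref{t:Raikov}. As $X$ is nilpotent, we have $X=Z_c(X)$ for some $c\in\w$, and Corollary~\ref{c:Cclosed-Zn}(1) applied to this $c$ yields that $X$ has compact exponent.

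The main content lies in $(1)\Ra(3)$, which I plan to prove by induction on the nilpotency class $c$ of $X$. The base case $c=1$ is the Abelian case, furnished by the implication $(4)\Ra(3)$ of Theorem~\ref{t:BRintro}. For the inductive step $c\ge 2$, the center $Z:=Z(X)$ is a closed Abelian subgroup of $X$; both completeness and compact exponent are inherited by closed subgroups, so $Z$ is $\epTS$-closed by Theorem~\ref{t:BRintro}. The Hausdorff quotient $X/Z$ is nilpotent of class $c-1$, complete (a quotient of a Raikov-complete topological group by a closed normal subgroup is Raikov-complete) and of compact exponent (since $n(X/Z)$ is the continuous image of the relatively compact set $nX$), so the inductive hypothesis yields that $X/Z$ is $\epTS$-closed.

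Given an isomorphic topological embedding $X\subseteq Y$ into a Hausdorff powertopological semigroup $Y$, set $H:=\overline{X}^Y$; as the closure of a subsemigroup in a semitopological semigroup, $H$ is itself powertopological. Since $Z$ is $\epTS$-closed it is closed in $Y$, and the centrality of $Z$ in $X$ together with separate continuity of multiplication in $Y$ propagates to centrality of $Z$ in $H$, i.e.\ $zh=hz$ for all $z\in Z$ and $h\in H$. This lets me introduce on $H$ the congruence $\sim$ whose classes are the translates $Zh=hZ$; the resulting quotient semigroup should be a Hausdorff powertopological semigroup containing $X/Z$ densely. The $\epTS$-closedness of $X/Z$ then forces the quotient of $H$ by $\sim$ to equal $X/Z$, and a fibrewise application of the $\epTS$-closedness of $Z$ inside $H$ gives $H=X$ as required.

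The hardest step will be the verification that the quotient of $H$ by $\sim$ is indeed Hausdorff, powertopological, and admits the canonical map from $X/Z$ as a topological embedding with dense image — this is precisely where the centrality of $Z$ in $H$ (and not merely in $X$) is indispensable, as it guarantees that the congruence classes $Zh$ are well-defined closed sets on which the multiplication of $H$ descends. If a direct quotient construction turns out to be awkward, an alternative route is to prove an auxiliary extension lemma, analogous in spirit to Theorem~\ref{t1.1}, characterising $\epTS$-closedness by a non-ideal condition on $\overline{X}\setminus X$ in powertopological supersemigroups, and then to verify that condition using the nilpotent structure of $X$ to reduce to the central and quotient layers already handled.
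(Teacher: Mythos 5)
Your treatment of $(3)\Ra(2)$ and $(2)\Ra(1)$ is correct and coincides with the paper's: completeness comes from Theorem~\ref{t:Raikov} (or Theorem~\ref{t:Weil}) and compact exponent from Corollary~\ref{c:eTS=>Zn} applied to $Z_c(X)=X$. The problem is the main implication $(1)\Ra(3)$, where your induction on nilpotency class is both circular and has steps that would fail. First, the base case invokes Theorem~\ref{t:BRintro}$(4)\Ra(3)$, but the paper explicitly derives Theorem~\ref{t:BRintro} from the very theorem you are proving (Theorem~\ref{t:AbelC}), so you cannot use it as a black box here. Second, the inductive step asserts that $X/Z$ is complete because ``a quotient of a Raikov-complete topological group by a closed normal subgroup is Raikov-complete''; this is not a valid general principle (it is precisely why the paper introduces \emph{total completeness} as a strictly stronger notion than completeness), so the hypothesis of the induction is not verified for $X/Z$. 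Third, and most seriously, the quotient-semigroup construction at the heart of your argument is left entirely unverified at the points where it is most likely to break: $Z$ is closed but \emph{not} compact, right translations in a semitopological semigroup are continuous but not closed maps, so the congruence classes $Zh$ need not be closed in $H$; the quotient map $H\to H/{\sim}$ of a semigroup is not open, so Hausdorffness of $H/{\sim}$ and separate continuity of the induced multiplication do not follow from closedness of the congruence; and the canonical map $X/Z\to H/{\sim}$ carries the (open) group-quotient topology on the source against a possibly strictly coarser subspace topology on the target, so it need not be an embedding. None of these can be waved away, and your fallback (an ``extension lemma'' in the spirit of Theorem~\ref{t1.1}) is not developed.

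The paper's proof of $(1)\Ra(3)$ is Theorem~\ref{t:exp}, and it is both shorter and more general: it does not use nilpotency at all. If $X$ is complete and $nX$ has compact closure $K$, then for any Hausdorff powertopological supersemigroup $Y\supset X$ the set $\{y\in Y:y^n\in K\}$ is closed (by continuity of the power map $y\mapsto y^n$ and closedness of the compact set $K$ in the Hausdorff space $Y$) and contains $X$, hence contains $\overline{X}$; so any $y\in\overline{X}\setminus X$ would satisfy $y^n\in K\subset X$, contradicting Theorem~\ref{t:2.5n}(2). Nilpotency enters the paper's argument only in the direction $(2)\Ra(1)$, exactly where you also use it. I recommend replacing your entire inductive scheme by this one-paragraph power-map argument.
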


In Example~\ref{e:Iso} we shall observe that the discrete topological group $\Iso(\IZ)$ of isometries of $\IZ$ is $\eTS$-closed but does not have compact exponent. This shows that Theorem~\ref{t:eTS-nilpotent} does not generalize to solvable groups.

\begin{theorem}[Dikranjan, Uspenskij]\label{t:DU-nilpotent} For a nilpotent topological group $X$ the following conditions are equivalent:
\begin{enumerate}
\item $X$ is compact;
\item $X$ is $\hTG$-closed.
\end{enumerate}
\end{theorem}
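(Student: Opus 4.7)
The implication $(1)\Ra(2)$ is immediate: if $X$ is compact and $f\colon X\to Y$ is any continuous homomorphism into a Hausdorff topological group $Y$, then $f(X)$ is compact and therefore closed in $Y$.

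For the non-trivial direction $(2)\Ra(1)$, the plan is to run an induction on the nilpotency class $n$ of $X$, using as the engine the statement (item (4) of the preceding unified theorem) that if $X$ is $\hTG$-closed, then $Z(X/N)$ is compact for every closed normal subgroup $N\subset X$. The base case $X=Z_0(X)=\{e\}$ is trivial, so assume the implication is known for all nilpotent classes $\le n$ and let $X$ be an $\hTG$-closed topological group with $X=Z_{n+1}(X)$.

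Applying the cited fact with $N=\{e\}$ shows that the center $Z:=Z(X)$ is compact; being closed and normal, it makes $X/Z$ a Hausdorff topological group. I would first verify that $\hTG$-closedness descends to this quotient: for any continuous homomorphism $f\colon X/Z\to Y$ into a Hausdorff topological group, the composition $f\circ q$ with the quotient homomorphism $q\colon X\to X/Z$ is a continuous homomorphism, and hence $f(X/Z)=(f\circ q)(X)$ is closed in $Y$ by the $\hTG$-closedness of $X$. Next, using the standard identity $Z_i(X/Z)=Z_{i+1}(X)/Z$ (valid for all $i\in\w$), one obtains $Z_n(X/Z)=Z_{n+1}(X)/Z=X/Z$, so $X/Z$ is nilpotent of class at most $n$. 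The inductive hypothesis then yields the compactness of $X/Z$, and since $Z$ is compact as well, the group $X$ is compact, as an extension of the compact group $X/Z$ by the compact group $Z$.

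The real obstacle is not the inductive scheme, which is essentially mechanical, but rather the prerequisite structural theorem that $Z(X/N)$ is compact whenever $X$ is $\hTG$-closed and $N$ is closed normal in $X$. This input carries the genuinely non-commutative content of the argument and would need to be established earlier in the paper; it can be viewed as a strengthening, targeted at quotients, of the Abelian characterization Theorem~\ref{t:banakh}.
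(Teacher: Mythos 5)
Your proof is correct and takes essentially the same route as the paper: the paper's Proposition~\ref{p:hTG=>Z_n} shows by induction on $n$ that each $Z_n(X)$ is compact, using exactly your key input (Corollary~\ref{c:hTS-center}, the compactness of $Z(X/N)$ for every closed normal $N\subset X$, established earlier via Theorem~\ref{t:iTG=>center} and Proposition~\ref{p:h=i+q}) together with the identity $Z(X/Z_n(X))=Z_{n+1}(X)/Z_n(X)$ and the 3-space property of compactness. Your induction on the nilpotency class via the quotient $X/Z(X)$ is the same climb along the upper central series run in the opposite direction, with the extra (correctly verified) observation that $\hTG$-closedness passes to Hausdorff quotients.
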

For Abelian topological groups Theorem~\ref{t:DU-nilpotent} was independently proved by Zelenyuk and Protasov \cite[4.3]{ZP}.

A topological group $X$ is called {\em hypoabelian} if for each non-trivial closed subgroup $X$ the commutator $[X,X]$ is not dense in $X$. It is easy to see that each solvable topological group is hypoabelian.

\begin{theorem}[Dikranjan, Uspenskij]\label{t:DU-solvable} For a solvable (more generally, hypoabelian) topological group $X$ the following conditions are equivalent:
\begin{enumerate}
\item $X$ is compact;
\item $X$ is $\phTG$-closed;
\item any closed subgroup of $X$ is $\hTG$-closed.
\end{enumerate}
\end{theorem}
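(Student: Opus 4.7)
The implications $(1)\Rightarrow(2)$ and $(1)\Rightarrow(3)$ are routine: continuous images of compact sets into Hausdorff spaces are closed, so every compact topological group is $\cTG$-closed (hence $\phTG$-closed); and every closed subgroup of a compact group is compact, hence $\hTG$-closed. Thus only $(2)\Rightarrow(1)$ and $(3)\Rightarrow(1)$ require work. Both arguments proceed along the (possibly transfinite) topological derived series $X = X^{[0]} \supset X^{[1]} \supset \cdots$ defined by $X^{[\alpha+1]} := \overline{[X^{[\alpha]}, X^{[\alpha]}]}$ and $X^{[\lambda]} := \bigcap_{\alpha<\lambda} X^{[\alpha]}$ at limit ordinals $\lambda$; the hypoabelianness hypothesis guarantees that this strictly decreasing series reaches $\{e\}$ at some ordinal $\gamma$. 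A closed subgroup $H$ of a $\phTG$-closed group inherits that property (any continuous homomorphism defined on a closed subsemigroup $Z\subset H$ is also a continuous homomorphism on the closed subsemigroup $Z\subset X$); and in $(3)$ we are directly given that every closed subgroup of $X$ is $\hTG$-closed. So in either argument each $X^{[\alpha]}$ has the same closedness property as $X$.

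For $(2)\Rightarrow(1)$, Corollary~\ref{c:Cclosed-Zn}(3) applied to each $\phTG$-closed subgroup $X^{[\alpha]}$ gives that $X^{[\alpha]}/X^{[\alpha+n]}$ is compact for every $n\in\w$. The plan is to prove by transfinite induction on $\alpha\le\gamma$ that $X/X^{[\alpha]}$ is compact. At a successor $\beta+1$, the short exact sequence $1\to X^{[\beta]}/X^{[\beta+1]}\to X/X^{[\beta+1]}\to X/X^{[\beta]}\to 1$ has both outer terms compact (the right by the inductive hypothesis, the left by the previous sentence with $\alpha=\beta$ and $n=1$), and a compact-group extension of a compact group is compact. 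At a limit ordinal $\lambda$, the diagonal homomorphism $X\to\prod_{\alpha<\lambda} X/X^{[\alpha]}$ has kernel $X^{[\lambda]}$ and target a compact Hausdorff group (by Tychonoff); since $X$ is $\hTG$-closed (being $\phTG$-closed) and the quotient map $X\to X/X^{[\lambda]}$ is open, the induced continuous bijection from $X/X^{[\lambda]}$ onto the (closed, hence compact) image is a homeomorphism, so $X/X^{[\lambda]}$ is compact. Taking $\alpha=\gamma$ yields that $X$ is compact.

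For $(3)\Rightarrow(1)$, each $X^{[\alpha]}$ is $\hTG$-closed by hypothesis; applying item~(4) of the theorem preceding Corollary~\ref{c:Cclosed-Zn} to the Abelian (hence self-centralizing) quotient $X^{[\alpha]}/X^{[\alpha+1]}$ shows that this quotient is compact for every $\alpha$. The same transfinite-induction scheme as above (successors by short exact sequences, limits by realizing $X/X^{[\lambda]}$ as the closed image of $X$ in the compact product $\prod_{\alpha<\lambda}X/X^{[\alpha]}$, exploiting the $\hTG$-closedness of $X$) gives that $X/X^{[\alpha]}$ is compact for every $\alpha\le\gamma$, and in particular $X=X/X^{[\gamma]}$ is compact. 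The principal obstacle in both arguments is precisely this limit-ordinal step: the $\vec\C$-closedness of the ambient group $X$ is what prevents the diagonal image of $X$ from being only dense in --- rather than equal to --- the inverse limit of the compact quotients.
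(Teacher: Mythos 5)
The paper does not actually prove this theorem: it is quoted from Dikranjan--Uspenskij \cite[3.10]{DU}, and the only ingredients the paper supplies are Theorem~\ref{t:pC} (which gives $(2)\Leftrightarrow(3)$ directly, since $\pTG$-closedness is equivalent to every closed subgroup being $\hTG$-closed) and Corollary~\ref{c:hypoabelian}, which derives $(2)\Rightarrow(1)$ from the (also merely cited) hypocommutator Theorem~\ref{t:DU-hypocommutator}. So your attempt is essentially trying to reprove the hypocommutator theorem in the hypoabelian case, and it must be judged on its own merits. The easy implications, the successor steps, and the purely solvable case are fine: for a solvable group the derived series is finite, so $(2)\Rightarrow(1)$ is exactly Corollary~\ref{c:Cclosed-Zn}(3), and $(3)\Rightarrow(1)$ follows by finitely many applications of the three-space property to the compact Abelizations $X^{[n]}/X^{[n+1]}$.

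The limit-ordinal step, however, contains a genuine gap --- and it is precisely the hard point of the Dikranjan--Uspenskij theorem. From the $\hTG$-closedness of $X$ you correctly get that the diagonal image $d(X)\subset\prod_{\alpha<\lambda}X/X^{[\alpha]}$ is closed, hence compact; but this only yields a \emph{continuous bijective homomorphism} $\bar d:X/X^{[\lambda]}\to d(X)$ onto a compact group, which does not make $X/X^{[\lambda]}$ compact. The openness of the quotient map $q:X\to X/X^{[\lambda]}$ is irrelevant here: $\bar d$ is open iff $d=\bar d\circ q$ is open onto its image, and nothing forces that. (The identity map from a compact group retopologized discretely onto that compact group is a continuous bijective homomorphism onto a compact group with non-compact domain, so the inference is false in general; and ``$\iTG$-closed MAP implies compact'' is recorded as open Problem~\ref{prob:i-MAP} in the paper, so the closedness hypothesis alone cannot be expected to rescue the step without further work.) The step can be repaired using the paper's own machinery: $X/X^{[\lambda]}$ is $\phTG$-closed (the property passes to closed subgroups and to Hausdorff quotients) and is MAP, since $\bar d$ is a continuous injective homomorphism into a compact group; the first theorem of Section~\ref{s:Bohr} then gives its compactness (via Lemmas~\ref{l:separ} and \ref{l:minimal}, i.e., a separability-plus-minimality argument, not via openness of quotient maps). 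As written, though, your justification of the limit step is invalid.
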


The last two theorems were proved by Dikranjan and Uspenskij in \cite[3.9 and 3.10]{DU} (in terms of the $h$-completeness, which is an alternative name for the $\hTG$-closedness).

The Weyl-Heisenberg group $H(w_0)$ (which is a non-compact $\iTG$-closed nilpotent Lie group) shows that $\hTG$-closedness in Theorem~\ref{t:DU-nilpotent} cannot be weakened to the $\iTG$-closedness (see Example~\ref{ex:HW} for more details).

On the other hand, the solvable Lie group $\Iso_+(\IC)$ of orientation preserving isometries of the complex plane is $\hTS$-closed and not compact, which shows that the $\phTG$-closedness in Theorem~\ref{t:DU-solvable}(2) cannot be replaced by the $\hTG$-closedness of $X$. This example (analyzed in details in Section~\ref{s:Iso}) answers Question 3.13 in \cite{DU} and Question 36 in \cite{DSh}.
\smallskip

Nonetheless, the $\phTG$-closedness of the solvable group $X$ in Theorem~\ref{t:DU-solvable}(2) can be replaced by the $\hTG$-closedness of $X$ under the condition that the group $X$ is balanced and MAP-solvable.

A topological group $X$ is called {\em balanced} if for any neighborhood $U\subset X$ of the unit there exists a neighborhood $V\subset X$ of the unit such that $xV\subset Ux$ for all $x\in X$.

A topological group $X$ is called {\em maximally almost periodic} (briefly {\em MAP}) if it admits a continuous injective homomorphism $h:X\to K$ into a compact topological group $K$. By Theorem~\ref{t:MAP->i=h}, for any productive class $\C\supset\TG$ of topologized semigroups, {\em the $\iC$-closedness and $\hC$-closedness are equivalent for MAP topological groups.}

A topological group $X$ is defined to be {\em MAP-solvable} if there exists an icreasing sequence $\{e\}=X_0\subset X_1\subset\dots \subset X_m=X$ of closed normal subgroups in $X$ such that for every $n<m$ the quotient group $X_{n+1}/X_n$ is Abelian and MAP. Since locally compact Abelian groups are MAP, each solvable locally compact topological group is MAP-solvable.

The following theorem (proven in Section~\ref{s:MAP-solv}) nicely complements Theorem~\ref{t:DU-solvable} of Dikranjan and Uspenskij. Example~\ref{ex:solv} of non-compact solvable $\hTS$-closed Lie group $\Iso_+(\IC)$ shows that the ``balanced'' requirement cannot be removed from the conditions (2), (3).

\begin{theorem}\label{t:hypoabelian} For a solvable topological group $X$ the following conditions are equivalent:
\begin{enumerate}
\item $X$ is compact;
\item $X$ is balanced, locally compact, and $\hTG$-closed;
\item $X$ is balanced, MAP-solvable and $\hTG$-closed.
\end{enumerate}
\end{theorem}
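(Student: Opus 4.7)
The plan is to prove the cyclic chain $(1)\Rightarrow(2)\Rightarrow(3)\Rightarrow(1)$. The implication $(1)\Rightarrow(2)$ is immediate: a compact Hausdorff topological group carries a two-sided invariant uniformity (so is balanced), is trivially locally compact, and is $\hTG$-closed because continuous images of compact spaces are compact and thus closed in any Hausdorff codomain. For $(2)\Rightarrow(3)$, given a locally compact solvable group $X$, its derived series $X=X^{[0]}\supset X^{[1]}\supset\cdots\supset X^{[n]}=\{e\}$ is a chain of closed characteristic (hence normal) subgroups of $X$, each of which is locally compact as a closed subgroup of $X$; the successive quotients $X^{[i]}/X^{[i+1]}$ are locally compact Abelian groups, hence MAP by Pontryagin duality, so the derived series itself witnesses the MAP-solvability of $X$.

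The substantive content is $(3)\Rightarrow(1)$, which I would establish by induction on the length $m$ of a MAP-solvable chain $\{e\}=X_0\subset X_1\subset\cdots\subset X_m=X$ of closed normal subgroups of $X$ with Abelian MAP successive quotients. In the base case $m=1$ the group $X$ itself is Abelian and MAP, so Theorem~\ref{t:MAP->i=h} promotes $\hTG$-closedness to $\iTG$-closedness, and then Theorem~\ref{t:banakh} yields compactness. For the inductive step, observe that $X/X_{m-1}$ is $\hTG$-closed: any continuous homomorphism $X/X_{m-1}\to Z$ composed with the quotient map $X\to X/X_{m-1}$ gives a continuous homomorphism out of $X$ with the same image, which is closed by the $\hTG$-closedness of $X$. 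Since $X/X_{m-1}$ is also Abelian and MAP, the base case renders it compact. As $X_{m-1}$ is balanced (a closed subgroup of a balanced group) and inherits a MAP-solvable chain of length $m-1$, the inductive hypothesis will make $X_{m-1}$ compact, and hence $X$ an extension of a compact group by a compact group and thus compact, \emph{provided} we can verify that $X_{m-1}$ is itself $\hTG$-closed.

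The crux of the proof, and what I expect to be the main obstacle, is therefore the following key lemma: if $X$ is balanced and $\hTG$-closed and $N\trianglelefteq X$ is a closed normal subgroup with $X/N$ compact, then $N$ is $\hTG$-closed. This is precisely the gap between $\hTG$-closedness and the stronger hereditary $\phTG$-closedness that drives Theorem~\ref{t:DU-solvable}, so the balanced condition combined with the compact-quotient condition must be what bridges it. My strategy to prove the lemma is, given a continuous homomorphism $f:N\to Y$ into a Hausdorff topological group $Y$, to construct a continuous homomorphism $\tilde f:X\to\tilde Y$ into a suitable enlargement $\tilde Y$ (such as a permutation wreath product of the form $Y^{X/N}\rtimes(X/N)$ or a related bundle-type construction) in such a way that the closedness of $\tilde f(X)$ in $\tilde Y$ forces $f(N)$ to be closed in $Y$. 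The compactness of $X/N$ provides coset representatives within a compact set, while the balanced hypothesis makes the conjugation action of $X$ on $N$ equicontinuous at the identity, so that the family of twisted homomorphisms $\{n\mapsto f(xnx^{-1})\}_{x\in X}$ is equicontinuous; these are the two ingredients needed for $\tilde f$ to be well defined, continuous, and into a Hausdorff target. The $\hTG$-closedness of $X$ then delivers the closedness of $\tilde f(X)$, from which $f(N)$ can be recovered as an intersection with a closed copy of $Y$. Example~\ref{ex:solv} of $\Iso_+(\IC)$, which is locally compact, solvable, $\hTG$-closed, and non-compact but \emph{not} balanced, confirms that the balanced hypothesis in (2) and (3) is genuinely necessary and must play an essential role in the key lemma.
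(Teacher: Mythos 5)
Your reduction of $(3)\Rightarrow(1)$ to the ``key lemma'' is where the proposal breaks down: that lemma is precisely the hereditary closedness property that is \emph{not} available for $\hTG$-closed groups, and you do not prove it --- you only sketch a strategy. Passing from the $\hTG$-closedness of $X$ to the $\hTG$-closedness of a closed normal subgroup $N$ is exactly the gap between $\hTG$-closedness and $\phTG$-closedness, and Example~\ref{ex:solv} shows this gap is genuine even for solvable Lie groups, so the balanced hypothesis would have to carry the entire weight. Your proposed induced homomorphism $\tilde f:X\to Y^{X/N}\rtimes(X/N)$ faces serious unresolved issues once $X/N$ is an infinite compact group: which group topology on $Y^{X/N}$ makes $\tilde Y$ Hausdorff and $\tilde f$ continuous, how $\tilde f$ is even well defined without a (generally nonexistent) continuous section of $X\to X/N$, and how the closedness of $\tilde f(X)$ in $\tilde Y$ would be transferred back to the closedness of $f(N)$ in $Y$. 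None of this is carried out, so the inductive step is unsupported as written.

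The paper avoids your lemma entirely by running the induction in the opposite direction, on the quotients rather than on the subgroups: it proves that $X/X_n$ is compact for each $n$, exploiting the fact that quotients (unlike closed subgroups) of $\hTG$-closed groups are automatically $\hTG$-closed. In the inductive step, with $G=X/X_{n+1}$ and $A=X_n/X_{n+1}$, the already-established compactness of $G/A\cong X/X_n$ forces each conjugacy class $C_G(a)$, $a\in A$, to be compact; a hypothetical witness of non-compactness of $A$ (a countable subgroup $Z\subset A$ with non-compact closure, supplied by Lemma~\ref{l:separ}) then generates a closed $\w$-narrow normal subgroup of $G$. The balanced and MAP hypotheses enter through a different door than you anticipated: one checks that $G$ itself is balanced and MAP (using the Bohr topology of the Abelian MAP group $A$ and the 3-space property of precompactness) and then invokes Corollary~\ref{c:Bohr2} --- which rests on Proposition~\ref{p:bal} and the $K$-analytic open mapping theorem --- to conclude that every closed $\w$-narrow normal subgroup of $G$ is compact, a contradiction. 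Your implications $(1)\Rightarrow(2)$, $(2)\Rightarrow(3)$ and the Abelian base case are fine, but the heart of the argument is missing.
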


It is interesting that  the proof of this theorem exploits a good piece of Descriptive Set Theory (that dealing with $K$-analytic spaces). Also methods of Descriptive Set Theory are used for establishing the interplay between $\iTG$-closed and minimal topological groups.

We recall that a topological group $X$ is {\em minimal} if each continuous bijective homomorphism $h:X\to Y$ to a topological group $Y$ is open (equivalently, is a topological isomorphism). By the fundamental theorem of Prodanov and Stoyanov \cite{Prod-Stoj}, each minimal topological Abelian group is precompact, i.e., has compact Raikov completion. Groups that are minimal in the discrete topology are called {\em non-topologizable}.
For more information on minimal topological groups we refer the reader to the monographs \cite{DPS}, \cite{Luk} and the surveys \cite{D98} and  \cite{DM}.

The definition of minimality implies that {\em  a minimal topological group is $\iTG$-closed if and only if it is  $\eTG$-closed if and only if it is complete}. In particular, each minimal complete topological group is $\iTG$-closed. By Theorem~\ref{t:banakh}, the converse implication holds for Abelian topological groups. It also holds for $\w$-narrow topological groups of countable pseudocharacter.

\begin{theorem}\label{t:analytic}  An $\w$-narrow topological group $X$ of countable pseudocharacter is $\iTG$-closed if and only if $X$ is complete and minimal.
\end{theorem}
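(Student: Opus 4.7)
The plan is to treat the two directions separately. The easy implication $(\Leftarrow)$: if $X$ is complete and minimal, then for any injective continuous homomorphism $f\colon X\to Y$ into a Hausdorff topological group $Y$, the corestricted map $X\to f(X)$ is a continuous bijective homomorphism onto $f(X)$ with its subspace topology; minimality forces it to be a topological isomorphism, whence $f(X)$ is Raikov complete and hence closed in $Y$.

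For the converse, assume $X$ is an $\iTG$-closed $\w$-narrow topological group of countable pseudocharacter. Since every isomorphic topological embedding is an injective continuous homomorphism, $X$ is also $\eTG$-closed, so Raikov's Theorem~\ref{t:Raikov} supplies the completeness of $X$. To establish the minimality, write $\tau$ for the topology of $X$, let $\tau'\subset\tau$ be any coarser Hausdorff group topology on $X$, and denote by $\hat X_{\tau'}$ the Raikov completion of $(X,\tau')$. Viewing the identity as an injective continuous homomorphism $(X,\tau)\to\hat X_{\tau'}$, the $\iTG$-closedness of $(X,\tau)$ forces $X$ to be closed in $\hat X_{\tau'}$, so $(X,\tau')$ is itself Raikov complete. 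It remains to show that the continuous bijective homomorphism $\id\colon(X,\tau)\to(X,\tau')$ is open; this would give $\tau=\tau'$ and prove that $X$ is minimal.

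This last step is where Descriptive Set Theory enters. The source $(X,\tau)$ is $\w$-narrow, Raikov complete, and of countable pseudocharacter; combining Guran's embedding of $X$ into a product of second countable groups with a countable pseudobase at the unit and the two-sided completeness, one builds a compact resolution $\{K_\alpha\}_{\alpha\in\w^\w}$ witnessing that $(X,\tau)$ is $K$-analytic. The target $(X,\tau')$ is Raikov complete and $\w$-narrow, hence Baire. An open mapping theorem of the form ``every continuous surjective homomorphism from a $K$-analytic topological group onto a Baire topological group is open'' then guarantees that $\id\colon(X,\tau)\to(X,\tau')$ is open, as required.

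The main obstacle is the third paragraph: verifying the $K$-analyticity of $(X,\tau)$ from the three hypotheses (Raikov completeness, $\w$-narrowness, countable pseudocharacter) and arranging for $(X,\tau')$ to satisfy the Baire property, so that a sufficiently strong open mapping theorem becomes applicable. The other steps---deriving completeness from $\iTG$-closedness via Raikov and pulling back to the Raikov completion of a coarser topology---are routine formal reductions.
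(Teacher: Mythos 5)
Your easy direction and the first reduction are fine: complete $+$ minimal $\Rightarrow$ $\iTG$-closed is exactly Proposition~\ref{p:min}, and applying $\iTG$-closedness to the identity map into the Raikov completion of a coarser Hausdorff group topology $\tau'$ correctly shows that $(X,\tau')$ is complete. The problem is the third paragraph, which carries the whole weight of the argument and contains two genuine gaps. First, the assertion that $(X,\tau')$ is Baire because it is ``Raikov complete and $\w$-narrow'' is false: a Raikov-complete $\w$-narrow group need not be Baire. For instance, the free Abelian topological group $A([0,1])$ is complete, $\sigma$-compact (hence Lindel\"of and $\w$-narrow) and of countable pseudocharacter, yet it is the union of countably many compact nowhere dense sets $A_n([0,1])$ and so is meager in itself. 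Second, the claim that completeness, $\w$-narrowness and countable pseudocharacter alone yield a compact resolution witnessing $K$-analyticity of $(X,\tau)$ is unsubstantiated; note that nowhere in this step do you use the $\iTG$-closedness of $X$, whereas some use of it is indispensable to pin down the topology of $X$ (Guran's theorem plus countable pseudocharacter only give an injective continuous homomorphism onto a subgroup of a countable product of Polish groups, with no control on whether that image is closed or on whether the map is open). Even granting $K$-analyticity of the source, the failure of Baireness of the target leaves the $K$-analytic open mapping principle inapplicable.

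The paper's proof (Theorem~\ref{t:pseudo}) takes a different and more elementary route that you should compare with. It uses the closedness hypothesis twice on countable subproducts of the Guran embedding $X\subset\prod_{\alpha\in A}P_\alpha$: once to see that $p_B(X)$ is a closed, hence Polish, subgroup of a countable subproduct with $p_B$ injective, and once more on an enlarged countable index set $C=B\cup F$ so that the classical Banach open mapping theorem between the Polish groups $p_C(X)$ and $p_B(X)$ forces $p_B$ to be open. This shows $X$ itself is Polish. Minimality then follows because any bijective continuous homomorphic image $Z$ of $X$ again satisfies all the hypotheses ($\iPG$-closed, $\w$-narrow, countable pseudocharacter), so $Z$ is Polish by the same claim, and Banach's theorem applies to $X\to Z$. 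The key idea you are missing is precisely this bootstrapping: the target of the comparison map inherits the hypotheses and is therefore itself Polish (in particular Baire), which is what makes an open mapping theorem usable; trying to certify Baireness of the target from completeness alone cannot work.
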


A subset $B\subset X$ of a topological group $X$ is called {\em $\w$-narrow} if for any neighborhood $U\subset X$ of the unit there exists a countable set $C\subset X$ such that $B\subset CU\cap UC$. $\w$-Narrow topological groups were introduced by Guran \cite{Guran} (as $\aleph_0$-bounded groups) and play important role in Theory of Topological Groups \cite{AT}.
Theorem~\ref{t:analytic} will be proved in Section~\ref{s:iC} (see Theorem~\ref{t:pseudo}). This theorem suggests the following open problem.

\begin{problem}\label{prob:i-min} Is each $\iTG$-closed topological group minimal?
\end{problem}

Observe that a complete MAP topological group is minimal if and only if it is compact. So, for MAP topological groups Problem~\ref{prob:i-min} is equivalent to another intriguing open problem.

\begin{problem}\label{prob:i-MAP} Is each $\iTG$-closed MAP topological group compact?
\end{problem}

For $\w$-narrow topological groups an affirmative answer to this problem follows from Theorem~\ref{t:MAP->i=h} and the characterization of $\hTG$-closedness in term of total completeness and total minimality, see Theorem~\ref{t:hTG=tc+tm}.

Following \cite{Luk}, we define a topological group $G$ to be {\em totally complete} (resp. {\em totally minimal\/}) if for any closed normal subgroup $H\subset G$ the quotient topological group $G/H$ is complete (resp. minimal). Totally minimal topological groups were introduced by Dikranjan and Prodanov in \cite{DProd}. By \cite[3.45]{Luk}, each totally complete totally minimal topological group is absolutely $\TG$-closed.

\begin{theorem}\label{t:hTG=tc+tm} An $\w$-narrow topological group is $\hTG$-closed if and only if it is totally complete and totally minimal.
\end{theorem}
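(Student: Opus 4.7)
The \emph{if} direction is exactly the statement cited from \cite[3.45]{Luk} just above: a totally complete, totally minimal topological group is absolutely $\TG$-closed, hence $\hTG$-closed. No $\omega$-narrowness is required here.

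For the converse, assume $X$ is $\omega$-narrow and $\hTG$-closed. Both hypotheses descend to Hausdorff quotients (a continuous image of an $\omega$-narrow group is $\omega$-narrow, and any continuous homomorphism $X/N\to Z$ precomposed with the quotient map $X\to X/N$ becomes a continuous homomorphism of $X$ with the same range), so it suffices to show that $X$ itself is complete and minimal. Completeness is immediate: $\hTG$-closed implies $\eTG$-closed, which entails Raikov-completeness by Theorem~\ref{t:Raikov}. For minimality, apply Guran's theorem to obtain a topological embedding $X\hookrightarrow\prod_{i\in I}X_i$ with each $X_i$ second-countable, and let $\pi_i\colon X\to X_i$ be the coordinate homomorphisms. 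By $\hTG$-closedness the image $\pi_i(X)$ is closed in $X_i$ and hence Polish; and the quotient $X/\ker\pi_i$ is $\omega$-narrow, $\hTG$-closed, and of countable pseudocharacter (pull back the $G_\delta$-singleton $\{e\}\subset\pi_i(X)$ through the continuous bijection $X/\ker\pi_i\to\pi_i(X)$). Theorem~\ref{t:analytic} then forces $X/\ker\pi_i$ to be complete and minimal, so the continuous bijection $X/\ker\pi_i\to\pi_i(X)$ is a topological isomorphism and the topology $\tau_X$ coincides with the initial topology on $X$ induced by $\{\pi_i\}_{i\in I}$.

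Let $\tau'\le\tau_X$ be any Hausdorff group topology on $X$; I aim to force $\tau'=\tau_X$. For each $i\in I$ the map $(\pi_i,\mathrm{id})\colon(X,\tau_X)\to\pi_i(X)\times(X,\tau')$ is a continuous homomorphism, so by $\hTG$-closedness its image is closed in the Hausdorff topological group $\pi_i(X)\times(X,\tau')$; swapping coordinates, the graph of $\pi_i$ viewed as a map $(X,\tau')\to\pi_i(X)$ is closed in $(X,\tau')\times\pi_i(X)$. Moreover $(X,\tau')$ is Raikov-complete: its Raikov completion $\widetilde{(X,\tau')}$ receives the composite $(X,\tau_X)\to(X,\tau')\hookrightarrow\widetilde{(X,\tau')}$ continuously, and by $\hTG$-closedness of $X$ the image $(X,\tau')$ must be closed; being also dense, it equals $\widetilde{(X,\tau')}$. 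Thus $\pi_i\colon(X,\tau')\to\pi_i(X)$ is a homomorphism with closed graph from an $\omega$-narrow Raikov-complete topological group into a Polish topological group; a closed-graph argument for such targets — the descriptive-set-theoretic ingredient highlighted in the introduction — forces $\pi_i$ to be continuous on $(X,\tau')$. Hence $\tau'$ is at least as fine as the initial topology induced by $\{\pi_i\}$, which is $\tau_X$, yielding $\tau'=\tau_X$ as desired.

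The main obstacle is this closed-graph step for $\pi_i\colon(X,\tau')\to\pi_i(X)$: since $(X,\tau')$ need not be Polish or $K$-analytic, the classical closed-graph theorem does not apply directly. The remedy is a version tailored to $\omega$-narrow Raikov-complete topological groups — Guran's theorem applied to $(X,\tau')$ together with $\hTG$-closedness embeds $(X,\tau')$ as a closed subgroup of a product of Polish topological groups, so the closed graph of $\pi_i$ can be analysed one countable sub-product at a time, reducing continuity to the Polish setting already covered by Theorem~\ref{t:analytic}.
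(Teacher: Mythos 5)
Your reduction to showing that every $\w$-narrow $\hTG$-closed group is complete and minimal is correct, and the first half of your minimality argument (Guran's embedding, closedness of $\pi_i(X)$, and Theorem~\ref{t:analytic} applied to the countable-pseudocharacter quotient $X/\ker\pi_i$) matches the paper. The genuine gap is the step you yourself flag: the claimed closed-graph theorem that a homomorphism with closed graph from an $\w$-narrow Raikov-complete group $(X,\tau')$ into a Polish group is continuous. No such theorem is available in the paper: Theorem~\ref{t:openK} would require the graph $\Gamma$ to be $K$-analytic and the target $(X,\tau')$ of the projection $\pr_1\colon\Gamma\to(X,\tau')$ to be Baire, and neither holds here ($(X,\tau')$ is merely $\w$-narrow and complete, which does not imply Baire, and $\Gamma$ is a closed subgroup of $(X,\tau')\times\pi_i(X)$, not of a Polish or $\sigma$-compact group). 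Your proposed remedy does not close the gap: since $\pi_i$ is exactly what is not yet known to be $\tau'$-continuous, it does not factor through any countable subproduct of a Guran embedding of $(X,\tau')$, and closedness of $\Gamma$ in $(X,\tau')\times\pi_i(X)$ does not descend to ``partial graphs'' over countable subproducts, because projections do not preserve closedness. So the argument stalls precisely where you say it does.

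The paper's Lemma~\ref{l:minimal} avoids any closed-graph theorem outside the Polish setting. Keep your closed set $\Gamma\subset(X,\tau')\times\pi_i(X)$, but use it only to deduce that $K:=\ker\pi_i$ is $\tau'$-closed: indeed $K\times\{1\}=\Gamma\cap\bigl((X,\tau')\times\{1\}\bigr)$ is closed in $(X,\tau')\times\{1\}$. Then $(X,\tau')/K$ is a Hausdorff topological group and the identity map induces a continuous bijective homomorphism $X/K\to(X,\tau')/K$. Since $X/K\cong\pi_i(X)$ is Polish and minimal by Corollary~\ref{c:ana}, this bijection is a topological isomorphism, and composing the quotient map $(X,\tau')\to(X,\tau')/K$ with its inverse and with the isomorphism $X/K\to\pi_i(X)$ exhibits $\pi_i$ as $\tau'$-continuous. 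This quotient step is the missing ingredient; once it replaces your closed-graph appeal, your outline becomes the paper's proof.
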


Theorem~\ref{t:hTG=tc+tm} will be proved in Section~\ref{s:hC} (see Theorem~\ref{t:absw}). This theorem complements a  characterization of $\hTG$-closed topological groups in terms of special filters, due to Dikranjan and Uspenskij \cite{DU} (see also \cite[4.24]{Luk}). Using their characterization of $\hTG$-closedness, Dikranjan and Uspenskij \cite[2.16]{DU} proved another characterization.

\begin{theorem}[Dikranjan, Uspenskij]\label{t:cTG} A balanced topological group is $\hTG$-closed if and only if it is $\cTG$-closed.
\end{theorem}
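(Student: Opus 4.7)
The forward implication $\cTG$-closed $\Rightarrow$ $\hTG$-closed is free from the diagram: any continuous homomorphism $f : X \to Y$ has graph $\Gamma_f = \{(x,f(x)) : x \in X\} \subset X\times Y$ which is a closed subsemigroup of $X\times Y$ (as $Y$ is Hausdorff), and $\Gamma_f(X) = f(X)$, so the $\cTG$-closedness of $X$ forces $f(X)$ to be closed. Hence the content of the theorem lies in the reverse implication, and the assumption of balancedness is used only there.

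So assume $X$ is balanced and $\hTG$-closed. Let $Y$ be a Hausdorff topological group, let $\Phi \subset X\times Y$ be a closed subsemigroup, and suppose for contradiction that there is $y_0 \in \overline{\Phi(X)}\setminus\Phi(X)$. Pick a net $(x_\alpha, y_\alpha)_{\alpha\in A}\subset \Phi$ with $y_\alpha \to y_0$. Closedness of $\Phi$ and $y_0 \notin \Phi(X)$ imply that $(x_\alpha)$ has no subnet convergent in $X$; so the obstruction to $(x_0,y_0)\in\Phi$ is a failure of some completeness-type property in the first coordinate.

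The plan is to turn this obstruction into a direct violation of $\hTG$-closedness via a filter argument in the style of Dikranjan--Uspenskij. For every neighborhood $V$ of $y_0$ in $Y$, the preimage $\Phi^{-1}(V) = \{x\in X : \exists y\in V,\ (x,y)\in\Phi\}$ is nonempty; the family $\{\Phi^{-1}(V) : V\in\mathcal N(y_0)\}$ thus generates a filter $\mathcal F$ on $X$. Using that $\Phi$ is a subsemigroup, for any neighborhoods $V,V'$ of $y_0$ one has $\Phi^{-1}(V)\cdot \Phi^{-1}(V') \subset \Phi^{-1}(V\cdot V')$, so products of $\mathcal F$-large sets land in preimages of neighborhoods of $y_0^2$. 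Combined with the balancedness of $X$ (which identifies the left, right, and two-sided uniformities of $X$ and supplies conjugation-invariant neighborhood bases of the unit), this lets one organize $\mathcal F$ into a Cauchy filter in the two-sided uniformity on $X$ that witnesses the failure of $\hTG$-closedness in the sense of the Dikranjan--Uspenskij filter characterization (cf.\ \cite{DU}, \cite[4.24]{Luk}). That characterization then forces $\mathcal F$ to converge to some $x_0\in X$, and closedness of $\Phi$ together with $y_\alpha\to y_0$ gives $(x_0,y_0)\in\Phi$, the desired contradiction.

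The principal obstacle is verifying that the filter $\mathcal F$ really satisfies the hypotheses of Dikranjan--Uspenskij's filter criterion: the criterion is stated for closed normal subgroups/quotients of group overextensions of $X$, whereas we only have a closed \emph{subsemigroup} $\Phi\subset X\times Y$. Bridging this gap is precisely where the balancedness of $X$ does the work, because it allows one to replace $\Phi$ by the closed subgroup $G = \overline{\langle\Phi\rangle}\subset X\times Y$ it generates and to control the projection $p_X|_G : G\to X$ uniformly by invariant neighborhoods. Once this reduction is carried out, the argument reduces to applying $\hTG$-closedness to (a quotient of) $G$ viewed as an overgroup of a continuous-homomorphic image of $X$.
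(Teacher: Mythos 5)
First, a point of reference: the paper does not prove this theorem at all --- it is quoted verbatim from Dikranjan--Uspenskij \cite[2.16]{DU}, so your proposal has to stand on its own rather than be measured against an argument in the text. Your forward direction is fine (the graph of a continuous homomorphism into a Hausdorff group is a closed subsemigroup of the product, so $\cTG$-closedness trivially implies $\hTG$-closedness; this is already part of the paper's general implication chain). The reverse direction, however, is where all the content lies, and there your text is a plan rather than a proof: after introducing the filter $\mathcal F$ generated by the sets $\Phi^{-1}(V)$, $V$ a neighborhood of $y_0$, you assert that balancedness ``lets one organize $\mathcal F$ into a Cauchy filter \dots that witnesses the failure of $\hTG$-closedness'' and that the Dikranjan--Uspenskij filter criterion ``then forces $\mathcal F$ to converge.'' No mechanism is given for either step. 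The semigroup inclusion $\Phi^{-1}(V)\cdot\Phi^{-1}(V')\subset\Phi^{-1}(VV')$ only relates products of $\mathcal F$-large sets to neighborhoods of $y_0^2$, not of the unit of $X$, which is what the Dikranjan--Uspenskij criterion actually requires (their special filters are those $\mathcal F$ for which, roughly, $\mathcal F\mathcal F^{-1}$ and $\mathcal F^{-1}\mathcal F$ converge to the identity); moreover the sets $\Phi^{-1}(V)$ can be very large (they contain the full ``kernel'' $\{x: (x,e_Y)\in\Phi\}$ whenever that is nonempty), so $\mathcal F$ has no reason to be Cauchy, and $\hTG$-closedness is not the statement that Cauchy filters converge in any case --- that is mere completeness. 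You explicitly flag this as ``the principal obstacle'' and then defer it; but that obstacle \emph{is} the theorem.

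The one concrete device you propose for bridging the gap --- replacing $\Phi$ by the closed subgroup $G=\overline{\langle\Phi\rangle}\subset X\times Y$ that it generates --- does not work. Passing to the generated subgroup inflates the image: $G(X)$ contains $\Phi(X)\cdot\Phi(X)^{-1}$ and hence typically the whole subgroup of $Y$ generated by $\Phi(X)$, so closedness of $G(X)$ carries no information about closedness of $\Phi(X)$, and the limit point $y_0$ you are trying to exclude may well lie in $G(X)$ legitimately. (This is also why the paper's own Theorem~\ref{t:pC}, which converts closed subsemigroups into subgroups, operates inside $X$ itself under much stronger hypotheses, not inside $X\times Y$.) To actually prove the implication one needs the Dikranjan--Uspenskij machinery in earnest: their filter-theoretic characterization of $h$-completeness applied to a carefully chosen filter built from $\Phi$ and an \emph{invariant} neighborhood base of the unit of $X$, which is where balancedness enters. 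As written, your argument identifies the right references and the right role for balancedness, but the proof of the nontrivial implication is missing.
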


The compactness of $\w$-narrow $\iTG$-closed MAP topological groups can be also derived from the compactness of the {\em $\w$-conjucenter} $Z^\w(X)$ defined for any topological group $X$ as the set of all points $z\in X$ whose conjugacy class $C_X(z):=\{xzx^{-1}:x\in X\}$ is $\w$-narrow in $X$.

A topological group $X$ is defined to be {\em $\w$-balanced} if for any neighborhood $U\subset X$ of the unit there exists a countable family $\V$ of neighborhoods of the unit such that for any $x\in X$ there exists $V\in\V$ such that $xVx^{-1}\subset U$. It is known (and easy to see) that each $\w$-narrow topological group is $\w$-balanced. By Katz Theorem~\cite[3.4.22]{AT}, a topological group is $\w$-balanced if and only if it embeds into a Tychonoff product of first-countable topological groups.
The following theorem can be considered as a step towards the solution of Problem~\ref{prob:i-MAP}.

\begin{theorem}\label{t:conjucenter} If an $\w$-balanced MAP topological group $X$ is $\iTG$-closed, then its $\w$-conjucenter $Z^\w(X)$ is compact.
\end{theorem}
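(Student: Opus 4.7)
\emph{Plan.} The plan is to show that the $\w$-conjucenter $Z^\w(X)$ is a closed, $\w$-narrow, characteristic subgroup of $X$ that inherits the MAP and $\iTG$-closedness properties of $X$; Theorems~\ref{t:MAP->i=h} and~\ref{t:hTG=tc+tm} then yield the compactness of $Z^\w(X)$.

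First I would reduce to a compact target. Since $X$ is MAP, fix an injective continuous homomorphism $h\colon X\to K$ into a compact topological group $K$, and set $G:=\overline{h(X)}\subseteq K$. Then $G$ is a Hausdorff topological group and $h(X)$ is dense in $G$; the $\iTG$-closedness of $X$ applied to the continuous injective homomorphism $X\to G$ implies $h(X)$ is closed in $G$, so $h(X)=G$. Hence $G$ is a compact topological group and $h\colon X\to G$ is a continuous bijective homomorphism onto it.

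Next I would check that $Z^\w(X)$ is a characteristic subgroup of $X$: multiplicative closure follows from $C_X(zw)\subseteq C_X(z)\cdot C_X(w)$ and the fact that finite products of $\w$-narrow sets are $\w$-narrow; inversion closure follows from $C_X(z^{-1})=C_X(z)^{-1}$; and invariance under continuous automorphisms of $X$ is immediate, since automorphisms preserve $\w$-narrowness of conjugacy classes. The $\w$-balance of $X$ then enters to show that $Z^\w(X)$ is closed in $X$ and is itself $\w$-narrow: given a neighborhood $U$ of the identity, $\w$-balance supplies a countable family $\mathcal V$ of neighborhoods controlling conjugations uniformly over $X$, and combining $\mathcal V$ with the countable $\w$-narrow covers of individual conjugacy classes of elements of $Z^\w(X)$ produces both a countable cover of $Z^\w(X)$ by translates of $U$ and the desired closure property at limit points.

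The subgroup $Z^\w(X)$ inherits the MAP property from $X$ via restriction of $h$. A more subtle step is to verify that $Z^\w(X)$, equipped with the subspace topology, inherits the $\iTG$-closedness of $X$ (closed subgroups of $\iTG$-closed groups need not be $\iTG$-closed in general, so the characteristic nature of $Z^\w(X)$ and its $\w$-narrowness must be exploited here). With this established, Theorem~\ref{t:MAP->i=h} promotes the $\iTG$-closedness of $Z^\w(X)$ to $\hTG$-closedness, and Theorem~\ref{t:hTG=tc+tm}, applied to the $\w$-narrow $\hTG$-closed group $Z^\w(X)$, yields that $Z^\w(X)$ is totally complete and totally minimal. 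In particular $Z^\w(X)$ is minimal; since $h|_{Z^\w(X)}\colon Z^\w(X)\to G$ is a continuous injective homomorphism into a Hausdorff topological group, its image is closed (by $\iTG$-closedness of $Z^\w(X)$) hence compact, and minimality forces $h|_{Z^\w(X)}$ to be a topological isomorphism onto this compact image. Therefore $Z^\w(X)$ is compact.

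The main obstacle is the combinatorial-topological inheritance argument: establishing that $Z^\w(X)$ is closed in $X$, $\w$-narrow, and $\iTG$-closed as its own topological group. All three rely critically on the $\w$-balance of $X$, which supplies the countable ``conjugation witnesses'' needed to bridge the (possibly uncountable) ambient structure of $X$ and the countable-covering property inherent in the $\w$-narrow conjugacy classes of elements of $Z^\w(X)$.
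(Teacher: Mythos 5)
There is a genuine gap, in fact two. First, your claim that $\omega$-balance alone makes $Z^\w(X)$ closed and $\w$-narrow does not hold up: $Z^\w(X)$ is a union of possibly uncountably many conjugacy classes, and having a countable $U$-cover of each individual class $C_X(z)$ gives no way to assemble a single countable cover of the whole union, no matter which countable family $\V$ of conjugation witnesses $\w$-balance provides. (A discrete free Abelian group of uncountable rank is balanced and MAP with $Z^\w(X)=X$ not $\w$-narrow, so any correct argument must invoke $\iTG$-closedness at this very step.) Establishing that $Z^\w(X)$ is precompact is in fact the heart of the proof. The paper does it by contradiction: if $Z^\w(X)$ is not precompact, Lemma~\ref{l:separ} yields a countable subgroup $D\subset Z^\w(X)$ with non-compact closure; the set $\bigcup_{x\in D}C_X(x)$ is $\w$-narrow (a countable union of $\w$-narrow sets) and generates an $\w$-narrow \emph{normal} subgroup $H\supset D$; Corollary~\ref{c:Bohr2} (which packages the real work: Proposition~\ref{p:bal} plus the $K$-analytic Open Mapping Principle) then makes $\bar H$ compact, contradicting the non-compactness of $\bar D$. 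Closedness of $Z^\w(X)$ then falls out at the end: $\overline{Z^\w(X)}$ is a compact normal subgroup, so every element of it has precompact, hence $\w$-narrow, conjugacy class and already lies in $Z^\w(X)$.

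Second, the step you flag as ``more subtle'' --- that the closed characteristic subgroup $Z^\w(X)$ is itself $\iTG$-closed --- is not merely subtle; no argument is offered and none is available from the paper's toolkit. The only mechanisms in the paper for pushing $\iTG$-closedness down to a subgroup are Proposition~\ref{p:bal} (images of closed normal subgroups under injective homomorphisms into \emph{balanced} targets) and the central-subgroup construction in Theorem~\ref{t:iTG=>center}; for a general normal subgroup and a general target the refined group topology $\{V\cdot f^{-1}(W)\}$ need not satisfy the Pontryagin axioms, because conjugation by elements of $X$ need not remain continuous. Since your whole chain (Theorem~\ref{t:MAP->i=h} to get $\hTG$-closedness, then Theorem~\ref{t:hTG=tc+tm} to get minimality and compactness) rests on both of these unproved premises, the proposal does not go through as written; the paper's route via normal $\w$-narrow subgroups and Corollary~\ref{c:Bohr2} is designed precisely to avoid them.
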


A topological group $G$ is called {\em hypercentral} if for each closed normal subgroup $H\subsetneq G$, the quotient group $G/H$ has non-trivial center $Z(G/H)$. It is easy to see that each nilpotent topological group is hypercentral. Theorem~\ref{t:conjucenter} implies the following characterization (see Corollary~\ref{c:hypercentral}).

\begin{corollary}\label{c:hyperabelian} A hypercentral topological group $X$ is compact if and only if $\w$-balanced, MAP, and $\iTG$-closed.
\end{corollary}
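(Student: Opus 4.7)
The ``only if'' direction is immediate: a compact Hausdorff group is balanced (hence $\w$-balanced), MAP (the identity embeds $X$ into itself, a compact group), and $\iTG$-closed (the continuous injective image of a compact set into a Hausdorff group is compact, hence closed).

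For the ``if'' direction, assume $X$ is hypercentral, $\w$-balanced, MAP, and $\iTG$-closed. By Theorem~\ref{t:conjucenter}, $Z^\w(X)$ is compact, hence closed in $X$. My plan is to show $X=Z^\w(X)$ via a transfinite induction along the upper central series $\{Z_\alpha(X)\}_\alpha$ of $X$, proving that $Z_\alpha(X)\subset Z^\w(X)$ for every ordinal $\alpha$. The definition of hypercentrality given in the excerpt is equivalent (by a standard argument) to the assertion that the transfinite upper central series exhausts $X$, so this would give $X=Z_\alpha(X)\subset Z^\w(X)$, and hence $X$ is compact.

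The base case $Z_0(X)=\{e\}$ is trivial, and the limit case follows from the closedness of $Z^\w(X)$ together with the definition $Z_\lambda(X)=\overline{\bigcup_{\alpha<\lambda}Z_\alpha(X)}$. The key successor step runs as follows: assume $Z_\alpha(X)\subset Z^\w(X)$, so that $Z_\alpha(X)$, being closed in the compact set $Z^\w(X)$, is itself compact. Pick $x\in Z_{\alpha+1}(X)$. Then $xZ_\alpha(X)\in Z\bigl(X/Z_\alpha(X)\bigr)$, which unwinds to $[y,x]=yxy^{-1}x^{-1}\in Z_\alpha(X)$ for every $y\in X$, whence
$$C_X(x)=\{yxy^{-1}:y\in X\}\subset Z_\alpha(X)\cdot x.$$
The right-hand side is compact, so $C_X(x)$ is relatively compact, and any relatively compact subset of a topological group is $\w$-narrow (finitely many translates of any neighborhood of the unit cover its closure). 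Therefore $x\in Z^\w(X)$, completing the inductive step.

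The main obstacle is concentrated in Theorem~\ref{t:conjucenter}, which is invoked as a black box; everything else is the elementary observation that an element whose conjugacy class lies in a compact set must belong to the $\w$-conjucenter, combined with the standard transfinite upper-central-series machinery. The three hypotheses $\w$-balanced, MAP, and $\iTG$-closed are used only in applying Theorem~\ref{t:conjucenter}, whereas hypercentrality is used solely at the end to guarantee that the series reaches $X$.
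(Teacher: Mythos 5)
Your proof is correct and follows essentially the same route as the paper, which also deduces the result from Theorem~\ref{t:conjucenter} by a transfinite induction showing $Z_\alpha(X)\subset Z^\w(X)$ for every ordinal $\alpha$ and then invoking hypercentrality to get $X=Z_\infty(X)$. (Your successor step, with the inclusion $C_X(x)\subset Z_\alpha(X)\cdot x$, is in fact stated slightly more carefully than in the paper, which writes $C_X(z)\subset Z_\beta(X)$; the conclusion that the conjugacy class is $\w$-narrow is the same.)
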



\begin{remark} Known examples of non-topologizable groups (due to Klyachko, Olshanskii, and Osin \cite{KOO}) show that the compactness does not follow from the $\pTS$- or $\cTG$-closedness  even for 2-generated discrete topological groups (see Example~\ref{ex:non-top}).
\end{remark}

The following diagram describes the implications between various completeness and closedness properties of a topological group. By simple arrows we indicate the implications that hold under some additional assumptions (written in italic near the arrow).
$$
\xymatrixcolsep{15pt}
\xymatrixrowsep{30pt}
\xymatrix{
\mbox{complete}\ar@{<=>}[rr]&&\mbox{$\eTG$-closed}\ar@/^10pt/^{\mbox{\it\tiny minimal}}[d]&\mbox{$\epTG$-closed}\ar@{=>}[l]\ar@/_12pt/_{\mbox{\it\tiny Abelian}}[r]&\mbox{$\eTS$-closed}\ar_{\mbox{\tiny\it nilpotent}}[rd]\ar@{=>}[l]
&\mbox{$\epTS$-closed}\ar@{=>}[l]\\
\mbox{complete}\atop\mbox{minimal}
\ar@/_10pt/_{\mbox{\it\tiny MAP}}[d]
\ar@{=>}[rr]\ar@{=>}[u]
&&\mbox{$\iTG$-closed}\ar@/^16pt/^{\mbox{\it\tiny of compact}\atop\mbox{\tiny\it exponent}}[rr]\ar@/^10pt/^{\mbox{\it\tiny Abelian}}[d]\ar@/_10pt/_{\mbox{\it\tiny MAP}}[d]
\ar@{=>}[u]\ar@/_12pt/_{\mbox{\it\tiny $\w$-narrow of countable}\atop\mbox{\tiny\it pseudocharacter}}[ll]
&&\mbox{$\iTS$-closed}\ar@{=>}[ll]\ar@{=>}[u]\ar@/^10pt/^{\mbox{\it\tiny MAP}}[d]
&\mbox{\small complete of}\atop\mbox{\small compact exponent}\ar@{=>}[u]\\
\mbox{\small totally complete}\atop\mbox{\small totally minimal}\ar@{=>}[rr]\ar@/_10pt/_{\mbox{\it\tiny MAP}}[dd]
\ar@{=>}[u]&&\mbox{$\hTG$-closed}\ar@/_16pt/_{\mbox{\it\tiny of compact}\atop\mbox{\tiny\it exponent}}[rr]\ar@{=>}[u]
\ar@/_12pt/_{\mbox{\it\tiny $\w$-narrow}}[ll]
\ar@/_10pt/_{\mbox{\it\tiny MAP-solvable\!\!\!\!\!\!\!\!}\atop\mbox{\tiny\it  balanced\!\!\!\!\!\!\!\!\!}}^{\mbox{\it\tiny \!\!nilpotent}}[ddll]
&&\mbox{$\hTS$-closed}\ar@{=>}[ll]\ar@{=>}[u]
&\mbox{compact}\ar@{=>}[u]\\
&&\mbox{$\phTG$-closed}\ar@/^16pt/[rr]
\ar_{\mbox{\it\tiny MAP}}^{\mbox{\it\tiny \phantom{m}hypoabelian}}[lld]
\ar@{=>}[u]\ar@/^10pt/^{\mbox{\it\tiny balanced}}[d]&&\mbox{$\phTS$-closed}\ar@{=>}[ll]\ar@{=>}[u]
&\mbox{$\esTS$-closed}\ar@{<=>}[u]
\\
\mbox{compact}\ar@{=>}[rr]\ar@{=>}[uu]&&\mbox{$\cTG$-closed}\ar@{=>}[u]&&\mbox{$\cTS$-closed}\ar@{=>}[ll]\ar@{=>}[u]
&\mbox{$\erTG$-closed}\ar@{=>}[l]\ar@{<=>}[u]
}
$$
The curved horizontal implications, holding under the assumption of compact exponent, are proved in Theorems~\ref{t:i-exp} and \ref{t:exp=>hTS=hTG}.

\section{Completeness of topological groups versus $\C$-closedness}

To discuss the completeness properties of topological groups, we need to recall some known information related to uniformities on topological groups (see \cite{RD} and \cite[\S1.8]{AT} for more details). We refer the reader to \cite[Ch.8]{Eng} for basic information on uniform spaces. Here we recall that a uniform space $(X,\U)$ is {\em complete} if each Cauchy filter $\F$ on $X$ converges to some point $x\in X$. A {\em filter} on a set $X$ is a non-empty family of non-empty subsets of $X$, which is closed under finite intersections and taking supersets. A subfamily $\mathcal B\subset\F$ is called a {\em base} of a filter $\F$ if each set $F\in\F$ contains some set $B\in\mathcal B$.

A filter $\F$ on a uniform space $(X,\U)$ is {\em Cauchy} if for each entourage $U\in\U$ there is a set $F\in\F$ such that $F\times F\subset U$. A filter on a topological space $X$ {\em converges} to a point $x\in X$ if each neighborhood of $x$ in $X$ belongs to the filter. A uniform space $(X,\U)$ is compact if and only if the space is complete and {\em totally bounded} in the sense that for every entourage $U\in\U$ there exists a finite subset $F\subset X$ such that $X=\bigcup_{x\in F}B(x,U)$ where $B(x,U):=\{y\in X:(x,y)\in U\}$.

Each topological group $(X,\tau)$ with unit $e$ carries four natural uniformities:
\begin{itemize}
\item the {\em left uniformity} $\U_l$ generated by the base $\mathcal B_l=\big\{\{(x,y)\in X\times X:y\in xU\}:e\in U\in\tau\big\}$;
\item the {\em right uniformity} $\U_r$ generated by the base $\mathcal B_r=\big\{\{(x,y)\in X\times X:y\in Ux\}:e\in U\in\tau\big\}$;
\item the {\em two-sided uniformity} $\U_{\vee}$ generated by the base
$\mathcal B_{\vee}=\big\{\{(x,y)\in X\times X:y\in Ux\cap xU\}:e\in U\in\tau\big\}$;
\item the {\em Roelcke uniformity} $\U_{\wedge}$ generated by  the base
$\mathcal B_{\wedge}=\big\{\{(x,y)\in X\times X:y\in UxU\}:e\in U\in\tau\big\}$.
\end{itemize}
It is well-known (and easy to see) that a topological group $X$ is complete in its left uniformity if and only if it is complete in its right uniformity. Such topological groups are called {\em Weil-complete}. A topological group is {\em complete} if it is complete in its two-sided uniformity. Since each Cauchy filter in the two-sided uniformity is Cauchy in the left and right uniformities, each Weil-complete topological group is complete. For an Abelian (more generally, balanced) topological group $X$ the uniformities $\U_l$, $\U_r$, $\U_t$ coincide, which implies that $X$ is Weil-complete if and only if it is complete. 

An example of a complete topological group, which is not Weil-complete is the Polish group $\mathrm{Sym}(\w)$ of all bijections of the discrete countable space $\w$ (endowed with the topology of pointwise convergence, inherited from the Tychonoff product $\w^\w$).

The completion of a topological group $X$ by its two-sided uniformity is called the {\em  Raikov-completion} of $X$. It is well-known that the Raikov-completion of a topological group has a natural structure of a topological group, which contains $X$ as a dense subgroup.   On the other hand, the completion of a topological group $X$ by its left (or right)  uniformity carries a natural structure of a topological semigroup, called the (left or right) {\em Weil-completion} of the topological group, see \cite[8.45]{Strop}. For example, the left Weil-completion of the Polish group $\mathrm{Sym}(\w)$ is the semigroup of all injective functions from $\w$ to $\w$.

So, if a topological group $X$ is not complete, then $X$ admits a non-closed embedding into its Raikov-completion, which implies that it is not $\eC$-closed for any class $\C$ of topologized semigroups, containing all complete topological groups.
If $X$ is not Weil-complete, then $X$ admits a non-closed embedding into its (left or right) Weil-completion, which implies that it is not $\eC$-closed for any class $\C$ of topologized semigroups, containing all Tychonoff topological semigroups. Let us write these facts for future references.

\begin{theorem} \label{t:Weil}
 Assume that a class $\C$ of topologized semigroups contains all Raikov-completions (and Weil-completions) of topological groups. Each $\eC$-closed topological group is (Weil-)complete. In particular, each $\eTG$-closed topological group is complete and each $\eTS$-closed topological group is Weil-complete.
\end{theorem}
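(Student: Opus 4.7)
The plan is to prove the contrapositive in each case. Suppose first that $X$ is not complete in its two-sided uniformity. I would invoke the classical construction of the Raikov completion $\tilde X$ of $X$, which is a Hausdorff topological group containing $X$ as a dense topological subgroup, with the inclusion $X\hookrightarrow\tilde X$ an isomorphic topological embedding. Since $X$ is not complete, $X\subsetneq\tilde X$; combined with density, this forces $X$ to be a non-closed subset of $\tilde X$. By hypothesis $\tilde X\in\C$, so the inclusion is a morphism of the category $\eC$ whose image $X$ fails to be closed in $\tilde X$. This witnesses that $X$ is not $\eC$-closed, as required.

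For the Weil-complete statement I would argue identically, replacing the Raikov completion by the left Weil completion $\hat X$ (the completion of $X$ in its left uniformity). By the standard result cited in the text (\cite[8.45]{Strop}), $\hat X$ carries a natural Hausdorff topological semigroup structure in which $X$ sits as a dense topological subsemigroup via an isomorphic topological embedding. If $X$ is not Weil-complete then $X\subsetneq\hat X$, so the image of $X$ is properly dense and hence not closed. Under the assumption that $\C$ contains all Weil completions of topological groups, $\hat X\in\C$ and the inclusion is a morphism of $\eC$ whose range is not closed, contradicting $\eC$-closedness.

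The ``in particular'' clauses follow immediately: the class $\TG$ of all Hausdorff topological groups contains every Raikov completion $\tilde X$ of a topological group $X$, so each $\eTG$-closed topological group must be complete; and the class $\TS$ of all Hausdorff topological semigroups contains every Weil completion $\hat X$, so each $\eTS$-closed topological group must be Weil-complete.

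There is no real obstacle here: once the two standard facts are granted—that the Raikov completion is a topological group and that the Weil completion is a Hausdorff topological semigroup, both containing $X$ as a dense subobject via an isomorphic topological embedding—the argument reduces to unwinding the definition of $\eC$-closedness. The only point deserving care is the appeal to \cite[8.45]{Strop} for the semigroup structure on the Weil completion, which guarantees that $\hat X$ genuinely belongs to $\TS$ (rather than merely to a class of topologized, possibly non-continuous, semigroups).
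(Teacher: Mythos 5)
Your proposal is correct and follows exactly the paper's own argument: the paper justifies this theorem in the paragraph immediately preceding it by observing that a non-(Weil-)complete group admits a non-closed dense isomorphic embedding into its Raikov- (resp.\ Weil-) completion, which lies in $\C$ by hypothesis. The only difference is that you spell out the details (density plus proper containment implies non-closedness, and the appeal to \cite[8.45]{Strop} for the semigroup structure on the Weil completion) slightly more explicitly than the paper does.
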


We recall that a non-empty subset $I$ of a semigroup $S$ is called an {\em ideal} in $S$ if  $IS\cup SI\subset I$.

\begin{theorem}\label{t:2.5n} Assume that a topological group $X$ admits a non-closed topological isomorphic embedding $f:X\to Y$ into a Hausdorff semitopological semigroup $Y$.
\begin{enumerate}
\item If $X$ is Weil-complete, then $\overline{f(X)}\setminus f(X)$ is an ideal of the semigroup $\overline{f(X)}$.
\item If $X$ is complete, then $\{y^n:y\in \overline{f(X)}\setminus f(X),\;n\in\IN\}\subset \overline{f(X)}\setminus f(X)$.
\end{enumerate}
\end{theorem}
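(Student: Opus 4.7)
The plan is to prove both parts by contradiction, using separate continuity in the semitopological semigroup $Y$ to convert algebraic relations in $\overline{f(X)}$ into Cauchy conditions on nets in $X$. Identify $X$ with $f(X)\subset Y$. Two preliminary facts will be used throughout: (a) $\overline{X}$ is a subsemigroup of $Y$ and the unit $e$ of $X$ is a two-sided identity for $\overline{X}$ (both follow from separate continuity applied to a net in $X$ converging to a point of $\overline{X}$); and (b) for $y\in\overline{X}\setminus X$ and $a\in X$, both $ya$ and $ay$ lie in $\overline{X}\setminus X$, since otherwise multiplying by $a^{\pm 1}$ in $X$ together with the fact that $X$ is topologically embedded in $Y$ would force $y\in X$.

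For Part~(1), suppose $y\in\overline{X}\setminus X$, $z\in\overline{X}$, and $x_0:=yz\in X$. Fact~(b) forces $z\notin X$. Set $v:=zx_0^{-1}\in\overline{X}\setminus X$; then $yv=e$, i.e.\ $y$ admits a right inverse in $\overline{X}$. Choose nets $y_\beta\to y$ and $v_\gamma\to v$ in $X$. Separate continuity gives $y_\beta v\to e$ in $Y$ and, for each fixed $\beta$, $y_\beta v_\gamma\to y_\beta v$ as $\gamma\to\infty$. Given any neighborhood $U$ of $e$ in $X$, write $U=W\cap X$ for $W$ open in $Y$; choose $\beta_0$ with $y_\beta v\in W$ for $\beta\ge\beta_0$, and for each such $\beta$ choose $\gamma_0(\beta)$ so that $y_\beta v_\gamma\in W\cap X=U$ for $\gamma\ge\gamma_0(\beta)$. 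Then for $\beta,\beta'\ge\beta_0$ and any common $\gamma\ge\max\{\gamma_0(\beta),\gamma_0(\beta')\}$ the algebraic identity
\[
y_\beta y_{\beta'}^{-1}=(y_\beta v_\gamma)(y_{\beta'} v_\gamma)^{-1}
\]
places $y_\beta y_{\beta'}^{-1}$ inside $UU^{-1}$. Shrinking $U$ so that $UU^{-1}$ fits inside a prescribed neighborhood of $e$ shows that $(y_\beta)$ is Cauchy in the right uniformity of $X$; Weil-completeness yields a limit $x\in X$, and Hausdorffness of $Y$ forces $y=x\in X$, a contradiction. The symmetric case $zy\in X$ is handled by the left inverse $u:=x_0^{-1}z$ together with the mirror identity $y_\beta^{-1}y_{\beta'}=(u_\gamma y_\beta)^{-1}(u_\gamma y_{\beta'})$ (where $u_\gamma\to u$ in $X$), giving a Cauchy net in the left uniformity.

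For Part~(2), assume $y^n=x_0\in X$ for some $y\in\overline{X}\setminus X$ and $n\ge 1$. Since $x_0=y^n$ commutes with $y$ in $\overline{X}$, the element $v:=y^{n-1}x_0^{-1}=x_0^{-1}y^{n-1}$ is a \emph{two-sided} inverse: $yv=vy=e$. Running the Part~(1) diagonal argument on both sides — Cauchy in the right uniformity from $yv=e$ via $y_\beta y_{\beta'}^{-1}=(y_\beta v_\gamma)(y_{\beta'} v_\gamma)^{-1}$, and Cauchy in the left uniformity from $vy=e$ via $y_\beta^{-1}y_{\beta'}=(v_\gamma y_\beta)^{-1}(v_\gamma y_{\beta'})$ — produces a net $(y_\beta)$ that is Cauchy in the two-sided uniformity of $X$. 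Two-sided completeness then furnishes a limit $x\in X$, and Hausdorffness of $Y$ gives $y=x\in X$, again a contradiction.

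The main obstacle is the diagonal step, because $Y$ is only \emph{semitopological}: the double net $(y_\beta v_\gamma)$ has no joint limit, and one cannot freely take limits in the product variable. The trick above sidesteps joint continuity entirely: fix a target neighborhood $U\subset X$, choose $\gamma$ as a function of $\beta$ so that each $y_\beta v_\gamma$ lies in $U$, and then exploit the purely algebraic cancellation $v_\gamma v_\gamma^{-1}=e$ inside $X$ to extract a uniformity estimate on $y_\beta y_{\beta'}^{-1}$ directly, without ever forming a joint limit in $Y$.
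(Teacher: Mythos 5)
Your proof is correct. The engine is the same as in the paper's argument --- separate continuity applied twice in succession (first in one factor, then in the other) to push a product of approximants into a prescribed neighborhood of the unit, followed by group cancellation and (Weil-)completeness --- but you package it differently, and the packaging buys something. The paper argues contrapositively: the trace filter of $y$ on $X$ diverges, hence is not Cauchy, and a witness $V_e$ of non-Cauchyness is then contradicted; you instead extract the direct Cauchy estimate $y_\beta y_{\beta'}^{-1}\in UU^{-1}$ on a net converging to $y$ and let completeness produce a limit that Hausdorffness identifies with $y$. More substantially, you reduce both parts to the single claim that an element of $\overline{f(X)}$ possessing a right (resp.\ two-sided) inverse in $\overline{f(X)}$ must lie in $f(X)$: in part (2) the exact element $v=y^{n-1}x_0^{-1}=x_0^{-1}y^{n-1}$ replaces the paper's finite induction constructing approximants $x_0,\dots,x_{n-1}\in X$ with $x_iy^{n-i}\in W$; and in part (1) your elementary Fact (b) (which needs only that $f(e)$ is a two-sided identity of $\overline{f(X)}$, not completeness) disposes of products with one factor in $f(X)$ and isolates the completeness argument to the case where both factors lie in $\overline{f(X)}\setminus f(X)$ --- the only case where completeness is actually needed, and the one the paper's written proof of part (1) does not state explicitly (it announces only $x\in X$, though its argument extends verbatim to $x\in\overline{f(X)}$ once the auxiliary point $z$ is chosen in $U_x\cap X$). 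The small verifications all check out: the commutation $y^{n-1}x_0^{-1}=x_0^{-1}y^{n-1}$ follows from $y^{n-1}x_0=x_0y^{n-1}$ together with the identity property of $f(e)$ on $\overline{f(X)}$, and a net Cauchy in both one-sided uniformities is Cauchy in their join, the two-sided uniformity.
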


\begin{proof} To simplify notation, it will be convenient to identify $X$ with its image $f(X)$ in $Y$. Replacing $Y$ by the closure $\bar X$ of $X$, we can assume that the group $X$ is dense in the semigroup $Y$.
\smallskip

1. First, we assume that $X$ is Weil-complete. Given any $x\in X$ and $y\in Y\setminus X$, we should prove that $xy,yx\in Y\setminus X$.

To derive a contradiction, assume that $xy\in X$. On the topological group $X$, consider the filter $\F$ generated by the base consisting of the intersections $X\cap O_{y}$ of $X$ with neighborhoods $O_y$ of $y$ in $Y$. The Hausdorff property of $Y$ ensures that this filter does not converge in the Weil-complete group $X$ and thus is not Cauchy in the left uniformity of $X$. This yields an open neighborhood $V_e$ of the unit of the group $X$ such that $F\not\subset zV_e$ for any set $F\in\F$ and any $z\in X$. Since $X$ carries the subspace topology, the space $Y$ contains an open set $U_{xy}\subset Y$ such that $U_{xy}\cap X=xyV_e$.

The separate continuity of the binary operation on $Y$ yields an open neighborhood $U_{x}\subset Y$ of the point $x$ in $Y$ such that $U_xy\subset U_{xy}$. Choose any point $z\in U_x$ and find a neighborhood $U_y$ of the point $y$ in $Y$ such that $zU_y\subset U_{xy}$. Now consider the set $F=X\cap U_y\in \F$ and observe that $zF\subset X\cap U_{xy}=xyV_e$ and hence $F\subset z^{-1}xyV_e$, which contradicts the choice of $V_e$. This contradiction shows that $xy\in Y\setminus X$.

By analogy we can prove that $yx\in Y\setminus X$.
\smallskip

2. Next, assume that $X$ is complete. In this case we should prove that $y^n\notin X$ for any $y\in Y\setminus X$ and $n\in\IN$. To derive a contradiction, assume that $y^n\in X$ for some $y\in Y\setminus X$ and $n\in\IN$. On the group $X$ consider the filter $\F$ generated by the base $X\cap O_y$ where $O_y$ runs over neighborhoods of $y$ in $Y$. The filter $\F$ converges to the point $y\notin X$ and hence is divergent in $X$ (by the Hausdorff property of $Y$). Since $X$ is complete, the divergent filter $\F$ is not Cauchy in its two-sided uniformity. This allows us to find an open neighborhood $V_e\subset X$ of the unit such that $F\not\subset xV_e\cap V_ez$ for any points $x,z\in X$. Choose an open set $W\subset Y$ such that $W\cap X=y^nV_e\cap V_ey^n$.

By finite induction, we shall construct a sequence $(x_i)_{i=0}^{n-1}$ of points of the group $X$ such that $x_iy^{n-i}\in W$ for all $i\in\{1,\dots,n-1\}$. To start the inductive construction, let $x_0=e$ be the unit of the group $X$. Assume that for some positive $i\le n-1$ the point $x_{i-1}\in X$ with $x_{i-1}y^{n+1-i}\in W$ has been constructed. By the separate continuity of the semigroup operation in $Y$, the point $y$ has a neighborhood $V_y\subset Y$ such that $x_{i-1}V_yy^{n-i}\in W$. Choose any point $v\in X\cap V_y$, put $x_i=x_{i-1}v\in X\cap V_y$ and observe that $x_iy^{n-i}\in W$, which completes the inductive step.

After completing the inductive construction, we obtain a point $x=x_{n-1}\in X$ such that $xy\in W$. By analogy we can construct a point $z\in X$ such that $yz\in W$. The separate continuity of the binary operation in $Y$ yields a neighborhood $V_y\subset Y$ of $y$ such that $(xV_y)\cup(V_yz)\subset W$. Then the set $F=X\cap V_y\in\F$ has the property: $(xF)\cup(Fz)\subset X\cap W=y^nV_e\cap V_ey^n$ which implies that $F\subset (x^{-1}y^nV_e)\cap (V_ey^nz^{-1})$. But this contradicts the choice of the neighborhood $V_e$.
\end{proof}

Now we describe a construction of the ideal union of topologized semigroups, which allows us to construct non-closed embeddings of topologized semigroups.

Let $h:X\to Y$ be a continuous homomorphism between topologized semigroups $X,Y$ such that $Y\setminus h(X)$ is an ideal in $Y$ and $X\cap (Y\setminus h(X))=\emptyset$. Consider the set $U_h(X,Y):=X\cup (Y\setminus h(X))$ endowed with the semigroup operation defined by
$$xy=\begin{cases}
x*y&\mbox{if $x,y\in X$};\\
h(x)\cdot y&\mbox{if $x\in X$ and $y\in Y\setminus h(X)$};\\
x\cdot h(y)&\mbox{if $x\in Y\setminus h(X)$ and $y\in X$};\\
x\cdot y&\mbox{if $x,y\in Y\setminus h(X)$}.\\
\end{cases}
$$ Here by $*$ and $\cdot$ we denote the binary operations of the semigroups $X$ and $Y$, respectively.
The set $U_h(X,Y)$ is endowed with the topology consisting of the sets $W\subset U_h(X,Y)$ such that
\begin{itemize}
\item for any $x\in W\cap X$, some neighborhood $U_x\subset X$ of $x$ is contained in $W$;
\item for any $y\in W\cap (Y\setminus h(X))$ there exists an open neighborhood $U\subset Y$ of $y$ such that $h^{-1}(U)\cup(U\setminus h(X))\subset W$.
\end{itemize}
This topology turns $U_h(X,Y)$ into a topologized semigroup, which be called the {\em ideal union of the semigroups $X$ and $Y$ along the homomorphism $h$}.

The following lemma can be derived from the definition of the ideal union.

\begin{theorem}\label{t:2.2} Let $h:X\to Y$ be a continuous homomorphism between topologized semigroups such that $Y\setminus h(X)$ is an ideal in $Y$. The topologized semigroup $U_h(X,Y)$ has the following properties:
\begin{enumerate}
\item $X$ is an open subsemigroup of $U_h(X,Y)$;
\item $X$ is closed in $U_h(X,Y)$ if and only if $h(X)$ is closed in $Y$;
\item If $X$ and $Y$ are (semi)topological semigroups, then so is the topologized semigroup $U_h(X,Y)$;
\item If the spaces $X,Y$ are Hausdorff (or regular or Tychonoff), then so is the space $U_h(X,Y)$.
\end{enumerate}
\end{theorem}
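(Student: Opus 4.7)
My plan is to verify each of the four claims in turn, noting that they all follow from unwinding the rather explicit definitions of the semigroup operation and the topology on $U_h(X,Y)$.

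For (1), I would observe directly from the second bullet in the topology definition (applied with $W=X$) that $X$ is open: every point of $X$ has itself as an open neighborhood contained in $X$, and the condition on points of $W\cap(Y\setminus h(X))$ is vacuous. Subsemigroup-ness is immediate from the case distinction in the formula for multiplication: for $x,y\in X$ the product is defined as $x*y\in X$.

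For (2), I would note that $X$ is closed in $U_h(X,Y)$ iff $U_h(X,Y)\setminus X=Y\setminus h(X)$ is open in $U_h(X,Y)$. By the second bullet of the topology, this holds iff every $y\in Y\setminus h(X)$ has an open neighborhood $U\subset Y$ with $h^{-1}(U)\cup(U\setminus h(X))\subset Y\setminus h(X)$, i.e., with $h^{-1}(U)=\emptyset$, equivalently $U\cap h(X)=\emptyset$. This in turn says precisely that $Y\setminus h(X)$ is open in $Y$, which is the assertion that $h(X)$ is closed in $Y$.

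For (3), the semigroup operation on $U_h(X,Y)$ splits into four cases according to the four ways a pair $(a,b)$ can lie in $X$ and $Y\setminus h(X)$. To verify joint (or separate) continuity, I would examine, for each case, what preimage of a basic neighborhood of $ab$ looks like. The interesting cases are the mixed ones; for instance if $a\in X$, $b\in Y\setminus h(X)$, so that $ab=h(a)\cdot b\in Y\setminus h(X)$, a basic neighborhood of $ab$ in $U_h(X,Y)$ comes from an open $U\subset Y$ with $h(a)\cdot b\in U$. By (joint or separate) continuity of multiplication in $Y$ applied at $(h(a),b)$, together with continuity of $h$, I can pull this back to products of basic open sets in $X$ and in $U_h(X,Y)$. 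The case $a,b\in Y\setminus h(X)$ uses the ideal assumption, which guarantees $a\cdot b\in Y\setminus h(X)$ so the target neighborhood is again of the second type. The purely-$X$ case is trivial, and the remaining mixed case is symmetric. I expect this case analysis to be the most mechanical but also the longest part of the proof.

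For (4), I would separate a pair of distinct points $p,q\in U_h(X,Y)$ according to where they lie. If both are in $X$, use the Hausdorff (or regular, or Tychonoff) property of $X$ together with the openness of $X$ from (1). If both lie in $Y\setminus h(X)$, use the corresponding property of $Y$ to produce disjoint open $V_p,V_q\subset Y$, and then $h^{-1}(V_p)\cup(V_p\setminus h(X))$ and $h^{-1}(V_q)\cup(V_q\setminus h(X))$ give disjoint open neighborhoods in $U_h(X,Y)$. In the mixed case $p\in X$, $q\in Y\setminus h(X)$, use $X$ (which is open) as a neighborhood of $p$ and a small set of the second-bullet form around $q$, which is disjoint from $X$. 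The regularity and Tychonoff cases are handled in the same spirit: any closed set of $U_h(X,Y)$ disjoint from a given point splits into its trace in $X$ (closed there) and its trace in $Y\setminus h(X)$ (coming from a closed-in-$Y$ set, modulo the ideal), and one combines a separating function or separating open set in $X$ with one in $Y$ pulled back via the recipe from the second bullet. The main obstacle I anticipate is bookkeeping in this last step, particularly verifying that functions built piecewise out of a continuous function on $X$ and a continuous function on $Y$ (restricted to $Y\setminus h(X)$) remain continuous on $U_h(X,Y)$; this is where the compatibility built into the second bullet (it involves both $h^{-1}(U)$ \emph{and} $U\setminus h(X)$) is essential.
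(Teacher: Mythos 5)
The paper offers no actual proof of this statement (it says only that the lemma ``can be derived from the definition of the ideal union''), so I am judging your argument on its own merits. Your treatments of (1) and (2) are correct, and your case analysis for (3) is the right argument: in each mixed case one pulls a basic $Y$-neighborhood of the product back through the continuity of multiplication in $Y$ and of $h$, and the ideal hypothesis keeps the product inside $Y\setminus h(X)$. The first genuine error is in the mixed Hausdorff case of (4). You claim that some basic neighborhood $h^{-1}(U)\cup(U\setminus h(X))$ of $q\in Y\setminus h(X)$ is disjoint from $X$; but such a set misses $X$ only when $h^{-1}(U)=\emptyset$, i.e.\ when $U\cap h(X)=\emptyset$, and a $U\ni q$ with this property exists only if $q\notin\overline{h(X)}$. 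In the only situation where the construction is actually used (Theorem~\ref{t:2.3}, where $h(X)$ is dense and non-closed in $Y$), \emph{every} basic neighborhood of \emph{every} $q\in Y\setminus h(X)$ meets $X$, so your separating pair does not exist. The repair is to separate $h(p)$ from $q$ inside $Y$: choose disjoint open $U_1\ni h(p)$ and $U_2\ni q$ in $Y$ and take the neighborhoods $h^{-1}(U_1)$ of $p$ and $h^{-1}(U_2)\cup(U_2\setminus h(X))$ of $q$, which are disjoint because $h^{-1}(U_1)\cap h^{-1}(U_2)=h^{-1}(U_1\cap U_2)=\emptyset$ and $h^{-1}(U_1)\subset X$ cannot meet $U_2\setminus h(X)$.

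The regular/Tychonoff clause of (4) is not mere bookkeeping, and your piecewise-gluing sketch hides a real obstruction. For an open set $V\subset X$ one computes directly from the second bullet that a point $y\in Y\setminus h(X)$ lies in the closure of $V$ in $U_h(X,Y)$ if and only if $y\in \mathrm{cl}_Y(h(V))$; hence every closed neighborhood of a point $x\in X$ contains the set $\mathrm{cl}_Y(h(V))\setminus h(X)$ for some $X$-neighborhood $V$ of $x$. If every $X$-neighborhood $V$ of $x$ satisfies $\mathrm{cl}_Y(h(V))\not\subset h(X)$ --- which happens, for instance, for $X=\Sym(\w)$ embedded into the Polish semigroup $Y$ of injections of $\w$, whose complement $Y\setminus X$ is an ideal --- then no closed neighborhood of $x$ is contained in the open neighborhood $X$, and regularity at $x$ fails even though $X$ and $Y$ are Polish. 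So no argument along the lines you propose can establish the regular and Tychonoff cases from the stated hypotheses alone; this clause needs either an additional assumption or should be flagged as doubtful. Fortunately only the Hausdorff case (and items (1)--(3)) is used elsewhere in the paper.
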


We shall say that a class $\C$ of topologized semigroups is stable under taking
\begin{itemize}
\item {\em topological isomorphisms} if for any topological isomorphism $h:X\to Y$ between topologized semigroups $X,Y$ the inclusion $X\in\C$ implies $Y\in\C$;
\item {\em closures} if for any topologized semigroup $Y\in\C$ and a subgroup $X\subset Y$ the closure $\bar X$ of $X$ in $Y$ is a topologized semigroup that belongs to the class $\C$;
\item {\em ideal unions} if for any continuous homomorphism $h:X\to Y$ between semigroups $X,Y\in\C$ with  $Y\setminus h(X)$ being an ideal in $Y$, the topologized semigroup $U_h(X,Y)$ belongs to the class $\C$.
\end{itemize}

Theorems~\ref{t:2.5n} and \ref{t:2.2} imply the following characterization.

\begin{theorem}\label{t:2.3} Assume that a class $\C$ of Hausdorff semitopological semigroups is stable under topological isomorphisms, closures and ideal unions. A Weil-complete topological group $X\in\C$ is $\eC$-closed if and only if for every continuous homomorphism $f:X\to Y$ into a topologized semigroup $Y\in\C$ the set $\overline{f(X)}\setminus f(X)$ is not an ideal in $\overline{f(X)}$.
\end{theorem}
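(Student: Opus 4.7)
The plan is to prove both implications by combining Theorem~\ref{t:2.5n} (which says non-closedness forces an ideal structure on the remainder) with Theorem~\ref{t:2.2} (which builds the ideal union), using the three stability hypotheses on $\C$ to keep every constructed object inside $\C$.

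\textbf{Necessity.} Suppose $X$ is $\eC$-closed, and let $f: X \to Y$ be a continuous homomorphism with $Y \in \C$. Set $Y' := \overline{f(X)}$; by stability of $\C$ under closures, $Y' \in \C$. Arguing by contradiction, assume $Y' \setminus f(X)$ is an ideal in $Y'$. After replacing $X$ by a disjoint topologically isomorphic copy (still in $\C$ by stability under topological isomorphisms), I may assume $X \cap (Y' \setminus f(X)) = \emptyset$, so the ideal union $U_f(X, Y')$ is defined, and by stability under ideal unions it belongs to $\C$. Theorem~\ref{t:2.2}(1) places $X$ as an open subsemigroup of $U_f(X, Y')$, so the inclusion is an isomorphic topological embedding into an object of $\C$. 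The $\eC$-closedness of $X$ then forces $X$ to be closed in $U_f(X, Y')$, which by Theorem~\ref{t:2.2}(2) is equivalent to $f(X)$ being closed in $Y'$, i.e.\ $f(X) = Y'$. But then $Y' \setminus f(X) = \emptyset$, contradicting the fact that (by definition) an ideal is nonempty.

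\textbf{Sufficiency.} Conversely, assume $X$ is Weil-complete with $X \in \C$, and that $\overline{f(X)} \setminus f(X)$ fails to be an ideal in $\overline{f(X)}$ for every continuous homomorphism $f: X \to Y \in \C$. Let $e: X \to Z$ be an isomorphic topological embedding into some $Z \in \C$; I must show $e(X)$ is closed in $Z$. If it were not, then $e$ would be a non-closed isomorphic topological embedding of the Weil-complete group $X$ into a Hausdorff semitopological semigroup, so Theorem~\ref{t:2.5n}(1) would yield that $\overline{e(X)} \setminus e(X)$ is an ideal in $\overline{e(X)}$. Stability of $\C$ under closures gives $\overline{e(X)} \in \C$, so viewing $e$ as a continuous homomorphism $X \to \overline{e(X)} \in \C$ directly contradicts the hypothesis.

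I do not expect any substantive obstacle: the entire argument is bookkeeping once Theorems~\ref{t:2.5n} and \ref{t:2.2} are in hand. The only two delicate points are (a) passing to a disjoint copy of $X$ so that the ideal union construction is legal, and (b) confirming that the inclusion $X \hookrightarrow U_f(X, Y')$ qualifies as an isomorphic topological embedding; both are immediate from Theorem~\ref{t:2.2}(1) and the stability of $\C$ under topological isomorphisms.
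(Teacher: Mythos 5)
Your proof is correct and follows essentially the same route as the paper: stability under closures and ideal unions plus Theorem~\ref{t:2.2} for the necessity direction, and Theorem~\ref{t:2.5n}(1) for the sufficiency direction. The only differences are cosmetic (you argue by direct contradiction where the paper argues by contraposition, and you replace $X$ rather than $Y$ by a disjoint isomorphic copy), and your observation that an ideal is by definition nonempty correctly closes the necessity argument.
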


\begin{proof} To prove the ``only if'' part, assume that there exits a continuous homomorphism $f:X\to Y$ into a topologized semigroup $Y\in\C$ such that $f(X)$ is dense in $Y$ and $\overline{f(X)}\setminus f(X)$ is an ideal of $\overline{f(X)}$. In particular, $\overline{f(X)}\setminus f(X)\ne\emptyset$, which means that $f(X)$ is not closed in $Y$. Taking into account that the class $\C$ is stable under closures, we conclude that $\overline{f(X)}$ is a topologized semigroup in the class $\C$. So, we can replace $Y$ by $\overline{f(X)}$ and assume that the subgroup $f(X)$ is dense in $Y$. Replacing $Y$ by its isomorphic copy, we can assume that $X\cap Y=\emptyset$.  In this case we can consider the ideal sum $U_f(X,Y)$ and conclude that it belongs to the class $\C$ (since $\C$ is stable under ideal unions). By Theorem~\ref{t:2.2}(2), the topological group $X$ is not closed in $U_f(X,Y)$, which means that $X$
is not $\eC$-closed.
\smallskip

To prove the ``if'' part, assume that the Weil-complete topological group $X$ is not $\eC$-closed. Then $X$ admits a non-closed topological isomorphic embedding $f:X\to Y$ into a topologized semigroup $Y\in\C$. By Theorem~\ref{t:2.5n}(1), the complement $\overline{f(X)}\setminus f(X)$ is an ideal in $\overline{f(X)}$.
\end{proof}

\section{Topological groups of (pre)compact exponent}

In this section we study topological groups of compact exponent.
We shall say that a topological group $X$ has {\em compact exponent} (resp. {\em finite exponent\/}) if there exists a number $n\in\IN$ such that the set $\{x^n:x\in X\}$ is contained is a compact (resp. finite) subset of $X$. A complete topological group has compact exponent if and only if it has {\em precompact exponent} in the sense that for some $n\in\IN$ the set $nX$ is precompact. A subset $A$ of a topological group $X$ is called {\em precompact} if for any neighborhood $U\subset X$ of the unit there exists a finite subset $F\subset X$ such that $A\subset FU\cap UF$. 

\begin{lemma}\label{l:Prot} A topological group $X$ is precompact if and only if for any neighborhood $U\subset X$ of the unit there exists a finite subset $F\subset X$ such that $X=FUF$.
\end{lemma}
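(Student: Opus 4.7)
The forward implication is routine: if $X$ is precompact, then for any neighborhood $U$ of the unit the definition supplies a finite set $F\subset X$ with $X\subset FU$; setting $F':=F\cup\{e\}$, we have $X\subset F'UF'$, hence $X=F'UF'$ (the reverse inclusion is immediate since $F',U\subset X$).

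For the reverse implication, fix a neighborhood $U$ of the unit. I aim to show $X\subset GU$ for some finite $G$; the analogous $X\subset UG$ will follow by applying the same argument to the neighborhood $U^{-1}$ and using inversion, and the two together yield $X\subset G'U\cap UG'$ for a single finite $G'$, giving precompactness. The starting move is to apply the hypothesis to $U$, obtaining a finite $F$ with $X=FUF$. Using the identity $XF^{-1}=X$ (which holds because $X$ is a group and $F\subset X$), I multiply on the right to rewrite
\[
X=(FUF)F^{-1}=FU\cdot(FF^{-1}).
\]
Setting $H:=FF^{-1}$, a finite set, every $x\in X$ has the form $x=fuh$ with $f\in F$, $u\in U$, $h\in H$; rearranging $fuh=(fh)(h^{-1}uh)$ already shows that $X$ is contained in a finite union of left translates of the conjugate neighborhoods $h^{-1}Uh$, $h\in H$.

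The main obstacle I expect is to replace those conjugates by $U$ itself, so that the cover becomes a cover by left translates of $U$. The direct attempt -- pre-shrink to a symmetric neighborhood $V\subset U\cap\bigcap_{h\in H}hUh^{-1}$ and reapply the hypothesis at $V$ -- fails outright, because the finite set produced by the hypothesis at $V$ need not be related to the $H$ that controlled the conjugation. My plan to break this circular dependence is to apply the hypothesis at two nested scales and then use the coarser decomposition $X=FUF$ to re-express the elements of the finer finite set, thereby reducing conjugation by elements of the new finite set to conjugation by elements of $F$ (controlled by the choice of the smaller neighborhood) composed with inner automorphisms by elements of $U$ (which are close to the identity by continuity of conjugation). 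With careful bookkeeping -- an $\varepsilon/3$-style telescoping of products and conjugates, exploiting that both finite sets are contained in the ambient group $X$ -- this should produce a finite cover of $X$ by left translates of a neighborhood contained in $U$, completing the argument.
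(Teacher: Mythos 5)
Your forward implication and the reduction to covering $X$ by finitely many left translates of a single neighborhood are fine, but the reverse implication is not actually proved: you stop at the point you yourself identify as ``the main obstacle,'' and the plan you sketch for overcoming it contains a step that is false in general. You propose to control the conjugates that arise by noting that ``inner automorphisms by elements of $U$ \dots are close to the identity by continuity of conjugation.'' Continuity of $(g,x)\mapsto gxg^{-1}$ only gives pointwise control: for each \emph{fixed} $x$ one can make $gxg^{-1}$ close to $x$ by taking $g$ close to $e$, with no uniformity either over $x$ ranging in a neighborhood or over $g$ ranging in all of $U$. The assertion that $uWu^{-1}\subset U$ for all $u\in U$ and some neighborhood $W$ is essentially the statement that $X$ is balanced (or that $U$ is precompact --- which is what you are trying to prove), and it fails in general topological groups. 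So the two-scale/telescoping plan runs straight back into the same circularity you set out to break: after rewriting the elements of the finer finite set via $X=FUF$, you must conjugate by elements of $U$, an a priori non-precompact set, and nothing bounds those conjugates.

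The paper's proof avoids conjugation by neighborhoods entirely, using a combinatorial covering theorem \cite[12.6]{PB}: if a group $G$ is covered by finitely many subsets $A_1,\dots,A_n$, then for some $i$ there is a finite set $B$ with $G=BA_iA_i^{-1}$. One first shrinks to $UU^{-1}\subset W$ and applies the hypothesis to get $X=FUF=\bigcup_{x,y\in F}xUy$; the covering theorem then yields finite $B$ and $x,y\in F$ with $G=B(xUy)(xUy)^{-1}=BxUU^{-1}x^{-1}$. The crucial point is that the $y$'s cancel and only conjugation by the \emph{single fixed element} $x$ survives, which is removed by right-translating the identity $G=Gx$, giving $G=BxUU^{-1}\subset(Bx)W$. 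If you want to complete your argument along your own lines, you will need an ingredient of this combinatorial type (or some other device that eliminates conjugation by non-finite sets); the elementary $\varepsilon/3$ bookkeeping you describe will not suffice.
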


\begin{proof} The ``only if'' part is trivial. To prove the ``if'' part, assume that for any neighborhood $U\subset X$ of the unit there exists a finite subset $F\subset X$ such that $A\subset FUF$.
Given a neighborhood $W=W^{-1}\subset X$ of the unit, we need to find a finite subset $E\subset X$ such that $X=EW=WE$. Choose a neighborhood $U\subset X$ of the unit such that $UU^{-1}\subset W$. By our assumption, there exists a finite set $F\subset X$ such that $X=FUF$.
By \cite[12.6]{PB}, for some $x,y\in F$ there exists a finite subset $B\subset G$ such that $G=B(xUy)(xUy)^{-1}$. Then $G=BxUU^{-1}x^{-1}$ and hence $G=EW=WE^{-1}$ for $E=Bx$.
\end{proof}

The following proposition shows that our definition of a group of finite exponent is equivalent to the standard one.

\begin{proposition}\label{l:exp} A group $X$ has finite exponent if and only if there exists $n\in\IN$ such that for every $x\in X$ the power $x^n$ coincides with the unit of the group $X$.
\end{proposition}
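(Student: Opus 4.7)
The plan is to split on the two directions. The backward direction is immediate: if $x^n=e$ for every $x\in X$, then $\{x^n:x\in X\}=\{e\}$ is finite, so $X$ has finite exponent in the sense used in the paper. The forward direction is the substance.

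For the forward direction, I would set $F:=\{x^n:x\in X\}$ and exploit the hypothesis that $F$ is finite. The key observation to record is that $F$ is stable under taking positive integer powers: if $y=x^n\in F$, then for every $k\in\IN$ we have $y^k=x^{nk}=(x^k)^n\in F$. Therefore the cyclic subsemigroup $\{y^k:k\in\IN\}$ sits inside the finite set $F$, which forces two of its entries to coincide and hence $y$ to have finite order in $X$.

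With this in hand, I would let $M$ be the least common multiple of the (finitely many) orders of elements of $F$; this $M$ is a well-defined positive integer. Then $y^M=e$ for every $y\in F$, so for every $x\in X$ we obtain $x^{nM}=(x^n)^M=e$. Thus the integer $m:=nM$ witnesses the required uniform annihilation.

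The argument is entirely elementary and I do not expect any real obstacle. The only point that needs care is the closure of $F$ under taking positive integer powers, since this is precisely what allows one to pass from ``every $n$th power lies in a finite set'' to ``every $n$th power has bounded order,'' and thereby to a uniform exponent for $X$.
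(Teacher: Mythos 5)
Your proof is correct and follows essentially the same route as the paper: both set $F=\{x^n:x\in X\}$, observe that $F$ is closed under taking positive powers, apply pigeonhole to conclude each element of $F$ has finite order, and combine these orders into a uniform exponent (you via the least common multiple, the paper via the product of the periods, which changes nothing of substance). No gaps.
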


\begin{proof} The ``if'' part is trivial. To prove the ``only if'' part, assume that $X$ has finite exponent and find $n\in\IN$ such that the set $F=\{x^n:x\in X\}$ is finite.

It follows that for every $x\in F$ the powers $x^{kn}$, $k\in\IN$, belong to the set $F$. So, by the Pigeonhole Principle, $x^{in}=x^{jn}$ for some numbers $i<j$. Consequently, for the number $m_x=j-i$ the power $x^{nm_x}$ is the unit $e$ of the group $X$. Then for the number $m=\prod_{x\in F}m_x$ we have $\{x^{n^2m}:x\in X\}\subset\{x^{nm}:x\in F\}=\{e\}$.
\end{proof}

This characterization implies that being of finite exponent is a 3-space property.

\begin{corollary} Let $H$ be a normal subgroup of a group $G$. The group $G$ has finite exponent if and only if $H$ and $G/H$ have finite exponent.
\end{corollary}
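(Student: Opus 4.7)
The plan is to use Proposition~\ref{l:exp} to reduce the statement to the following algebraic fact: a group $G$ has finite exponent if and only if there is a single $n\in\IN$ with $x^n=e$ for all $x\in G$. Once the definition is replaced by this clean condition, the corollary becomes a routine three-step argument.

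For the ``only if'' direction, I would assume $G$ has finite exponent. Applying Proposition~\ref{l:exp}, pick $n\in\IN$ with $x^n=e$ for every $x\in G$. Then $h^n=e$ for every $h\in H$, so $H$ has finite exponent, and $(xH)^n=x^nH=H$ for every coset $xH\in G/H$, so $G/H$ has finite exponent; both conclusions use Proposition~\ref{l:exp} in the opposite direction.

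For the ``if'' direction, assume that $H$ and $G/H$ both have finite exponent. By Proposition~\ref{l:exp} pick $m,k\in\IN$ such that $h^m=e$ for every $h\in H$ and $(xH)^k=H$ for every $xH\in G/H$. The latter says $x^k\in H$ for every $x\in G$, and then $(x^k)^m=e$, so $x^{km}=e$ for every $x\in G$. Invoking Proposition~\ref{l:exp} one more time gives that $G$ has finite exponent with bound $km$.

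There is no real obstacle here: the whole point is that ``finite exponent'' is an equational condition on elements (after Proposition~\ref{l:exp}), and equational conditions are automatically 3-space properties by this kind of two-line lifting argument. The only care needed is to cite Proposition~\ref{l:exp} in both directions so that the argument stays within the definition of ``finite exponent'' used in this paper (namely, that $\{x^n:x\in X\}$ is finite), rather than silently switching to the more classical ``$x^n=e$ for all $x$'' formulation.
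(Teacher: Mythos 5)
Your proof is correct and follows exactly the route the paper intends: the paper states this corollary without proof as an immediate consequence of Proposition~\ref{l:exp}, and your argument (reduce to the equational form $x^n=e$, then lift via $x^{km}=(x^k)^m=e$) is precisely the standard two-line verification being left to the reader. Nothing is missing.
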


A similar 3-space property holds also for topological groups of compact exponent. A subgroup $H$ of a group $G$ is called {\em central} if $H$ is contained in the {\em center} $Z(G)=\{x\in G:\forall g\in G\;\;xg=gx\}$ of the group $G$.

\begin{proposition}\label{p:3exp} Let $Z$ be a closed central subgroup of a topological group $X$. The topological group $X$ has precompact exponent if and only if the topological groups $Z$ and $Y=X/Z$ have precompact exponent.
\end{proposition}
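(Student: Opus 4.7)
The plan is to prove both directions of the equivalence, with the forward direction essentially routine and the reverse requiring a clever reduction. For the forward direction, if $X$ has precompact exponent $N$, then $NZ = \{z^N : z\in Z\}$ is a subset of $NX$ and so precompact in $X$, hence in $Z$ via the subspace topology, while $NY = q(NX)$ is precompact as the continuous image of a precompact set under the quotient $q\colon X\to Y$.

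For the substantive $(\Leftarrow)$ direction, suppose $mY$ and $nZ$ are precompact; I will show that $N := mn$ works. The first step is to quotient $X$ by the precompact closed central subgroup $\overline{nZ}\subset Z$: set $X_1 := X/\overline{nZ}$ with quotient map $\pi\colon X\to X_1$, and $Z_1 := Z/\overline{nZ}$. Then $Z_1$ is closed central in $X_1$, the induced quotient $X_1/Z_1$ is canonically isomorphic to $Y$, and crucially $Z_1$ has exponent dividing $n$, since $z^n\in nZ\subset\overline{nZ}$ for every $z\in Z$.

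The heart of the argument is to construct a continuous map $\phi\colon mY\to X_1$ by the rule $\phi(u)=\tilde u^{\,n}$, where $\tilde u\in X_1$ is any lift of $u$ under $q_1\colon X_1\to Y$. Well-definedness combines centrality of $Z_1$ with its finite exponent $n$: two lifts differ by some $z_1\in Z_1$, and $(\tilde u z_1)^n = \tilde u^n z_1^n = \tilde u^n$. To verify continuity of $\phi$ at a point $u_0\in mY$, I pick a lift $\tilde u_0$, use continuity of $x\mapsto x^n$ at $\tilde u_0$ to produce a neighborhood $W\ni\tilde u_0$ with $w^n\in\phi(u_0)\cdot U$ for all $w\in W$, and then invoke openness of $q_1$ to pull $W$ back to a neighborhood of $u_0$ inside $mY$. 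Since $mY$ is precompact and $\phi$ is continuous, $\phi(mY)$ is precompact in $X_1$; and a direct check identifies $\phi(mY)=\{x_1^{mn} : x_1\in X_1\}$, so $X_1$ has precompact exponent $mn$.

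To conclude, I transfer precompactness from $X_1$ back to $X$ by a short three-space computation exploiting the precompactness and centrality of $\overline{nZ}$: given a neighborhood $U$ of the identity in $X$, choose $U_1$ with $U_1^2\subset U$, take a finite $F_0\subset\overline{nZ}$ with $\overline{nZ}\subset F_0 U_1$, and a finite $\tilde F_1\subset X$ whose image under $\pi$ covers $(mn)X_1$ by $\pi(U_1)$-translates; then $x^{mn}\in\tilde F_1 U_1\overline{nZ}\subset\tilde F_1 U_1 F_0 U_1=(\tilde F_1 F_0)\,U_1^2\subset(\tilde F_1 F_0)\,U$, using centrality of $F_0$ to commute it past $U_1$. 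The main obstacle I anticipate is neither the algebraic well-definedness of $\phi$ nor the final transfer, but the verification of continuity of $\phi$: this is precisely where the trick of quotienting by $\overline{nZ}$ pays off, for without the finite exponent of $Z_1$ one would be forced to expand $(fv)^n$ as a product of conjugates of $v$, creating a circular dependency between the choice of a finite lift of $mY$ and the required smallness of $v$ that is quite delicate to resolve directly.
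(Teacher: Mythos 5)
Your forward direction is the same as the paper's, the reduction to $X_1=X/\overline{nZ}$ (so that the central subgroup acquires finite exponent and $\phi(u)=\tilde u^{\,n}$ becomes well defined) is a genuinely nice reformulation, and your final transfer of precompactness from $X_1$ back to $X$ is a correct routine computation. The gap is the sentence ``Since $mY$ is precompact and $\phi$ is continuous, $\phi(mY)$ is precompact in $X_1$.'' Precompactness is a property of the \emph{uniformity}, not of the topology, and continuous maps do not preserve it: the map $x\mapsto 1/x$ carries the precompact subset $(0,1)$ of the additive group $\IR$ onto the non-precompact set $(1,\infty)$. Pointwise continuity of $\phi$ together with total boundedness of $mY$ does not yield a finite cover of $\phi(mY)$; for that you need a \emph{uniform} estimate, say a single neighborhood $V_Y$ of the unit of $Y$ with $\phi(uV_Y)\subset\phi(u)U$ for all $u\in mY$. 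When you try to prove such an estimate you must expand $(\tilde u\tilde v)^n$ as $\tilde u^{\,n}$ times a product of conjugates $\tilde u^{-k}\tilde v\,\tilde u^{\,k}$, $0\le k<n$ --- exactly the computation your quotient trick was meant to avoid. Passing to $X_1$ kills the central error term $z^n$ but does nothing about these conjugates, so the circular dependency you flag at the end has not been resolved, only relocated into an unjustified claim.

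The missing ingredient is precisely what the paper isolates as Claim~\ref{cl:w}: for the precompact set $B^n$ (where $B=mY$) and any neighborhood $V$ of the unit, the intersection $W=\bigcap_{x\in q^{-1}(B^n)}xV^3x^{-1}$ is again a neighborhood of the unit; its proof uses the openness of the quotient map and the centrality of $Z$ to replace the conjugating elements by finitely many of them. With this uniform conjugation bound the paper carries out the conjugate expansion directly in $X$ (writing $x^m=bwz$, $x^{mn}=(bw)^nz^n$ and $(bw)^n=\bigl(\prod_{i=1}^n b^iwb^{-i}\bigr)b^n$), with no need for the auxiliary quotient. Your argument becomes correct if you insert an analogous claim to upgrade the pointwise continuity of $\phi$ to the required uniform statement; as written, the central step of the nontrivial implication is unsupported.
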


\begin{proof} If the topological group $X$ has precompact exponent, then for some $n\in\IN$ the set $nX=\{x^n:n\in X\}$ is precompact in $X$. It follows that the intersection $(nX)\cap Z\supset nZ$ is precompact in $Z$ and the image $q(nX)=nY$ of $nX$ under the quotient homomorphism $q:X\to X/Z=Y$ is precompact in the quotient topological group $Y$.
\smallskip

The proof of the ``if'' part is more complicated. Assume that the topological groups $Z$ and $Y:=X/Z$ have precompact exponent. Then there exist natural numbers $n$ and $m$ such that the sets $A:=nZ$ and $B:=mY$ are precompact. The set $B$ contains the unit of the group $Y$ and hence $B^k\subset B^n$ for all positive $k\le n$.

We claim that the set $nmX:=\{x^{nm}:x\in X\}$ is precompact in $X$. Given any open neighborhood $U=U^{-1}\subset G$ of the unit, we should find a finite subsets $F\subset G$ such that the set $nmX\subset FU\cap UF$. Since the set $nmX$ is symmetric, it suffices to find $F\subset X$ such that $nmX\subset FU$. Using the continuity of the group operations, choose a neighborhood $V\subset X$ of the unit such that $V=V^{-1}$ and $V^{3n+1}\subset U$. Let $q:X\to Y$ be the quotient homomorphism.

\begin{claim}\label{cl:w} For the precompact set $B^n$ the intersection $W=\bigcap_{x\in q^{-1}(B^n)}xV^3x^{-1}$ is a neighborhood of the unit in $X$.
\end{claim}

\begin{proof} By the precompactness of $B^n$ and the openness of the quotient homomorphism $q:X\to X/Z$, there exists a finite set $F\subset X$ such that $B^n\subset q(FV)$. We claim that the neighborhood $W'=\bigcap_{y\in F}yVy^{-1}$ is contained in $xV^3x^{-1}$ for any $x\in q^{-1}(B^n)$. Indeed, for any $w\in W'$ and $x\in q^{-1}(B^n)$, we can find $y\in F$ such that $q(x)\in q(yV)$ and hence $x=yvz$ for some $v\in V$ and $z\in Z$. Then $w\in yVy^{-1}\subset  xz^{-1}v^{-1}Vvzx^{-1}=xv^{-1}Vvx^{-1}\subset xV^3x^{-1}$ (here we use that $z$ belongs to the center of the group $X$).
\end{proof}

By the precompactness of the sets $A,B$, there exist a finite set $A'\subset A\subset Z$ such that $A\subset A'V$ and a finite set $B'\subset q^{-1}(B)$ such that $B\subset q(B')\cdot q(W)=q(B'W)$. We claim that the finite set $F=\{b^na:b\in B',\;a\in A'\}$ has the desired property: $x^{mn}\in FU$ for any $x\in X$.

The choice of $m$ ensures that $x^m\in q^{-1}(B)\subset q^{-1}(q(B'W))=B'WZ$. So, we can find elements $b\in B'$, $w\in W$ and $z\in Z$ such that $x^m=bwz$ and hence $x^{mn}=(bwz)^n=(bw)^nz^n\in (bw)^nA\subset (bw)^nA'V$ (we recall that the element $z\in Z$ belongs to the center of $X$).

Observe that $(bw)^n=\Big(\prod_{i=1}^nb^iwb^{-i}\Big)b^n$.
For every $i\le n$ the element $b^i$ belongs to $q^{-1}(B^n)$ and by Claim~\ref{cl:w}, $b^iwb^{-i}\in b^nV^3b^{-n}$. So, $$x^{mn}\in (bw)^nA'V=\Big(\prod_{i=1}^nb^iwb^{-i}\Big)b^nA'V\subset (b^nV^{3}b^{-n})^nb^nA'V=b^nV^{3n}A'V=b^nA'V^{3n+1}\subset FU.$$
\end{proof}

For complete topological groups, the precompactness of exponent is recognizable by separable subgroups.

\begin{proposition}\label{p:separ-exp} A complete topological group $X$ has precompact exponent if and only if each countable subgroup of $X$ has precompact exponent.
\end{proposition}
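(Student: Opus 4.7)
The plan is to establish the equivalence by proving both implications, the nontrivial one via a diagonal construction of a countable subgroup without precompact exponent. A key auxiliary observation I would establish first is that precompactness of a subset $A\subset H\subset X$ relative to the ambient group $X$ is equivalent to precompactness of $A$ inside the topological subgroup $H$. Indeed, given a neighborhood $U$ of the unit of $X$, choose a symmetric neighborhood $U'$ with $U'U'\subset U$; precompactness of $A$ in $X$ yields a finite $F\subset X$ with $A\subset FU'$, and for each $f\in F$ with $fU'\cap A\neq\emptyset$ pick $a_f\in fU'\cap A\subset H$, so that $fU'\subset a_fU'^{-1}U'\subset a_fU$. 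Then the finite set $\{a_f\}\subset H$ satisfies $A\subset\{a_f\}(U\cap H)$, and a symmetric argument produces the covering from the other side.

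This observation immediately yields the forward direction: if $X$ has precompact exponent witnessed by $n\in\IN$, then for any countable subgroup $H\subset X$ the set $nH\subset nX$ is precompact in $X$ and hence in $H$, so $H$ has precompact exponent.

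For the reverse direction I argue contrapositively. Assume $X$ does not have precompact exponent, so for every $n\in\IN$ the symmetric set $nX=(nX)^{-1}$ fails to be precompact in $X$. Because $nX$ and every symmetric neighborhood are inverse-closed, the two-sided precompactness condition $nX\subset FU\cap UF$ is equivalent (up to replacing $F$ by $F\cup F^{-1}$) to the one-sided $nX\subset FU$, so for each $n$ I may choose a symmetric open neighborhood $W_n$ of the unit such that no finite $F\subset X$ satisfies $nX\subset FW_n$. Then I recursively pick $x_{n,k}\in X$ for $k\in\IN$ with
$$x_{n,k}^n\notin\bigcup_{j<k}x_{n,j}^nW_n,$$
which is possible since the right-hand side is a finite union of $W_n$-translates and therefore cannot cover $nX$. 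By construction, $x_{n,j}^{-n}x_{n,k}^n\notin W_n$ whenever $j<k$.

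Let $H$ be the (countable) subgroup of $X$ generated by $\{x_{n,k}:n,k\in\IN\}$. I claim $nH$ fails to be precompact in $H$ for every $n$. Fix $n$, pick a symmetric neighborhood $V$ of the unit in $X$ with $VV\subset W_n$, and suppose toward contradiction that $nH$ is precompact in $H$. By the opening observation $nH$ is then precompact in $X$, yielding a finite $F\subset X$ with $\{x_{n,k}^n:k\in\IN\}\subset nH\subset FV$. By the pigeonhole principle some $f\in F$ contains two terms $x_{n,j}^n,x_{n,k}^n\in fV$ with $j<k$, whence $x_{n,j}^{-n}x_{n,k}^n\in V^{-1}V\subset W_n$, contradicting the defining property of the sequence. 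Hence $H$ is a countable subgroup without precompact exponent, completing the contrapositive. The main obstacle is the two-sided formulation of precompactness, handled cleanly by working with symmetric neighborhoods and the identity $nX=(nX)^{-1}$; once this passes precompactness transparently between $X$ and $H$, the diagonal construction is routine.
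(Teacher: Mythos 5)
Your proof is correct, but it takes a different route from the paper's. The paper does not argue directly: it first isolates a general lemma (Lemma~\ref{l:separ}) stating that a subset $A$ of a topological group is precompact iff $A\cap H$ is precompact in $H$ for every countable subgroup $H$, proved by taking (via Zorn's Lemma) a maximal $U$-separated subset $E\subset A\cup A^{-1}$, observing it must be infinite, and letting $H$ be generated by a countable infinite piece of $E$; the proposition is then declared an easy consequence. Your argument is self-contained and replaces Zorn by a direct recursion choosing $x_{n,k}$ with $x_{n,k}^n\notin\bigcup_{j<k}x_{n,j}^nW_n$. More importantly, you generate the countable subgroup $H$ by the \emph{roots} $x_{n,k}$ rather than by the separated powers themselves, which is exactly what is needed to conclude that $nH$ (and not merely $(nX)\cap H$) fails to be precompact; this is the one point where the paper's ``easily derived'' reduction to Lemma~\ref{l:separ} is not literally immediate, since the lemma only produces a countable subgroup meeting $nX$ in a non-precompact set, and one must still pass to the subgroup generated by $n$-th roots of the separated elements. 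Your explicit treatment of the intrinsic nature of precompactness (precompact in $H$ iff precompact in $X$) and of the reduction of the two-sided condition to a one-sided one via symmetry of $nX$ is also sound. What the paper's approach buys is a reusable lemma (it is invoked again, e.g., in Theorem~\ref{t:gutik} and Theorem~\ref{t:conjucent}); what yours buys is a cleaner, choice-free, and fully explicit proof of this particular proposition.
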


This proposition can be easily derived from the following (probably known) lemma.

\begin{lemma}\label{l:separ} A subset $A$ of a topological group $X$ is precompact if and only if for each countable subgroup $H\subset X$ the intersection $A\cap H$ is precompact in the topological group $H$.
\end{lemma}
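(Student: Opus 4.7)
The forward implication is a routine transfer of finite covers through the subspace topology. Given a countable subgroup $H\subset X$ and a neighborhood $W=U\cap H$ of the unit in $H$, I would shrink $U$ to a symmetric open $V\subset X$ with $V^2\subset U$, apply precompactness of $A$ in $X$ to cover $A$ by finitely many left translates $xV$ (and, symmetrically, right translates $Vy$) with $x,y\in X$, and in each such translate that meets $A\cap H$ replace the center $x$ by a witness $h_x\in xV\cap A\cap H$. The relation $V=V^{-1}$ and $V^2\subset U$ give $xV\subset h_xV^{-1}V\subset h_xU$, so intersecting with $H$ produces a cover of $A\cap H$ by finitely many translates of $W$ with centers in $H$; the right side is handled by the mirror argument and the two finite sets are then unioned.

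For the reverse implication I plan to argue by contraposition. If $A$ is not precompact in $X$, then, by the symmetry between left and right translates, I may assume that there is a neighborhood $U$ of the unit in $X$ admitting no finite $F\subset X$ with $A\subset FU$. Inductively pick $a_1,a_2,\ldots\in A$ with $a_{n+1}\notin\bigcup_{i\le n}a_iU$; this is possible at every step, for otherwise the chosen $a_i$'s themselves would form the forbidden finite cover. Let $H$ be the countable subgroup of $X$ generated by $\{a_n:n\in\IN\}$, so that $\{a_n\}\subset A\cap H$.

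To show that $A\cap H$ is not precompact in $H$, fix a symmetric open $V\subset X$ with $V^2\subset U$ and set $W:=V\cap H$, a neighborhood of the unit in $H$. A finite cover $A\cap H\subset FW$ with $F\subset H$ would put all $a_n$ into $FV$, and the pigeonhole principle would then force some $f\in F$ and indices $i<j$ with $a_i,a_j\in fV$; this gives $a_j\in a_iV^{-1}V\subset a_iU$, contradicting the construction of $(a_n)$. Hence $A\cap H$ is not precompact in $H$, completing the contrapositive.

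The only delicate point is the scale calibration in the reverse direction: the separating sequence $(a_n)$ has to be constructed relative to $U$ rather than to $V$, so that a pigeonhole collision at the finer scale $V$ produces two $a_j$'s within $U$ of one another and thereby conflicts with the inductive separation. Everything else reduces to the standard trick of promoting coverings from $X$ to coverings centered in a subgroup by choosing witnesses in that subgroup.
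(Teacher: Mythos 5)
Your proof is correct and follows essentially the same route as the paper's: both hinge on extracting an infinite $U$-separated subset of $A$, generating a countable subgroup $H$ from it, and running a pigeonhole argument at the finer scale $V$ with $V=V^{-1}$, $V^2\subset U$. The only cosmetic differences are that the paper obtains the separated set as a \emph{maximal} $U$-separated subset of $A\cup A^{-1}$ via Zorn's Lemma rather than by your direct recursion, and it handles the left/right asymmetry in the definition of precompactness by passing to $A\cup A^{-1}$ instead of your ``mirror argument'' reduction.
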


\begin{proof} The ``only if'' part is trivial. To prove the ``only if'' part, assume that $A$ is not precompact. Then $A\cup A^{-1}$ is not precompact and hence there exists a neighborhood $U=U^{-1}\subset X$ of the unit such that $A\cup A^{-1}\ne FU$ for any finite subset $F\subset X$. By Zorn's Lemma, there exists a maximal subset $E\subset A\cup A^{-1}$ which is {\em $U$-separated} in the sense that $x\notin yU$ for any distinct points $x,y\in E$. The maximality of $E$ guarantees that for any $x\in A\cup A^{-1}$ there exists $y\in E$ such that $x\in yU$ or $y\in xU$ (and hence $x\in yU^{-1}=yU$). Consequently, $A\cup A^{-1}=EU$. The choice of $U$ ensures that the set $E$ is infinite. Then we can choose any infinite countable set $E_0\subset E$ and consider the countable subgroup $H$ generated by $E_0$. It follows that the intersection $H\cap (A\cup A^{-1})$ containes the infinite $U$-separated set $E_0$ and hence is not precompact in $H$.
\end{proof}

\section{On $\mathsf{e}{:}\C$-closed topological groups}\label{s:eC}

In this section we collect some results on $\mathsf{e}{:}\C$-closed topological groups for various classes $\C$.

First, observe that Theorems~\ref{t:Weil}, \ref{t:2.5n} and \ref{t:2.3} imply the following theorem (announced as Theorem~\ref{t1.1} in the introduction).

\begin{theorem}\label{t1.1r} A topological group $X$ is $\eTS$-closed if and only if $X$ is Weil-complete and for every continuous homomorphism $f:X\to Y$ into a Hausdorff topological semigroup $Y$ the complement $\overline{f(X)}\setminus f(X)$ is not an ideal in the semigroup $\overline{f(X)}$.
\end{theorem}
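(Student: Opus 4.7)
The plan is to derive this theorem as a direct consequence of the machinery already assembled in Section~2, namely Theorems~\ref{t:Weil}, \ref{t:2.5n}, and~\ref{t:2.3}. Theorem~\ref{t:2.3} is tailored precisely to yield statements of this form, once applied to a class $\C$ that is stable under topological isomorphisms, closures, and ideal unions. So my first step is to verify these three stability properties for the class $\C:=\TS$ of Hausdorff topological semigroups. Stability under topological isomorphisms is immediate; stability under closures is the fact that the closure of a subsemigroup in a Hausdorff topological semigroup is again a Hausdorff topological semigroup; stability under ideal unions follows from items~(3) and~(4) of Theorem~\ref{t:2.2}, which guarantee that $U_h(X,Y)\in\TS$ whenever $X,Y\in\TS$.

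For the ``only if'' direction, I would first invoke Theorem~\ref{t:Weil}, noting that the left and right Weil-completions of a topological group belong to $\TS$, to conclude that an $\eTS$-closed topological group $X$ is Weil-complete. For the ideal clause I would argue by contrapositive: given a continuous homomorphism $f:X\to Y$ into a Hausdorff topological semigroup such that $\overline{f(X)}\setminus f(X)$ is a (nonempty) ideal of $\overline{f(X)}$, I would replace $Y$ by $\overline{f(X)}$, which belongs to $\TS$ by stability under closures, and then by a disjoint topologically isomorphic copy so that $X\cap Y=\emptyset$. The ideal union $U_f(X,Y)$ is then a Hausdorff topological semigroup by Theorem~\ref{t:2.2}(3)(4), it contains $X$ as an open subsemigroup by Theorem~\ref{t:2.2}(1) (so the inclusion $X\hookrightarrow U_f(X,Y)$ is a topological isomorphic embedding), and the image of $X$ is not closed by Theorem~\ref{t:2.2}(2), because $f(X)$ is not closed in $\overline{f(X)}$. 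This contradicts the $\eTS$-closedness of $X$.

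For the ``if'' direction, I would let $g:X\to Y$ be a topological isomorphic embedding of $X$ into a Hausdorff topological semigroup $Y$ and, assuming $g(X)$ is not closed in $Y$, apply Theorem~\ref{t:2.5n}(1) (with the Weil-completeness of $X$) to conclude that $\overline{g(X)}\setminus g(X)$ is an ideal in $\overline{g(X)}$. Since $g$ is in particular a continuous homomorphism, this contradicts the ideal-complement hypothesis, completing the proof.

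The substantive work has already been performed in Section~2; the present theorem is essentially Theorem~\ref{t:2.3} specialized to $\C=\TS$, together with the observation that $\TS$ contains all Weil-completions of topological groups. Accordingly I do not expect a serious obstacle beyond the bookkeeping described above: verifying the three stability properties of $\TS$ and carefully tracking through the ideal-union construction to ensure that the copy of $X$ produced is a non-closed topological isomorphic embedding in a member of $\TS$.
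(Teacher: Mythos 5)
Your proposal is correct and follows essentially the same route as the paper, which derives Theorem~\ref{t1.1r} directly from Theorems~\ref{t:Weil}, \ref{t:2.5n} and \ref{t:2.3} (your verification that $\TS$ is stable under topological isomorphisms, closures and ideal unions, via Theorem~\ref{t:2.2}, is exactly the bookkeeping needed to apply Theorem~\ref{t:2.3} with $\C=\TS$). No gaps.
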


We recall that a topologized semigroup $X$ is defined to be a {\em powertopological semigroup} if it is semitopological and for every $n\in\IN$ the power map $X\to X$, $x\mapsto x^n$, is continuous. By $\pTS$ we denote the class of Hausdorff powertopological semigroups.


\begin{theorem}\label{t:exp} Each complete topological group $X$ of compact exponent is $\epTS$-closed.
\end{theorem}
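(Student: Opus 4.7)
The plan is to argue by contradiction, exploiting the fact that in a powertopological semigroup the power map $p_n\colon y\mapsto y^n$ is globally continuous. Suppose $X$ is not $\epTS$-closed, so there exists an isomorphic topological embedding $f\colon X\to Y$ into a Hausdorff powertopological semigroup $Y$ whose image is not closed. Identify $X$ with $f(X)$ and replace $Y$ by the closure $\overline{f(X)}$; this is legitimate because the class $\pTS$ is stable under taking subsemigroup closures (closures of subsemigroups in semitopological semigroups are subsemigroups by a standard separate-continuity/net argument, and continuity of each power map survives passing to a subspace). So we may assume $X$ is dense in $Y$ but $X\ne Y$, and fix some $y\in Y\setminus X$.

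Next, use the compact exponent hypothesis to locate $y^n$. Choose $n\in\IN$ such that the set $\overline{nX}^X$ is compact in $X$. Since $Y$ is Hausdorff, every compact subset of $Y$ is closed, so $\overline{nX}^X$ is already closed in $Y$; therefore $\overline{nX}^Y=\overline{nX}^X\subset X$. The density of $X$ in $Y$ combined with the continuity of $p_n\colon Y\to Y$ yields
\[
p_n(Y)=p_n\bigl(\overline{X}^{Y}\bigr)\subset \overline{p_n(X)}^Y=\overline{nX}^Y\subset X,
\]
so in particular $y^n\in X$.

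To close the loop, invoke Theorem~\ref{t:2.5n}(2): because $X$ is complete and $f$ is a non-closed topological isomorphic embedding into the Hausdorff semitopological semigroup $Y$, every power $y^k$ of every point $y\in\overline{f(X)}\setminus f(X)$ again lies in $\overline{f(X)}\setminus f(X)$. Applied to our $y$ and the exponent $n$, this gives $y^n\notin X$, contradicting the conclusion of the previous paragraph.

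I do not anticipate any serious obstacle; the whole argument is a one-line sandwich once the right tools are lined up. The one subtle point worth checking carefully is that replacing $Y$ by $\overline{f(X)}$ preserves membership in $\pTS$, which amounts to verifying the standard fact that the closure of a subsemigroup in a Hausdorff semitopological semigroup is itself a semitopological semigroup (continuity of the power map on the closure is automatic as it is inherited from $Y$).
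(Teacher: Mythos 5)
Your argument is correct and is essentially the paper's own proof: both use the continuity of the power map $p_n$ to show that $y^n$ lands in the (closed, because compact) set $\overline{nX}\subset X$ for every $y\in\overline{f(X)}$, and then derive a contradiction with Theorem~\ref{t:2.5n}(2). The only cosmetic difference is that you pass to the closure $\overline{f(X)}$ first (which is harmless but unnecessary), whereas the paper works directly with the closed set $\{y\in Y:y^n\in K\}\supset\bar X$ inside the ambient semigroup $Y$.
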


\begin{proof} Fix a number $n\in\IN$ and a compact set $K\subset X$ such that $\{x^n:x\in X\}\subset K$. To show that $X$ is $\epTS$-closed, assume that $X$ is a subgroup of some Hausdorff powertopological semigroup $Y$. The Hausdorff property of $Y$ ensures that the compact set $K$ is closed in $Y$. Then the continuity of the power map $p:Y\to Y$, $p:y\mapsto y^n$, implies that the set $$\{y\in Y:y^n\in K\}$$ containing $X$ is closed in $Y$ and hence contains $\bar X$.
If $X$ is not closed in $Y$, then we can find a point $y\in \bar X\setminus X$ and conclude that $y^n\in K\subset X$. But this contradicts Theorem~\ref{t:2.5n}(2).
\end{proof}

\begin{corollary} For a topological group $X$ of precompact exponent the following conditions are equivalent:
\begin{enumerate}
\item $X$ is complete;
\item $X$ is $\eTG$-closed;
\item $X$ is $\eTS$-closed;
\item $X$ is $\epTS$-closed.
\end{enumerate}
\end{corollary}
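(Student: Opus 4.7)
The plan is to close the cycle $(1)\Ra(4)\Ra(3)\Ra(2)\Ra(1)$, exploiting the fact that three of the four implications are either immediate from class inclusions or already established in the paper, so that the only substantive step is $(1)\Ra(4)$, which reduces to Theorem~\ref{t:exp}.

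First, the implications $(4)\Ra(3)\Ra(2)$ follow from the chain of class inclusions $\TG\subset\TS\subset\pTS$: if $\C'\subset\C$, then any topological isomorphic embedding of $X$ into an object of $\C'$ is also such an embedding into an object of $\C$, so $\eC$-closedness implies $\eC'$-closedness. Next, $(2)\Leftrightarrow(1)$ is precisely Raikov's half of Theorem~\ref{t:Raikov}. So I only need to establish $(1)\Ra(4)$.

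For $(1)\Ra(4)$, I would invoke the observation made just before Lemma~\ref{l:Prot}: for a complete topological group, precompact exponent is equivalent to compact exponent. Indeed, by assumption there is $n\in\IN$ such that $nX=\{x^n:x\in X\}$ is totally bounded, and totally bounded subsets of a complete topological group have compact closure in the two-sided uniformity (the closure of $nX$ remains totally bounded and is complete as a closed subset of the complete group, hence compact). Thus $X$ has compact exponent, and Theorem~\ref{t:exp} applies directly to conclude that $X$ is $\epTS$-closed.

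There is essentially no obstacle here: the corollary is a packaging result that combines Raikov's theorem with Theorem~\ref{t:exp}, plus the trivial monotonicity of $\eC$-closedness in $\C$. The only minor point to state cleanly is the passage from precompact to compact exponent in a complete group, which is a one-line application of the fact that a complete totally bounded uniform subspace is compact.
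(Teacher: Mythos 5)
Your proposal is correct and follows essentially the same route as the paper: the implications $(4)\Rightarrow(3)\Rightarrow(2)$ from the inclusions $\TG\subset\TS\subset\pTS$, the implication $(2)\Rightarrow(1)$ from Theorem~\ref{t:Weil} (equivalently, Raikov's theorem), and $(1)\Rightarrow(4)$ from Theorem~\ref{t:exp} after upgrading precompact exponent to compact exponent using completeness. Your extra remark spelling out why a totally bounded set in a complete group has compact closure is a correct and harmless elaboration of the observation the paper makes at the start of Section~3.
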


\begin{proof} The implications $(4)\Ra(3)\Ra(2)$ follow from the inclusions $\pTS\supset\TS\supset\TG$, $(2)\Ra(1)$ and $(1)\Ra(4)$ follow from Theorems~\ref{t:Weil} and \ref{t:exp}, respectively.
\end{proof}




\begin{proposition}\label{p:eTS-center} If a topological group $X$ is $\eTS$-closed, then for any closed normal subgroup $N\subset X$ the center $Z(X/N)$ of the quotient group $X/N$ has precompact exponent.
\end{proposition}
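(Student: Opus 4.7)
I will argue by contradiction using Theorem~\ref{t1.1r}: the goal is to produce a continuous homomorphism $f : X \to Y$ into a Hausdorff topological semigroup $Y$ for which $\overline{f(X)} \setminus f(X)$ is a nonempty ideal in $\overline{f(X)}$. Suppose then that $Z := Z(X/N)$ does not have precompact exponent, set $G := X/N$, and let $q : X \to G$ be the quotient homomorphism.

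The core of the construction is an amalgamation of $G$ with a ``bad'' semigroup extension of the Abelian central subgroup $Z$. Since $Z$ is an Abelian topological group lacking precompact exponent, Theorem~\ref{t:BRintro} implies that $Z$ is not $\eTS$-closed. Combining this with Theorem~\ref{t:2.5n}(1), we obtain a continuous topological isomorphic embedding $\phi : Z \to H$ into a Hausdorff topological semigroup $H$ such that $\phi(Z)$ is dense in $H$ and $H \setminus \phi(Z)$ is a nonempty ideal of $H$; as the closure of the Abelian subsemigroup $\phi(Z)$, the semigroup $H$ is also Abelian.

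Now define $Y := (G \times H)/{\sim}$, where the relation $\sim$ identifies $(g_1, h_1)$ with $(g_2, h_2)$ whenever there exists $z \in Z$ with $g_2 = g_1 z$ and $h_2 = \phi(z^{-1}) h_1$. Centrality of $Z$ in $G$ (so that $z$ commutes with the $G$-factors in any product) together with the Abelianness of $H$ (so that $\phi(z^{-1})$ commutes with any $h \in H$) ensures that $\sim$ is a closed semigroup congruence on the topological semigroup $G \times H$, so $Y$ inherits a Hausdorff topological semigroup structure. The map $\iota: G \to Y$, $g \mapsto [(g, e_H)]$, is a topological isomorphic embedding onto an open subgroup of $Y$; since $\phi(Z)$ is dense in $H$, a representative $(g, \phi(z_\alpha)) \sim (gz_\alpha, e_H) \in \iota(G)$ can be used to approximate any $[(g,h)]$, so $\iota(G)$ is dense. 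Moreover $[(g, h)] \in \iota(G)$ if and only if $h \in \phi(Z)$, and the ideal property of $H \setminus \phi(Z)$ in the Abelian $H$ propagates to show that $Y \setminus \iota(G) = \{[(g, h)] : h \notin \phi(Z)\}$ is a nonempty ideal of $Y$. Taking $f := \iota \circ q : X \to Y$, we have $\overline{f(X)} = Y$ and $\overline{f(X)} \setminus f(X) = Y \setminus \iota(G)$ a nonempty ideal, contradicting Theorem~\ref{t1.1r}.

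The main obstacle is the construction and control of the bad semigroup $H$ for $Z$, specifically ensuring that $H \setminus \phi(Z)$ is an ideal. This is straightforward when $Z$ is Weil-complete, via Theorem~\ref{t:2.5n}(1) applied directly to $Z$. When $Z$ is not Weil-complete, a naive passage to the Raikov completion $\hat Z$ introduces completion points $\phi(\hat z)$ with $\hat z \in \hat Z \setminus Z$ whose products can land back in $\phi(Z)$ (for instance $\phi(\hat z)\phi(\hat z^{-1}z) = \phi(z)$), breaking the ideal property for $H \setminus \phi(Z)$. Resolving this requires a more careful choice of $H$ ensuring that $\phi(Z)$ is exactly the maximal subgroup of $H$ — provided by the constructions underlying Theorem~\ref{t:BRintro} — or a reduction using the Weil-completeness of $X$ to transfer suitable completeness information to $Z$ through the preimage $q^{-1}(Z)$, which is Weil-complete as a closed subgroup of the Weil-complete $X$.
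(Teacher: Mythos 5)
Your overall strategy is the same as the paper's: reduce to the Abelian central subgroup $Z=Z(X/N)$, invoke the Abelian characterization (Theorem~\ref{t:BRintro}), and use the centrality of $Z$ in $G=X/N$ to propagate the bad behaviour from $Z$ to $G$, contradicting Theorem~\ref{t1.1r}. But the step you yourself flag as ``the main obstacle'' is a genuine, unresolved gap, and it sits at the heart of the argument. To get a semigroup $H\supset\phi(Z)$ with $H\setminus\phi(Z)$ a nonempty ideal you invoke Theorem~\ref{t:2.5n}(1), whose hypothesis is that the group in question is Weil-complete. For the Abelian group $Z$ this amounts to completeness, and completeness of $Z$ is exactly what is not available: $Z$ is a closed subgroup of the quotient $X/N$, and quotients of (Weil-)complete groups by closed normal subgroups do not obviously inherit completeness. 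Neither of your two proposed repairs is an argument: ``the constructions underlying Theorem~\ref{t:BRintro}'' are not part of that theorem's statement and cannot be cited for the maximal-subgroup property you need, and the ``transfer through $q^{-1}(Z)$'' requires precisely the nontrivial fact that a quotient of a Weil-complete group is Weil-complete, which you neither prove nor reference. In addition, several topological assertions about the amalgamation $(G\times H)/{\sim}$ (Hausdorffness of the quotient, continuity of the induced multiplication, that $\iota$ is an open topological embedding) are stated without verification; they are plausibly checkable, but they are not free.

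The paper's proof is organized so that this difficulty never arises. It first passes to the Raikov completion $\bar Z$ of $Z$ (a complete Abelian group without compact exponent), applies the Abelian theory there, and extracts not a whole ideal but a \emph{single} element $z$ in the completion of $(Z,\tau_Z)$ (for a suitable weaker Hausdorff group topology $\tau_Z$) with $z^n\notin Z$ for all $n\in\IN$ --- this is exactly what condition (5) of Theorem~\ref{t:BRintro} delivers. Centrality is then used to glue $\tau_Z$ to the topology of $G$ via a Pontryagin-axioms check, producing a weaker group topology $\tau$ on $G$; inside the completion of $(G,\tau)$ the subsemigroup generated by $G\cup\{z\}$ equals $\{gz^n:g\in G,\;n\in\w\}$, and its complement of $G$ is an ideal because no positive power of $z$ returns to $G$. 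In effect, the paper replaces your amalgamated product by a weaker group topology plus a one-element adjunction, which is both technically lighter and immune to the incompleteness of $Z$. To salvage your version you would need either to prove that $X/N$ is Weil-complete, or to restructure the argument along the paper's lines.
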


\begin{proof} Let $G=X/N$ be the quotient topological group, $q:X\to G$ be the quotient homomorphism and $Z=\{z\in Z:\forall g\in G\;zg=gz\}$ be the center of the group $G$. Assuming that $Z$ does not have precompact exponent, we conclude that the completion $\bar Z$ of $Z$ does not have compact exponent. Applying Theorem~\ref{t1.1r}, we obtain a continuous injective homomorphism $f:\bar Z\to Y$ to a topological group $Y$ such that the closure $\overline{f(\bar Z)}=\overline{f(Z)}$ of $f(\bar Z)$ in $Y$ contains an element $y$ such that $y^n\notin f(\bar Z)$ for all $n\in\IN$.

Observe that the family $\tau_Z=\{Z\cap f^{-1}(U):U$ is open in $Y\}$ is a Hausdorff topology on $Z$ turning it into a topological group, which is topologically isomorphic to the topological group $f(Z)$. Then the completion $\bar Z$ of the topological group $(Z,\tau_Z)$ contains an element $z\in\bar Z$ such that $z^n\notin Z$ for all $n\in\IN$.

Let $\Tau_G$ be the topology of the topological group $G$. Taking into account that the subgroup $Z$ is central in $G$, we can show that the family $\tau_e=\{U\cdot V:e\in U\in\Tau_G,\;e\in V\in\tau_Z\}$ satisfies the Pontryagin Axioms \cite[1.3.12]{AT} and hence is a neighborhood base at the unit of some Hausdorff group topology $\tau$ on $G$. The definition of this topology implies that the subgroup $Z$ remains closed in the topology $\tau$ and the subspace topology $\{U\cap Z:U\in\tau\}$ on $Z$ coincides with the topology $\tau_Z$. Then the completion $\bar Z$ of the topological group $(Z,\tau_Z)$ is contained in the completion $\bar G$ of the topological group $(G,\tau)$ and hence $z\in\bar Z\subset\bar G$. Now consider the subsemigroup $S$ of $\bar G$, generated by the set $G\cup\{z\}$. Observe that $\{z^n\}_{n\in\w}\subset \bar Z\setminus Z=\bar Z\setminus G$. Since the group $Z$ is central in $G$, the element $z$ commutes with all elements of $G$. This implies that $S=\{gz^n:g\in G,\;n\in\w\}$ and hence $S\setminus G= \{gz^n:g\in G,\;n\in\IN\}$ is an ideal in $G$. Let $i:G\to S$ be the identity homomorphism. Then for the homomorphism $h=i\circ q:X\to S$ the complement $\overline{h(X)}\setminus h(X)=S\setminus G$ is an ideal in $S$. By Theorem~\ref{t1.1r}, the topological group $X$ is not $\eTS$-closed. This is a desired contradiction showing that the topological group $Z(G)=Z(X/N)$ has precompact exponent.
\end{proof}

We recall that for a topological group $X$ its central series $\{e\}=Z_0(X)\subset Z_1(X)\subset\cdots$ consists of the subgroups defined recursively as $Z_{n+1}(X)=\{z\in X:\forall x\in X\;\;zxz^{-1}x^{-1}\in Z_n(X)\}$ for $n\in\w$.

\begin{corollary}\label{c:eTS=>Zn} If a topological group $X$ is $\eTS$-closed, then for every $n\in\w$ the subgroup $Z_n(X)$ has compact exponent.
\end{corollary}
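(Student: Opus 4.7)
My plan is to prove the corollary by a nested induction. I fix $n$ and argue that $Z_n(X)$ has precompact exponent by working along the refined filtration $\{e\}=Z_0(X)\subset Z_1(X)\subset\cdots\subset Z_n(X)$. The naive primary induction, moving directly from $Z_n(X)$ to $Z_{n+1}(X)$ by a 3-space argument, would founder because Proposition~\ref{p:3exp} requires a \emph{central} closed subgroup, and $Z_n(X)$ is central in $Z_{n+1}(X)$ only when $n\le 1$; in general one only has $[Z_n(X),Z_{n+1}(X)]\subset Z_{n-1}(X)$. The observation that saves the induction is that each single layer $Z_{k+1}(X)/Z_k(X)$ is, by the very definition of the central series, the center $Z(X/Z_k(X))$, and hence is automatically central in every subgroup of $X/Z_k(X)$ that contains it---in particular in $Z_n(X)/Z_k(X)$.

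Concretely, fix $n\in\w$; the case $n=0$ is trivial since $Z_0(X)=\{e\}$. For $n\ge 1$, I will show by downward induction on $k\in\{n-1,n-2,\dots,0\}$ that $Z_n(X)/Z_k(X)$ has precompact exponent. The base case $k=n-1$ follows from the identity $Z_n(X)/Z_{n-1}(X)=Z(X/Z_{n-1}(X))$ together with Proposition~\ref{p:eTS-center} applied to the closed normal subgroup $N:=Z_{n-1}(X)$. For the inductive step, supposing that $Z_n(X)/Z_{k+1}(X)$ has precompact exponent, I consider the short exact sequence
$$1\to Z_{k+1}(X)/Z_k(X)\to Z_n(X)/Z_k(X)\to Z_n(X)/Z_{k+1}(X)\to 1,$$
whose kernel $Z_{k+1}(X)/Z_k(X)=Z(X/Z_k(X))$ is a closed central subgroup of the middle term and has precompact exponent by another application of Proposition~\ref{p:eTS-center}. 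Proposition~\ref{p:3exp} (the 3-space property for precompact exponent) then delivers precompact exponent of $Z_n(X)/Z_k(X)$. At $k=0$ this reads that $Z_n(X)$ itself has precompact exponent.

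Finally I upgrade ``precompact'' to ``compact''. Since $X$ is $\eTS$-closed, Theorem~\ref{t:Weil} shows that $X$ is Weil-complete and in particular complete in its two-sided uniformity. The closed subgroup $Z_n(X)$ inherits two-sided completeness, and for complete topological groups precompact exponent coincides with compact exponent (as noted at the start of Section~3). Thus $Z_n(X)$ has compact exponent, as required. The main obstacle to anticipate is exactly the non-centrality gap flagged in the first paragraph; once this is bypassed via the finer filtration, the remainder is a routine double induction stitched together from Propositions~\ref{p:eTS-center} and~\ref{p:3exp}.
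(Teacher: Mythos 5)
Your argument is correct, and it is genuinely not the paper's argument: the paper runs a single upward induction on $n$, deducing precompact exponent of $Z_{n+1}(X)$ from that of $Z_n(X)$ and of $Z_{n+1}(X)/Z_n(X)=Z(X/Z_n(X))$ by a direct appeal to Proposition~\ref{p:3exp} applied to the pair $Z_n(X)\subset Z_{n+1}(X)$ --- precisely the ``naive'' step you flag. Your objection to that step is well taken: Proposition~\ref{p:3exp}, both as stated and as proved (its proof uses the identity $(bwz)^n=(bw)^nz^n$ and Claim~\ref{cl:w}, which require $z$ to commute with everything), genuinely needs the subgroup to be central, whereas for $n\ge 2$ one only has $[Z_n(X),Z_{n+1}(X)]\subset Z_{n-1}(X)$; already in the group of $4\times 4$ unitriangular matrices $Z_2$ is not contained in $Z(Z_3)$. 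Your downward induction along the refined filtration $Z_0(X)\subset\cdots\subset Z_n(X)$ invokes Proposition~\ref{p:3exp} only with kernels $Z_{k+1}(X)/Z_k(X)=Z(X/Z_k(X))$, which are central in all of $X/Z_k(X)$ and a fortiori in the intermediate subgroup $Z_n(X)/Z_k(X)$; the price is one application of Proposition~\ref{p:eTS-center} per layer $k$ instead of one per step of the outer induction, which is harmless since that proposition applies to every closed normal subgroup $N\subset X$. The final upgrade from precompact to compact exponent (Theorem~\ref{t:Weil}, completeness of closed subgroups of complete groups, and the remark opening Section~3) is identical in both arguments. In short, your proof is a correct, and in fact necessary, repair of the published induction: it proves the same statement while using Proposition~\ref{p:3exp} only within its actual hypotheses.
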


\begin{proof} First observe that the topological group $X$ is complete, being $\eTS$-closed. Then its closed subgroups $Z_n(X)$, $n\in\w$, also are complete. So, it suffices to prove that for every $n\in\w$ the topological group $Z_n(X)$ has precompact exponent. This will be proved by induction on $n$. For $n=0$ the trivial group $Z_0(X)=\{e\}$ obviously has precompact exponent. Assume that for some $n\in\w$ we have proved that the subgroup $Z_n(X)$ has precompact exponent. By Proposition~\ref{p:eTS-center}, the center $Z(X/Z_n(X))$ of the quotient topological group $X/Z_n(X)$ has precompact exponent. Since $Z(X/Z_n(X))=Z_{n+1}(X)/Z_n(X)$, we see that the quotient topological group $Z_{n+1}(X)/Z_n(X)$ has precompact exponent. By  Proposition~\ref{p:3exp}, the topological group $Z_{n+1}(X)$ has precompact exponent.  \end{proof}

Corollary~\ref{c:eTS=>Zn} implies the following characterization of $\eTS$-closed nilpotent topological groups (announced in the introduction as Theorem~\ref{t:eTS-nilpotent}).

\begin{theorem}\label{t:AbelC} For a nilpotent topological group $X$ the following conditions are equivalent:
\begin{enumerate}
\item $X$ is $\eTS$-closed;
\item $X$ is $\epTS$-closed;
\item $X$ is Weil-complete and has compact exponent;
\item $X$ is complete and has compact exponent.
\end{enumerate}
\end{theorem}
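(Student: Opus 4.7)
My plan is to close the loop of implications $(2)\Rightarrow(1)\Rightarrow(3)\Rightarrow(4)\Rightarrow(2)$, using essentially nothing beyond the machinery already assembled earlier in the paper. No genuine new argument is required; the work is to notice that nilpotency lets us cut off the central series at a finite stage and invoke Corollary~\ref{c:eTS=>Zn}.

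\textbf{Step 1: $(2)\Rightarrow(1)$.} This is purely formal. Every Hausdorff topological semigroup is in particular a Hausdorff powertopological semigroup (joint continuity of multiplication makes every power map continuous), so $\TS\subset\pTS$. Hence $\epTS$-closedness immediately restricts to $\eTS$-closedness.

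\textbf{Step 2: $(1)\Rightarrow(3)$.} This is the key place where nilpotency is used. From Theorem~\ref{t:Weil}, any $\eTS$-closed topological group is Weil-complete, which gives the first half of (3). For the second half, apply Corollary~\ref{c:eTS=>Zn} to conclude that every term $Z_n(X)$ of the central series has compact exponent. Since $X$ is nilpotent, there is some $n\in\w$ with $X=Z_n(X)$, and so $X$ itself has compact exponent.

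\textbf{Step 3: $(3)\Rightarrow(4)$.} Weil-completeness implies completeness in the two-sided uniformity, since every Cauchy filter in the two-sided uniformity is Cauchy in the left (and right) uniformity. Compact exponent is carried over unchanged.

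\textbf{Step 4: $(4)\Rightarrow(2)$.} This is precisely the content of Theorem~\ref{t:exp}, which asserts that any complete topological group of compact exponent is $\epTS$-closed.

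\textbf{Main obstacle.} There is no real obstacle; the heavy lifting has already been done. The only subtlety worth flagging is that the theorem reduces to Corollary~\ref{c:eTS=>Zn} precisely because nilpotency guarantees $X=Z_n(X)$ for some finite $n$ (so that the conclusion ``each $Z_n(X)$ has compact exponent'' bootstraps to a statement about $X$ itself). For non-nilpotent (even solvable) groups this step fails, consistent with the fact that Example~\ref{e:Iso} exhibits an $\eTS$-closed discrete solvable group without compact exponent.
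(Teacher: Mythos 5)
Your proposal is correct and follows essentially the same route as the paper: the implications $(2)\Rightarrow(1)$ and $(3)\Rightarrow(4)$ are formal, $(4)\Rightarrow(2)$ is Theorem~\ref{t:exp}, and the substantive step $(1)\Rightarrow(3)$ combines Theorem~\ref{t:Weil} with Corollary~\ref{c:eTS=>Zn} and the fact that nilpotency gives $X=Z_n(X)$ for some finite $n$. This matches the paper's argument exactly, including the role of nilpotency.
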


\begin{proof}  The implications $(2)\Ra(1)$ and $(3)\Ra(4)$ are trivial, and $(4)\Ra(2)$ was proved in Theorem~\ref{t:exp}. It remains to prove that $(1)\Ra(3)$. So, assume that the nilpotent topological group $X$ is $\eTS$-closed. By Theorem~\ref{t:Weil}, $X$ is Weil-complete. By Corollary~\ref{c:eTS=>Zn}, for every $n\in\w$ the subgroup $Z_n(X)$ has compact exponent. In particular, $X$ has compact exponent, being equal to $Z_n(X)$ for a sufficiently large number $n$.
\end{proof}

We do not know if Theorem~\ref{t:AbelC} remains true for hypercentral topological groups. We recall that a topological group $X$ is {\em hypercentral} if for each closed normal subgroup $H\subsetneq X$ the quotient group $X/H$ has non-trivial center. Each nilpotent topological group is hypercentral.

\begin{problem} Has each $\eTS$-closed hypercentral topological group compact exponent?
\end{problem}

The following characterization of compact topological groups shows that the $\epTS$-closedness of $X$ in Theorem~\ref{t:AbelC} cannot be replaced by the $\esTS$-closedness. The equivalence $(1)\Leftrightarrow(2)$ was proved by Gutik \cite{Gutik14}.

\begin{theorem}\label{t:gutik} For a topological group $X$ the following conditions are equivalent:
\begin{enumerate}
\item $X$ is compact;
\item $X$ is $\esTS$-closed;
\item $X$ is $\erTG$-closed.
\end{enumerate}
\end{theorem}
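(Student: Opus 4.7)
The implications $(1)\Rightarrow(2)$ and $(1)\Rightarrow(3)$ are immediate: if $X$ is compact and $f\colon X\to Y$ is a topological isomorphic embedding into any Hausdorff topologized semigroup $Y$, then $f(X)$ is compact as a continuous image of a compact space, and hence closed in $Y$. The equivalence $(1)\Leftrightarrow(2)$ is already available as Gutik's Theorem~\ref{t:Gutik}, so only the implication $(3)\Rightarrow(1)$ requires work.

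I would prove $(3)\Rightarrow(1)$ by contrapositive. Since $\TG\subset\rTG$, every $\erTG$-closed topological group is $\eTG$-closed and hence complete by Raikov's Theorem~\ref{t:Raikov}; thus if $X$ is not complete, the inclusion of $X$ into its Raikov completion (a topological group, hence a right-topological group) already supplies a non-closed topological embedding. The genuinely new case is therefore when $X$ is complete but not compact, and it is here that I need to produce a topological isomorphic embedding of $X$ into a Hausdorff right-topological group with non-closed image.

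The starting point is the failure of precompactness: by Lemma~\ref{l:Prot}, since $X$ is complete and not compact, hence not precompact, there is an open neighborhood $U$ of $e\in X$ such that $X\ne FUF$ for every finite set $F\subset X$. Using this neighborhood data I would build an algebraic group-extension $Y\supset X$ together with a right-invariant Hausdorff topology on $Y$, whose neighborhood base at the unit extends one of $X$ by adjoining new neighborhoods that witness the existence of a point in $\overline X\setminus X$. Because a group with a right-invariant Hausdorff topology is automatically a right-topological group (right translations are continuous iff the topology is right-invariant), the resulting $Y$ lies in $\rTG$, and by construction $X$ is not closed in $Y$.

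The main obstacle is choosing the algebraic enlargement $X\subset Y$ and the extra neighborhoods at the unit compatibly so that four conditions hold simultaneously: (a) $Y$ is a group; (b) the resulting topology is Hausdorff; (c) the subspace topology on $X\subset Y$ coincides with the original topology of $X$; (d) some point of $Y\setminus X$ lies in the closure of $X$. Completeness of $X$ is what keeps (b) compatible with (d), preventing the adjoined ``limit points'' from collapsing against elements of $X$, while preservation of the subspace topology in (c) forces the new neighborhoods of the unit to restrict to the original base at $e$ in $X$. The inability to simply reuse Gutik's construction for (2) is exactly this point: Gutik's semitopological semigroup extension typically produces absorbing (non-invertible) elements in $Y\setminus X$, so the adjoined points cannot be inverted to give a group, and a different algebraic enlargement — for instance an HNN-type extension built from the neighborhood $U$, or an Ellis-style right-topological enveloping group coming from the action of $X$ on its right-uniform compactification — is required. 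Once such a $Y$ is produced, the embedding $X\hookrightarrow Y$ contradicts $\erTG$-closedness, completing the proof.
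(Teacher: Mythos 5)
Your reduction is the same as the paper's: $(1)\Rightarrow(2),(3)$ is trivial, and for $(3)\Rightarrow(1)$ one passes to the case where $X$ is complete (via $\TG\subset\rTG$ and Raikov) but not compact, hence not totally bounded, and must then manufacture a Hausdorff right-topological group containing $X$ as a non-closed subgroup. But at exactly the point where the real work begins, your proposal stops: you list the four conditions (a)--(d) the extension must satisfy, observe that Gutik's zero-adjunction cannot work because the adjoined element is not invertible, and then gesture at ``an HNN-type extension'' or ``an Ellis-style right-topological enveloping group'' without constructing either. Neither suggestion is developed enough to be checked, and the second is suspect on its face: enveloping-semigroup-type objects attached to a compactification are generically semigroups, not groups, so you would be back to the invertibility problem you correctly identified as the obstruction. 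The construction is the entire mathematical content of this implication, and it is missing.

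For comparison, the paper's Lemma~\ref{l:erTG} does the following. One first uses Lemma~\ref{l:separ} to extract a \emph{countable} non-totally-bounded subgroup $Z\subset X$ (a reduction your proposal omits, and which is essential: countability permits an enumeration $Z=\{z_k\}_{k\in\w}$ and an inductive choice of a sequence $(x_n)_{n\in\w}$ in $Z$ satisfying the separation conditions $x_n\notin F_nW^3F_n^{-1}$ for suitable finite sets $F_n$). The ambient group is then simply $\IZ\times X$ -- no HNN extension or compactification is needed -- topologized by declaring the sets $\Sigma_m U$ to be basic neighborhoods, where $\Sigma_m=\{(0,e)\}\cup\{(n,x_{i_1}\cdots x_{i_n}):i_1>\dots>i_n>m\}$ and $U$ ranges over neighborhoods of the unit in $X$. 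Right-invariance of this topology is immediate from the definition, the subspace topology on $\{0\}\times X$ is the original one since $\Sigma_mU\cap X=U$, non-closedness of $X$ is visible from the shape of $\Sigma_m$, and the Hausdorff property is verified by a four-case combinatorial analysis resting on the separation conditions imposed on $(x_n)$ -- this last verification being where completeness-versus-precompactness actually enters, through the choice of $W$ with $Z\not\subset FW^3F$. Until you supply a construction at this level of detail, the implication $(3)\Rightarrow(1)$ is not proved.
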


\begin{proof} The implication $(1)\Ra(2,3)$ is trivial.
\smallskip

To prove that $(2)\Ra(1)$, assume that a topological group $X$ is $\esTS$-closed. Then it is $\eTG$-closed and hence complete (by Theorem~\ref{t:Weil}). Assuming that $X$ is not compact, we conclude that $X$ is not totally bounded. So, there exists a neighborhood $V\subset X$ of the unit such that $X\not\subset FV\cup VF$ for any finite subset $F\subset X$.

Chose any element $0\notin X$ and consider the space $X_0=X\cup\{0\}$ endowed with the Hausdorff topology $\tau$ consisting of sets $W\subset  X_0$ such that $W\cap X$ is open in $X$ and if $0\in W$, then $X\setminus W\subset FV$ for some finite subset $F\subset X$. Extend the group operation of $X$ to a semigroup operation on $X_0$ letting $0x=0=x0$ for all $x\in X_0$. It is easy to see that $X_0$ is a Hausdorff semitopological semigroup containing $X$ as a non-closed subgroup and witnessing that $X$ is not $\esTS$-closed.
\smallskip

To prove that $(3)\Ra(1)$, assume that a topological group $X$ is $\erTG$-closed. Then it is $\eTG$-closed and hence complete (by Theorem~\ref{t:Weil}). Assuming that $X$ is not compact, we conclude that $X$ is not totally bounded. By Lemma~\ref{l:separ}, $X$ contains a countable subgroup which is not totally bounded. Now Lemma~\ref{l:erTG} (proved below) implies that $X$ is not $\erTG$-closed, which is a desired contradiction.
\end{proof}

A topology $\tau$ on a group $X$ is called {\em right-invariant} (resp. {\em shift-invariant}) if $\{Ux:U\in\tau,\;x\in X\}=\tau$ (resp. $\{xUy:U\in\tau,\;x,y\in X\}=\tau$). This is equivalent to saying that $(X,\tau)$ is a right-topological (resp. semitopological) group.

\begin{lemma}\label{l:erTG} If a topological group $X$ contains a countable subgroup $Z$ which is not totally bounded, then the group $\IZ\times X$ admits a Hausdorff right-invariant topology $\tau$ such that the subgroup $\{0\}\times X$ is not closed in the right-topological group $(\IZ\times X,\tau)$ and $\{0\}\times X$ is topologically isomorphic to $X$. Moreover, if the subgroup $Z$ is central in $X$, then $(\IZ\times X,\tau)$ is a semitopological group.
\end{lemma}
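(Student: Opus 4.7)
The plan is to build $\tau$ by exhibiting a right-invariant subbase on $Y:=\IZ\times X$ (with coordinate-wise multiplication) that draws $(1,e)$ into the closure of $\{0\}\times X$ while preserving Hausdorff separation. Since $Z$ is countable and not totally bounded, I would first pick a symmetric open neighborhood $V$ of $e$ in $X$ such that $Z$ is not contained in any finite union $\bigcup_{i\le n}z_iV$ with $z_i\in Z$, and use Zorn's Lemma to choose a $V$-separated sequence $(s_n)_{n\in\w}\subset Z$, meaning $s_m^{-1}s_n\notin V$ whenever $m\neq n$. A standard argument (if $x$ were a cluster point and $V'$ were symmetric with $V'V'\subset V$, two distinct $s_m,s_n\in xV'$ would give $s_m^{-1}s_n\in V'V'\subset V$) shows that $S:=\{s_n:n\in\w\}$ is closed and discrete in $X$.

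Writing $S_N:=\{s_n:n\ge N\}$, I would take $\tau$ to be the topology generated by the subbase consisting of all sets
\[
B(U,N)(k,x):=(\{k\}\times Ux)\cup(\{k-1\}\times S_NUx),
\]
indexed by $(k,x)\in Y$, open neighborhoods $U$ of $e$ in $X$, and $N\in\w$. Right-invariance of this subbase is immediate from the identity $B(U,N)(k,x)\cdot(k_a,x_a)=B(U,N)(k+k_a,xx_a)$, so $(Y,\tau)$ is a right-topological group. The subspace topology on $\{0\}\times X$ coincides with the original topology of $X$, since each intersection $B(U,N)(k,x)\cap(\{0\}\times X)$ is either empty, $\{0\}\times Ux$ (when $k=0$), or $\{0\}\times S_NUx$ (when $k=1$), and all three are open in $X$; conversely, each $X$-open set arises as a subspace intersection of a union of subbase elements centered in row $0$. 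Non-closedness of $\{0\}\times X$ is witnessed by $(1,e)$: the subbase neighborhood $B(U,N)(1,e)=\{1\}\times U\cup\{0\}\times S_NU$ meets $\{0\}\times X$ in the nonempty set $\{0\}\times S_NU$, so $(1,e)\in\overline{\{0\}\times X}\setminus(\{0\}\times X)$.

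The hard part will be verifying Hausdorffness. To separate distinct points $(k,x)\neq(k',x')$ I would look for disjoint subbase elements $B(U,N)(k,x)$ and $B(U',N')(k',x')$. When $|k-k'|\ge 2$ these live in disjoint pairs of rows; when $|k-k'|=1$ the single overlapping row is handled by choosing $N$ large enough that $S_N$ avoids a specified point of $X$, which is possible by closed discreteness of $S$. The critical case $k=k'$ with $x\neq x'$ reduces, after separating row $k$ via the Hausdorff property of $X$, to arranging $S_NUx\cap S_{N'}U'x'=\emptyset$ in row $k-1$, which in turn amounts to finding a neighborhood of $y:=x'x^{-1}\neq e$ disjoint from the tail of ratios $T_{N',N}:=\{s_m^{-1}s_n:m\ge N',\,n\ge N\}$ for suitable $N,N'$. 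To make this uniform in $y$, the sequence $(s_n)$ must be refined beyond $V$-separatedness by an inductive construction exploiting the countability and non-total-boundedness of $Z$: at step $n$, choose $s_n$ outside a finite list of forbidden values so that the new ratios $s_n^{-1}s_i,\,s_i^{-1}s_n$ ($i<n$) do not repeat any previously occurring ratio. This refinement ensures that the set $\{s_m^{-1}s_n:m\ne n\}$ is closed and discrete in $X\setminus\{e\}$, from which the required tail avoidance follows. This inductive choice of $(s_n)$ is the main technical step.

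For the ``moreover'' clause, suppose $Z$ is central in $X$. Then $x_0S_N=S_Nx_0$ for every $x_0\in X$, and a direct computation using this centrality yields
\[
(k_0,x_0)\cdot B(U,N)(k,x)=B(x_0Ux_0^{-1},N)(k_0+k,x_0x).
\]
Because $x_0Ux_0^{-1}$ is an open neighborhood of $e$ by continuity of conjugation by $x_0$ on $X$, the left translate of each subbase element is again a subbase element. Hence left translations permute the subbase, $\tau$ is left-invariant as well, and $(Y,\tau)$ is a semitopological group.
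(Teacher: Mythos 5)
There is a fatal structural flaw in your construction, independent of the deferred Hausdorffness argument. Because you declare the two-level sets $B(U,N)(k,x)=(\{k\}\times Ux)\cup(\{k-1\}\times S_NUx)$ to be a \emph{subbase}, each of them is open in the generated topology. Now fix any point $(k,x)$. It lies in $B(U,N)(k,x)$, which is contained in rows $k$ and $k-1$; but it also lies in a subbase element one level up: choosing $x'=u^{-1}s_{n}^{-1}x$ for any $n\ge N'$ and $u\in U'$ gives $x=s_{n}ux'\in S_{N'}U'x'$, hence $(k,x)\in B(U',N')(k+1,x')$, a set contained in rows $k+1$ and $k$. The intersection of these two open sets is an open neighborhood of $(k,x)$ contained in $\{k\}\times X$. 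Consequently every row $\{k\}\times X$ is open, hence clopen, and $\{0\}\times X$ is \emph{closed} in $(\IZ\times X,\tau)$ --- the opposite of what the lemma requires. (Your closure argument only inspects a single subbase neighborhood of $(1,e)$; membership in the closure requires every finite intersection of subbase elements through $(1,e)$ to meet row $0$, and the intersection $B(U,N)(1,e)\cap B(U',N')(2,x')$ above does not.) If instead you read the $B(U,N)(k,x)$ as a prescribed neighborhood base rather than a subbase, the neighborhood axioms fail for the same reason: an open set containing $(k,x)$ must contain a basic set around each of its points in row $k-1$, which forces it to reach into row $k-2$, then $k-3$, and so on; two-level sets can never be open, so they cannot form a base of open neighborhoods.

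This unavoidable propagation through infinitely many levels is precisely the difficulty the paper's proof is built to handle: its basic neighborhoods $\Sigma_mU_g$, with $\Sigma_m=\{(0,e)\}\cup\{(n,x_{i_1}\cdots x_{i_n}):n\in\IN,\ i_1>\dots>i_n>m\}$, contain points at \emph{every} level $n\ge0$, formed from iterated products of the separated sequence, and the verification that $\Sigma_mU$ is itself open (via $\Sigma_{i_1}U'_eg\subset\Sigma_mU$) is exactly what makes the neighborhood filters coherent across levels. Any repair of your approach would have to replace the single tail $S_N$ by such sets of iterated products, at which point you also need the paper's inductive choice of the sequence ($x_n\notin F_nW^3F_n^{-1}$, where $F_n$ records all previously formed products and the enumeration of $Z$) to push the Hausdorff separation through; your ``no repeated ratios'' refinement of $V$-separatedness controls only pairwise ratios $s_m^{-1}s_n$ and is not adequate to separate products of arbitrary length. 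The right-invariance computation and the ``moreover'' clause for central $Z$ are correct as algebra, but they rest on a topology that does not do what the lemma needs.
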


\begin{proof} Identify the product group $\IZ\times X$ with the direct sum $\IZ\oplus X$. In this case the group $X\subset \IZ\oplus X$ is identified with the subgroup $\{0\}\times X$ of the group $\IZ\times X$. Let $Z=\{z_k\}_{k\in\w}$ be an enumeration of the countable subgroup $Z$. Since $Z$ is not totally bounded, there exists a neighborhood $W=W^{-1}\subset X$ of the unit such that $Z\not\subset FW^3F$ for any finite subset $F\subset X$ (see Lemma~\ref{l:Prot}). Using this property of $Z$, we can inductively construct a sequence of points $(x_n)_{n\in\w}$ of $Z$ such that for every $n\in\w$ the following condition is satisfied:
\begin{itemize}
\item[(a)] $x_n\notin F_nW^3F_n^{-1}$ where
\item[(b)] $F_n=\{e\}\cup\big\{x_{i_1}x_{i_2}\cdots x_{i_k}z_j^{\e}:k\in\w,\;n>i_1>\dots>i_k,\;j\le n,\;\e\in\{0,1\}\big\}$.
\end{itemize}

For every $m\in\w$ consider the subset $$\Sigma_m:=\{(0,e)\}\cup\{(n,x_{i_1}\!\cdots x_{i_n}):n\in\IN,\;
i_1>\dots>i_n>m\}\subset \IZ\times X.$$

On the group $G:=\IZ\oplus X$,  consider the topology $\tau$ consisting of subsets $W\subset G$ such that for every $g\in W$ there exists $m\in\w$ and a neighborhood $U_g\subset X$ of $g$ such that $\Sigma_mU_g\subset W$.
The definition of the topology $\tau$ implies that for any $W\in\tau$ and $a\in G$ the set $Wa$ belongs to $\tau$. So, $(G,\tau)$ is a right-topological group. If the subgroup $Z$ is central, then for every $a\in G$ and $g\in aW$ we get $a^{-1}g\in W$, so we can find a neighborhood $U\subset X$ of $a^{-1}g$ and $m\in\w$ such that $\Sigma_mU\subset W$. Then $U_{g}:=aU$ is a neighborhood of $g$ in $X$ such that $\Sigma_mU_g=\Sigma_maU=a\Sigma_mU\subset aW$, which means that the set $aW$ belongs to the topology $\tau$ and the topology $\tau$ is invariant.

Let us show that for any open set $U\subset X$ and any $m\in\w$ the set $\Sigma_mU$ belongs to the topology $\tau$.

For every $g\in\Sigma_mU$ we can find $u\in U$ and a sequence $i_1>\cdots i_n>m$ such that $g=x_{i_1}\cdots x_{i_n}u$. Choose  neighborhoods $U_e,U_e'\subset X$ of the unit such that $uU_e\subset U$ and $U'_ex_{i_1}\cdots x_{i_n}u\subset x_{i_1}\cdots x_{i_n}uU_e$. Then
$$\Sigma_{i_1}U'_eg=\Sigma_{i_1}U_e'x_{i_1}\cdots x_{i_n}u\subset \Sigma_{i_1}x_{i_1}\cdots x_{i_n}uU_e\subset \Sigma_mU$$
and hence $\Sigma_mU\in\tau$.

Observe that for every $U\subset X$ and $m\in\w$, have $\Sigma_mU\cap X=U$, which implies that $X$ is a subgroup of the right-topological group $(\IZ\oplus X,\tau)$. The subgroup $X$ is not closed in $\IZ\oplus X$ as $\bar X$ contains any point $(n,x)\in\IZ\times X$ with $n\le 0$.

It remains to check that the right-topological semigroup $(G,\tau)$ is Hausdorff.
Given any element $g=(n,x)\in G\setminus\{(0,e)\}$, we should find a neighborhood $U_e\subset X$ and $m\in\w$ such that $\Sigma_mU_e\cap (\Sigma_mU_eg)=\emptyset$. If $x\notin \bar Z$, then we can find a neighborhood $U_e=U_e^{-1}\subset X$ of the unit such that
$U_exU_e\cap\bar Z=\emptyset$ and hence $\Sigma_0U_e\cap (\Sigma_0U_eg)\subset \IZ\times (ZU_e\cap ZU_ex)=\emptyset$.

So, we assume that $x\in\bar Z$ and hence $x\in z_mW$ for some $m\in\w$.
Choose a neighborhood $V=V^{-1}\subset W$ of the unit such that $Vz_m\subset z_mW$ and if $x\ne e$, then $x\notin V^2$.

We claim that $\Sigma_mV\cap \Sigma_mVg=\emptyset$. Assuming that this intersection is not empty, fix an element $y\in \Sigma_mV\cap\Sigma_mVg$. The inclusion $y\in\Sigma_mV$ implies that $y=(k,x_{i_1}\cdots x_{i_k}v)$ for some numbers $k\in\IN$, $i_1>\cdots >i_k>m$, and $v\in V$.
On the other hand, the inclusion $y\in \Sigma_mVg$ implies that $y=(l,x_{j_1}\cdots x_{j_l}u)g$ for some numbers $l\in\IN$ and $j_1>\dots>j_l>m$ and some $u\in V$. It follows that
\begin{equation}\label{eq:rTG}(k,x_{i_1}\cdots x_{i_k}v)=y=(l,x_{j_1}\cdots x_{j_l}u)\cdot(n,x)=
(l+n,x_{j_1}\cdots x_{j_l}ux).
\end{equation}

Let $\lambda$ be the largest number $\le1+\min\{k,l\}$ such that $i_p=j_p$ for all $1\le p<\lambda$. Three cases are possible.

1) $\lambda\le \min\{k,l\}$. In this case the numbers $i_\lambda$ and $j_\lambda$ are well-defined and distinct. The equality (\ref{eq:rTG}) implies $x_{i_\lambda}\cdots x_{i_k}v=x_{j_\lambda}\cdots x_{j_l}ux$.
If $i_\lambda>j_\lambda$, then
$$
\begin{aligned}
x_{i_\lambda}&=x_{j_\lambda}\cdots x_{j_l}uxv^{-1}(x_{i_{\lambda+1}}\cdots x_{i_k})^{-1}\subset
x_{j_\lambda}\cdots x_{j_l}uz_mWv^{-1}(x_{i_{\lambda+1}}\cdots x_{i_k})^{-1}\subset\\
&\subset x_{j_\lambda}\cdots x_{j_l}Vz_mWV^{-1}(x_{i_{\lambda+1}}\cdots x_{i_k})^{-1}\subset
x_{j_\lambda}\cdots x_{j_l}z_mWWV^{-1}(x_{i_{\lambda+1}}\cdots x_{i_k})^{-1}\subset F_{i_\lambda}W^3F_{i_\lambda},
\end{aligned}
$$
which contradicts the choice of $x_{i_\lambda}$.

If $i_\lambda<j_\lambda$, then
$$
\begin{aligned}
&x_{j_\lambda}=x_{i_\lambda}\cdots x_{i_k}vx^{-1}u^{-1}(x_{j_{\lambda+1}}\cdots x_{j_l})^{-1}\subset
x_{i_\lambda}\cdots x_{i_k}VW^{-1}z_m^{-1}V^{-1}(x_{j_{\lambda+1}}\cdots x_{j_l})^{-1}\subset\\
&\subset x_{i_\lambda}\cdots x_{i_k}VW^{-1}W^{-1}z_m^{-1}(x_{j_{\lambda+1}}\cdots x_{j_l})^{-1}\subset
x_{i_\lambda}\cdots x_{i_k}W^3(x_{j_{\lambda+1}}\cdots x_{j_l}z_m)^{-1}\subset F_{j_\lambda}W^3F^{-1}_{j_\lambda},
\end{aligned}
$$
which contradicts the choice of $x_{j_\lambda}$.
\smallskip

2) $\lambda=1+\min\{k,l\}$ and $k=l$. In this case the equation (\ref{eq:rTG}) implies that $n=0$ and $v=ux$. Then $e\ne x=vu^{-1}\in V^2$, which contradicts the choice of $V$.
\smallskip

3) $\lambda=1+\min\{k,l\}$ and $k<l$. In this case the equation (\ref{eq:rTG}) implies that
$v=x_{j_\lambda}\cdots x_{j_l}ux$ and hence $x_{j_\lambda}=vx^{-1}u^{-1}(x_{j_{\lambda+1}}\cdots x_{j_l})^{-1}\subset VW^{-1}z_{m}^{-1}V^{-1}(x_{j_{\lambda+1}}\cdots x_{j_l})^{-1}\subset
VW^{-1}W^{-1}z_{m}^{-1}(x_{j_{\lambda+1}}\cdots x_{j_l})^{-1}\subset
W^3(x_{j_{\lambda+1}}\cdots x_{j_l}z_m)^{-1}\subset F_{j_\lambda}W^3F_{j_\lambda}^{-1},$
which contradicts the choice of $x_{j_\lambda}$.

4) $\lambda=1+\min\{k,l\}$ and $l<k$. In this case the equation (\ref{eq:rTG}) implies that
$x_{i_\lambda}\cdots x_{i_k}v=ux$ and hence $x_{i_\lambda}=uxv^{-1}(x_{i_{\lambda+1}}\cdots x_{i_k})^{-1}\subset Vz_mWV^{-1}(x_{i_{\lambda+1}}\cdots x_{i_k})^{-1}\subset
z_mW^3(x_{i_{\lambda+1}}\cdots x_{i_l})^{-1}\subset
F_{i_\lambda}W^3F_{i_\lambda}^{-1},$
which contradicts the choice of $x_{i_\lambda}$. This contradiction finishes the proof of the Hausdorff property of the topology $\tau$.
\end{proof}

Lemmas~\ref{l:separ} and \ref{l:erTG} have two implications.

\begin{corollary}\label{c:esTG} Each $\esTG$-closed topological group has compact center.
\end{corollary}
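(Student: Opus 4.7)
My plan is to proceed by contradiction, reducing compactness of $Z(X)$ to total boundedness via completeness, then separating out a countable obstruction, and finally producing a forbidden embedding using Lemma~\ref{l:erTG}.

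First, I would observe that since $\sTG\supset\TG$, any $\esTG$-closed topological group is $\eTG$-closed, hence complete by Theorem~\ref{t:Weil}. The center $Z(X)$ is a closed subgroup of $X$, and since the two-sided uniformity on $X$ restricts to the two-sided uniformity of the subgroup $Z(X)$, the group $Z(X)$ is itself complete. Because a complete topological group is compact if and only if it is totally bounded, it suffices to show that $Z(X)$ is totally bounded.

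Suppose, for a contradiction, that $Z(X)$ is not totally bounded, i.e., not precompact in itself. Applying Lemma~\ref{l:separ} to the set $A=Z(X)$ inside the topological group $Z(X)$, we obtain a countable subgroup $H\subset Z(X)$ such that $H=A\cap H$ is not precompact in $H$; equivalently, $H$ is not totally bounded as a topological group in its own right. Since $H\subset Z(X)$, the subgroup $H$ is central in $X$.

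Now I would invoke the ``moreover'' clause of Lemma~\ref{l:erTG}: because $H$ is central in $X$ and countable and not totally bounded, the group $\IZ\times X$ admits a Hausdorff shift-invariant (semitopological) group topology $\tau$ in which the subgroup $\{0\}\times X$ is topologically isomorphic to $X$ but is \emph{not} closed in $(\IZ\times X,\tau)$. This exhibits a topological isomorphic embedding of $X$ into a Hausdorff semitopological group whose image is not closed, contradicting the $\esTG$-closedness of $X$. Hence $Z(X)$ is totally bounded and therefore compact. The one point that warrants a sentence of justification is the passage from total boundedness of $Z(X)$ as a subset of $X$ to total boundedness of the countable subgroup $H$ as a topological group, but this is immediate from Lemma~\ref{l:separ} together with the standard observation that a subgroup is totally bounded as a group iff it is precompact as a subset of the ambient group. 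I expect no real obstacle beyond assembling these three ingredients (Raikov completeness, Lemma~\ref{l:separ}, and Lemma~\ref{l:erTG}) in the correct order.
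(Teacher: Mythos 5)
Your argument is correct and is precisely the proof the paper intends: the corollary is stated as an immediate consequence of Lemma~\ref{l:separ} and Lemma~\ref{l:erTG}, and you assemble those two lemmas (plus completeness of $Z(X)$ via Theorem~\ref{t:Weil}) in exactly the expected way, including the needed ``moreover'' clause giving a semitopological group. No gaps.
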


\begin{corollary}\label{c:AesTG} An Abelian topological group is compact if and only if it is $\esTG$-closed.
\end{corollary}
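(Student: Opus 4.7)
The proof should be essentially a one-step deduction from Corollary~\ref{c:esTG}. The plan is to handle the two directions separately, with the forward direction being trivial and the reverse direction reducing immediately to the fact that in an Abelian group the center is the whole group.

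For the ``only if'' direction, suppose $X$ is a compact topological group. Given any isomorphic topological embedding $f: X \to Y$ into a Hausdorff semitopological group $Y$, the image $f(X)$ is compact, hence closed in the Hausdorff space $Y$. This shows $X$ is $\esTG$-closed, and in fact is valid for any class of Hausdorff topologized semigroups.

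For the ``if'' direction, assume $X$ is an Abelian topological group which is $\esTG$-closed. Since $X$ is Abelian, its center satisfies $Z(X) = X$. By Corollary~\ref{c:esTG}, the center $Z(X)$ is compact. Therefore $X = Z(X)$ is compact.

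The entire content of the corollary is thus packed into Corollary~\ref{c:esTG}, whose proof relies on Lemma~\ref{l:erTG} (applied to a countable non-totally-bounded subgroup of the center, which is automatically central) together with Lemma~\ref{l:separ} to reduce from the full group to a countable subgroup. There is essentially no obstacle here; the only subtlety is noticing that the hypotheses of Lemma~\ref{l:erTG}---requiring the countable subgroup to be \emph{central} in order to produce a semitopological (not merely right-topological) group structure---are automatically satisfied when $X$ is Abelian, and this is precisely why Abelian $\esTG$-closedness is strong enough to force compactness, matching Theorem~\ref{t:gutik} where $\esTS$-closedness of arbitrary topological groups is shown to imply compactness.
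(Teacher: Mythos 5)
Your proposal is correct and follows exactly the route the paper intends: the forward direction is the standard ``compact subsets of Hausdorff spaces are closed'' observation, and the reverse direction is the immediate specialization of Corollary~\ref{c:esTG} (compactness of the center) to the Abelian case where $Z(X)=X$. Your remark that the centrality hypothesis in Lemma~\ref{l:erTG} is what makes the construction land in semitopological (rather than merely right-topological) groups, and is automatic here, correctly identifies the one point worth checking.
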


\begin{problem} Is a topological group compact if it is $\esTG$-closed?
\end{problem}


\section{On ${\mathsf i}{:}\C$-closed topological groups}\label{s:iC}

In this section we collect some results on ${\mathsf i}{:}\C$-closed topological groups for various classes $\C$ of topologized semigroups. First we prove that for topological groups of precompact exponent, many of such closedness properties are equivalent.

\begin{theorem}\label{t:i-exp} For a topological group $X$ of precompact exponent the following conditions are equivalent:
\begin{enumerate}
\item $X$ is $\iTS$-closed;
\item $X$ is $\iTG$-closed.
\end{enumerate}
\end{theorem}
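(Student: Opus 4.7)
The forward implication $(1)\Ra(2)$ is immediate from the inclusion $\TG\subset\TS$. For the converse, suppose $X$ is $\iTG$-closed of precompact exponent. Then $X$ is $\eTG$-closed, hence complete by Theorem~\ref{t:Weil}, and in a complete topological group a precompact set has compact closure, so there exists $n\in\IN$ such that $K:=\overline{\{x^n:x\in X\}}$ is compact in $X$. Let $f\colon X\to Y$ be an injective continuous homomorphism into a Hausdorff topological semigroup; replacing $Y$ by $\overline{f(X)}$ we may assume $f(X)$ is dense in $Y$, and it suffices to show $Y=f(X)$.

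The compact set $f(K)$ is closed in $Y$, and by continuity of the $n$-th power map $p\colon Y\to Y$, $p(y):=y^n$, the preimage $p^{-1}(f(K))$ is a closed subset of $Y$ containing $f(X)$; by density, $p^{-1}(f(K))=Y$. Thus every $y\in Y$ admits some $x\in K$ with $y^n=f(x)$, and the element $z:=y^{n-1}f(x)^{-1}\in Y$ (with $f(x)^{-1}$ computed in the group $f(X)$) satisfies $yz=y^nf(x)^{-1}=e$. Since $y^n=f(x)$ commutes with $y$ in $Y$, the computation $f(x)^{-1}y=f(x)^{-1}yf(x)f(x)^{-1}=f(x)^{-1}f(x)yf(x)^{-1}=yf(x)^{-1}$ shows that $f(x)^{-1}$ commutes with $y$ as well, so $zy=y^{n-1}f(x)^{-1}y=y^{n-1}yf(x)^{-1}=y^nf(x)^{-1}=e$. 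Therefore $Y$ is algebraically a group, with $y^{-1}=y^{n-1}(y^n)^{-1}$.

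It remains to verify that inversion is continuous on $Y$. The restriction $f|_K\colon K\to f(K)$ is a continuous bijection from a compact Hausdorff space onto a Hausdorff space, hence a homeomorphism; similarly $f|_{K^{-1}}$ is a homeomorphism. Consequently the map $\iota\colon f(K)\to Y$, $\iota(f(x)):=f(x^{-1})$, is continuous, being the composition $f|_{K^{-1}}\circ\mathrm{inv}_X\circ(f|_K)^{-1}$. Since $y\mapsto y^n$ sends $Y$ continuously into $f(K)$, the assignment $y\mapsto(y^n)^{-1}=\iota(y^n)$ is continuous on $Y$, and then $y\mapsto y^{-1}=y^{n-1}(y^n)^{-1}$ is continuous as well. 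Thus $Y$ is a Hausdorff topological group, and the $\iTG$-closedness of $X$ forces $f(X)$ to be closed in $Y$; together with density this yields $f(X)=Y$, as required. The genuine obstacle in the argument is exactly this promotion of the topological semigroup $\overline{f(X)}$ to a topological group, and compact exponent is what makes it possible: it forces the $n$-th powers of elements of $Y$ to land back in $f(X)$, and it lets the continuity of inversion on $X$ (transported via the homeomorphism $f|_K$) spread to all of $Y$ through the formula $y^{-1}=y^{n-1}(y^n)^{-1}$.
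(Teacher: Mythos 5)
Your argument follows the same route as the paper's proof: reduce to showing that $Y=\overline{f(X)}$ is a topological group, use the compact set $K=\overline{nX}$ to force $y^n\in f(K)$ for every $y\in Y$, and recover inversion via the formula $y^{-1}=y^{n-1}(y^n)^{-1}$. Your treatment of the continuity of inversion on $f(K)$ (transporting it from $X$ through the homeomorphism $f|_K$) is a slight, perfectly valid simplification of the paper's argument, which instead works with the compact graph $\Gamma=\{(u,v)\in f(K)\times f(K):uv=e_Y\}$ and shows that its two projections are homeomorphisms.

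There is, however, one genuine gap: you never verify that $e:=f(e_X)$ is a two-sided identity of the \emph{whole} semigroup $Y$, rather than merely of the subgroup $f(X)$, and several of your steps silently use this. In the chain $f(x)^{-1}y=f(x)^{-1}yf(x)f(x)^{-1}=f(x)^{-1}f(x)yf(x)^{-1}=yf(x)^{-1}$ the first and third equalities amount to $ue=u$ and $eu=u$ for elements $u$ that need not lie in $f(X)$, and the concluding claim that ``$Y$ is algebraically a group'' also needs $e$ to be an identity: a semigroup in which every element $y$ admits $z$ with $yz=zy=e$ need not be a group when $e$ fails to be an identity (consider a two-element semigroup with constant product equal to $e$). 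The gap is easily filled --- and this is exactly the first step of the paper's proof --- by observing that the set $\{y\in Y: ye=y=ey\}$ is closed in the Hausdorff topological semigroup $Y$ and contains the dense subset $f(X)$, hence equals $Y$. With that one observation inserted before the inverse computation, your proof is complete and essentially identical to the paper's.
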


\begin{proof} The implications $(1)\Ra(2)$ is trivial and follows from the inclusion $\TG\subset\TS$. To prove that $(2)\Ra(1)$, assume that $X$ is $\iTG$-closed and take any continuous injective homomorphism $f:X\to Y$ to a Hausdorff topological semigroup $Y$.
We need to show that $f(X)$ is closed in $Y$. Replacing $Y$ by $\overline{f(X)}$, we can assume that the group $f(X)$ is dense in $Y$.
We claim that $Y$ is a topological group.

First observe that the image $e_Y=f(e_X)$ of the unit $e_X$ of the group $X$ is a two-sided unit of the semigroup $Y$ (since the set $\{y\in Y:ye_Y=y=ye_Y\}$ is closed in $Y$ and contains the dense subset $f(X)$~).

Since the complete group $X$ has precompact exponent, it has a compact exponent and hence for some number $n\in\IN$ of the set $nX=\{x^n:x\in X\}$ has compact closure $K:=\overline{nX}$. By the continuity of $h$ and the Hausdorff property of $Y$, the image $f(K)$ is a compact closed subset of $Y$. Consequently, the set $Y_n=\{y\in Y:y^n\in f(K)\}$ is closed in $Y$. Taking into account that $Y_n$ contains the dense subset $f(X)$, we conclude that $Y_n=Y$.

Now consider the compact subset $\Gamma=\{(x,y)\in f(K)\times f(K):xy=e_Y\}$ in $Y\times Y$. Let $\pr_1,\pr_2:\Gamma\to f(K)$ be the coordinate projections. We claim that these projections are bijective. Since $f(X)$ is a group, for every $z\in f(X)$ there exists a unique element $y\in f(X)$ with $zy=e_Y$. This implies that the projection $\pr_1:\Gamma\to f(K)$, $\pr_1:(z,y)\mapsto z$, is injective. Given any element $z\in f(K)$ find an element $x\in K\cap f^{-1}(z)$ and observe that $x^{-1}\in K^{-1}=K$ and hence the pair $(z,y):=(f(x),f(x^{-1}))$ belongs to $\Gamma$ witnessing that the map $\pr_1:\Gamma\to f(K)$ is surjective. Being a bijective continuous map defined on the compact space $\Gamma$, the map $\pr_1:\Gamma\to f(K)$ is a homeomorphism. By analogy we can prove that the projection $\pr_2:\Gamma\to f(K)$, $\pr_2:(z,y)\mapsto y$, is a homeomorphism.
Then the inversion map $i:f(K)\to f(K)$, $i=\pr_2\circ \pr_1^{-1}:f(K)\to f(K)$ is continuous.

Now consider the continuous map $\bar i:Y\to Y$ defined by $\bar i(y)=y^{n-1}\cdot i(y^n)$ for $y\in Y$. This map is well-defined since $y^n\in f(K)$ for all $y\in Y$. Observe that for every element $y$ of the group $f(X)$,  the element  $\bar i(y)=y^{n-1}\cdot i(y^n)=y^{n-1}(y^n)^{-1}=y^{-1}$ coincides with the inverse element of $y$ in the group $f(X)$. Consequently, $y\cdot \bar i(y)=e_Y=\bar i_Y(y)\cdot y$ for all $y\in f(X)$ and by the continuity of the map $\bar i$ this equality holds for every $y\in Y$. This means that each element $y$ of the semigroup $Y$ has inverse $\bar i(y)$ and hence $Y$ is a group. Moreover, the continuity of the map $\bar i$ ensures that $Y$ is a topological group. So, $f:X\to Y$ is an injective continuous homomorphism to a topological group. Since $X$ is $\iTG$-closed, the image $f(X)$ is closed in $Y$.
\end{proof}

Theorems~\ref{t:i-exp} and \ref{t:AbelC} imply the following characterization.

\begin{corollary}\label{c:iTS=eTS+iTG} A nilpotent topological group $X$ is $\iTS$-closed if and only if $X$ is $\eTS$-closed and  $\iTG$-closed.
\end{corollary}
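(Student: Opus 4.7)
The forward direction is immediate. From the general implication chain $\iC$-closed $\Ra$ $\eC$-closed (applied with $\C=\TS$) we get that $\iTS$-closed implies $\eTS$-closed, and from the inclusion $\TG\subset\TS$ we get that $\iTS$-closed implies $\iTG$-closed. So only the ``if'' part requires work.

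For the converse, the plan is simply to combine Theorem~\ref{t:AbelC} with Theorem~\ref{t:i-exp}. Assume $X$ is a nilpotent topological group that is both $\eTS$-closed and $\iTG$-closed. The first hypothesis, together with the characterization of $\eTS$-closed nilpotent groups in Theorem~\ref{t:AbelC}, yields that $X$ is complete and of compact exponent. In particular $X$ has precompact exponent (being compact is stronger than being precompact).

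Now Theorem~\ref{t:i-exp} applies: for a topological group of precompact exponent, the properties $\iTG$-closed and $\iTS$-closed are equivalent. Since $X$ is $\iTG$-closed by assumption, we conclude that $X$ is $\iTS$-closed, as required.

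The only conceivable obstacle is ensuring that the two cited theorems are exactly what is needed; but Theorem~\ref{t:AbelC} delivers compact exponent on the nose, and Theorem~\ref{t:i-exp} is precisely the bridge from $\iTG$-closedness to $\iTS$-closedness in the presence of precompact exponent. So no further argument is required beyond invoking these two results in sequence.
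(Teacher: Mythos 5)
Your proof is correct and follows exactly the route the paper intends: the ``only if'' direction is the trivial implication chain together with $\TG\subset\TS$, and the ``if'' direction combines Theorem~\ref{t:AbelC} (to get complete of compact, hence precompact, exponent from $\eTS$-closedness and nilpotency) with Theorem~\ref{t:i-exp} (to upgrade $\iTG$-closedness to $\iTS$-closedness). The paper itself states the corollary as an immediate consequence of those same two theorems, so there is nothing to add.
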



The above results allow us to reduce the problem of detecting $\iTS$-closed topological groups to the problem of detecting $\iTG$-closed topological groups. So, now we establish some properties of $\iTG$-closed topological groups.

\begin{theorem}\label{t:iTG=>center} The center of any $\iTG$-closed topological group $X$ is compact.
\end{theorem}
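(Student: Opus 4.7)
The plan is to show that the center $Z(X)$ is itself an $\iTG$-closed topological group; since $Z(X)$ is Abelian, Theorem~\ref{t:banakh} will then immediately give that $Z(X)$ is compact. So the entire content of the proof is the ``transfer'' of the $\iTG$-closedness from $X$ to $Z(X)$.

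For this, let $f : Z(X)\to H$ be an injective continuous homomorphism into a topological group $H$. Replacing $H$ by $\overline{f(Z(X))}$, I may assume that $f(Z(X))$ is dense in $H$, which forces $H$ to be Abelian. I then form the topological group $X\times H$ and consider the ``graph'' subgroup
$$N=\{(z,f(z)^{-1}):z\in Z(X)\}\subset X\times H.$$
Because $Z(X)$ is central in $X$ and $H$ is Abelian, $N$ is in fact a central (and hence normal) subgroup, so its closure $\overline N$ is a closed normal subgroup of $X\times H$. Let $G:=(X\times H)/\overline N$ and consider the two natural continuous homomorphisms
$$\iota_X:X\to G,\quad x\mapsto (x,e)\overline N,\qquad \iota_H:H\to G,\quad h\mapsto (e,h)\overline N.$$
The key technical step is to check that both $\iota_X$ and $\iota_H$ are injective: if $(x,e)\in\overline N$, a net $(z_\alpha,f(z_\alpha)^{-1})\to(x,e)$ forces $z_\alpha\to x$ (so $x\in Z(X)$ since $Z(X)$ is closed) and then by continuity of $f$ and the Hausdorff property, $f(x)=e$, hence $x=e$; the injectivity of $\iota_H$ is even simpler.

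Now apply the hypothesis that $X$ is $\iTG$-closed to $\iota_X$: the subgroup $\iota_X(X)$ is closed in $G$. Continuity of $\iota_H$ then yields that the preimage $\iota_H^{-1}(\iota_X(X))$ is closed in $H$. A direct computation shows this preimage equals $f(Z(X))$: the inclusion ``$\supseteq$'' follows from the identity $(z,e)\overline N=(e,f(z))\overline N$ (whose witness $(z^{-1},f(z))$ lies in $N$); the inclusion ``$\subseteq$'' repeats the closure-net argument from the injectivity step. Since $f(Z(X))$ is both closed and dense in $H$, we get $f(Z(X))=H$, proving that $Z(X)$ is $\iTG$-closed.

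The main obstacle I expect is the verification that $\iota_H^{-1}(\iota_X(X))=f(Z(X))$, and in particular the ``$\subseteq$'' direction, which relies crucially on the centrality of $Z(X)$ in $X$ (to get normality of $N$, so that the quotient $G$ is a group) together with the fact that $Z(X)$ is closed in $X$ (so that limits of nets from $Z(X)$ in $X$ stay in $Z(X)$, allowing continuity of $f$ to be applied). Once this description of the preimage is in hand, the conclusion via Theorem~\ref{t:banakh} is automatic.
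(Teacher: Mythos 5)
Your proof is correct, but it takes a genuinely different route from the paper's. The paper argues by contradiction: if the (closed, hence complete) center $Z$ is not compact, then by Theorem~\ref{t:banakh} it is not $\iTG$-closed, so it admits a weaker non-complete Hausdorff group topology $\tau_Z$; this is then extended to a weaker Hausdorff group topology on all of $X$ via the neighborhood base $\tau_e=\{V\cdot U:V\in\Tau_e,\ e\in U\in\tau_Z\}$ (verified through the Pontryagin axioms, using centrality of $Z$), and the resulting non-complete group $(X,\tau)$ sits non-closedly in its completion, contradicting the $\iTG$-closedness of $X$. You instead prove the positive transfer statement directly — $Z(X)$ is itself $\iTG$-closed — by forming the push-out $G=(X\times H)/\overline{N}$ with $N$ the anti-graph of $f$, and reading off the closedness of $f(Z(X))$ as $\iota_H^{-1}(\iota_X(X))$. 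The two constructions are essentially dual: the subspace topology that $X$ receives from $\iota_X:X\to G$ is exactly the topology with base $\{V\cdot f^{-1}(U)\}$ that the paper builds by hand. What your version buys is that the group axioms for the new topology come for free from the quotient construction (no Pontryagin-axiom verification is needed), at the cost of the explicit net arguments showing $\overline N\cap(X\times\{e\})$ and $\overline N\cap(\{e\}\times H)$ are trivial and computing $\iota_H^{-1}(\iota_X(X))$ — all of which you carry out correctly, using precisely the two facts (centrality and closedness of $Z(X)$) that the paper's construction also relies on. Both proofs ultimately reduce to Theorem~\ref{t:banakh}, and yours is a valid, arguably more self-contained, alternative.
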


\begin{proof} To derive a contradiction, assume that the center $Z$ of an $\iTG$-closed topological group $X$ is not compact. Being $\iTG$-closed, the topological group $X$ is complete and so is its closed subsemigroup $Z$. By Theorem~\ref{t:banakh}, the non-compact complete Abelian topological group $Z$ is not $\iTG$-closed and hence admits a non-complete weaker Hausdorff group topology $\tau_Z$.

Let $\Tau$ be the topology of $X$ and $\Tau_e=\{U\in\Tau:e\in U\}$.
Consider the family $$\tau_e=\{V\cdot U:V\in\Tau_e,\;e\in U\in\tau_Z\}$$ of open neighborhoods of the unit in the topological group $X$. It can be shown that $\tau_e$ satisfies the Pontryagin Axioms \cite[1.3.12]{AT} and hence is a base of some Hausdorff group topology $\tau\subset\Tau$ on $X$. Observe that the topology $\tau$ induces the topology $\tau_Z$ on the subgroup $Z$, which remains closed in the topology $\tau$. Since the topological group $(Z,\tau_Z)$ is not complete, the topological group $X_\tau=(X,\tau)$ is not complete, too.
Then the identity map $X\to \bar X_\tau$ into the completion $\bar X_\tau$ of $X_\tau$ has non-closed image, witnessing that the topological group $X$ is not $\iTG$-closed. This is a desired contradiction, completing the proof of the theorem.
\end{proof}

\begin{problem} Is a topological group compact if it is $\iqTG$-closed?
\end{problem}

Next, we study the interplay between the $\iTG$-closedness and minimality of topological groups.

We recall that a topological group $X$ is {\em minimal\/} if each continuous bijective  homomorphism $h:X\to Y$ to a topological group $Y$ is a topological isomorphism. This definition implies the following (trivial) characterization.

\begin{proposition}\label{p:min} A minimal topological group $X$ is $\iTG$-closed if and only if $X$ is $\eTG$-closed.
\end{proposition}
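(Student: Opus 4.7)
The proof should be very short since the statement essentially unpacks the definition of minimality. Here is my plan.

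The $(\Leftarrow)$ direction is a free consequence of the general implication chain displayed earlier in the paper, namely $\eTG$-closed $\Leftarrow$ $\iTG$-closed (every injective continuous homomorphism is in particular an injective continuous homomorphism into a topological group, and $\iTG$-closed handles that larger class). Wait — actually the intended direction of freeness is the other one: every isomorphic topological embedding is an injective continuous homomorphism, so $\iTG$-closed $\Rightarrow$ $\eTG$-closed. This works without any minimality hypothesis.

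For the nontrivial $(\Leftarrow)$ direction of the equivalence, assume $X$ is minimal and $\eTG$-closed, and let $f\colon X\to Y$ be any injective continuous homomorphism into a topological group $Y$. The plan is to upgrade $f$ from a mere injective continuous homomorphism to an isomorphic topological embedding using minimality, and then conclude with $\eTG$-closedness. Concretely, view $f$ as a continuous bijective homomorphism $f\colon X\to f(X)$, where $f(X)$ carries the subspace topology inherited from $Y$ and is therefore a topological group. By minimality of $X$, this map is a topological isomorphism. Thus the original $f\colon X\to Y$ is an isomorphic topological embedding into a topological group, so by the $\eTG$-closedness of $X$ its image $f(X)$ is closed in $Y$. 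This proves $X$ is $\iTG$-closed.

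There is no real obstacle; the only subtlety is the observation that to apply minimality we must view $f$ as a map onto its image, and check that $f(X)$ with its subspace topology is indeed an object of $\TG$, which is immediate since subgroups of topological groups are topological groups. No completeness, precompactness or structural results from the preceding sections are needed.
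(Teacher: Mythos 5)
Your argument is correct and is exactly the intended one: the forward implication is the general fact that embeddings are injective continuous homomorphisms, and the reverse uses minimality to upgrade an injective continuous homomorphism $f\colon X\to Y$ to a topological isomorphism onto the subgroup $f(X)$ (a topological group in its subspace topology), after which $\eTG$-closedness applies. The paper states this proposition without proof as a trivial consequence of the definition of minimality, and your write-up supplies precisely that routine verification.
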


In Theorem~\ref{t:pseudo} we shall characterize $\iTG$-closed topological groups among $\w$-narrow topological groups of countable pseudocharacter. A topological space $X$ is defined to have {\em countable pseudocharacter} if for each point $x\in X$ there exists a countable family $\U$ of open sets in $X$ such that $\bigcap\U=\{x\}$.

We recall that a topological group $X$ is {\em $\w$-narrow} if for any neighborhood $U\subset X$ of the unit in $X$ there exists a countable set $C\subset X$ such that $X=CU$. The following classical theorem of Guran \cite{Guran} (see also \cite[Theorem 3.4.23]{AT}) describes the structure of $\w$-narrow topological groups.

\begin{theorem}[Guran]\label{t:Guran} A topological group $X$ is $\w$-narrow if and only if $X$ is topologically isomorphic to a subgroup of a Tychonoff product $\prod_{\alpha\in A}P_\alpha$ of Polish groups.
\end{theorem}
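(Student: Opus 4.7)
The ``if'' direction is straightforward. Every Polish group is separable, and any separable topological group is $\w$-narrow, because a countable dense subset $D$ satisfies $X=DU=UD$ for every open neighborhood $U$ of the unit (given $x\in X$, the neighborhood $xU^{-1}$ meets $D$). The class of $\w$-narrow topological groups is closed under arbitrary Tychonoff products: a basic neighborhood of the unit in $\prod_\alpha P_\alpha$ depends only on finitely many coordinates, and the product of countable coverings on those coordinates with a singleton at the remaining coordinates is a countable cover of the product. It is also closed under passage to subgroups $H\subset G$: given an open neighborhood $V$ of the unit of $H$, write $V=U\cap H$ with $U$ open in $G$ and choose open $W\ni e$ with $W^{-1}W\subset U$; for a countable $C\subset G$ with $G=CW$, the countable set $\{h_c:c\in C,\;cW\cap H\ne\emptyset\}$ (with $h_c$ a fixed point of $cW\cap H$) satisfies $H\subset\{h_c\}V$ because $h_c^{-1}h\in W^{-1}W\cap H\subset V$. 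Combining these three observations gives the ``if'' direction.

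For the ``only if'' direction, I reduce to the following claim: for every open neighborhood $U$ of the unit $e$ in an $\w$-narrow topological group $X$, there exists a continuous homomorphism $\pi_U:X\to P_U$ to a Polish topological group $P_U$ with $\ker(\pi_U)\subset U$. Once this is established, the diagonal product $\Delta:X\to\prod_U P_U$ is an injective continuous homomorphism (injectivity from $\bigcap_U\ker(\pi_U)\subset\bigcap_U U=\{e\}$), and because the preimages under the $\pi_U$'s of open neighborhoods of the unit contain a neighborhood base of $e$ in $X$, the map $\Delta$ is a topological embedding.

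To construct $\pi_U$, I perform a combined Birkhoff--Kakutani and Guran induction. Build a descending sequence $U=U_0\supset U_1\supset U_2\supset\cdots$ of symmetric open neighborhoods of $e$ with $U_{n+1}^3\subset U_n$, together with a countable set $C=\{c_1,c_2,\dots\}\subset X$ assembled from the countable coverings $X=C_nU_n$ provided by the $\w$-narrowness of $X$. At stage $n$, shrink $U_{n+1}$ further so that in addition $c_iU_{n+1}c_i^{-1}\subset U_n$ for every $i\le n$; this is possible because each $c_i^{-1}U_nc_i$ is a neighborhood of $e$ and only finitely many are being intersected. The classical Birkhoff--Kakutani recipe applied to this chain yields a continuous left-invariant pseudometric $d$ on $X$ with $B_d(e,2^{-n})\subset U_n$, and the conjugation-absorption condition guarantees that $d$ is invariant under conjugation by every $c\in C$. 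The null set $N:=\{x\in X:d(e,x)=0\}$ is then a closed subgroup contained in $U$, normalized by $C$; since $X=C\cdot U_n$ for every $n$ and conjugation by elements of $U_n$ moves $N$ by arbitrarily small $d$-distance, $N$ is in fact normal in $X$. The quotient $X/N$ carries a metrizable Hausdorff topological group topology (namely the one induced by $d$), which is separable as a quotient of an $\w$-narrow group, hence second countable; its Raikov completion $P_U$ is then Polish, and the composition $\pi_U:X\to X/N\hookrightarrow P_U$ is the desired homomorphism with $\ker(\pi_U)=N\subset U$.

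The principal obstacle is ensuring that the subgroup $N$ is genuinely normal in $X$, not merely normalized by the countable set $C$. This is exactly the point where $\w$-narrowness is used essentially: it supplies the countable set $C$ whose conjugation action can be absorbed into the inductive chain, and the decomposition $X=C U_n$ for arbitrarily small $U_n$ then propagates the normalization from $C$ to all of $X$. A secondary technicality is to verify that the topology on $X/N$ generated by the metric descended from $d$ agrees with the quotient topology from $X$, so that $\pi_U$ is continuous, and then to pass to the Raikov completion to obtain the ``Polish'' conclusion promised in the statement.
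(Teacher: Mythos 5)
The paper does not prove this statement: it is quoted as Guran's classical theorem with a reference to \cite{Guran} and to \cite[Theorem 3.4.23]{AT}, so your proposal can only be judged on its own merits. Your ``if'' direction is correct and complete, and the architecture of the ``only if'' direction (reduce to producing, for each neighborhood $U$ of $e$, a continuous homomorphism to a Polish group whose kernel --- in fact the preimage of some neighborhood --- lies in $U$, then diagonalize) is exactly the standard route.

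There is, however, a genuine gap at the pivotal step: the normality of $N=\bigcap_n U_n$. First, the assertion that ``$d$ is invariant under conjugation by every $c\in C$'' is not what your construction delivers: the condition $c_iU_{n+1}c_i^{-1}\subset U_n$ for $i\le n$ gives only a one-sided, index-dependent contraction, valid at levels $n\ge i$. Second, the deduction of normality from ``$X=CU_n$'' plus ``conjugation by elements of $U_n$ moves $N$ by arbitrarily small $d$-distance'' does not close: writing $x=cu$ with $c=c_j\in C$ and $u\in U_m$, you get $ugu^{-1}\in U_{m-1}$ for $g\in N$, but the outer conjugation $c_j(\cdot)c_j^{-1}$ is controlled only from level $j+1$ downward, so you need $m-1\ge j+1$; since each $C_m$ is infinite, the enumeration index $j$ of the element of $C_m$ covering $x$ is not bounded in terms of $m$, and nothing guarantees such an $m$ exists for a given target level. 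As written, the argument establishes only that $N$ is normalized by $C$, which (since $C$ is dense merely in the topology generated by the chain, not in the original topology) does not yield normality in $X$.

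The construction itself is sound and the gap is repairable with two adjustments. Close $C$ under inverses, so that the absorption gives $c^{\pm1}U_{n+1}c^{\mp1}\subset U_n$ for all $n\ge i$ and hence $cNc^{-1}=N$ for every $c\in C$. Then reverse the order of quantifiers: fix the target level $n$, write $x=cu$ with $c\in C_{n+1}$ and $u\in U_{n+1}$, let $j$ be the index of $c^{-1}$, and take any $m\ge\max(j,n+1)+1$; then
$$x^{-1}U_mx=u^{-1}\bigl(c^{-1}U_mc\bigr)u\subset u^{-1}U_{m-1}u\subset U_{n+1}U_{m-1}U_{n+1}\subset U_{n+1}^{\,3}\subset U_n.$$
This shows simultaneously that $x^{-1}Nx\subset\bigcap_nU_n=N$ for all $x$ (hence $N$ is normal) and that the family $\{U_nN\}$ satisfies the conjugation axiom among the Pontryagin axioms, so that $\{ \pi_U(U_n)\}$ really is a base at the identity of a Hausdorff group topology on $X/N$ --- a point your write-up also leaves implicit (a left-invariant pseudometric alone does not make the quotient a topological group). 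With these repairs your proof agrees in substance with the proof in \cite[3.4.23]{AT}, which factors the same argument through the intermediate notion of $\w$-balancedness.
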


In the subsequent proofs we shall use the known Open Mapping Principle for Polish groups due to S.~Banach \cite{Banach} (see also \cite[9.10]{Ke}).

\begin{theorem}[Banach]\label{t:open} Each continuous surjective homomorphism $h:X\to Y$ between Polish groups is open.
\end{theorem}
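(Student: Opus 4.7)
The plan is the classical two-step proof using the Baire category theorem and completeness of Polish groups. First I would reduce to openness at the identity: by the homomorphism property, it suffices to show that $h(U)$ is a neighborhood of $e_Y:=h(e_X)$ for every neighborhood $U$ of $e_X$, because then for any open $O\subset X$ and any $x\in O$ the set $h(O)=h(x)\cdot h(x^{-1}O)$ is a neighborhood of $h(x)$ in $Y$, making $h(O)$ open.

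The first step (Baire category) would establish that $\overline{h(U)}$ is a neighborhood of $e_Y$. Pick a symmetric open neighborhood $V$ of $e_X$ with $VV\subset U$. Separability of the Polish group $X$ yields a countable cover $X=\bigcup_{n\in\IN}x_nV$, and thus $Y=h(X)=\bigcup_n h(x_n)\cdot\overline{h(V)}$. Since $Y$ is Polish, hence a Baire space, some $h(x_n)\overline{h(V)}$ has non-empty interior, and by translation invariance of the topology of $Y$ so does $\overline{h(V)}$. Choosing a point $w$ in a non-empty open subset $W\subset\overline{h(V)}$, the set $Ww^{-1}$ is a neighborhood of $e_Y$ contained in $\overline{h(V)}\cdot\overline{h(V)}^{-1}\subset\overline{h(VV^{-1})}\subset\overline{h(U)}$.

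The second step, which is the main obstacle, removes the closure using a complete metric $\rho$ on $X$. Fix a neighborhood $U_0:=U$ of $e_X$ and inductively choose a descending base $U_0\supset U_1\supset U_2\supset\cdots$ of symmetric open neighborhoods of $e_X$ satisfying $U_{n+1}U_{n+1}\subset U_n$ and shrinking fast enough (using continuity of multiplication) that any sequence of left partial products $p_n:=v_1v_2\cdots v_n$ with $v_k\in U_k$ is $\rho$-Cauchy. By the first step each $\overline{h(U_n)}$ is a neighborhood of $e_Y$. Given $y\in\overline{h(U_1)}$, I would iteratively pick $v_n\in U_n$ with $h(p_n)^{-1}y\in\overline{h(U_{n+1})}$; this is possible because $h(p_{n-1})^{-1}y\in\overline{h(U_n)}$ can be approximated arbitrarily well by values of $h$ on $U_n$. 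Completeness of $\rho$ yields $v:=\lim_n p_n\in X$, and since the closures $\overline{h(U_n)}$ shrink to $\{e_Y\}$ (a consequence of continuity of $h$ and Hausdorffness of $Y$), the relation $h(p_n)^{-1}y\in\overline{h(U_{n+1})}$ forces $h(v)=y$. A preliminary shrinking of $U$ (replacing it by a neighborhood whose closure lies in $U$) arranges $v\in U$, giving $\overline{h(U_1)}\subset h(U)$, and therefore $h(U)$ contains a neighborhood of $e_Y$, as required.
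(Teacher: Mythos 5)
The paper offers no proof of this statement: it is quoted as a classical theorem with references to \cite{Banach} and \cite[9.10]{Ke}, so your proposal can only be measured against the standard arguments. Your reduction to openness at the identity and your Baire-category step (showing $\overline{h(U)}$ is a neighbourhood of $e_Y$) are correct. The genuine gap is in the second step, at the claim that the $U_n$ can be fixed in advance, ``shrinking fast enough (using continuity of multiplication) that any sequence of left partial products $p_n=v_1v_2\cdots v_n$ with $v_k\in U_k$ is $\rho$-Cauchy'' for a complete metric $\rho$ on $X$. The partial products satisfy $p_n^{-1}p_{n+m}\in U_n$, i.e.\ they are Cauchy for the \emph{left} uniformity of $X$; to upgrade this to $\rho$-Cauchyness uniformly over all admissible choices of the $v_k$ you need a uniform modulus of continuity for right translation over the set $U_1U_2\cdots U_n\supset U_1$, which is not precompact, i.e.\ you need the $\rho$-uniformity to be coarser than the left uniformity on a whole neighbourhood of the unit. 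A complete metric with that property would force $X$ to be Weil-complete, and Polish groups need not be: for $X=\Sym(\w)$ (the paper's own example of a complete but not Weil-complete group, see Section~\ref{s:Iso}) no such system $(U_n)$ exists for any complete compatible metric, since otherwise every left-Cauchy sequence near the unit would converge. A left-invariant metric does make the $p_n$ Cauchy, but it need not be complete; this Weil-versus-Raikov tension is exactly what your write-up elides.

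There are two standard repairs. The route of \cite[9.10]{Ke} replaces your second step entirely: $h(V)$ is analytic, hence has the Baire property, and is non-meager by your covering argument, so Pettis' theorem yields that $h(VV^{-1})=h(V)h(V)^{-1}\subset h(U)$ already contains a neighbourhood of $e_Y$ --- no completeness and no successive approximation needed. Alternatively, keep your scheme but choose $U_{n+1}$ \emph{adaptively}, after $p_n$ is already determined, demanding in addition $p_nU_{n+1}U_{n+1}p_n^{-1}\subset W_n$ for a prefixed neighbourhood base $(W_n)$ at $e_X$; this is possible because conjugation by the single element $p_n$ is continuous. Then $p_n^{-1}p_{n+m}\in U_n$ and $p_{n+m}p_n^{-1}\in W_n$, so $(p_n)$ is Cauchy in the two-sided uniformity and converges because every Polish group is Raikov-complete. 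In that version it is also cleaner to choose $v_n$ so that moreover $h(p_n)$ lies within $2^{-n}$ of $y$ in a fixed metric on $Y$ (possible since $h(p_{n-1})^{-1}y\in\overline{h(U_n)}$); then $h(v)=\lim h(p_n)=y$ follows from continuity of $h$, without having to argue that the sets $\overline{h(U_n)}$ shrink to $\{e_Y\}$.
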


Now we are able to present a characterization of $\iTG$-closed $\w$-narrow topological groups of countable pseudocharacter. By $\PG$ we denote the class of {\em Polish groups}, i.e. topological groups whose topological space is Polish (= separable completely metrizable).

\begin{theorem}\label{t:pseudo}  For an $\w$-narrow topological group $X$ of countable pseudocharacter the following conditions are equivalent:
\begin{enumerate}
\item $X$ is Polish and minimal;
\item $X$ is complete and minimal;
\item $X$ is $\iTG$-closed;
\item $X$ is $\iPG$-closed.
\end{enumerate}
\end{theorem}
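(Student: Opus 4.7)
My plan is to establish the cycle $(4)\Ra(1)\Ra(2)\Ra(3)\Ra(4)$. Three of these arrows are short. For $(2)\Ra(3)$, I would observe that by Raikov (Theorem~\ref{t:Raikov}) a complete topological group is $\eTG$-closed, and then apply Proposition~\ref{p:min} to upgrade $\eTG$-closed to $\iTG$-closed under the standing assumption of minimality. The implication $(3)\Ra(4)$ is free from $\PG\subset\TG$. For $(1)\Leftrightarrow(2)$ I would quote the Arhangel'skii--Tkachenko theorem that in every $\w$-narrow Hausdorff topological group one has $\chi=\psi\cdot\aleph_0$; combined with the standing assumption $\psi(X)\le\aleph_0$, this makes $X$ first countable and hence metrizable by Birkhoff--Kakutani. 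Since an $\w$-narrow metrizable group is automatically separable, $X$ is separable metrizable, and for such a group being Polish is equivalent to being Raikov-complete.

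The main work is $(4)\Ra(1)$, which I would split into a Polish part and a minimality part. For the Polish part, the previous paragraph already gives that $X$ is separable metrizable; its Raikov completion $\bar X$ is then separable, metrizable and complete, hence Polish. The canonical inclusion $X\hookrightarrow\bar X$ is an injective continuous homomorphism into a Polish group, so hypothesis $(4)$ immediately forces $X$ to be closed in $\bar X$, i.e.\ $X=\bar X$, and thus $X$ is Polish.

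For minimality, let $h:X\to Y$ be a continuous bijective homomorphism into a Hausdorff topological group $Y$; I need $h^{-1}$ continuous. Since $Y$ is the continuous image of the $\w$-narrow $X$, it too is $\w$-narrow, so Guran's Theorem~\ref{t:Guran} provides a topological embedding $\iota:Y\hookrightarrow\prod_{\alpha\in A}P_\alpha$ into a product of Polish groups. Put $f:=\iota\circ h$ and $N_\alpha:=\ker(\pi_\alpha\circ f)$; the injectivity of $f$ gives $\bigcap_{\alpha\in A}N_\alpha=\{e\}$. Now I would exploit that $X$ is Polish, hence hereditarily Lindel\"of: the open cover $\{X\setminus N_\alpha\}_{\alpha\in A}$ of $X\setminus\{e\}$ has a countable subcover, yielding a countable $B\subset A$ with $\bigcap_{\alpha\in B}N_\alpha=\{e\}$. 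Then $P:=\prod_{\alpha\in B}P_\alpha$ is Polish and $g:=\pi_B\circ f:X\to P$ is an injective continuous homomorphism into a Polish group, so by hypothesis $(4)$ the image $g(X)$ is closed in $P$ and is itself a Polish group. Banach's Open Mapping Theorem (Theorem~\ref{t:open}) makes the continuous bijective homomorphism $g:X\to g(X)$ between Polish groups a topological isomorphism. Factoring $g=(\pi_B\circ\iota)\circ h$, the intermediate map $\pi_B\circ\iota:Y\to g(X)$ equals $g\circ h^{-1}$ and so is a continuous bijection; hence $h^{-1}=g^{-1}\circ(\pi_B\circ\iota)$ is continuous as a composition of continuous maps, and $h$ is a topological isomorphism.

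The hard part, I expect, is the Lindel\"of reduction from Guran's potentially uncountable product $\prod_{\alpha\in A}P_\alpha$ to a \emph{single} Polish target $P=\prod_{\alpha\in B}P_\alpha$. Without this step the hypothesis $\iPG$-closed has no Polish group to bite on, and without closedness of $g(X)$ one has no access to the Open Mapping Theorem. Everything downstream --- the factorization of $h^{-1}$ through $g^{-1}$ and $\pi_B\circ\iota$ --- is mechanical once this reduction is in hand.
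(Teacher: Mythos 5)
Your cycle and the easy arrows are fine, and your minimality argument (the Lindel\"of reduction of Guran's product to a countable subproduct on the side of $Y$, followed by Banach's open mapping theorem) is a correct variant of what the paper does, \emph{provided} $X$ is already known to be Polish. The genuine gap is in the ``Polish part'' of $(4)\Ra(1)$: the metrization claim you lean on is false. There is no theorem asserting $\chi=\psi\cdot\aleph_0$ for $\w$-narrow groups; what is true is only the $i$-weight estimate $iw(X)\le\psi(X)\cdot\aleph_0$, i.e., an $\w$-narrow group of countable pseudocharacter admits an injective continuous homomorphism \emph{onto} a second-countable group --- it need not itself be first countable. Any countable non-metrizable Hausdorff group topology (for instance $\IZ$ with its Bohr topology) is $\w$-narrow and, being countable and $T_1$, has countable pseudocharacter, yet is not metrizable. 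Consequently you cannot assert that $X$ is separable metrizable from the standing hypotheses alone, its Raikov completion $\bar X$ need not be a Polish group, and hypothesis $(4)$ has no Polish target to act on in your argument.

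The missing idea is that hypothesis $(4)$ must be used \emph{twice}: once to make the image $p_B(X)$ of the injective projection onto a countable subproduct closed (hence Polish), and a second time to show that $p_B:X\to p_B(X)$ is \emph{open}. The paper does this by taking a basic neighborhood $p_F^{-1}(U_F)\subset U$ with $F$ finite, setting $C=B\cup F$, using $(4)$ to see that $p_C(X)$ is also closed and Polish, and applying Banach's open mapping theorem to the continuous bijective homomorphism $p^C_B:p_C(X)\to p_B(X)$ to reach a contradiction with the assumed non-openness. Only after this does one know that $X$ is topologically isomorphic to the Polish group $p_B(X)$. Your minimality half then goes through (it is essentially equivalent to the paper's step of observing that the image group is again $\w$-narrow, of countable pseudocharacter and $\iPG$-closed, hence Polish), but as written it is conditional on the broken first half.
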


\begin{proof} The implications $(1)\Ra(2)$ and $(3)\Ra(4)$ are trivial, and $(2)\Ra(3)$ follows from (the trivial) Proposition~\ref{p:min}. To prove that $(4)\Ra(1)$, assume that the topological group $X$ is $\iPG$-closed.

\begin{claim}\label{cl:topi} The topological group $X$ is Polish.
\end{claim}

\begin{proof} By Guran's Theorem~\ref{t:Guran}, the $\w$-narrow topological group $X$ can be identified with a subgroup of a Tychonoff product $\prod_{\alpha\in A}P_\alpha$ of Polish groups. For a subset $B\subset A$ by $p_B:X\to\prod_{\alpha\in B}P_\alpha$ we denote the coordinate projection.

Since $X$ has countable pseudocharacter, there exists a countable subfamily $B\subset A$ such that the coordinate projection $p_B:X\to\prod_{\beta\in B}P_\beta$ is injective. Taking into account that the topological group $X$ is $\iPG$-closed, we conclude that the image $p_B(X)$ in a closed subgroup of the Polish group $\prod_{\beta\in B}P_\beta$ and hence $p_B(X)$ is a Polish group.

We claim that the homomorphism $p_B:X\to p_B(X)$ is open. Assuming the opposite, we can find a neighborhood $U\subset X$ of the unit whose image $p_B(U)$ is not a neighborhood of the unit in the Polish group $p_B(X)$. Since $X\subset\prod_{\alpha\in A}P_\alpha$, there exists a finite subset $F\subset A$ and a neighborhood $U_F\subset \prod_{\alpha\in F}P_\alpha$ of the unit such that $p_F^{-1}(U_F)\subset U$.
Let $C:=B\cup F$ and consider the coordinate projection $p_C:X\to\prod_{\alpha\in C}P_\alpha$. Taking into account that the topological group $X$ is $\iPG$-closed, we conclude that the subgroup $p_C(X)$ is closed in the Polish group $\prod_{\alpha\in C}P_\alpha$ and hence $p_C(X)$ is Polish. Observe that the coordinate projection $p^C_B:p_C(X)\to p_B(X)$ is a continuous bijective homomorphism between Polish groups. By Theorem~\ref{t:open}, this homomorphism is open, which is not possible as
the image $p^C_B(p_C(U))=p_B(U)$ of the neighborhood $p_C(U)$ of the unit in $p_C(X)$ is not a neighborhood of the unit in the group $p_B(X)$. This contradiction shows that the continuous bijective homomorphism $p_B:X\to p_B(X)$ is open and hence is a topological isomorphism. Since the topological group $p_B(X)$ is Polish, so is the topological group $X$.
\end{proof}

It remains to show that the Polish group $X$ is minimal. Given a continuous bijective homomorphism  $h:X\to Z$ to a topological group $Z$, we need to check that $h$ is open. Observe that the topological group $Z$ is $\iPG$-closed, $\w$-narrow, and has countable pseudocharacter (being the continuous bijective image of the $\iPG$-closed Polish group $X$). By Claim~\ref{cl:topi}, the topological group $Z$ is Polish and by Theorem~\ref{t:open}, the homomorphism $h$ is open.
\end{proof}

The following (known) example shows that Theorem~\ref{t:pseudo} does not generalize to nilpotent topological groups.

\begin{example}\label{ex:HW}{\rm Consider the classical Weyl-Heisenberg group $H(w_0):=H(\IR)/Z$ where  $$H(\IR)=\left\{\left(\begin{array}{ccc}1&a&b\\0&1&c\\0&0&1\end{array}\right):a,b,c\in\IR\right\}
\mbox{ \ \ and \ \ } Z=\left\{\left(\begin{array}{ccc}1&0&b\\0&1&0\\0&0&1\end{array}\right):b\in\IZ\right\}. $$ The group $H(w_0)$ is nilpotent, Polish, locally compact, and minimal, see \cite[\S5]{DM}, \cite[5.5]{DM10}, \cite{Meg}. Being minimal and non-compact, the complete group $H(w_0)$ is not MAP. Being minimal and complete, the Weyl-Heisenberg group $H(w_0)$ is $\iTG$-closed. On the other hand, $H(w_0)$ admits a continuous homomorphism onto the real line, which implies that $H(w_0)$ is not $\hTG$-closed.}
\end{example}

\section{On ${\mathsf h}{:}\C$-closed topological groups}\label{s:hC}

In this section we collect some results on ${\mathsf h}{:}\C$-closed topological groups for various classes $\C$ of topologized semigroups.
First observe the following trivial characterization of $\hC$-closed topological groups.

\begin{proposition}\label{p:h=i+q} Let $\C$ be a class of topological groups. A topological group $X$ is $\hC$-closed if and only if for any closed normal subgroup $N\subset X$ the quotient topological group $X/N$ is $\iC$-closed.
\end{proposition}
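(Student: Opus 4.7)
The plan is to prove both directions by the standard first isomorphism theorem for topological groups, using the universal property of the quotient topology and the fact that kernels of continuous homomorphisms into Hausdorff groups are closed.

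For the $(\Rightarrow)$ direction, I would fix a closed normal subgroup $N\subset X$ and any injective continuous homomorphism $g\colon X/N\to Y$ with $Y\in\C$. Letting $q\colon X\to X/N$ be the quotient homomorphism, the composition $f:=g\circ q\colon X\to Y$ is a continuous homomorphism into $Y\in\C$. The assumption that $X$ is $\hC$-closed then gives that $f(X)=g(X/N)$ is closed in $Y$, which is precisely what is needed to conclude that $X/N$ is $\iC$-closed.

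For the $(\Leftarrow)$ direction, I would take any continuous homomorphism $f\colon X\to Y\in\C$ and verify that $f(X)$ is closed in $Y$. Since $Y$ is Hausdorff (by the blanket assumption that all topologized semigroups are Hausdorff), the unit $\{e_Y\}$ is closed, so $N:=\ker f=f^{-1}(e_Y)$ is a closed normal subgroup of $X$. By the universal property of the quotient topology, $f$ factors as $f=\bar f\circ q$, where $q\colon X\to X/N$ is the quotient homomorphism and $\bar f\colon X/N\to Y$ is an \emph{injective} continuous homomorphism. By hypothesis, $X/N$ is $\iC$-closed, so $\bar f(X/N)$ is closed in $Y$; but $\bar f(X/N)=f(X)$, which completes the argument.

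There is essentially no obstacle: the argument is a direct unpacking of the definitions together with the standard factorization through the quotient by the kernel. The only mild point worth flagging is that the surjectivity of $q$ is what identifies $\bar f(X/N)$ with $f(X)$, and the Hausdorffness of members of $\C$ (needed only to see that $\ker f$ is closed) is already part of the standing conventions of the paper.
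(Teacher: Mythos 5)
Your proof is correct and is exactly the standard factorization argument (through the quotient by the closed kernel, using Hausdorffness of $Y$ and the universal property of the quotient topology) that the paper has in mind when it labels this proposition a ``trivial characterization'' and omits the proof entirely. Both directions are complete, and you correctly flag the only two points that need checking: closedness of $\ker f$ and the identification $\bar f(X/N)=f(X)$.
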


Proposition~\ref{p:h=i+q} and Theorem~\ref{t:i-exp} implies the following characterization.

\begin{theorem}\label{t:exp=>hTS=hTG} For a topological group $X$ of precompact exponent the following conditions are equivalent:
\begin{enumerate}
\item $X$ is $\hTS$-closed;
\item $X$ is $\hTG$-closed.
\end{enumerate}
\end{theorem}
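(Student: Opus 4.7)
The plan is to handle the two implications separately. The implication $(1)\Ra(2)$ is immediate from the inclusion $\TG\subset\TS$: every continuous homomorphism from $X$ into a Hausdorff topological group is in particular a continuous homomorphism into a Hausdorff topological semigroup, so $\hTS$-closedness of $X$ automatically entails $\hTG$-closedness.

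For $(2)\Ra(1)$ my plan is to reduce to the injective case already settled by Theorem~\ref{t:i-exp}. Given a continuous homomorphism $f:X\to Y$ into a Hausdorff topological semigroup $Y$, I would first form the set $N:=f^{-1}(f(e_X))$, where $e_X$ is the unit of $X$. Since $Y$ is Hausdorff, the singleton $\{f(e_X)\}$ is closed in $Y$, so $N$ is a closed subgroup of $X$. Since $f(X)$ is an algebraic subgroup of $Y$ (the homomorphic image of a group, with idempotent unit $f(e_X)$ and inverses $f(x^{-1})$), $N$ is the kernel of $f$ and is normal in $X$. Thus $f$ factors as $f=g\circ q$, where $q:X\to X/N$ is the quotient homomorphism onto a topological group and $g:X/N\to Y$ is an injective continuous homomorphism.

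The next step is to transfer the relevant properties of $X$ to the quotient $X/N$. Because $X$ is $\hTG$-closed, Proposition~\ref{p:h=i+q} applied to $\C=\TG$ gives that $X/N$ is $\iTG$-closed. Because $X$ has precompact exponent, the identity $q(nX)=n(X/N)$ combined with the fact that the continuous image of a precompact set is precompact shows that $X/N$ also has precompact exponent. Consequently Theorem~\ref{t:i-exp} applies to $X/N$ and yields that $X/N$ is $\iTS$-closed. Applied to the injective continuous homomorphism $g:X/N\to Y$, this gives that $g(X/N)=f(X)$ is closed in $Y$, completing the proof.

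I do not expect any serious obstacle: the argument is essentially a bookkeeping reduction, with all the substance already invested in Proposition~\ref{p:h=i+q} and Theorem~\ref{t:i-exp}. The only routine points one has to check are the closedness and normality of $N$ (which follow from the Hausdorff property of $Y$ and from $f(X)$ being an algebraic group) and the descent of precompact exponent through the continuous surjection $q$.
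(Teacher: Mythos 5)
Your proposal is correct and follows exactly the route the paper intends: the paper states this theorem as an immediate consequence of Proposition~\ref{p:h=i+q} and Theorem~\ref{t:i-exp}, and your argument simply fills in that derivation (factoring $f$ through the quotient by its closed normal kernel, transferring $\iTG$-closedness via Proposition~\ref{p:h=i+q} and precompact exponent via the open quotient map, then invoking Theorem~\ref{t:i-exp}). All the routine verifications you flag (closedness and normality of $N$, descent of precompact exponent) are indeed unproblematic.
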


Also Theorem~\ref{t:iTG=>center} and Proposition~\ref{p:h=i+q} imply:

\begin{corollary}\label{c:hTS-center} For any closed normal subgroup $N$ of an $\hTG$-closed topological group $X$ the quotient topological group $X/N$ has compact center $Z(X/N)$. In particular, the Abelization $X/\overline{[X,X]}$ of $X$ is compact.
\end{corollary}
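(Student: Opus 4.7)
The proof is essentially a two-line deduction, so the plan is to simply trace the implications and carefully handle the ``in particular'' clause. First I would invoke Proposition~\ref{p:h=i+q} applied to the class $\C=\TG$: since $X$ is $\hTG$-closed and $N\subset X$ is a closed normal subgroup, the quotient topological group $X/N$ is $\iTG$-closed. Then Theorem~\ref{t:iTG=>center} applied to $X/N$ gives that the center $Z(X/N)$ is compact, which is the first assertion.

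For the ``in particular'' part, I would take $N=\overline{[X,X]}=X^{[1]}$, which is a closed normal subgroup of $X$ by definition of the topological derived series. The quotient topological group $X/\overline{[X,X]}$ is Abelian (this is precisely why the commutator subgroup is chosen), so it coincides with its own center:
\[
Z\!\left(X/\overline{[X,X]}\right)=X/\overline{[X,X]}.
\]
Applying the first part with $N=\overline{[X,X]}$ then yields that $X/\overline{[X,X]}$ is compact, as required.

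There is no real obstacle here: both ingredients (Proposition~\ref{p:h=i+q} and Theorem~\ref{t:iTG=>center}) have already been established. The only minor point worth noting in the write-up is that $\overline{[X,X]}$ is automatically closed and normal in $X$ (closedness is immediate from the definition; normality follows from the normality of $[X,X]$ together with the fact that the closure of a normal subgroup in a topological group is normal), so it is a legitimate choice of $N$ for the first statement to apply.
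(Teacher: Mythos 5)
Your proof is correct and follows exactly the route the paper intends: the paper derives this corollary directly from Proposition~\ref{p:h=i+q} (with $\C=\TG$) and Theorem~\ref{t:iTG=>center}, and handles the ``in particular'' clause by taking $N=\overline{[X,X]}$ so that the Abelian quotient coincides with its center. Nothing to add.
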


We recall that for a topological group $X$ its central series $\{e\}=Z_0(X)\subset Z_1(X)\subset\cdots$ consists of the subgroups defined recursively as $Z_{n+1}(G)=\{z\in X:\forall x\in X\;\;zxz^{-1}x^{-1}\in Z_n(X)\}$ for $n\in\w$.

\begin{proposition}\label{p:hTG=>Z_n} If a topological group $X$ is $\hTG$-closed, then for every $n\in\w$ the subgroup $Z_n(X)$ is compact.
\end{proposition}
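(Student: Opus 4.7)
The plan is to prove the statement by induction on $n\in\w$, using Corollary~\ref{c:hTS-center} at the inductive step and the 3-space property of compactness for topological groups to assemble the conclusion.

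For the base case $n=0$, the trivial subgroup $Z_0(X)=\{e\}$ is obviously compact. For the inductive step, suppose $Z_n(X)$ is compact. Before applying Corollary~\ref{c:hTS-center} I would first observe (by a parallel side-induction on $n$) that each $Z_n(X)$ is a \emph{closed} normal subgroup of $X$: indeed, assuming $Z_n(X)$ is closed, the formula
\[
Z_{n+1}(X)=\bigcap_{x\in X}\{z\in X:zxz^{-1}x^{-1}\in Z_n(X)\}
\]
exhibits $Z_{n+1}(X)$ as an intersection of preimages of the closed set $Z_n(X)$ under the continuous maps $z\mapsto zxz^{-1}x^{-1}$, so it is closed, and its normality is immediate from the definition.

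Having established that $Z_n(X)$ is a closed normal subgroup of the $\hTG$-closed group $X$, I would apply Corollary~\ref{c:hTS-center} to the quotient $X/Z_n(X)$ to conclude that the center $Z(X/Z_n(X))$ is compact. By the defining recursion of the central series we have the identification
\[
Z_{n+1}(X)/Z_n(X)=Z\bigl(X/Z_n(X)\bigr),
\]
so the quotient topological group $Z_{n+1}(X)/Z_n(X)$ is compact.

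To finish, I would invoke the standard 3-space property for compactness: if $N$ is a closed normal subgroup of a topological group $H$ with both $N$ and $H/N$ compact, then $H$ itself is compact (the quotient map $H\to H/N$ is a closed continuous surjection with compact fibers, so $H$ is the union of finitely many translates of $N$ covering any cover of $H/N$). Applied to $H=Z_{n+1}(X)$ and $N=Z_n(X)$, this yields compactness of $Z_{n+1}(X)$ and completes the induction. The argument involves no serious obstacle; essentially all the work is done inside Corollary~\ref{c:hTS-center}, and the remaining ingredients (closedness of the $Z_n(X)$ and the 3-space property) are routine.
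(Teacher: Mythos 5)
Your proof is correct and follows essentially the same route as the paper: induction on $n$, with Corollary~\ref{c:hTS-center} giving compactness of $Z(X/Z_n(X))=Z_{n+1}(X)/Z_n(X)$ and the 3-space property of compactness (cited in the paper as \cite[1.5.8]{AT}) closing the inductive step. The only additions are your explicit verification that each $Z_n(X)$ is closed and your sketch of the 3-space property, both of which the paper takes as known.
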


\begin{proof} The compactness of the subgroups $Z_n(X)$ will be proved by induction on $n$. For $n=0$ the compactness of the trivial group $Z_0(X)=\{e\}$ is obvious. Assume that for some $n\in\w$ we have proved that the subgroup $Z_n(X)$ is compact. By Corollary~\ref{c:hTS-center}, the center $Z(X/Z_n(X))$ of the quotient topological group $X/Z_n(X)$ is compact. Since $Z(X/Z_n(X))=Z_{n+1}(X)/Z_n(X)$, we see that the quotient topological group $Z_{n+1}(X)/Z_n(X)$ is compact. By \cite[1.5.8]{AT} (the 3-space property of the compactness), the topological group $Z_{n+1}(X)$ is compact. \end{proof}

Proposition~\ref{p:hTG=>Z_n} implies the following characterization of $\hTG$-closed nilpotent topological groups, first proved by Dikranjan and Uspenskij \cite[3.9]{DU}.

\begin{corollary}[Dikranjan, Uspenskij] A nilpotent topological group is compact if and only if it is $\hTG$-closed.
\end{corollary}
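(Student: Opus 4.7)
The plan is to invoke the immediately preceding Proposition~\ref{p:hTG=>Z_n} together with the very definition of nilpotency, so the proof is essentially a one-line deduction after two standard observations.

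For the easy direction, if $X$ is compact then for any continuous homomorphism $f\colon X\to Y$ into a Hausdorff topological group $Y$, the image $f(X)$ is compact and hence closed in $Y$. Thus $X$ is $\hTG$-closed. (I would state this in a single sentence.)

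For the nontrivial direction, suppose $X$ is nilpotent and $\hTG$-closed. By definition of nilpotency, there is some $n\in\w$ with $X=Z_n(X)$. Proposition~\ref{p:hTG=>Z_n} then gives that $Z_n(X)$ is compact, so $X$ is compact. There is no obstacle here — the work has already been done in Proposition~\ref{p:hTG=>Z_n}, which was itself established by induction using the 3-space property of compactness and Corollary~\ref{c:hTS-center} (which came from Theorem~\ref{t:iTG=>center} via Proposition~\ref{p:h=i+q}). The only thing to double-check is that the definition of nilpotent used in this paper — namely $X=Z_n(X)$ for some $n\in\w$ — matches the hypothesis in Proposition~\ref{p:hTG=>Z_n}, which it does by the definition of the central series recalled earlier in the introduction.
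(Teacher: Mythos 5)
Your proof is correct and is exactly the paper's route: the paper states this corollary as an immediate consequence of Proposition~\ref{p:hTG=>Z_n}, with the forward direction being the standard fact that continuous images of compact groups are compact and hence closed in Hausdorff groups. Nothing is missing.
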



We recall that a topological group $X$ is {\em totally minimal\/} if for any closed normal subgroup $N\subset X$ the quotient topological group $X/N$ is minimal. Proposition~\ref{p:h=i+q} and Theorem~\ref{t:pseudo} imply the following characterization.

\begin{corollary}\label{c:ana}  For an $\w$-narrow topological group $X$ of countable pseudocharacter the following conditions are equivalent:
\begin{enumerate}
\item $X$ is $\hTG$-closed;
\item $X$ is $\hPG$-closed.
\item $X$ is Polish and totally minimal.
\end{enumerate}
\end{corollary}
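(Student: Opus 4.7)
The plan is to establish the cycle of implications $(1)\Ra(2)\Ra(3)\Ra(1)$ by combining Proposition~\ref{p:h=i+q} (applied with $\C=\TG$ and with $\C=\PG$) together with Theorem~\ref{t:pseudo} and the standard fact that the quotient of a Polish group by a closed normal subgroup is Polish.

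The implication $(1)\Ra(2)$ is immediate from the inclusion $\PG\subset\TG$: any continuous homomorphism $f:X/N\to Y$ with $Y\in\PG$ is in particular a continuous homomorphism into a topological group, so its image is closed.

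For $(2)\Ra(3)$, first specialize Proposition~\ref{p:h=i+q} (with $\C=\PG$) to the trivial subgroup $N=\{e\}$ to conclude that $X$ itself is $\iPG$-closed. Since $X$ is $\w$-narrow of countable pseudocharacter, the equivalence $(4)\Leftrightarrow(1)$ of Theorem~\ref{t:pseudo} yields that $X$ is Polish and minimal. For an arbitrary closed normal subgroup $N\subset X$, the quotient $X/N$ is Polish (being the image of the Polish group $X$ under the open continuous homomorphism $X\to X/N$), so in particular it is $\w$-narrow and has countable pseudocharacter. Another application of Proposition~\ref{p:h=i+q} gives that $X/N$ is $\iPG$-closed, and a second application of Theorem~\ref{t:pseudo} then forces $X/N$ to be (Polish and) minimal. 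Hence $X$ is totally minimal.

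For $(3)\Ra(1)$, assume $X$ is Polish and totally minimal. Given any closed normal subgroup $N\subset X$, the quotient $X/N$ is again Polish and, by total minimality of $X$, minimal. Because Polish groups are $\w$-narrow of countable pseudocharacter, Theorem~\ref{t:pseudo} applies to $X/N$ and shows that $X/N$ is $\iTG$-closed. Since this holds for every closed normal $N\subset X$, Proposition~\ref{p:h=i+q} (with $\C=\TG$) lets us conclude that $X$ is $\hTG$-closed.

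The argument is essentially a bookkeeping exercise; the only mildly non-formal ingredient is the preservation of the hypotheses of Theorem~\ref{t:pseudo} under passing to quotients, which rests on the fact that a quotient of a Polish group by a closed normal subgroup is Polish. No new heavy machinery is required beyond the two results being invoked, so there is no real obstacle beyond ensuring that at each step the class ($\TG$ or $\PG$) is chosen compatibly with the hypothesis being used.
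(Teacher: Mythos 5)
Your proof is correct and is exactly the argument the paper intends: the paper derives this corollary directly from Proposition~\ref{p:h=i+q} and Theorem~\ref{t:pseudo}, and your write-up simply supplies the routine details (choice of class $\TG$ versus $\PG$, and the fact that quotients of Polish groups by closed normal subgroups are Polish, hence $\w$-narrow of countable pseudocharacter) that the paper leaves implicit.
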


In fact, the countable pseudocharacter can be removed from this corollary.
Following \cite{Luk}, we define a topological group $X$ to be {\em totally complete}  if for any closed normal subgroup $N\subset X$ the quotient topological group $X/N$ is complete. It is easy to see that each $\hTG$-closed topological group is totally complete. By \cite[3.45]{Luk}, a topological group is $\hTG$-closed if it is totally complete and totally minimal. These observations are complemented by the following characterization.

\begin{theorem}\label{t:absw} For an $\w$-narrow topological group $X$ the following conditions are equivalent:
\begin{enumerate}
\item $X$ is $\hTG$-closed;
\item $X$ is totally complete and totally minimal.
\end{enumerate}
\end{theorem}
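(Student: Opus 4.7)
The implication $(2)\Ra(1)$ is \cite[3.45]{Luk}, as recalled just before the statement. For $(1)\Ra(2)$, total completeness is easy: for a closed normal subgroup $N\trianglelefteq X$, compose the quotient map $X\to X/N$ with the canonical embedding of $X/N$ into its Raikov completion to obtain a continuous homomorphism from $X$ into a topological group; by the $\hTG$-closedness of $X$ its image $X/N$ is closed and also dense in this completion, hence equal to it, so $X/N$ is complete.

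Total minimality is the substantial content. Since both $\hTG$-closedness and $\w$-narrowness pass to Hausdorff quotients (the former because $f\circ q$ is a continuous homomorphism from $X$ whenever $f$ is one from $X/N$), the task reduces via Proposition~\ref{p:h=i+q}-style reasoning to the claim: \emph{every $\w$-narrow $\hTG$-closed topological group $X$ is minimal}. So let $h\colon X\to Y$ be a continuous bijective homomorphism to a Hausdorff topological group $Y$; I must show $h$ is open. By Guran's Theorem~\ref{t:Guran} applied to the $\w$-narrow group $Y$, fix an embedding $Y\hookrightarrow\prod_{\gamma\in\Gamma}Q_\gamma$ with each $Q_\gamma$ Polish. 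For every countable $C\subset\Gamma$ the continuous homomorphism $h_C:=\pi_C\circ h\colon X\to\prod_{\gamma\in C}Q_\gamma$ targets a Polish group, and by $\hTG$-closedness of $X$ its image $\pi_C(Y)=h_C(X)$ is closed, hence Polish. The quotient $X/\ker h_C$ is $\hTG$-closed, $\w$-narrow, and of countable pseudocharacter (it injects continuously into the Polish $\pi_C(Y)$), so by Corollary~\ref{c:ana} it is Polish and totally minimal. Thus each $h_C\colon X\to\pi_C(Y)$ is an open surjective homomorphism.

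To transfer openness from the projections $h_C$ to $h$ itself, I apply Guran's Theorem to $X$ as well and use that the identity map $X\hookrightarrow\prod_{\alpha\in A}P_\alpha$ is a continuous homomorphism into a topological group, whose image $X$ is therefore closed in $\prod_\alpha P_\alpha$ by $\hTG$-closedness. A basic $\tau_X$-neighborhood of the identity then has the form $V=p_{F_0}^{-1}(U_0)\cap X$ for some finite $F_0\subset A$ and open $U_0\ni e$. The plan is to show that $p_{F_0}\circ h^{-1}\colon Y\to p_{F_0}(X)$ is continuous; granting this, $h(V)=(p_{F_0}\circ h^{-1})^{-1}(U_0)$ is open in $Y$, making $h$ open. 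Crucially, the graph of $p_{F_0}\circ h^{-1}$ coincides with the image $(h,p_{F_0})(X)$ of the continuous homomorphism $(h,p_{F_0})\colon X\to Y\times p_{F_0}(X)$, which is closed in the topological group $Y\times p_{F_0}(X)$ by $\hTG$-closedness of $X$. The problem therefore collapses to an automatic continuity statement: a group homomorphism from the $\w$-narrow group $Y$ into a Polish group, with closed graph, must be continuous. This is the main obstacle, and it is precisely where Descriptive Set Theory enters — one needs a $K$-analytic closed-graph / open-mapping argument exploiting the pro-Polish structure of $Y$ (inherited as a continuous bijective image of the $\hTG$-closed group $X$, which is itself closed in a product of Polish groups) to upgrade the closed graph to continuity. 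Once this automatic continuity step is executed, openness of $h$, and hence minimality of $X$, follows.
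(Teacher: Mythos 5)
Your reduction to the key claim that every $\w$-narrow $\hTG$-closed group is minimal, and your preparatory steps (total completeness, closedness of images in Polish groups, Polishness and total minimality of the quotients via Corollary~\ref{c:ana}, closedness of the graph of $p_{F_0}\circ h^{-1}$), all match the paper's Lemma~\ref{l:minimal}. But the proof is not complete: you explicitly stop at ``the main obstacle,'' namely an automatic-continuity statement which you do not prove, and which is in fact \emph{false} in the generality you state it. A homomorphism from an $\w$-narrow group into a Polish group with closed graph need not be continuous: take $Y=\IR$ with the Bohr topology (precompact, hence $\w$-narrow) and the identity map to $\IR$ with the usual topology; its graph is closed (it is the flip of the graph of the continuous identity in the other direction, with Hausdorff codomain), yet the map is discontinuous. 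So any correct argument must use more than the closed graph and the $\w$-narrowness of $Y$, and you have not identified which extra structure does the work.

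The paper closes exactly this gap by a different assembly of the same ingredients, avoiding any closed-graph theorem on $Y$. With $p\colon X\to P$ a projection to a Polish group and $K_X=\ker p$, the closed graph $\Gamma=\{(h(x),p(x)):x\in X\}$ is used only to conclude that $K_Y:=h(K_X)=\{y:(y,1)\in\Gamma\}$ is \emph{closed} in $Y$, so that $Y/K_Y$ is a Hausdorff topological group. Then $h$ induces a continuous bijective homomorphism $\hat h\colon X/K_X\to Y/K_Y$, and the \emph{minimality} of the Polish group $X/K_X$ (the part of Corollary~\ref{c:ana} you quoted but did not exploit) forces $\hat h$ to be a topological isomorphism; hence $p\circ h^{-1}=\bar p\circ\hat h^{-1}\circ q_Y$ is continuous, where $\bar p\colon X/K_X\to P$ is the induced isomorphism and $q_Y\colon Y\to Y/K_Y$ the quotient map. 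You already have every lemma needed for this; what is missing is this final factorization through the quotients, which replaces the unproved (and unprovable, as stated) closed-graph step. Also note that your first paragraph of the minimality argument, applying Guran's theorem to $Y$ and establishing that each $h_C$ is open, is never used afterwards and can be deleted.
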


\begin{proof} The implication $(2)\Ra(1)$ was proved by Luk\'acs  \cite[3.45]{Luk}. The implication $(1)\Ra(2)$ will be derived from the following lemma.

\begin{lemma}\label{l:minimal} Each $\w$-narrow $\hTG$-closed topological group $X$ is minimal.
\end{lemma}

\begin{proof} We should prove that each continuous bijective homomorphism $f:X\to Y$ to a topological group $Y$ has continuous inverse $f^{-1}:Y\to X$. Since $X$ embeds into the Tychonoff product of Polish groups, it suffices to check that for every continuous homomorphism $p:X\to P$ to a Polish group $P$ the composition $p\circ f^{-1}:Y\to P$ is continuous. Since $X$ is $\hTG$-closed, the image $p(X)$ is closed in $P$. Replacing $P$ by $p(X)$, we can assume that $P=p(X)$. Let $K_X=p^{-1}(1)$ be the kernel of the homomorphism $p$. Observe that the quotient topological group $X/K_X$ admits a bijective continuous homomorphism $\bar p:X/K_X\to P$ such that $p=\bar p\circ q_X$ where $q_X:X\to X/K_X$ is the quotient homomorphism.  It follows that the quotient topological group $X/K_X$ has countable pseudocharacter. Moreover, the topological group $X/K_X$ is $\w$-narrow and $\hTG$-closed being a continuous homomorphic image of the $\w$-narrow $\hTG$-closed topological group $X$. By Corollary~\ref{c:ana}, the group $X/K_X$ is Polish and minimal. Consequently, $\bar p:X/K_X\to P$ is a topological isomorphism.

Now we shall prove that the image $K_Y=f(K_X)$ of $K_X$ is closed in $Y$.
Consider the continuous homomorphism $fp:X\to Y\times P$, $fp:x\mapsto (f(x),p(x))$, and observe that its image $fp(X)$ is a closed subgroup of $Y\times P$ by the $\iTG$-closedness of $X$. Consequently, the homomorphism $p\circ f^{-1}$ has closed graph $\Gamma=\{(y,p\circ f^{-1}(y)):y\in Y\}=\{(f(x),p(x)):x\in X\}=fp(X)$. Since the intersection $\Gamma\cap(Y\times \{1\})$ is a closed subset of $Y\times\{1\}$ the homomorphism $p\circ f^{-1}$ has closed kernel $$K_Y=\{y\in Y:p\circ f^{-1}(y)=1\}=\{y\in Y:(y,1)\in\Gamma\},$$ which coincides with $f(K_X)$. Let $Y/K_Y$ be the quotient topological group and $q_Y:Y\to Y/K_Y$ be the quotient homomorphism.

The continuous bijective homomorphism $f:X\to Y$ induces a continuous bijective homomorphism $\hat f:X/K_X\to Y/K_Y$ making the following diagram commutative.
$$
\xymatrix{
&X\ar_p[dl]\ar^f[r]\ar^{q_X}[d]&Y\ar^{q_Y}[d]\\
P&X/K_X\ar^{\bar p}[l]\ar_{\hat f}[r]&Y/K_Y
}
$$
The minimality of the Polish group $X/K_X$ guarantees that the bijective homomorphism $\hat f$ is a topological isomorphism, which implies that the homomorphism $\bar p\circ \hat f^{-1}\circ q_Y=p\circ f^{-1}:Y\to P$ is continuous.
\end{proof}

Now we are able to prove the implication $(1)\Ra(2)$ of Theorem~\ref{t:absw}. Given an $\w$-narrow $\hTG$-closed topological group $X$ and a closed normal subgroup $N\subset X$, we should check that the quotient topological group $X/N$ is complete and minimal. Observe that $X/N$ is $\w$-narrow and $\hTG$-closed (being a continuous homomorphic image of the $\w$-bounded $\hTG$-closed topological group $X$). By Theorem~\ref{t:Weil}, the $\hTG$-closed topological group $X/N$ is complete and by Lemma~\ref{l:minimal}, it is minimal.
\end{proof}

\section{On $\pC$-closed topological groups}

In this section we collect some results on ${\mathsf p}{:}\C$-closed topological groups for various classes $\C$ of topologized semigroups.

\begin{theorem}\label{t:pC} Let $\C$ be a class of topologized semigroups, containing all Abelian topological groups. For a topological group $X$ the following conditions are equivalent:
\begin{enumerate}
\item $X$ is $\pC$-closed;
\item each closed subsemigroup of $X$ is $\hC$-closed;
\item each closed subgroup of $X$ is $\hC$-closed.
\end{enumerate}
\end{theorem}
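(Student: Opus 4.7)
The equivalence $(1)\Leftrightarrow(2)$ follows by unwinding definitions. By definition, $X$ is $\pC$-closed exactly when, for every continuous homomorphism $f\colon Z\to Y\in\C$ whose domain $Z$ is a closed subsemigroup of $X$, the image $f(Z)$ is closed in $Y$; and quantifying over such $Z$, this is precisely the assertion that every closed subsemigroup of $X$ is $\hC$-closed. The implication $(2)\Rightarrow(3)$ is trivial, since every closed subgroup of the topological group $X$ is a closed subsemigroup.

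The only nontrivial implication is $(3)\Rightarrow(2)$. The plan is to prove that, under hypothesis $(3)$, every closed subsemigroup of $X$ is in fact a closed subgroup, whence $(2)$ follows immediately from $(3)$. First I would check that every closed Abelian subgroup $A$ of $X$ is compact. Indeed, by $(3)$ such an $A$ is $\hC$-closed, and since $\C$ contains all Abelian topological groups, every injective continuous homomorphism $A\to B$ into an Abelian topological group $B$ has closed image. Because the closure of an Abelian subgroup in any topological group is again Abelian, this property is equivalent to $\iTG$-closedness of $A$, and Theorem~\ref{t:banakh} then yields compactness of $A$.

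Next, given any closed subsemigroup $Z\subset X$ and any $z\in Z$, the closure $\overline{\langle z\rangle}$ is a closed Abelian subgroup of $X$, hence compact by the previous step. The set $S_z:=\overline{\{z^n:n\ge 1\}}$ is a closed subsemigroup of the compact group $\overline{\langle z\rangle}$, and by the classical theorem that every closed subsemigroup of a compact topological group is a subgroup, $S_z$ is a subgroup of $\overline{\langle z\rangle}$; in particular $z^{-1}\in S_z$. Since $\{z^n:n\ge 1\}\subset Z$ and $Z$ is closed, $S_z\subset Z$, and therefore $z^{-1}\in Z$. Thus $Z=Z^{-1}$, showing that $Z$ is a closed subgroup of $X$, and $(3)$ directly yields the $\hC$-closedness of $Z$.

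The key step is reducing closed subsemigroups to closed subgroups, and the main obstacle lies in upgrading the hypothesis $(3)$ from ``$\hC$-closed'' to ``compact'' for closed Abelian subgroups of $X$; this is exactly where the deep Theorem~\ref{t:banakh} is used, together with the observation that, for an Abelian group, $\hC$-closedness against Abelian targets already forces $\iTG$-closedness. Once compactness of closed Abelian subgroups is in hand, the remaining argument is a short application of the classical fact about closed subsemigroups of compact topological groups.
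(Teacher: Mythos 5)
Your proposal is correct and follows essentially the same route as the paper: both reduce $(3)\Rightarrow(2)$ to showing every closed subsemigroup of $X$ is a subgroup, by applying hypothesis (3) to the closed Abelian subgroup generated by a single element, invoking Theorem~\ref{t:banakh} to get its compactness (the paper leaves implicit the reduction from $\hC$-closedness to $\iTG$-closedness via passing to the Abelian closure of the image, which you rightly spell out), and then concluding $z^{-1}\in Z$ from a classical compactness fact. The only cosmetic difference is that the paper cites the monothetic-group fact that $\overline{\{x^n:n\in\IN\}}$ already equals the whole compact group, whereas you cite the equivalent fact that a closed subsemigroup of a compact group is a subgroup.
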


\begin{proof} The equivalence $(1)\Leftrightarrow(2)$ follows immediately from the definitions of $\pC$-closed and $\hC$-closed topological groups, and $(2)\Ra(3)$ is trivial.

To prove that $(3)\Ra(2)$, assume that each closed subgroup of $X$ is $\hC$-closed, and take any closed subsemigroup $S\subset X$ of $X$. We claim that $S$ is a subgroup of $X$. Given any element $x\in S$, consider the closure $Z$ of the cyclic subgroup $\{x^n\}_{n\in\IZ}$. By our assumption, $Z$ is $\hC$-closed and by Theorem~\ref{t:banakh}, the Abelian $\hC$-closed topological group $Z$ is compact. Being a compact monothetic group, $Z$ coincides with the closure of the subsemigroup $\{x^n\}_{n\in\IN}$ and hence is contained in the closed subsemigroup $S$. Then $x^{-1}\in Z\subset S$ and $S$ is a subgroup of $X$. By (3), $S$  is $\hC$-closed.
\end{proof}

For topological groups of precompact exponent, Theorem~\ref{t:exp=>hTS=hTG} implies the following characterization.

\begin{corollary} For any topological group $X$ of precompact exponent the following conditions are equivalent:
\begin{enumerate}
\item $X$ is $\phTG$-closed;
\item $X$ is $\phTS$-closed.
\end{enumerate}
\end{corollary}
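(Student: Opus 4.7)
The plan is to derive this as a short corollary of Theorem~\ref{t:pC} together with Theorem~\ref{t:exp=>hTS=hTG}. The implication $(2)\Ra(1)$ is immediate from the inclusion $\TG\subset\TS$: every continuous homomorphism into a topological group is in particular a continuous homomorphism into a topological semigroup, so any test morphism witnessing failure of $\phTG$-closedness already witnesses failure of $\phTS$-closedness.

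For the substantive direction $(1)\Ra(2)$, the strategy is as follows. Assume $X$ has precompact exponent and is $\phTG$-closed. First apply Theorem~\ref{t:pC} to the class $\C=\TG$ (which trivially contains all Abelian topological groups) to conclude that every closed subgroup $H\subset X$ is $\hTG$-closed. Next, verify that every such $H$ inherits precompact exponent from $X$; once this is done, Theorem~\ref{t:exp=>hTS=hTG} upgrades the $\hTG$-closedness of $H$ to $\hTS$-closedness. A second application of Theorem~\ref{t:pC}, this time with $\C=\TS$, then yields that $X$ is $\phTS$-closed.

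The one technical point is the heredity of precompact exponent. Since $X$ is $\phTG$-closed, it is in particular $\eTG$-closed, hence complete by Theorem~\ref{t:Weil}. A complete topological group of precompact exponent has compact exponent, so there exists $n\in\IN$ with $K:=\overline{nX}$ compact in $X$. For any closed subgroup $H\subset X$ the set $nH=\{h^n:h\in H\}$ lies in $K\cap H$, which is a compact subset of $H$; hence $H$ has compact (in particular, precompact) exponent, as required.

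No serious obstacle is anticipated: the statement is a short combinatorial consequence of already-established results, and the main substance of the argument consists in choosing the right instantiations of Theorem~\ref{t:pC} and verifying the elementary inheritance of (pre)compact exponent by closed subgroups.
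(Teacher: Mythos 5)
Your proposal is correct and is essentially the argument the paper intends: the corollary is stated right after Theorem~\ref{t:pC} as a consequence of it combined with Theorem~\ref{t:exp=>hTS=hTG}, i.e., reduce both closedness properties to the $\hTG$-, resp. $\hTS$-closedness of closed subgroups and apply the equivalence of these for groups of precompact exponent. Your explicit verification that closed subgroups inherit (pre)compact exponent (via completeness, which follows from $\phTG$-closedness and Theorem~\ref{t:Weil}) fills in exactly the small step the paper leaves implicit.
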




An interesting property of $\phTG$-closed topological groups was discovered by Dikranjan and Uspenskij \cite[3.10]{DU}. This property concerns the transfinite derived series $(X^{[\alpha]})_{\alpha}$ of a topological group $X$, which consists of the closed normal subgroups $X^{[\alpha]}$ of $X$, defined by the transfinite recursive formulas:
$$
\begin{aligned}
&X^{[0]}:=X,\\
&X^{[\alpha+1]}:=\overline{[X^{[\alpha]},X^{[\alpha]}]}\mbox{ for each ordinal $\alpha$},\\
&X^{[\gamma]}:=\bigcap_{\alpha<\gamma}X^{[\alpha]}\mbox{ for each limit ordinal $\gamma$}.
\end{aligned}
$$
The closed subgroup $$X^{[\infty]}:=\bigcap_{\alpha}X^{[\alpha]}$$ is called the {\em hypocommutator} of $X$. A topological group is called {\em hypoabelian} it its hypocommutator is trivial.

\begin{theorem}[Dikranjan, Uspenskij]\label{t:DU-hypocommutator} For each $\phTG$-closed topological group $X$ the quotient topological group $X/X^{[\infty]}$ is compact.
\end{theorem}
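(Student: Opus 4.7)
The plan is to reduce the theorem to Theorem~\ref{t:DU-solvable} applied to the quotient $Y := X/X^{[\infty]}$, by verifying (a) that $Y$ is itself $\phTG$-closed, and (b) that $Y$ is hypoabelian. For (a), let $q\colon X\to Y$ be the quotient homomorphism. Given any closed subsemigroup $Z'\subseteq Y$ and any continuous homomorphism $g\colon Z'\to W$ into a topological group $W$, set $Z:=q^{-1}(Z')$. Then $Z$ is closed in $X$ (by continuity of $q$) and a subsemigroup (since $q$ is a homomorphism and $Z'$ is a subsemigroup), and $g\circ q|_Z\colon Z\to W$ is a continuous homomorphism from a closed subsemigroup of $X$; the $\phTG$-closedness of $X$ forces $g(Z')=(g\circ q|_Z)(Z)$ to be closed in $W$.

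For (b) I would establish, by transfinite induction, the identity $Y^{[\alpha]}=q(X^{[\alpha]})=X^{[\alpha]}/X^{[\infty]}$ for every ordinal $\alpha$. The limit case is immediate from $q\bigl(\bigcap_{\alpha<\gamma}X^{[\alpha]}\bigr)=\bigcap_{\alpha<\gamma}q(X^{[\alpha]})$, which holds because each $X^{[\alpha]}$ contains $\ker q=X^{[\infty]}$. For the successor step, the restriction $q|_{X^{[\alpha]}}\colon X^{[\alpha]}\to Y^{[\alpha]}$ is surjective, so $[Y^{[\alpha]},Y^{[\alpha]}]=q([X^{[\alpha]},X^{[\alpha]}])$ and hence $Y^{[\alpha+1]}=\overline{q([X^{[\alpha]},X^{[\alpha]}])}$. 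Continuity of $q$ yields the inclusion $q(X^{[\alpha+1]})\subseteq Y^{[\alpha+1]}$, while the reverse inclusion uses that $q(X^{[\alpha+1]})$ is closed in $Y$: its preimage $q^{-1}(q(X^{[\alpha+1]}))=X^{[\alpha+1]}\cdot X^{[\infty]}=X^{[\alpha+1]}$ (using the critical inclusion $X^{[\infty]}\subseteq X^{[\alpha+1]}$) is closed in $X$, so $\overline{q([X^{[\alpha]},X^{[\alpha]}])}\subseteq q(X^{[\alpha+1]})$.

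Choosing $\beta$ large enough that the transfinite derived series stabilizes at $X^{[\beta]}=X^{[\infty]}$, the induction yields $Y^{[\beta]}=\{e_Y\}$. Then any non-trivial closed subgroup $H\subseteq Y$ with $\overline{[H,H]}=H$ would satisfy $H\subseteq Y^{[\alpha]}$ for every $\alpha$ and hence $H\subseteq Y^{[\beta]}=\{e_Y\}$, a contradiction; therefore $Y$ is hypoabelian. Applying Theorem~\ref{t:DU-solvable} to the hypoabelian $\phTG$-closed group $Y$ gives that $Y=X/X^{[\infty]}$ is compact. The main obstacle is the successor step of the transfinite induction, where one cannot simply invoke $\overline{q(A)}=q(\overline{A})$ for an arbitrary subset $A$ and must instead exploit the saturation of $X^{[\alpha+1]}$ by the kernel $X^{[\infty]}$ in order to conclude that $q(X^{[\alpha+1]})$ is already closed in the quotient.
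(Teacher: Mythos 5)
Your argument is correct, but note that the paper itself gives no proof of Theorem~\ref{t:DU-hypocommutator}: it is quoted verbatim from Dikranjan--Uspenskij \cite[3.10]{DU}, so there is no internal proof to compare against. What you have done is reduce the general statement to the hypoabelian special case, namely the implication (2)$\Rightarrow$(1) of Theorem~\ref{t:DU-solvable}, via two correct auxiliary facts: (a) quotients of $\phTG$-closed groups are $\phTG$-closed (your preimage argument $Z=q^{-1}(Z')$ is exactly right, since $q(q^{-1}(Z'))=Z'$ by surjectivity and closedness of $q^{-1}(Z')$ is immediate), and (b) the transfinite derived series of $Y=X/X^{[\infty]}$ is the image of that of $X$, where your handling of the successor step --- using that $X^{[\alpha+1]}$ is saturated by $\ker q=X^{[\infty]}\subseteq X^{[\alpha+1]}$, so that $q(X^{[\alpha+1]})$ is already closed and therefore absorbs the closure of $q([X^{[\alpha]},X^{[\alpha]}])$ --- is precisely the point where a naive argument would break, and you treat it correctly; the limit step likewise uses saturation to commute $q$ with the intersection. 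The one caveat worth flagging is a question of logical order within this paper: Corollary~\ref{c:hypoabelian} (``a hypoabelian topological group is compact iff it is $\phTG$-closed'') is presented as a consequence of Theorem~\ref{t:DU-hypocommutator}, and it coincides with the hypoabelian case of Theorem~\ref{t:DU-solvable} that you invoke. So your derivation is not circular only because Theorem~\ref{t:DU-solvable} is itself taken as an externally established result of \cite{DU}; if one wanted a self-contained proof of Theorem~\ref{t:DU-hypocommutator} from first principles, your reduction would still leave the hypoabelian case to be proved, which is where the real content of the Dikranjan--Uspenskij argument (their filter characterization of $\hTG$-closedness) lives.
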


\begin{corollary}\label{c:hypoabelian} A hypoabelian topological group is compact if and only if it is $\phTG$-closed.
\end{corollary}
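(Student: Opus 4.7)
The plan is to note that this corollary is essentially immediate from Theorem~\ref{t:DU-hypocommutator}, so the only work is to unpack the definitions and handle the two directions separately.

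For the ``only if'' direction (compactness implies $\phTG$-closedness), I would observe that any continuous homomorphism $f\colon Z\to Y$ defined on a closed subsemigroup $Z\subset X$ has as its image $f(Z)$ a continuous image of a closed subset of a compact space; hence $f(Z)$ is compact, and since $Y$ is Hausdorff it is closed in $Y$. This works uniformly, and the hypoabelian hypothesis is not used here.

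For the ``if'' direction, suppose $X$ is hypoabelian and $\phTG$-closed. By definition, the hypocommutator $X^{[\infty]}:=\bigcap_{\alpha} X^{[\alpha]}$ is the trivial subgroup $\{e\}$. Theorem~\ref{t:DU-hypocommutator} asserts that for every $\phTG$-closed topological group $X$, the quotient topological group $X/X^{[\infty]}$ is compact. Substituting $X^{[\infty]}=\{e\}$, we obtain that $X/\{e\}\cong X$ is compact. This finishes the proof.

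There is no real obstacle here, since the hard content is quarantined inside Theorem~\ref{t:DU-hypocommutator}; the corollary is a direct specialization once one recognizes that ``hypoabelian'' is precisely the condition that makes the quotient $X/X^{[\infty]}$ coincide with $X$ itself. The only point requiring a brief sentence of care is the trivial verification that compact Hausdorff topological groups are $\phTG$-closed, which follows since continuous images of compact sets are compact and compact subsets of Hausdorff spaces are closed.
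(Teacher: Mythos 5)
Your proof is correct and matches the paper's (implicit) argument exactly: the corollary is stated immediately after Theorem~\ref{t:DU-hypocommutator} precisely because hypoabelian means $X^{[\infty]}=\{e\}$, so the compact quotient $X/X^{[\infty]}$ is $X$ itself, while the converse direction is the routine observation that continuous images of closed subsemigroups of a compact group are compact, hence closed in any Hausdorff topological group. Nothing is missing.
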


Since solvable topological groups are hypoabelian, Corollary~\ref{c:hypoabelian} implies the following characterization of compactness of solvable topological groups.

\begin{corollary} A solvable topological group $X$ is compact if and only if it is $\phTG$-closed.
\end{corollary}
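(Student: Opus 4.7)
The plan is to deduce this corollary immediately from the preceding Corollary~\ref{c:hypoabelian} together with the trivial observation that every solvable topological group is hypoabelian. There is essentially no hard step here; the whole content is in checking the inclusion between the two classes of groups.

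First, the ``only if'' direction is routine: any compact topological group is $\phTG$-closed, since for a closed (hence compact) subsemigroup $Z\subset X$ and any continuous homomorphism $f:Z\to Y$ into a Hausdorff topological group $Y$, the image $f(Z)$ is compact and therefore closed in $Y$.

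For the ``if'' direction, I would argue as follows. Suppose $X$ is solvable in the sense of the paper, meaning $X^{[n]}=\{e\}$ for some $n\in\IN$, where the topological derived series is defined by $X^{[0]}=X$ and $X^{[k+1]}=\overline{[X^{[k]},X^{[k]}]}$. Since the transfinite topological derived series refines the finite one, we have
\[
X^{[\infty]}=\bigcap_{\alpha}X^{[\alpha]}\subset X^{[n]}=\{e\},
\]
so the hypocommutator $X^{[\infty]}$ is trivial, i.e.\ $X$ is hypoabelian. Now Corollary~\ref{c:hypoabelian} applies and gives that the $\phTG$-closed hypoabelian group $X$ is compact.

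The only conceivable subtlety is to confirm that ``solvable'' in the paper's sense really does imply ``hypoabelian,'' i.e.\ that the transfinite derived series is contained in each term of the (finite) topological derived series; but this is immediate from the definitions by transfinite induction on $\alpha$, using that $X^{[\alpha+1]}=\overline{[X^{[\alpha]},X^{[\alpha]}]}\subset \overline{[X^{[k]},X^{[k]}]}=X^{[k+1]}$ whenever $X^{[\alpha]}\subset X^{[k]}$, and that intersections preserve the inclusion at limit stages. Hence no genuine obstacle arises and the corollary is a one-line consequence of Corollary~\ref{c:hypoabelian}.
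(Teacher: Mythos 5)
Your proposal is correct and follows exactly the paper's route: the paper derives this corollary in one line from Corollary~\ref{c:hypoabelian} by noting that every solvable topological group is hypoabelian (the finite topological derived series terminating at $\{e\}$ forces the hypocommutator $X^{[\infty]}$ to be trivial), which is precisely your argument. The extra verifications you include (the trivial ``only if'' direction and the inclusion of the transfinite derived series in the finite one) are sound but add nothing beyond what the paper leaves implicit.
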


By \cite[2.16]{DU}, {\em a balanced topological group is $\phTG$-closed if and only if it is $\cTG$-closed}.

\begin{problem} Is each balanced $\phTS$-closed topological group $\cTS$-closed?
\end{problem}

\section{On closedness properties of MAP topological groups}\label{s:Bohr}

In this section we establish some properties of MAP topological groups.  We recall that a topological group $X$ is {\em maximally almost periodic} (briefly, {\em MAP}) if it admits a continuous injective homomorphism into a compact topological group.

\begin{theorem} A topological group $X$ is compact if and only if $X$ is $\phTG$-closed and MAP.
\end{theorem}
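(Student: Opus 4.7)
For the forward direction, one notes that compactness gives MAP (via the identity $X\to X$ into the compact group $X$) and $\phTG$-closedness (every closed subsemigroup of $X$ is compact and hence a subgroup by Theorem~\ref{t:pC}, so any continuous homomorphism from such a subsemigroup into a Hausdorff topological group has compact, therefore closed, image).

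For the converse, my plan is as follows. Since $X$ is MAP, fix a continuous injective homomorphism $\iota:X\to K$ into a compact topological group $K$; replacing $K$ by $\overline{\iota(X)}$, we may assume $\iota(X)$ is dense in $K$. Since $\phTG$-closedness implies $\hTG$-closedness, $\iota(X)$ is also closed in $K$, so $\iota(X)=K$, and $\iota$ becomes a continuous bijective homomorphism onto the compact group $K$; the task is to upgrade this to a topological isomorphism. Decomposing $K=\varprojlim_{i\in I}K_i$ into its compact Lie quotients via projections $\pi_i:K\to K_i$, I set $N_i:=\ker(\pi_i\circ\iota)$. Each quotient $X/N_i$ inherits $\phTG$-closedness (verified by lifting closed subsemigroups and homomorphisms to $X$) and admits a continuous injective homomorphism $\bar\iota_i:X/N_i\to K_i$ into the Polish group $K_i$; hence $X/N_i$ has countable pseudocharacter and, being complete by Theorem~\ref{t:Weil}, is a completely metrizable topological group.

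The key step is to show each $X/N_i$ is compact. If not, being a non-totally-bounded complete metrizable group, $X/N_i$ contains some $\epsilon$-separated sequence $(x_n)$. By compactness of $\bar\iota_i(X/N_i)\subseteq K_i$ (closed by $\hTG$-closedness), a subsequence $(x_{n_k})$ satisfies $\bar\iota_i(x_{n_k})\to\bar\iota_i(x)$ for some $x\in X/N_i$. Let $G_0$ be the countable subgroup of $X/N_i$ generated by $\{x_{n_k}\}\cup\{x\}$ and $H$ its closure; then $H$ is closed in $X/N_i$, hence $\phTG$-closed, separable (via the countable dense $G_0$), first-countable, metrizable, and complete—so Polish. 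By $\hTG$-closedness, $\bar\iota_i(H)$ is closed in $K_i$ and thus a Polish compact subgroup. Banach's Open Mapping Theorem (Theorem~\ref{t:open}) applied to the continuous bijective homomorphism $\bar\iota_i|_H:H\to\bar\iota_i(H)$ between Polish groups makes it a topological isomorphism, so $H$ is compact—contradicting the presence of the $\epsilon$-separated subsequence $(x_{n_k})\subseteq H$. Hence $X/N_i$ is compact, and since $\bar\iota_i$ is a continuous bijection from the compact group $X/N_i$ into the Hausdorff $K_i$, it is a topological isomorphism onto a closed subgroup $L_i\subseteq K_i$.

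Finally, the diagonal map $\Phi:X\to\prod_{i\in I}X/N_i$ is continuous and injective (as $\bigcap_i N_i=\ker\iota=\{e\}$), with closed compact image (by $\hTG$-closedness) in the compact product. To conclude $X$ is compact, I need $\Phi$ to be a topological embedding, i.e., that $\tau_X$ coincides with the initial topology pulled back from the $q_i:X\to X/N_i\cong L_i$. The main obstacle will be this last step, because $X$ itself need not have countable pseudocharacter, so the escape-sequence argument used above does not apply to $X$ verbatim. I would handle it by an analogous argument confined to closed countably-generated subgroups of $X$: assuming $\tau_X$ were strictly finer than the pulled-back topology, I would extract a witness filter, reduce to a sequence inside a closed separable $\phTG$-closed (hence Polish, by the same reasoning as for $H$ above) subgroup of $X$, and derive a contradiction by applying Banach's Open Mapping Theorem inside that subgroup to the restriction of $\iota$. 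This last transfer from the quotients to $X$ itself is the principal technical point in executing the plan.
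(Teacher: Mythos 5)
Your forward direction is fine, but the converse has two genuine gaps. First, you repeatedly infer metrizability from countable pseudocharacter: you claim $X/N_i$ "has countable pseudocharacter and, being complete, is a completely metrizable topological group," and later that the closure $H$ of a countable subgroup is "first-countable, metrizable \dots so Polish." For topological groups, metrizability is equivalent to first countability (Birkhoff--Kakutani), and countable pseudocharacter (a $G_\delta$ identity) is strictly weaker; a continuous injection into the metrizable group $K_i$ gives only a weaker metrizable topology, not metrizability of $X/N_i$ itself. The paper obtains Polishness of such subgroups only through $\iTG$-closedness, via Claim~\ref{cl:topi} of Theorem~\ref{t:pseudo}: one embeds the $\w$-narrow group into a product of Polish groups by Guran's Theorem~\ref{t:Guran} and then applies the Open Mapping Principle~\ref{t:open} to a well-chosen countable subproduct. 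Without that detour, your application of Banach's theorem to $\bar\iota_i|_H$ is not justified, because $H$ has not been shown to be Polish.

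Second, the step you defer --- passing from compactness of all $X/N_i$ to compactness of $X$ --- is not a technicality but essentially the whole theorem: at that point you only have a continuous bijective homomorphism of $X$ onto a compact group, and upgrading it to an isomorphism is precisely a minimality statement. Your proposed repair ("the same reasoning as for $H$ above" inside closed separable subgroups of $X$) does not go through, since $K$ need not be metrizable, so a closed separable subgroup of $X$ need not have countable pseudocharacter and the Polish-group Open Mapping Theorem does not apply directly. The paper's proof avoids the entire Lie-quotient apparatus: since $X$ is complete, non-compactness gives (by Lemma~\ref{l:separ}) a closed separable non-compact subgroup $H\subset X$; the $\phTG$-closedness of $X$ makes $H$ $\hTG$-closed, and Lemma~\ref{l:minimal} (every $\w$-narrow $\hTG$-closed group is minimal, proved via Guran's embedding) makes $H$ minimal; a minimal complete MAP group is compact, a contradiction. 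You should either invoke Lemma~\ref{l:minimal} (or Theorem~\ref{t:pseudo}) at the points where you assert Polishness and minimality, or reproduce their Guran-plus-open-mapping arguments; as written the proof does not close.
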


\begin{proof} The ``only if'' part is trivial. To prove the ``if' part, assume that $X$ is $\phTG$-closed and MAP. Then $X$ is complete. Assuming that $X$ is not compact, we can apply Lemma~\ref{l:separ} and find a non-compact closed separable subgroup $H\subset X$. Since $X$ is $\phTG$-closed, the closed subgroup $H$ of $X$ is $\hTG$-closed and by Lemma~\ref{l:minimal}, it is minimal and being complete and MAP, is compact. But this contradicts the choice of $H$.
\end{proof}

The notion of a MAP topological group can be generalized as follows.
Let $\K$ be a class of topologized semigroups. A topologized semigroup $X$ is defined to be {\em $\K$-MAP} if it admits a continuous injective homomorphism $f:X\to K$ to some compact topologized semigroup $K\in\K$. So, MAP is equivalent to $\TG$-MAP.

\begin{theorem}\label{t:MAP->i=h} Let $\C$, $\K$ be two classes of Hausdorff topologized semigroups such that for any $C\in\C$ and $K\in\K$ the product $C\times K$ belongs to the class $\C$. A $\K$-MAP topologized semigroup $X$ is ${\mathsf i}{:}\C$-closed if and only if it is $\mathsf{h}{:}\C$-closed.
\end{theorem}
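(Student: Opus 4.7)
The plan is to prove the nontrivial direction: $\mathsf{i}{:}\C$-closed $\Rightarrow$ $\mathsf{h}{:}\C$-closed (the converse is part of the standard implication chain displayed in the introduction, since every continuous homomorphism that factors through its image as an injection gives an injective homomorphism onto a dense subsemigroup of its closure; more directly, every injective continuous homomorphism is a continuous homomorphism, so $\mathsf{h}{:}\C$-closed trivially implies $\mathsf{i}{:}\C$-closed).

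So assume $X$ is $\K$-MAP and $\mathsf{i}{:}\C$-closed, and let $f:X\to Y$ be any continuous homomorphism into some $Y\in\C$. I need to show $f(X)$ is closed in $Y$. Since $X$ is $\K$-MAP, fix a continuous injective homomorphism $g:X\to K$ into a compact $K\in\K$. Form the diagonal homomorphism
$$h:X\to Y\times K,\qquad h(x)=(f(x),g(x)).$$
This is a continuous homomorphism, and it is injective because $g$ already is. By the hypothesis on the two classes, $Y\times K\in\C$. Hence $h$ is a morphism in the category $\iC$, and by $\mathsf{i}{:}\C$-closedness of $X$ the image $h(X)$ is closed in $Y\times K$.

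Now use compactness of the second factor: the projection $\pi_Y:Y\times K\to Y$ is a closed map because $K$ is compact (Kuratowski's theorem on closed projections from a product with a compact factor). Since $\pi_Y(h(X))=f(X)$, we conclude that $f(X)$ is closed in $Y$, finishing the proof.

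No step is really the obstacle here; the only thing to check is that $Y\times K$ lands in $\C$ so that $h$ qualifies as a morphism of $\iC$, and this is built into the hypothesis. The essential ingredient is that compactness of $K$ lets us push closedness of the graph-like image $h(X)$ down to closedness of $f(X)$ via the closed projection $\pi_Y$.
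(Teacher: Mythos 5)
Your proof is correct and follows essentially the same route as the paper: form the injective diagonal homomorphism into $Y\times K$, use the $\iC$-closedness to get a closed image there, and push it down to $Y$ via the closed projection along the compact factor. The paper's argument is identical (it cites Engelking 3.7.1 for the closedness of the projection), so there is nothing to add.
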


\begin{proof} The ``if'' part is trivial. To prove the ``only if'' part, assume that a topologized group $X$ is $\K$-MAP and $\iC$-closed. To prove that $X$ is  $\hC$-closed, take any continuous homomorphism $f:X\to Y$ to a topologized semigroup $Y\in\C$. Since $X$ is $\K$-MAP, there exists a continuous injective homomorphism $h:X\to K$ into a compact topologized semigroup $K\in\K$.
By our assumption, the topologized semigroup $Y\times K$ belongs to the class $\C$. Since the homomorphism $fh:X\to Y\times K$, $fh:x\mapsto (f(x),h(x))$, is continuous and injective, the image $\Gamma=fh(X)$ of the $\iC$-closed semigroup $X$ in the semigroup $Y\times K\in\C$ is closed.
By \cite[3.7.1]{Eng}, the projection $\pr:Y\times K\to Y$ is a closed map (because of the compactness of $K$). Then the projection $\pr(fh(X))=h(X)$ is closed in $Y$.
\end{proof}

Since compact topological groups are balanced, each MAP topological group admits a continuous injective homomorphism into a balanced topological group. We recall \cite[p.69]{AT} that a topological group $X$ is {\em balanced} if each neighborhood $U\subset X$ of the unit contains a neighborhood $V\subset X$ of the unit which is {\em invariant} in the sense that $V=xVx^{-1}$ for all $x\in X$.

\begin{proposition}\label{p:bal} Let $X$ be an $\iTG$-closed topological group. For each continuous injective homomorphism $f:X\to Y$ to a balanced topological group $Y$ and each closed normal subgroup $Z\subset X$ the image $f(Z)$ is closed in $Y$.
\end{proposition}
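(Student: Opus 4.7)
The plan is to apply the $\iTG$-closedness of $X$ to a single graph-type map that packages $f$ together with the canonical quotient by $Z$. Since $Z\subset X$ is closed and normal, the quotient $X/Z$ is a Hausdorff topological group; let $q:X\to X/Z$ be the quotient homomorphism. Form
$$\Phi:X\to Y\times(X/Z),\qquad \Phi(x)=(f(x),q(x)).$$
This is a continuous group homomorphism into a Hausdorff topological group, and it is injective because its first coordinate $f$ is injective. By the $\iTG$-closedness of $X$, the image $\Phi(X)$ is therefore closed in $Y\times(X/Z)$.

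The remaining step is to extract $f(Z)$ as a horizontal slice of $\Phi(X)$. Since $X/Z$ is Hausdorff, $\{q(e)\}$ is closed in $X/Z$, hence $Y\times\{q(e)\}$ is closed in $Y\times(X/Z)$, and one computes
$$\Phi(X)\cap\bigl(Y\times\{q(e)\}\bigr)=\{(f(x),q(e)):x\in X,\ q(x)=q(e)\}=f(Z)\times\{q(e)\}.$$
Consequently $f(Z)\times\{q(e)\}$ is closed in $Y\times(X/Z)$, and therefore closed in the subspace $Y\times\{q(e)\}$. The restricted projection $Y\times\{q(e)\}\to Y$, $(y,q(e))\mapsto y$, is a homeomorphism, so $f(Z)$ is closed in $Y$.

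The only properties used by this argument are the $\iTG$-closedness of $X$ (to close up $\Phi(X)$) and the Hausdorffness of $X/Z$ (guaranteed by $Z$ being closed normal); the balancedness of $Y$ does not visibly enter. It is presumably included either to match the form in which the proposition is invoked later in the paper (e.g.~in Theorem~\ref{t:conjucenter} on the $\w$-conjucenter, where $Y$ is constructed to be balanced anyway), or to support an alternative proof via the closed subgroup $\overline{f(Z)}\subset\overline{f(X)}$, which, in the balanced setting together with density of $f(X)$, would automatically be normal in the enveloping group. The main (and essentially only) subtlety in the graph argument is thus to verify that the target $Y\times(X/Z)$ is a genuine Hausdorff topological group, after which $\iTG$-closedness does all the work.
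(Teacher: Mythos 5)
Your proof is correct, and it takes a genuinely different route from the paper's. The paper argues by contradiction: assuming $f(Z)$ is not closed in $Y$, it uses the balancedness of $Y$ (together with the normality of $Z$) to check that the family $\{V\cdot (f|Z)^{-1}(W)\}$, with $V$ ranging over neighborhoods of the unit in $X$ and $W$ over \emph{invariant} neighborhoods of the unit in $Y$, satisfies the Pontryagin axioms and hence generates a coarser Hausdorff group topology $\tau$ on $X$ in which $Z$ remains closed and becomes topologically isomorphic to the non-closed, hence non-complete, subgroup $f(Z)$ of $Y$; then $(X,\tau)$ is not complete, and the identity map of $X$ into the completion of $(X,\tau)$ witnesses the failure of $\iTG$-closedness. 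Your argument replaces all of this by the single injective continuous homomorphism $\Phi=(f,q):X\to Y\times(X/Z)$ followed by a slice at the identity coset; this is shorter, avoids both the Pontryagin-axiom verification and the completeness considerations, and---as you correctly observe---never uses that $Y$ is balanced. You have therefore proved the stronger statement in which $Y$ is an arbitrary Hausdorff topological group: the only hypotheses genuinely needed are that $Z$ is normal (so that $X/Z$ is a group) and closed (so that $X/Z$ is Hausdorff and $Y\times(X/Z)$ is a legitimate object of $\TG$), after which $\iTG$-closedness and the closedness of $\Phi(X)\cap(Y\times\{q(e)\})=f(Z)\times\{q(e)\}$ do all the work. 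In the paper's proof the balancedness is needed precisely to verify the conjugation axiom for the neighborhood base (invariance of $W$ combines with normality of $Z$ to give $x\,(f|Z)^{-1}(W)\,x^{-1}\subset(f|Z)^{-1}(W)$), so it is an artifact of that method rather than of the statement itself.
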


\begin{proof} To derive a contradiction, assume that the image $f(Z)$ of some closed normal subgroup $Z\subset X$ is not closed in $Y$. Let $\mathcal B_X$ be the family of open neighborhoods of the unit in the topological group $X$ and $\mathcal B_Y$ be the family of open invariant neighborhoods of the unit in the balanced topological group $Y$. It can be shown that the family $$\mathcal B=\{V\cdot (f|Z)^{-1}(W):V\in \mathcal B_X,\;\;W\in\mathcal B_Y\}$$satisfies the Pontryagin Axioms \cite[1.3.12]{AT} and hence is a base of some Hausdorff group topology $\tau$ on $X$. In this topology the subgroup $Z$ is closed and is topologically isomorphic to the topological group $f(Z)$ which is not closed in $Y$ and hence is not complete. Then the topological group $X_\tau=(X,\tau)$ is not complete too, and hence is not closed in its completion $\bar X_\tau$. Now we see that the identity homomorphism $\id:X\to X_\tau\subsetneq\bar X\tau$ witnesses that $X$ is not $\iTG$-closed. This contradiction completes the proof.
\end{proof}

In the proof of our next result, we shall need the (known) generalization of the Open Mapping Principle \ref{t:open} to homomorphisms between K-analytic topological groups.

 We recall \cite{Ro} that a topological space $X$ is {\em $K$-analytic} if $X=f(Z)$ for some continuous function $f:Z\to X$ defined on an $F_{\sigma\delta}$-subset $Z$ of a compact Hausdorff space $C$. It is clear that the continuous image of a $K$-analytic space is $K$-analytic and the product $A\times C$ of a $K$-analytic space $A$ and a compact Hausdorff space $C$ is $K$-analytic.

A topological group is called {\em $K$-analytic} if its topological space is $K$-analytic. In \cite[\S I.2.10]{Ro}) it was shown that  Banach's Open Mapping Principle~\ref{t:open}  generalizes to homomorphisms defined on $K$-analytic groups.

\begin{theorem}[K-analytic Open Mapping Principle]\label{t:openK} Each continuous homomorphism $h:X\to Y$ from a $K$-analytic topological group $X$ onto a Baire topological group $Y$ is open.
\end{theorem}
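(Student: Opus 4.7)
The plan is to follow the classical Banach-type open mapping argument, replacing the completeness assumption used in the usual proof by the $K$-analyticity of the domain. It suffices to prove that for every open neighborhood $U$ of the unit $e_X$ in $X$, the image $h(U)$ is a neighborhood of the unit $e_Y$ in $Y$. I would begin by choosing an open symmetric neighborhood $V\subset X$ of $e_X$ with $V\cdot V\subset U$.

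Since every $K$-analytic space is Lindel\"of, the open cover $\{xV:x\in X\}$ of $X$ admits a countable subcover, so $X=\bigcup_{n\in\IN}x_nV$ for some sequence $(x_n)_{n\in\IN}$ in $X$. Applying the surjective homomorphism $h$, we obtain
$$Y=h(X)=\bigcup_{n\in\IN}h(x_n)\cdot h(V),$$
and the Baire property of $Y$ forces at least one of the sets $h(x_n)\cdot h(V)$ to be non-meager in $Y$. Translating by $h(x_n)^{-1}$, one concludes that $h(V)$ itself is non-meager in $Y$.

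The set $V$, being open in the $K$-analytic space $X$, is itself $K$-analytic, and hence so is its continuous image $h(V)\subset Y$. At this stage I would invoke the classical fact that $K$-analytic subsets of an arbitrary Hausdorff topological space have the Baire property, to deduce that $h(V)$ has the Baire property in $Y$. By Pettis's theorem applied to the non-meager Baire-property subset $h(V)$ of the Baire topological group $Y$, the set $h(V)\cdot h(V)^{-1}$ contains an open neighborhood of $e_Y$. Since $V=V^{-1}$, we have $h(V)^{-1}=h(V)$, so
$$h(V)\cdot h(V)=h(V\cdot V)\subset h(U)$$
contains an open neighborhood of $e_Y$, as required.

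The main technical obstacle is the Baire-property statement for $K$-analytic sets in a general (not necessarily Polish or metrizable) topological space; this is a non-trivial classical result which, together with Pettis's theorem in the form valid for arbitrary Baire topological groups, can be found in Rogers' monograph \cite{Ro}. Once these two ingredients are in hand, the remainder of the argument reduces to routine Lindel\"of/Baire-category manipulations exploiting the symmetry of $V$ and the fact that $h$ is a surjective homomorphism.
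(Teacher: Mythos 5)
The paper does not actually prove Theorem~\ref{t:openK}: it quotes the result from \cite[\S I.2.10]{Ro} without argument, so there is no internal proof to compare yours against. Your proposal reconstructs the standard argument behind that citation (the Lindel\"of property of $K$-analytic spaces, a Baire-category count over a countable cover, the Baire property of $K$-analytic sets via the Souslin operation and Nikodym's theorem, and Pettis's lemma in a Baire topological group), and its overall architecture is sound.

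There is, however, one step that is false as written: an open subspace of a $K$-analytic space need not be $K$-analytic. For instance, the compact (hence $K$-analytic) group $\{0,1\}^{\omega_1}$ contains the open set $\{0,1\}^{\omega_1}\setminus\{p\}$, which is not Lindel\"of (the cover by the sets $\{x:x_\alpha\ne p_\alpha\}$, $\alpha<\omega_1$, admits no countable subcover) and therefore not $K$-analytic; in terms of the paper's definition, the trouble is that $f^{-1}(V)$ is the trace on $Z$ of an open subset of the compact space $C$, and such a set need not be $F_{\sigma\delta}$ in $C$. The gap is local and easily repaired: choose $V=V^{-1}$ with $\overline{V}\cdot\overline{V}\subset U$ rather than $V\cdot V\subset U$. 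Closed subspaces of $K$-analytic spaces \emph{are} $K$-analytic (a closed subset of an $F_{\sigma\delta}$-subset of $C$ is again $F_{\sigma\delta}$ in $C$), so $h(\overline{V})$ is a $K$-analytic, hence Souslin-$F$, hence Baire-property subset of $Y$; it is non-meager because $Y=\bigcup_{n}h(x_n)h(\overline{V})$ and $Y$ is Baire; and Pettis's lemma then shows that $h(\overline{V})\cdot h(\overline{V})^{-1}=h(\overline{V}\cdot\overline{V}^{-1})\subset h(U)$ is a neighborhood of the unit of $Y$. With this one-line change your proof is correct.
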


Theorem~\ref{t:openK} will be used in the proof of the following lemma.

\begin{lemma}\label{t:MAP-Ka} Let $X$ be an $\iTG$-closed MAP topological group. For a closed normal subgroup $Z\subset X$ and a continuous homomorphism $h:X\to Y$ to a topological group $Y$,  the image $h(Z)$ is compact if and only if $h(Z)$ is contained in a $K$-analytic subspace of\/ $Y$.
\end{lemma}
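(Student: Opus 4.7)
The ``only if'' direction is immediate, since every compact Hausdorff space is $K$-analytic (taking the identity map on the compact space itself).

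For the ``if'' direction, I would combine the MAP hypothesis with $\iTG$-closedness by passing to a diagonal embedding. Fix a continuous injective homomorphism $i:X\to K$ into a compact topological group $K$, and let $A\subset Y$ be a $K$-analytic subspace containing $h(Z)$. Consider the continuous injective homomorphism $f:X\to K\times Y$, $f(x)=(i(x),h(x))$. By the $\iTG$-closedness of $X$, the image $f(X)$ is closed in the topological group $K\times Y$. Since $K$ is compact, it is balanced, so Proposition~\ref{p:bal} applied to $i:X\to K$ yields that $i(Z)$ is closed in $K$, hence compact; write $C:=i(Z)$. The injectivity of $i$ gives $i^{-1}(C)=Z$, and therefore $f(X)\cap(C\times Y)=f(Z)$, which is closed in $K\times Y$.

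The next step is to transfer $K$-analyticity from $A$ to $f(Z)$. Since $f(Z)\subset C\times h(Z)\subset C\times A$, and $f(Z)$ is closed in $K\times Y$, it is closed in $C\times A$. As $C$ is compact Hausdorff and $A$ is $K$-analytic, the product $C\times A$ is $K$-analytic, so its closed subspace $f(Z)$ is $K$-analytic. Now I restrict the first coordinate projection to $f(Z)$: the continuous homomorphism $\pi_1|_{f(Z)}:f(Z)\to C$ is surjective and has trivial kernel (if $(i(z),h(z))$ projects to $e$, then $i(z)=e$, and injectivity of $i$ forces $z=e$). Since $f(Z)$ is a $K$-analytic topological group (being a closed subgroup of $K\times Y$) and $C$ is compact, hence Baire, the $K$-analytic Open Mapping Principle (Theorem~\ref{t:openK}) implies that $\pi_1|_{f(Z)}$ is open, and therefore a topological isomorphism onto $C$. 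Hence $f(Z)$ is compact, and so is its continuous image $h(Z)=\pi_2(f(Z))$.

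The pivotal (and perhaps most delicate) step is the last one: extracting genuine compactness of $f(Z)$ from the algebraic bijection with the compact group $C$ requires knowing $f(Z)$ is $K$-analytic, so that the Open Mapping Principle can promote the continuous bijection to a topological isomorphism. Everything else essentially just arranges the hypotheses so this principle applies; Proposition~\ref{p:bal} plays the crucial role of ensuring that $C$ is already compact (not merely precompact) inside the balanced group $K$, which in turn is what makes $f(Z)=f(X)\cap(C\times Y)$ closed.
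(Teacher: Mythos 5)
Your proof is correct and follows essentially the same route as the paper: embed diagonally via $x\mapsto(i(x),h(x))$, use Proposition~\ref{p:bal} to make $i(Z)$ compact, identify $f(Z)$ as a closed (hence $K$-analytic) subgroup of the $K$-analytic space $i(Z)\times A$, and apply the $K$-analytic Open Mapping Principle to the projection onto the compact factor. The only cosmetic difference is that you intersect with $C\times Y$ and then restrict to $C\times A$, where the paper intersects directly with $f(Z)\times A$; the argument is the same.
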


\begin{proof} The ``only if'' part is trivial. To prove the ``if'' part, assume that the image $h(Z)$ is contained in a $K$-analytic subspace $A$ of $Y$.

Being MAP, the group $X$ admits a continuous injective homomorphism $f:X\to K$ to a compact topological group $K$.
By Proposition~\ref{p:bal}, the image $f(Z)$ is a compact subgroup of $K$.

Now consider the continuous injective homomorphism $fh:X\to K\times Y$, $fh:x\mapsto (f(x),h(x))$. By the $\iTG$-closedness, the image $\Gamma:=fh(X)$ is closed in $K\times Y$. Then the space $H=\Gamma\cap (f(Z)\times A)$ is $K$-analytic (as a closed subspace of the $K$-analytic space $K\times A$). Observe that $H=\{(f(z),h(z):z\in Z\}$ is a subgroup of $X\times Y$.

By the K-analytic Open Mapping Principle~\ref{t:openK}, the bijective continuous homomorphism $\pr_K:H\to f(Z)$, $\pr_K:(x,y)\mapsto x$, from the $K$-analytic group $H$ to the compact group $f(Z)$ is open and hence is a topological isomorphism. Consequently, the topological group $H$ is compact and so is its projection $h(Z)$ onto $Y$.
\end{proof}

We recall that a topological group is {\em $\w$-balanced} if it embeds into a Tychonoff product of metrizable topological groups.

\begin{corollary}\label{c:Bohr2} If an $\w$-balanced MAP topological group $X$ is $\iTG$-closed, then each $\w$-narrow closed normal subgroup of $X$ is compact.
\end{corollary}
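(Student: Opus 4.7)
The plan is to apply Lemma~\ref{t:MAP-Ka} coordinatewise, using an $\w$-balanced presentation of $X$, and then to reassemble $N$ as a closed subset of a product of compacta. To set this up I will invoke Katz's Theorem to fix a topological embedding $e:X\hookrightarrow\prod_{\alpha\in A}M_\alpha$ into a Tychonoff product of first-countable (hence metrizable) topological groups; the coordinate projections yield continuous homomorphisms $h_\alpha:=\pi_\alpha\circ e:X\to M_\alpha$. Since $X$ is $\iTG$-closed and $e$ is a continuous injective homomorphism into a topological group, the image $e(X)$ is closed in $\prod_\alpha M_\alpha$.

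For each $\alpha$ I will then show that $h_\alpha(N)$ is compact. As a continuous homomorphic image of the $\w$-narrow group $N$, the subgroup $h_\alpha(N)\subset M_\alpha$ is $\w$-narrow in the metrizable group $M_\alpha$; an $\w$-narrow first-countable topological group is separable metrizable (combine a countable symmetric base at the unit with the countable witnesses of $\w$-narrowness to extract a countable base for the whole topology). Passing to the Raikov completion $\tilde M_\alpha$, which is again a metrizable topological group, the closure $\overline{h_\alpha(N)}$ in $\tilde M_\alpha$ is a closed separable metrizable subgroup of a complete metrizable group, hence Polish and therefore $K$-analytic. Applying Lemma~\ref{t:MAP-Ka} to the continuous homomorphism $X\to\tilde M_\alpha$ obtained by composing $h_\alpha$ with the canonical inclusion $M_\alpha\hookrightarrow\tilde M_\alpha$, whose image of $N$ sits in this $K$-analytic closure, I will conclude that $h_\alpha(N)$ is compact in $\tilde M_\alpha$ and hence compact in $M_\alpha$.

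Finally I will reassemble: $e(N)\subset\prod_\alpha h_\alpha(N)$, and the right-hand side is compact by Tychonoff. Because $N$ is closed in $X$ and $e$ is a topological embedding, $e(N)$ is closed in $e(X)$, which is in turn closed in $\prod_\alpha M_\alpha$; so $e(N)$ is a closed subset of the compact set $\prod_\alpha h_\alpha(N)$, hence compact, making $N\cong e(N)$ compact. The main delicate step will be arranging the hypothesis of Lemma~\ref{t:MAP-Ka}: the lemma demands a continuous homomorphism from $X$ into a topological group whose image of $N$ lies in a $K$-analytic subspace, so the passage to the Raikov completion $\tilde M_\alpha$ is essential, because the raw image $h_\alpha(N)$ in $M_\alpha$ need not be complete, let alone $K$-analytic. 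The rest is a routine combination of Tychonoff's theorem, the $\iTG$-closedness of $X$, and the preservation of closedness under topological embeddings.
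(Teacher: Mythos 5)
Your proposal is correct and follows essentially the same route as the paper: the paper likewise identifies $X$ with a closed subgroup of a Tychonoff product of complete metrizable groups, observes that each coordinate projection of the $\w$-narrow normal subgroup is separable with Polish (hence $K$-analytic) closure, applies the $K$-analytic compactness lemma coordinatewise, and recovers compactness of the subgroup as a closed subgroup of the product of the compact projections. The only cosmetic difference is that you invoke Katz's theorem and then pass to the Raikov completion of each factor, whereas the paper arranges the factors to be complete metrizable from the outset.
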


\begin{proof} Being $\w$-balanced and complete, the topological group $X$ can be identified with a closed subgroup of a Tychonoff product $\prod_{\alpha\in A}M_\alpha$ of complete metrizable topological groups.

Fix an $\w$-narrow closed normal subgroup $H$ in $X$ and observe that
for every $\alpha\in A$ the projection $\pr_\alpha(H)\subset M_\alpha$ is an $\w$-narrow and hence separable subgroup of the metrizable  topological group $M_\alpha$. Then the closure of $\pr_\alpha(H)$ in the complete metrizable topological group $M_\alpha$ is a Polish (and hence $K$-analytic) group. By Proposition~\ref{p:bal}, the group $\pr_\alpha(H)$ is compact. Being a closed subgroup of the product $\prod_{\alpha\in A}\pr_\alpha(H)$ of compact topological groups, the topological group $H$ is compact, too.
\end{proof}

We recall that for a topological group $X$ the {\em $\w$-conjucenter} $Z^\w(X)$ of $X$ consists of the points $z\in X$ whose conjugacy class $C_X(z):=\{xzx^{-1}:x\in X\}$ is $\w$-narrow in $X$.
A subset $A$ of a topological group $X$ is called {\em $\w$-narrow} if for each neighborhood $U\subset X$ of the unit there exists a countable set $B\subset X$ such that $A\subset BU\cap UB$.

\begin{theorem}\label{t:conjucent} Each $\iTG$-closed $\w$-balanced MAP topological group $X$ has compact $\w$-conjucenter $Z^\w(X)$.
\end{theorem}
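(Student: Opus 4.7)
The aim is to exhibit $Z^\w(X)$ as a closed normal $\w$-narrow subgroup of $X$, whence its compactness will follow directly from Corollary~\ref{c:Bohr2}.

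The inclusions $C_X(ab)\subset C_X(a)\cdot C_X(b)$, $C_X(a^{-1})=(C_X(a))^{-1}$, and $C_X(yzy^{-1})=C_X(z)$, together with the stability of $\w$-narrowness under products and inverses, show at once that $Z^\w(X)$ is a normal subgroup of $X$, so that $H:=\overline{Z^\w(X)}$ is a closed normal subgroup.

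The decisive step is to verify that $Z^\w(X)$ is itself $\w$-narrow. Suppose it is not; after possibly inverting and shrinking, we find a symmetric open neighborhood $U$ of the unit such that $Z^\w(X)\not\subset BU$ for every countable $B\subset X$. A routine induction then produces a sequence $(z_n)_{n\in\IN}$ in $Z^\w(X)$ with $z_n\notin\{z_1,\dots,z_{n-1}\}U$ for each $n$; the symmetry of $U$ gives $z_n\notin z_mU$ whenever $m\ne n$. The set $A:=\bigcup_{n\in\IN} C_X(z_n)$ is a countable union of conjugation-invariant $\w$-narrow sets, so the closure $N:=\overline{\langle A\rangle}$ of the subgroup it generates is a closed normal $\w$-narrow subgroup of $X$, and Corollary~\ref{c:Bohr2} forces $N$ to be compact. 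Picking an open neighborhood $W$ of the unit with $W^{-1}W\subset U$, compactness yields a finite covering $N\subset\bigcup_{i=1}^{k} n_iW$; but any two distinct $z_m,z_n$ lying in the same translate $n_iW$ would satisfy $z_m^{-1}z_n\in W^{-1}W\subset U$, contradicting $z_n\notin z_mU$. Hence each $n_iW$ contains at most one $z_n$, so $(z_n)$ is finite, a contradiction.

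Therefore $Z^\w(X)$, and with it $H$, is $\w$-narrow, and Corollary~\ref{c:Bohr2} makes $H$ compact. For any $h\in H$ the normality of $H$ places $C_X(h)$ inside the compact set $H$, which is $\w$-narrow, so $h\in Z^\w(X)$; thus $H=Z^\w(X)$ is compact. The principal obstacle is the $\w$-narrowness step, since there is no a priori cardinality control on $Z^\w(X)$; the key observation is that every \emph{countable} subset of $Z^\w(X)$ is contained in a closed normal $\w$-narrow subgroup of $X$, which by Corollary~\ref{c:Bohr2} must already be compact, converting any infinite $U$-separated sequence into a compactness contradiction.
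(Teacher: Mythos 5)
Your proof is correct and follows essentially the same route as the paper: both arguments reduce to Corollary~\ref{c:Bohr2} by observing that the conjugacy classes of countably many elements of $Z^\w(X)$ generate an $\w$-narrow closed normal (hence compact) subgroup, and both finish by noting that every element of $\overline{Z^\w(X)}$ has conjugacy class inside that compact closure. The only cosmetic difference is that you establish $\w$-narrowness of $Z^\w(X)$ directly via a $U$-separated sequence, whereas the paper first proves precompactness by invoking Lemma~\ref{l:separ}.
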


\begin{proof} First we prove that $Z^\w(X)$ is precompact. Assuming the opposite, we can apply Lemma~\ref{l:separ} and find a countable subgroup $D\subset Z^\w(X)$ whose closure $\bar D$ is not compact. By the definition of $Z^\w(X)$, each element $x\in D$ has $\w$-narrow conjugacy class $C_X(x)$. By \cite[5.1.19]{AT}, the $\w$-narrow set $\bigcup_{x\in D}C_X(x)$ generates an $\w$-narrow subgroup $H$. It is clear that $H$ is normal. By Corollary~\ref{c:Bohr2}, the closure $\bar H$ of $H$ is compact, which is not possible as $\bar H$ contains the non-compact subgroup $\bar D$. This contradiction completes the proof of the precompactness of $Z^\w(X)$. Then the closure $\bar Z^\w(X)$ of the subgroup $Z^\w(X)$ in $X$ is a compact normal subgroup of $X$. The normality of $\bar Z^\w(X)$ guarantees that for every $z\in \bar Z^\w(X)$ the conjugacy class $C_X(z)\subset \bar Z^\w(X)$ is precompact and hence $\w$-narrow, which means that $z\in Z^\w(X)$. Therefore, the $\w$-conjucenter $Z^\w(X)=\bar Z^\w(X)$ is compact.
\end{proof}

For any topological group $X$ let us define an increasing transfinite sequence $(Z_\alpha(X))_{\alpha}$ of closed normal subgroups defined by the recursive formulas
$$
\begin{aligned}
&Z_0(X)=\{e\},\\
&Z_{\alpha+1}(X):=\{z\in X:\forall x\in X\;xzxz^{-1}\in Z_\alpha(X)\}\mbox{ \  for any ordinal $\alpha$ and}\\
&Z_\beta(X)\mbox{ is the closure of the normal group $\bigcup_{\alpha<\beta}Z_\beta(X)$ for a limit ordinal $\beta$}.
\end{aligned}
$$
The closed normal subgroup $Z_\infty(X)=\bigcup_{\alpha}Z_\alpha(X)$ is called the {\em hypercenter} of the topological group $X$. We recall that a topological group $X$ is {\em hypercentral} if for every closed normal subgroup $N\subsetneq X$ the quotient topological group $X/N$ has non-trivial center $Z(X/N)$. It is easy to see that a topological group $X$ is hypercentral if and only if its hypercenter equals $X$.
It follows that a discrete topological group is hypercentral if and only if it is hypercentral in the standard algebraic sense \cite[364]{Rob}.
Observe that each nilpotent topological group is hypercentral. More precisely, a group $X$ is nilpotent if and only if $Z_n(X)=X$ for some finite number $n\in\IN$.

\begin{corollary}\label{c:hypercenter} If an $\w$-balanced MAP topological group $X$ is $\iTG$-closed, then its hypercenter $Z_\infty(X)$ is contained in the $\w$-conjucenter $Z^\w(X)$ and hence is compact.
\end{corollary}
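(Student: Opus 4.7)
The plan is to obtain compactness of $Z^\w(X)$ for free from Theorem~\ref{t:conjucent}, and reduce the corollary to verifying the inclusion $Z_\infty(X)\subset Z^\w(X)$. The natural way to attack this inclusion is a transfinite induction on $\alpha$ establishing $Z_\alpha(X)\subset Z^\w(X)$ for every ordinal, using the ascending definition of the hypercenter spelled out just before the corollary.

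The base case $Z_0(X)=\{e\}$ is immediate, and the limit case is essentially automatic: if $Z_\alpha(X)\subset Z^\w(X)$ for all $\alpha<\beta$, then the union $\bigcup_{\alpha<\beta}Z_\alpha(X)$ lies in $Z^\w(X)$, which is closed (being compact in the Hausdorff group $X$), so its closure $Z_\beta(X)$ does too.

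The step where the work happens is the successor case. Assume $Z_\alpha(X)\subset Z^\w(X)$; then $Z_\alpha(X)$ is closed in the compact group $Z^\w(X)$ and hence compact. For $z\in Z_{\alpha+1}(X)$ the defining property gives $xzx^{-1}z^{-1}\in Z_\alpha(X)$ for every $x\in X$, which rewrites as $xzx^{-1}\in Z_\alpha(X)\cdot z$. Therefore the conjugacy class $C_X(z)$ is contained in the compact coset $Z_\alpha(X)\cdot z$, hence is $\w$-narrow, which by definition places $z$ in $Z^\w(X)$. This closes the induction.

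Because the increasing sequence $(Z_\alpha(X))$ eventually stabilizes, $Z_\infty(X)=Z_\alpha(X)$ for some ordinal $\alpha$, so $Z_\infty(X)\subset Z^\w(X)$; being closed in the compact group $Z^\w(X)$, the hypercenter is itself compact. The only non-routine step is the successor stage, and even there the crux is simply the commutator identity $xzx^{-1}=[x,z]\cdot z$, which confines the conjugacy class to a single translate of the (by induction, compact) subgroup $Z_\alpha(X)$.
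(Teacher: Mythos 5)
Your proof is correct and follows essentially the same transfinite induction as the paper: compactness of $Z^\w(X)$ from Theorem~\ref{t:conjucent}, then $Z_\alpha(X)\subset Z^\w(X)$ by induction on $\alpha$, with the successor step controlled by the conjugacy classes. In fact your successor step is slightly more careful than the paper's, which asserts $C_X(z)\subset Z_\beta(X)$ where the commutator identity really gives $C_X(z)\subset Z_\beta(X)\cdot z$ as you write; either way the class is contained in a compact set and hence $\w$-narrow, so the argument goes through.
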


\begin{proof} By Theorem~\ref{t:conjucent}, the $\w$-conjucenter $Z^\w(X)$ of $X$ is compact. By transfinite induction we shall prove that for every ordinal $\alpha$ the subgroup $Z_\alpha(X)$ is contained in $Z^\w(X)$.
This is trivial for $\alpha=0$. Assume that for some ordinal $\alpha$ we have proved that $\bigcup_{\beta<\alpha}Z_\beta(X)\subset Z^\w(X)$.
If the ordinal $\alpha$ is limit, then
$$Z_\alpha(X)=\overline{\bigcup_{\beta<\alpha}Z_\beta(X)}\subset Z^\w(X).$$
Next, assume that $\alpha=\beta+1$ is a successor ordinal. To prove that $Z_\alpha(X)\subset Z^\w(X)$, take any point $z\in Z_\alpha(X)$ and observe that $C_X(z)=\{xzx^{-1}:x\in X\}\subset Z_\beta(X)\subset Z^\w(X)$ and hence $C_X(z)$ is $\w$-narrow. So, $z\in Z^\w(X)$ and $Z_\alpha(X)\subset Z^\w(X)$.

By the Principle of Tranfinite Induction, the subgroup $Z_\alpha(X)\subset Z^\w(X)$ for every ordinal $\alpha$. Then the hypercenter $Z_\infty(X)$ is contained in $Z^\w(X)$ and is compact,  being equal to $Z_\alpha(X)$ for a sufficiently large ordinal $\alpha$.
\end{proof}

\begin{corollary}\label{c:hypercentral} A hypercentral topological group $X$ is compact if and only if $X$ is $\iTG$-closed, $\w$-balanced, and MAP.
\end{corollary}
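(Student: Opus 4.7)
The plan is to reduce this immediately to Corollary~\ref{c:hypercenter}, which does all the real work. The ``only if'' direction is trivial: a compact topological group is automatically $\iTG$-closed (its image under any continuous homomorphism into a Hausdorff group is compact, hence closed), balanced (so $\w$-balanced), and MAP (via the identity map into itself).

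For the ``if'' direction, assume $X$ is hypercentral, $\iTG$-closed, $\w$-balanced, and MAP. Recall from the discussion preceding Corollary~\ref{c:hypercenter} that $X$ is hypercentral if and only if $Z_\infty(X) = X$, where $Z_\infty(X) = \bigcup_\alpha Z_\alpha(X)$ is the hypercenter from the transfinite central series. Applying Corollary~\ref{c:hypercenter} to $X$ (which is permissible because $X$ satisfies all three hypotheses: $\w$-balanced, MAP, and $\iTG$-closed), we conclude that $Z_\infty(X)$ is compact. Combining this with $X = Z_\infty(X)$ yields that $X$ itself is compact.

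There is essentially no obstacle here, since the heavy lifting has already been done: Theorem~\ref{t:conjucent} establishes compactness of the $\w$-conjucenter $Z^\w(X)$, and the transfinite induction in Corollary~\ref{c:hypercenter} shows $Z_\alpha(X) \subset Z^\w(X)$ for all ordinals $\alpha$, giving compactness of the hypercenter. The present corollary is the direct specialization to groups which coincide with their own hypercenter.
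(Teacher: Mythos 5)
Your proof is correct and follows exactly the route the paper intends: the ``only if'' direction is the standard observation that compact groups are $\iTG$-closed, balanced (hence $\w$-balanced) and MAP, and the ``if'' direction is the immediate specialization of Corollary~\ref{c:hypercenter} to a group coinciding with its own hypercenter, using the equivalence (stated just before that corollary) between hypercentrality and $Z_\infty(X)=X$. Nothing further is needed.
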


\begin{problem} Is each $\hTG$-closed hypercentral MAP topological group compact?
\end{problem}

\section{The compactness of $\hTG$-closed MAP-solvable topological groups}\label{s:MAP-solv}

In this section we detect compact topological groups among $\hTG$-closed MAP-solvable topological groups.
We define a topological group $X$ to be {\em MAP-solvable} if there exists a decreasing sequence $X=X_0\supset X_1\supset\dots \supset X_m=\{e\}$ of closed normal subgroups in $X$ such that for every $n<m$ the quotient group $X_n/X_{n+1}$ is Abelian and MAP. It is clear that each MAP-solvable topological group is solvable. By the Peter-Weyl Theorem~\cite[9.7.5]{AT} (see also \cite{Mor}), each locally compact Abelian group is MAP. This observation implies the following characterization.

\begin{proposition}\label{p:Bohr-s} A locally compact topological group is MAP-solvable if and only if it is solvable.
\end{proposition}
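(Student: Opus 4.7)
The forward implication is immediate from the definition, since the MAP-solvability chain is itself a witness of solvability (the MAP hypothesis on the quotients being irrelevant for solvability). So the substantive content is the reverse direction: any locally compact solvable topological group is MAP-solvable.

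For the reverse direction, the plan is to use the topological derived series $X = X^{[0]} \supset X^{[1]} \supset \cdots \supset X^{[m]} = \{e\}$ (which terminates at $\{e\}$ in finitely many steps by the solvability hypothesis) as the witnessing chain. Three routine checks are needed:

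\begin{enumerate}
\item Each $X^{[n]}$ is a closed normal subgroup of $X$. This holds because $X^{[n+1]} = \overline{[X^{[n]},X^{[n]}]}$ is characteristic in $X^{[n]}$, and an inductive argument using the fact that conjugation by elements of $X$ preserves $X^{[n]}$ shows that each $X^{[n]}$ is in fact characteristic (hence normal) in $X$.

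\item Each quotient $X^{[n]}/X^{[n+1]}$ is Abelian. This is by construction: $[X^{[n]},X^{[n]}] \subset X^{[n+1]}$.

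\item Each quotient $X^{[n]}/X^{[n+1]}$ is MAP. Since $X$ is locally compact and $X^{[n]}$ is a closed subgroup, $X^{[n]}$ is locally compact; passing to the quotient by the closed normal subgroup $X^{[n+1]}$ preserves local compactness. Thus $X^{[n]}/X^{[n+1]}$ is a locally compact Abelian group. By the cited Peter--Weyl type result for locally compact Abelian groups (equivalently, Pontryagin duality gives that the characters of a locally compact Abelian group separate points, so the canonical map into the Bohr compactification is injective), every locally compact Abelian group is MAP.
\end{enumerate}

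Combining these three observations, the derived series realizes $X$ as MAP-solvable of length at most $m$. There is no real obstacle here; the only thing to be mindful of is the normality-in-$X$ (not merely in the preceding term) requirement of the definition of MAP-solvable, which is handled by the observation that the terms of the derived series are characteristic in $X$. The argument uses essentially nothing beyond the definitions, the stability of local compactness under closed subgroups and Hausdorff quotients, and the MAP property of locally compact Abelian groups already invoked by the paper just before the proposition.
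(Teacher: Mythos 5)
Your proof is correct and follows essentially the same route as the paper, which derives the proposition directly from the observation (via the Peter--Weyl theorem) that locally compact Abelian groups are MAP, applied to the quotients of the topological derived series. The only detail you add beyond what the paper leaves implicit is the verification that the terms $X^{[n]}$ are characteristic, hence closed and normal in $X$, which is a correct and harmless elaboration.
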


Now we prove that balanced MAP-solvable $\hTG$-closed topological groups are compact. We recall that a topological group $X$ is called a {\em balanced} if each neighborhood $U\subset X$ of the unit contains a neighborhood $V\subset X$ of the unit such that $xVx^{-1}\subset U$ for all $x\in X$.

\begin{theorem}\label{t:solvable} For a solvable topological group $X$ the following conditions are equivalent:
\begin{enumerate}
\item $X$ is compact;
\item $X$ is balanced, locally compact and  $\hTG$-closed;
\item $X$ is balanced, MAP-solvable and  $\hTG$-closed.
\end{enumerate}
\end{theorem}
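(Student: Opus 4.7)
The plan is to prove the cycle $(1)\Ra(2)\Ra(3)\Ra(1)$. The directions $(1)\Ra(2)$ and $(1)\Ra(3)$ follow from standard facts: any compact group is balanced (has a base of invariant neighborhoods of the unit), locally compact, and trivially $\hTG$-closed (continuous homomorphic images of a compact group are compact, hence closed in any Hausdorff target). For $(1)\Ra(3)$ I would verify MAP-solvability by taking the derived series of the compact solvable group $X$; each successive Abelian quotient is compact and hence MAP by Peter--Weyl. Next, $(2)\Ra(3)$ is immediate from Proposition~\ref{p:Bohr-s}, which promotes any locally compact solvable group to a MAP-solvable one, while balancedness and $\hTG$-closedness carry over unchanged.

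The substantive implication is $(3)\Ra(1)$. Here I would assemble results already available in the paper. Since $X$ is balanced and $\hTG$-closed, Theorem~\ref{t:cTG} of Dikranjan--Uspenskij upgrades $X$ to a $\cTG$-closed group, and the basic implication $\cC$-closed $\Ra$ $\pC$-closed then gives that $X$ is $\phTG$-closed. Now $X$ is solvable, so its transfinite derived series terminates at $\{e\}$ in finitely many steps; in particular the hypocommutator $X^{[\infty]}$ is trivial, i.e.\ $X$ is hypoabelian. Applying Corollary~\ref{c:hypoabelian} (hypoabelian $+$ $\phTG$-closed $\Rightarrow$ compact) then concludes the proof of compactness of $X$.

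The main subtlety is that along this route the MAP-solvable hypothesis enters $(3)\Ra(1)$ only through its consequence that $X$ is solvable; the MAP aspect itself is not invoked in the implication $(3)\Ra(1)$. If one wishes to give a proof that genuinely exploits MAP-solvability and avoids a direct appeal to Theorem~\ref{t:cTG}, the natural alternative is induction on the length $m$ of the MAP-solvable series $X=X_0\supset X_1\supset\dots\supset X_m=\{e\}$: the base case $m=1$ (Abelian MAP $\hTG$-closed) yields compactness by Theorem~\ref{t:banakh}; for the inductive step one uses that balanced $\hTG$-closed groups are $\phTG$-closed, so every closed subgroup of $X$ is $\hTG$-closed, applies the inductive hypothesis to the closed normal subgroup $X_1$ and to the Abelian MAP quotient $X/X_1$, and then invokes the three-space property of compactness. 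Either way, the one genuinely nontrivial ingredient is the passage from $\hTG$-closedness to $\phTG$-closedness provided by balancedness; this is where I would expect a reader to need to pause and import the full strength of Theorem~\ref{t:cTG}.
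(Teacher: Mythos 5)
Your reduction of $(3)\Ra(1)$ hinges on a single step: ``$X$ balanced and $\hTG$-closed $\Ra$ $X$ is $\cTG$-closed (hence $\phTG$-closed)'', which you extract from Theorem~\ref{t:cTG} as printed in the introduction. That printed statement is not a reliable foundation. Elsewhere the paper cites the very same result \cite[2.16]{DU} in the form ``a balanced topological group is \emph{$\phTG$-closed} if and only if it is $\cTG$-closed'' (end of the section on $\pC$-closed groups), and the large implication diagram in the introduction likewise draws the ``balanced'' arrow only from $\phTG$-closedness down to $\cTG$-closedness, not from $\hTG$-closedness. The actual Dikranjan--Uspenskij theorem concerns \emph{hereditarily} $h$-complete (i.e.\ $\phTG$-closed) balanced groups and provides no passage from $\hTG$-closedness to $\phTG$-closedness; that passage is exactly the open difficulty. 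A sanity check confirms the problem: under your reading, every balanced solvable $\hTG$-closed group would be compact with no MAP hypothesis whatsoever, Theorem~\ref{t:solvable} would be a two-line corollary of \cite{DU}, and all of Section~\ref{s:MAP-solv} with its $K$-analytic machinery would be pointless. The same unproved upgrade is also the hinge of your alternative induction, where you need the closed normal subgroup $X_1$ to inherit $\hTG$-closedness from $X$; this is not known for merely $\hTG$-closed groups.

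The paper's proof is genuinely different and uses MAP-solvability essentially. It inducts \emph{upward} along the series, proving that each quotient $X/X_n$ is compact; since quotients (unlike closed subgroups) of $\hTG$-closed groups are automatically $\hTG$-closed, this sidesteps any appeal to $\phTG$-closedness. In the inductive step, with $G=X/X_{n+1}$ and $A=X_n/X_{n+1}$, one shows that the conjugacy classes $C_G(a)$, $a\in A$, are compact because $G/A$ is compact; a putative non-compact separable subgroup of $A$ then generates a closed $\w$-narrow normal subgroup of $G$; the MAP property of the Abelian quotient $A$ is used to refine the topology of $G$ and prove that $G$ itself is balanced and MAP; finally Corollary~\ref{c:Bohr2} (resting on Proposition~\ref{p:bal} and the $K$-analytic Open Mapping Principle~\ref{t:openK}) forces that $\w$-narrow normal subgroup to be compact, a contradiction. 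Your own observation that ``the MAP aspect itself is not invoked'' in your route should have been the warning sign: it is invoked, indispensably, in the paper's argument.
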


\begin{proof} The implication $(1)\Ra(2)$ is trivial and $(2)\Ra(3)$ follows from Proposition~\ref{p:Bohr-s}.
To prove that $(3)\Ra(1)$, assume that a topological group $X$ balanced, MAP-solvable and $\hTG$-closed. Being MAP-solvable, $X$ admits a decreasing sequence of closed normal subgroups $X=X_0\supset X_2\supset \cdots \supset X_m=\{e\}$ such that for every  $n<m$ the quotient group $X_n/X_{n+1}$ is Abelian and MAP.

To prove that the group $X=X/X_m$ is compact, it suffices to show that for every $n\le m$ the quotient group $X/X_n$ is compact. This is trivial for $n=0$. Suppose that for some $n<m$ the group $X/X_n$ is compact. Assuming that the quotient group $G:=X/X_{n+1}$ is not compact, we conclude that the normal Abelian subgroup $A:=X_n/X_{n+1}$ of $G$ is not compact. The $\hTG$-closedness of $X$ implies the  $\hTG$-closedness of the quotient group $G$. Then $G$ is complete and by Lemma~\ref{l:separ}, the non-compact closed subgroup $A$ of $X$ contains a countable subgroup $Z$ whose closure $\bar Z$ is not compact.

\begin{claim} For every $a\in A$ its conjugacy class $C_G(a)=\{xax^{-1}:x\in G\}$ is compact.
\end{claim}

\begin{proof} Consider the continuous map $f:G\to A$, $f:x\mapsto xax^{-1}$, and observe that it is constant on cosets $xA$, $x\in G$. Consequently, there exists a function $\tilde f:G/A\to A$ such that $f=\tilde f\circ q$ where $q:G\to G/A$ is a quotient homomorphism.
Since the group $G/A$ carries the quotient topology, the continuity of $f$ implies the continuity of $\tilde f$. Now the compactness of the quotient group $G/A=X/X_n$ implies that the set $\tilde f(G/A)=f(G)=C_G(a)$ is compact.
\end{proof}

It follows that the union $\bigcup_{z\in Z}C_G(z)$ is $\sigma$-compact and hence generates a $\sigma$-compact subgroup $H\subset A$, which is normal in $G$. Then its closure $\bar H$ is a normal closed $\w$-narrow subgroup in $G$.

\begin{claim} The topological group $G$ is balanced and MAP.
\end{claim}

\begin{proof} The topological group $G$ is balanced, being a quotient group of the balanced topological group $X$. Let $\mathcal B_G$ be the family of all open invariant neighborhoods of the unit in the balanced group $G$. To prove that $X$ is MAP, we shall use the fact that the Abelian topological group $A$ is MAP, which implies that the strongest totally bounded group topology $\tau_A$ on $A$ is Hausdorff. Let us observe that for every $x\in G$ the continuous automorphism $A\to A$, $a\mapsto xax^{-1}$, remains continuous in the topology $\tau_A$. Using this fact, it can be shown that the family
$$\mathcal B=\{UV:U\in\mathcal B_G,\;V\in\tau_A\}$$ satisfies the Pontryagin Axioms \cite[1.3.12]{AT} and hence is a base of some Hausdorff group topology $\tau$ on $G$. Observe that the subgroup $A$ of $(G,\tau)$ is precompact and has compact quotient group $(G,\tau)/A$. Since the precompactness is a 3-space property (see \cite[1.5.8]{AT}), the topological group $(G,\tau)$ is precompact and its Raikov-completion $\bar G$ is compact. The identity homomorphism $\id:G\to(G,\tau)\subset\bar G$ witnesses that the topological group $G$ is MAP.
\end{proof}

Since the topological group $G$ is balanced, MAP and $\hTG$-closed, we can apply Corollary~\ref{c:Bohr2} and conclude that the $\w$-narrow closed normal subgroup $H$ of $G$ is compact, which is not possible as $H$ contains a closed non-compact subgroup $\bar Z$. This contradiction completes the proof of the compactness of the subgroup $A=X_n/X_{n+1}$. Now the compactness of the groups $X/X_n$ and $A=X_n/X_{n+1}$ imply the compactness of the quotient group $X/X_{n+1}$, see \cite[1.5.8]{AT}.
\end{proof}

Since each discrete topological group is  locally compact and balanced, Theorem~\ref{t:solvable} implies

\begin{corollary} A solvable topological group is finite if and only if it is discrete and $\hTG$-closed.
\end{corollary}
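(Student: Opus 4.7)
The plan is to deduce this corollary directly from Theorem~\ref{t:solvable}, which was established just above. The only observation required is that the discreteness hypothesis automatically supplies the two missing conditions in Theorem~\ref{t:solvable}(2), namely local compactness and balancedness.

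For the nontrivial ``if'' direction, I would argue as follows. Let $X$ be a discrete solvable $\hTG$-closed topological group. First, $X$ is locally compact because every singleton $\{x\}$ is an open compact neighborhood of $x$. Second, $X$ is balanced: any neighborhood $U$ of the unit contains the trivial invariant neighborhood $\{e\}$, since $x\{e\}x^{-1}=\{e\}\subset U$ for every $x\in X$. Therefore all three hypotheses of Theorem~\ref{t:solvable}(2) are satisfied, and that theorem yields that $X$ is compact. A compact discrete Hausdorff space is finite, so $X$ is finite.

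The ``only if'' direction is immediate: if $X$ is finite and Hausdorff, then $X$ is automatically discrete, and for any continuous homomorphism $f\colon X\to Y$ into a Hausdorff topological group $Y$ the image $f(X)$ is finite and hence closed in $Y$.

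There is no real obstacle here; the entire content of the corollary is the observation that ``discrete'' already packages together ``locally compact $+$ balanced'' free of charge, so the conclusion is a direct specialization of Theorem~\ref{t:solvable}.
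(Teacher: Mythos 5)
Your proof is correct and is exactly the paper's argument: the paper derives this corollary in one line from Theorem~\ref{t:solvable} by noting that every discrete topological group is locally compact and balanced, which is precisely the observation you make (and justify) before specializing condition (2) of that theorem. The routine verifications you add (singletons give local compactness, $\{e\}$ witnesses balancedness, compact discrete implies finite, and the trivial ``only if'' direction) are all sound.
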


\section{Some counterexamples}\label{s:Iso}

In this section we collect some counterexamples.

Our first example shows that Theorem~\ref{t:AbelC} does not generalize to solvable (even meta-Abelian) discrete groups. A group $G$ is called {\em meta-Abelian} if it contains a normal Abelian subgroup $H$ with Abelian quotient $G/H$.

For an Abelian group $X$ let $X\rtimes C_2$ be the product $X\times \{-1,1\}$ endowed with the operation $(x,u)*(y,v)=(xy^u,uv)$ for $(x,u),(y,v)\in X\times \{-1,1\}$. The semidirect product $X\rtimes C_2$ is meta-Abelian (since $X\times\{1\}$ is a normal Abelian subgroup of index 2 in $X\rtimes C_2$).

By $\TSone$ we  denote the family of topological semigroups $X$ satisfying the separation axiom $T_{1}$ (which means that finite subsets in $X$ are closed). Since $\TS\subset\TSone$, each $\TSone$-closed topological semigroup is $\TS$-closed.

\begin{proposition}\label{p:sd} For any Abelian group $X$ the semidirect product $X\rtimes C_2$ endowed with the discrete topology is an   ${\mathsf e}{:}\!\TSone$-closed MAP topological group.
\end{proposition}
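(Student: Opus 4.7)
The plan splits into two independent parts: verifying the MAP property, and verifying the $\mathsf{e}{:}\!\TSone$-closedness.

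For the MAP part, every discrete Abelian group $X$ admits an injective continuous homomorphism into a compact Abelian group (for instance, its Bohr compactification $bX$, or equivalently $\IT^{\hat X}$). Inversion is an automorphism of $X$, hence by the universal property it extends to an involutive continuous automorphism $\sigma$ of $bX$. The compact topological group $bX\rtimes_\sigma C_2$ then contains $X\rtimes C_2$ as a subgroup via the canonical injective continuous inclusion, witnessing the MAP property.

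For $\mathsf{e}{:}\!\TSone$-closedness, write $G:=X\rtimes C_2$, $H:=X\times\{1\}$, $t:=(e_X,-1)$, so $G=H\sqcup Ht$, $t^2=e$, and $tht=h^{-1}$ for $h\in H$. Suppose $G$ is realized as a discrete subgroup of a $T_1$ topological semigroup $Y$, and pick, for contradiction, $y\in\overline G\setminus G$. Applying continuity of $L_e$ and $R_e$ to $eg=g=ge$ on $G$ shows $e$ is a two-sided identity on $\overline G$, so relations like $t\cdot t=e$ and $z\cdot e=e\cdot z=z$ for $z\in\overline G$ are legitimate computations in $Y$. Pick a net $g_\alpha\to y$ in $G$; passing to a subnet, assume either $g_\alpha\in H$ for all $\alpha$ or $g_\alpha\in Ht$ for all $\alpha$. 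In the latter case $g_\alpha t\in H$ converges to $yt$, and $yt\in G$ would give $y=(yt)t\in G$; so $yt\in\overline G\setminus G$ and replacing $y$ by $yt$ reduces us to the former case.

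With $(g_\alpha)\subset H$, each $g_\alpha t\in Ht$ is an involution, so joint continuity of multiplication forces $(yt)^2=e$ (a constant net in a $T_1$ space has only its value as limit). Right-multiplying $(yt)(yt)=e$ by $t$ and simplifying via $(yt)\cdot t=y\cdot t^2=y$ produces the key identity $yty=t$ in $Y$. Continuity of $R_{ty}$ applied to $g_\alpha\to y$ gives $g_\alpha\cdot ty\to yty=t$. Since $tg_\beta=g_\beta^{-1}t$ in $G$ (and hence in $Y$) for $g_\beta\in H$, continuity of $L_t$ yields $ty=\lim_\beta g_\beta^{-1}t$, and then continuity of $L_{g_\alpha}$ yields $g_\alpha\cdot ty=\lim_\beta g_\alpha g_\beta^{-1}t$. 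Multiplying once more on the right by $t$ and using $t^2=e$ gives the iterated limit
\[
\lim_\alpha\lim_\beta g_\alpha g_\beta^{-1}=e\quad\text{in }Y.
\]
Now choose an open $V\ni e$ in $Y$ with $V\cap G=\{e\}$ (available from discreteness of $G$ in $Y$). The outer limit gives $\alpha_0$ such that $\lim_\beta g_\alpha g_\beta^{-1}\in V$ for every $\alpha\ge\alpha_0$; openness of $V$ then provides, for each such $\alpha$, an index $\beta_\alpha$ with $g_\alpha g_\beta^{-1}\in V$ for all $\beta\ge\beta_\alpha$. Since $g_\alpha g_\beta^{-1}\in G\cap V=\{e\}$, this forces $g_\beta=g_\alpha$ for all $\beta\ge\beta_\alpha$, i.e., $(g_\beta)$ is eventually constant; hence $y=g_\alpha\in G$, a contradiction. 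The principal obstacle is to identify an algebraic identity in $Y$ that effectively couples $y$ to the approximating net despite the absence of inverses in the ambient semigroup; the observation that $Ht$ consists entirely of involutions, yielding $(yt)^2=e$ and hence $yty=t$, is precisely what enables the iterated-limit step to convert the discreteness of $G\subset Y$ into forced stabilization of the net.
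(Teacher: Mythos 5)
Your overall strategy is viable and genuinely different from the paper's (the paper first proves that the index-two subgroup $X\times\{1\}$ is closed, using the closed set $\{y\in Y:ypyp=e\}$ and a cardinality argument with shifts, then deduces closedness of the coset and of $G$; you run a single net-based iterated-limit argument), but as written there is one step that fails in the stated generality. You claim that continuity of $L_e$ and $R_e$ applied to $eg=g=ge$ on $G$ shows that $e$ is a two-sided identity on $\overline G$, and you use this to justify $(yt)\cdot t=y\cdot t^2=y$ (both in the reduction from the coset $Ht$ to $H$ and in deriving $yty=t$ from $(yt)^2=e$). That inference requires uniqueness of limits: the set $\{z\in Y:ez=z\}$ is the preimage of the diagonal, which is closed only when $Y$ is Hausdorff; equivalently, from $g_\alpha\to y$ and $eg_\alpha=g_\alpha\to y$ you can only conclude that $y$ and $ey$ are both limits of the same net, not that they coincide. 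This matters precisely here, because $\TSone$ consists of merely $T_1$ topological semigroups, and Proposition~\ref{p:sd} is one of the two places where the paper explicitly drops its blanket Hausdorff assumption. Your other limit manipulations are $T_1$-safe (a constant, or eventually constant, net in a $T_1$ space converges only to its value, which correctly gives $(yt)^2=e$ and the final stabilization), but $ye=y$ is not.

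The gap is repairable without changing your architecture: never cancel $t\cdot t$ against a limit point. Write $g_\beta^{-1}=tg_\beta t$ and work with the fixed element $tyt$: by continuity $(g_\alpha g_\beta^{-1})_\beta=(g_\alpha tg_\beta t)_\beta\to g_\alpha\,tyt$, and $(g_\alpha\,tyt)_\alpha\to y\,tyt=(yt)(yt)=e$ by associativity and the $T_1$-valid identity $(yt)^2=e$; your neighborhood argument with $V\cap G=\{e\}$ then goes through verbatim. Similarly, in the coset reduction, if $yt=g\in G$ then $g_\alpha=g_\alpha tt\to gt$, so the net eventually enters a neighborhood isolating $gt$ in $G$, is eventually constant, and $y=gt\in G$ by $T_1$. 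Note that the paper's device for transferring the involution identity to the closure --- taking the preimage of the closed singleton $\{e\}$ under the continuous map $z\mapsto zpzp$ --- is exactly the $T_1$-safe substitute for the "identity extends to the closure" claim, and your $(yt)^2=e$ step is its net-theoretic twin; you only went astray in the subsequent cancellation. The MAP half of your argument is correct and essentially identical to the paper's.
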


\begin{proof} First we show that the group $X\rtimes C_2$ is MAP. By Peter-Weyl Theorem \cite[9.7.5]{AT}, the Abelian discrete topological group $X$ is MAP and hence admits an injective homomorphism $\delta:X\to K$ to a compact Abelian topological group $K$. It easy to see that the semidirect product $K\rtimes C_2$ endowed with the group operation $(x,u)*(y,v)=(xy^u,uv)$ for $(x,u),(y,v)\in K\times C_2$ is a compact topological group and the map $\delta_2:X\rtimes C_2\to K\rtimes C_2$, $\delta_2:(x,u)\mapsto (\delta(x),u)$, is an injective homomorphism witnessing that the discrete topological group $X\rtimes C_2$ is MAP.
\smallskip

To show that $X\rtimes C_2$ is ${\mathsf e}{:}\!\TSone$-closed, fix a topological semigroup $Y\in\TSone$ containing the group $G:=X\rtimes C_2$ as a discrete subsemigroup. Identify $X$ with the normal  subgroup $X\times\{1\}$ of $G$. First we show that  $X$ is closed in $Y$. Assuming the opposite, we can find an element $\hat y\in \bar X\setminus X$.  Consider the element $p:=(e,-1)\in X\rtimes C_2$ and observe that for any element $x\in X$ we get $pxp=(e,-1)(x,1)(e,-1)=(x^{-1},1)=x^{-1}$ and hence $pxpx=e$, where $e$ is the unit of the groups $X\subset G$. The closedness of the singleton $\{e\}$ in $Y$ and the continuity of the multiplication in the semigroup $Y$ guarantee that the set $$Z=\{y\in Y:ypyp=e\}$$ is closed in $Y$ and hence contains the closure of the group $X$ in $Y$. In particular, $\hat y\in \bar X\subset\bar Z=Z$. So, $\hat yp\hat yp=e$. Since the subgroup $G$ of $Y$ is discrete, there exists a neighborhood $V_e\subset Y$ of $e$ such that $V_e\cap G=\{e\}$. By the continuity of the semigroup operation on $Y$, the point $\hat y$ has a neighborhood $W\subset Y$ such that $WpWp\subset V_e$. Fix any element $x\in W\cap X$ and observe that $(W\cap X)pxp\subset X\cap (WpWp)\subset X\cap V_e=\{e\}$, which is not possible as the set $W\cap X$ is infinite and so is its right shift $(W\cap X)pxp$ in the group $G$. This contradiction shows that the set $X$ is closed in $Y$.

Next, we show that the shift $Xp=X\times\{-1\}$ of the set $X$ is closed in $Y$. Assuming that $Xp$ has an accumulating point $y^*$ in $Y$, we conclude that $y^*p$ is an accumulating point of the group $X$, which is not possible as $X$ is closed in $Y$. So,  the sets $X$ and $Xp$ are closed in $Y$ and so is their union $G=X\cup Xp$, witnessing that the group $G$ is ${\mathsf e}{:}\!\TSone$-closed.
\end{proof}

Since the isometry group $\Iso(\IZ)$ of $\IZ$ is isomorphic to the semidirect product $\IZ\rtimes\C_2$,
Proposition~\ref{p:sd} implies the following fact.

\begin{example}\label{e:Iso} The group $\Iso(\IZ)$ endowed with the discrete topology is ${\mathsf e}{:}\!\TSone$-closed and MAP but does not have compact exponent.
\end{example}

By Dikranjan-Uspenskij Theorem~\ref{t:DU-nilpotent}, each $\hTG$-closed nilpotent topological group is compact.
Our next example shows that this theorem does not generalize to solvable topological groups and thus resolves in negative Question~3.13 in \cite{DU} and Question 36 in \cite{DSh}.

\begin{example}\label{ex:solv} The Lie group
$$\Iso_+(\IC)=\left\{\left(\begin{array}{cc}a&b\\0&1\end{array}\right):a,b\in\IC,\;|a|=1\right\}$$
of orientation-preserving isometries of the complex plane is  $\hTS$-closed and MAP-solvable but not compact and not MAP.
\end{example}

\begin{proof} The group $\Iso_+(\IC)$ is topologically isomorphic to the semidirect product $\IC\rtimes\IT$ of the additive group $\IC$ of complex numbers and the multiplicative group $\IT=\{z\in\IC:|z|=1\}$.
The semidirect product is endowed with the group operation $(x,a)\cdot(y,b)=(x+ay,ab)$ for $(x,a),(y,b)\in\IC\rtimes \IT$.

It is clear that the group $G:=\IC\rtimes\IT$ is meta-Abelian (and hence solvable) and not compact. Being solvable and locally compact, the group $G$ is MAP-solvable (see Proposition~\ref{p:Bohr-s}). To prove that $G$ is $\hTS$-closed, take any continuous homomorphism $h:G\to Y$ to a Hausdorff topological semigroup $Y$ and assume that the image $h(G)$ is not closed in $Y$. Replacing $Y$ by $\overline{h(G)}$, we can assume that the subgroup $h(G)$ is dense in the topological semigroup $Y$.

\begin{claim} The homomorphism $h$ is injective.
\end{claim}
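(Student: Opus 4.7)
The plan is to determine the closed normal subgroups of $G=\IC\rtimes\IT$, observe that every proper quotient of $G$ is compact, and then deduce injectivity from the hypothesis that $h(G)$ is not closed in $Y$.

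First I would classify the closed normal subgroups $N\trianglelefteq G$. Consider the intersection $N_0:=N\cap(\IC\times\{1\})$. Conjugation by $(0,a)\in\{0\}\times\IT$ acts on $\IC\times\{1\}$ by multiplication by $a$, so $N_0$ is a closed $\IT$-invariant subgroup of $\IC$. If $N_0$ contains some $z\neq 0$, then $\IT z\subset N_0$ and therefore $\IT z-\IT z$, which is a disk around $0$, lies in $N_0$; this forces $N_0=\IC$. So either $N_0=\{0\}$ or $N_0=\IC$.

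If $N_0=\IC$, then $N\supset\IC\times\{1\}$ and $N/(\IC\times\{1\})$ is a closed subgroup of $\IT$, hence either $\IT$ itself (giving $N=G$) or a finite cyclic subgroup $C_n$ (giving $N=\IC\rtimes C_n$). If $N_0=\{0\}$ and $N\ne\{e\}$, pick $(x,a)\in N$ with $(x,a)\ne e$. Since $N_0=\{0\}$, necessarily $a\neq 1$. For any $y\in\IC$, the commutator computation
$$[(y,1),(x,a)]=((1-a)y,1)$$
shows $((1-a)y,1)\in N\cap(\IC\times\{1\})=\{0\}$ for all $y\in\IC$, which together with $1-a\ne0$ is absurd. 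Hence $N=\{e\}$ in this case. Consequently the complete list of closed normal subgroups of $G$ is
$$\{e\},\quad \IC\rtimes C_n\;(n\in\IN),\quad G,$$
and every nontrivial $N$ of the list contains $\IC\times\{1\}$, whence $G/N$ is a (Hausdorff) quotient of $\IT$ and therefore compact.

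Now suppose, towards a contradiction, that $\ker h\ne\{e\}$. If $\ker h=G$, then $h(G)=\{h(e)\}$ is a singleton, closed in the Hausdorff semigroup $Y$. Otherwise $\ker h=\IC\rtimes C_n$ for some $n\in\IN$, so $G/\ker h$ is compact, and then $h(G)$, being the continuous image of the compact group $G/\ker h$, is compact and thus closed in $Y$. In both cases $h(G)$ is closed in $Y$, contradicting the standing assumption. Therefore $\ker h=\{e\}$, i.e.\ $h$ is injective. The only step requiring any real care is the classification of closed normal subgroups, and in particular the fact that a closed $\IT$-invariant subgroup of $\IC$ is either $\{0\}$ or $\IC$; the rest is a routine application of Hausdorffness plus compactness of $\IT$.
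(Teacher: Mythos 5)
Your proof is correct and follows essentially the same route as the paper: both arguments come down to showing, via the conjugation identity $(y,b)(x,a)(y,b)^{-1}=((1-a)y+bx,a)$, that any nontrivial closed normal subgroup of $\IC\rtimes\IT$ --- in particular a nontrivial kernel of $h$ --- must contain $\IC\times\{1\}$, so that the corresponding quotient is compact and $h(G)$ would be compact, hence closed in the Hausdorff semigroup $Y$. The only difference is organizational: you package this as a full classification of the closed normal subgroups, while the paper applies the computation directly to $H=h^{-1}(h(0,1))$.
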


\begin{proof} Consider the closed normal subgroup $H=h^{-1}(h(0,1))$ of $G$, the quotient topological group $G/H$ and the quotient homomorphism $q:G\to G/H$. It follows that $h=\tilde h\circ q$ for some continuous homomorphism $\tilde h:G/H\to Y$. We claim that the subgroup $H$ is trivial. To derive a contradiction, assume that $H$ contains some element $(x,a)\ne (0,1)$. Then for every $(y,b)\in \IC\rtimes \IT$ the normal subgroup $H$ of $G$ contains also the element $(y,b)(x,a)(y,b)^{-1}=(y+bx,ba)(-b^{-1}y,b^{-1})=(y+bx-ay,a)$ and hence contains the coset $\IC\times\{a\}$. Being a subgroup, $H$ also contains the set $(\IC\times\{a\})\cdot(\IC\times\{a\})^{-1}=\IC\times\{1\}$. Taking into account that the quotient group $G/(\IC\times\{1\})$ is compact, we conclude that $G/H$ is compact too. Consequently, $h(G)=\tilde h(G/H)$ is compact and closed in the Hausdorff space $Y$, which contradicts our assumption. This contradiction shows that the subgroup $H$ is trivial and the homomorphism $h$ is injective.
\end{proof}

\begin{claim} The map $h:G\to Y$ is a topological embedding.
\end{claim}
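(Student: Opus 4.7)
Since $h$ is a continuous injective group homomorphism of $G=\IC\rtimes\IT$, the claim is that $h^{-1}$ is continuous at the unit of $h(G)$: if $g_\alpha=(x_\alpha,a_\alpha)$ is a net in $G$ with $h(g_\alpha)\to e_Y$, then $g_\alpha\to e_G=(0,1)$. I would first exploit the compactness of $\{0\}\times\IT$ to pass to a subnet with $a_\alpha\to a\in\IT$. The factorization $(x_\alpha,a_\alpha)=(x_\alpha,1)(0,a_\alpha)$ combined with the invertibility of $h(0,a_\alpha)$ in the compact subgroup $h(\IT)$ (its inverse being $h(0,a_\alpha^{-1})$) and the joint continuity of multiplication in $Y\in\TS$ yields $h(x_\alpha,1)\to h(0,a^{-1})$. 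The commutativity of $\IC$ then gives, via $(y,1)(x_\alpha,1)=(x_\alpha,1)(y,1)$ and passage to the limit, $h(y,a^{-1})=h(a^{-1}y,a^{-1})$ for every $y\in\IC$; injectivity of $h$ forces $a^{-1}y=y$ for all $y$, hence $a=1$ and $h(x_\alpha,1)\to e_Y$.

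It remains to prove that $(x_\alpha)$ is bounded in $\IC$; then local compactness, continuity, and injectivity of $h$ finish the job, since any convergent subnet $x_\alpha\to x$ satisfies $h(x,1)=h(0,1)$, forcing $x=0$ and $g_\alpha\to e_G$. The main obstacle is to rule out the case $|x_\alpha|\to\infty$, and my plan here is a scaling trick that uses the rotation action to manufacture a bounded net. Fix $C>0$; for $\alpha$ large enough that $|x_\alpha|\ge C/2$, choose $b_\alpha\in\IT$ with $|b_\alpha-1|=C/|x_\alpha|$ (possible because $b\mapsto|b-1|$ maps $\IT$ onto $[0,2]$). Then $b_\alpha\to 1$, and the points $c_\alpha:=(b_\alpha-1)x_\alpha$ all have modulus $C$, so they lie on a compact circle in $\IC$.

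On the one hand, the conjugation identity $(0,b)(x_\alpha,1)(0,b^{-1})=(bx_\alpha,1)$ and the joint continuity of multiplication in $Y$ give $h(b_\alpha x_\alpha,1)=h(0,b_\alpha)h(x_\alpha,1)h(0,b_\alpha^{-1})\to e_Y\cdot e_Y\cdot e_Y=e_Y$ (using $b_\alpha\to 1$ and $h(x_\alpha,1)\to e_Y$), and analogously $h(-x_\alpha,1)=h(0,-1)h(x_\alpha,1)h(0,-1)\to e_Y$. Since $(b_\alpha x_\alpha,1)(-x_\alpha,1)=(c_\alpha,1)$, the homomorphism property yields $h(c_\alpha,1)=h(b_\alpha x_\alpha,1)\cdot h(-x_\alpha,1)\to e_Y$. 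On the other hand, by compactness of the circle $\{c\in\IC:|c|=C\}$, a subnet of $(c_\alpha)$ converges to some $c\in\IC$ with $|c|=C>0$, and continuity of $h$ gives $h(c_\alpha,1)\to h(c,1)$. Comparing the two limits: $h(c,1)=e_Y=h(0,1)$, so injectivity of $h$ forces $c=0$, contradicting $|c|=C>0$. This excludes the unbounded case and completes the proof that $h$ is a topological embedding.
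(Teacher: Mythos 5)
Your proof is correct, but it follows a genuinely different route from the paper's. The paper pulls the topology of $Y$ back to $G$ along the injective $h$, obtaining a weaker Hausdorff paratopological group topology $\tau$ on $G$; it then shows that some $\tau$-neighborhood of the unit lies in the compact set $\{(z,a):|z|\le 1\}$ --- the key algebraic point being that for $|v|>1$ one can pick $b,c\in\IT$ with $b+ac=v^{-1}$, so that the two conjugates $(bv,a)$, $(cv,a)$ of $(v,a)$ by rotations multiply to $(1,a^2)$, landing in a fixed compact set that a small $UU$ must miss --- and finally invokes Ellis's theorem (a locally compact Hausdorff paratopological group is a topological group) together with the Open Mapping Principle to conclude that $\tau$ is the original topology. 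You instead verify directly that $h^{-1}$ is continuous at the unit of $h(G)$ by a net argument, pinning down the rotational coordinate with a commutation/centralizer computation ($h(y,a^{-1})=h(a^{-1}y,a^{-1})$ forces $a=1$) and the translational coordinate with the scaling trick $c_\alpha=(b_\alpha-1)x_\alpha$, $|c_\alpha|=C$. Both proofs hinge on the same geometric mechanism --- conjugating by rotations to bring arbitrarily large translations back onto a fixed compact set --- but your version is entirely elementary: it uses only joint continuity of multiplication in the topological semigroup $Y$, Hausdorffness, and injectivity of $h$, and avoids Ellis's theorem and the Open Mapping Principle altogether. The paper's route buys brevity and reuses machinery already present elsewhere in the paper; yours buys self-containedness. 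Two small points worth tightening: say explicitly that continuity of $h^{-1}$ at the unit suffices because $h^{-1}$ is a homomorphism from the paratopological group $h(G)$ (a subsemigroup of $Y$) to the topological group $G$; and phrase the unbounded case as ``some subnet satisfies $|x_\alpha|\to\infty$'', to which the scaling trick is then applied, since a non-convergent net need not itself tend to infinity.
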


\begin{proof} Since $h$ is injective, the family $\tau=\{h^{-1}(U):U$ is open in $Y\}$ is a Hausdorff topology turning $G$ into a paratopological group. We need to show that $\tau$ coincides with the standard locally compact topology of the group $G$. Since the topology $\tau$ is weaker than the original product topology of $G=\IC\rtimes\IT$, the compact set $\{1\}\times\IT$ remains compact in the topology $\tau$. Then we can find a neighborhood $U\in\tau$ of the unit $(0,1)\in G$ such that $UU$ is disjoint with the compact set $\{1\}\times\IT$.

Using the compactness of the set $\{0\}\times \IT\subset G$ and the continuity of the multiplication in $G$, find a neighborhood $V\in\tau$ of the unit such that $\{ava^{-1}:v\in V,\;a\in\{0\}\times \IT\}\subset U$. We claim that $V\subset\{(z,a)\in\IC\rtimes\IT:|z|\le 1\}$.
Assuming the opposite, we could find an element $(v,a)\in V$ with $|v|>1$.

Since $|v^{-1}|<1$ and $|a|=1$, there are two complex numbers $b,c\in\IT$ such that $b+ac=v^{-1}\in\IC$. Observe that $(bv,a)=(0,b)(v,a)(0,b^{-1})\in U$ and similarly $(cv,a)\in U$. Then $$UU\ni(bv,a)(cv,a)=(bv+acv,a^2)=(1,a^2)\in \{1\}\times\IT,$$
which contradicts the choice of the neighborhood $U$. This contradiction shows that $V\subset\{(z,a)\in G:|z|\le1\}$ and hence $V$ has compact closure in the spaces $G$ and $(G,\tau)$. This means that the paratopological group $(G,\tau)$ is locally compact and, by the Ellis Theorem~\cite[2.3.12]{AT}, is a topological group. By the Open Mapping Principle~\ref{t:openK}, the identity homomorphism $G\to (G,\tau)$ is a topological isomorphism and so is the homomorphism $h:G\to h(G)\subset Y$. \end{proof}

Since the topological group $G$ is Weil-complete (being locally compact), Theorem~\ref{t:2.5n} guarantees that $Y\setminus h(G)$ is an ideal of the semigroup $Y$. Choose any element $y\in Y\setminus h(G)$ and observe that for the compact subset $K=h(\{0\}\times\IT)\subset h(G)\subset Y$ the compact set $yKy$ is contained in the ideal $Y\setminus h(G)$ and hence does not intersect $K$. By the Hausdorff property of the topological semigroup $Y$ and the compactness of $K$, the point $y$ has a neighborhood $V\subset Y$ such that $(VKV)\cap K=\emptyset$. Now take any element $v\in V\cap h(G)$ and find an element $(x,a)\in G$ with  $h(x,a)=v$. Let $b=-a^{-1}\in\IT$ and observe that $(x,a)(0,b)(x,a)=(x,ab)(x,a)=(x,-1)(x,a)=(0,-a)$. Then for the element $k=h(0,b)\in K$ we get $VKV\ni vkv=h(0,-a)\in K$, which contradicts the choice of the neighborhood $V$. This contradiction completes the proof of the $\hTS$-closedness of the Lie group $\IC\rtimes \IT$.

By Theorem~\ref{c:Bohr2}, the non-compact $\w$-narrow $\hTG$-closed group $\IC\rtimes\IT$ is not MAP.
\end{proof}

The following striking example of Klyachko, Olshanskii and Osin \cite[2.5]{KOO} shows that the $\phTS$-closedness does not imply compactness even for 2-generated discrete topological groups. A group is called {\em $2$-generated} if it is generated by two elements.

\begin{example}[Klyachko, Olshanskii, Osin]\label{ex:non-top} There exists a $\phTG$-closed infinite simple 2-generated discrete topological group $G$ of finite exponent. {\rm By Theorems~\ref{t:exp=>hTS=hTG} and~\ref{t:cTG}}, $G$ is $\phTS$-closed and is $\cTG$-closed.
\end{example}

We do not know if the groups constructed by Klyachko, Olshanskii and Osin can be $\cTS$-closed. So, we ask

\begin{problem} Is each $\cTS$-closed topological group compact?
\end{problem}

Finally, we present an example showing that an  $\hTG$-closed topological group needs not be $\eTS$-closed.

\begin{example} The symmetric group $\Sym(\w)$ endowed with the topology of pointwise convergence has the following properties:
\begin{enumerate}
\item $X$ is Polish, minimal, and not compact;
\item $X$ is complete but not Weil-complete;
\item $X$ is  $\hTG$-closed;
\item $X$ is not $\eTS$-closed.
\end{enumerate}
\end{example}

\begin{proof} The group $\Sym(\w)$ is Polish, being a $G_\delta$-subset of the Polish space $\w^\w$. The minimality of the group $\Sym(\w)$ is a classical result of Gaughan \cite{Gau}. Being Polish, the topological group $\Sym(\w)$ is complete. By \cite[7.1.3]{DPS}, the topological group $\Sym(\w)$ is not Weil-complete. By Theorem~\ref{t:Weil}, the topological group $\Sym(\w)$ is not $\eTS$-closed.

It remains to show that the topological group $X=\Sym(\w)$ is $\hTG$-closed. Let $h:X\to Y$ be a continuous homomorphism to a topological group $Y$. By \cite[7.1.2]{DPS}, the group $\Sym(\w)$ is topologically simple, which implies that the kernel $H=h^{-1}(1)$ of the homomorphism $h$ is either trivial or coincides with $X$. In the second case the group $h(X)$ is trivial and hence closed in $Y$. In the first case, the homomorphism $h$ is injective. By the minimality of $X$, the homomorphism $h$ is an isomorphic topological embedding. The completeness of $X$ ensures that the image $h(X)$ is closed in $Y$.
\end{proof}

\section*{Acknowledgements}

The author expresses his thanks to Serhi\u\i\ Bardyla and Alex Ravsky for fruitful discussions on the topic of the paper, to Dikran Dikranjan for helpful comments on the initial version of the paper, and to Michael Megrelishvili for a valuable remark on the minimality of the Weyl-Heisenberg group.
\newpage

\end{document}